\tikzset{
	>=stealth',
	punktchain/.style={
		rectangle,
		rounded corners,
		% fill=black!10,
		draw=black, thick,
		minimum height=3em,
		text centered,
		on chain},
	line/.style={draw, thick, <-},
	element/.style={
		tape,
		top color=white,
		bottom color=blue!50!black!60!,
		minimum width=8em,
		draw=blue!40!black!90, very thick,
		text width=10em,
		minimum height=3.5em,
		text centered,
		on chain},
	every join/.style={->, thick,shorten >=1pt},
	decoration={brace},
	tuborg/.style={decorate},
	tubnode/.style={midway, right=2pt},
}
\pgfplotsset{compat=1.15}
\definecolor{xdxdff}{rgb}{0.49019607843137253,0.49019607843137253,1}
\definecolor{ududff}{rgb}{0.30196078431372547,0.30196078431372547,1}
\numberwithin{equation}{section}
\def\C{\ensuremath{\mathbb{C}}}
\def\H{\ensuremath{\mathbb{H}}}
\def\P{\ensuremath{\mathbb{P}}}
\def\Q{\ensuremath{\mathbb{Q}}}
\def\R{\ensuremath{\mathbb{R}}}
\def\Z{\ensuremath{\mathbb{Z}}}
\def\bx{\ensuremath{\mathbf{x}}}
\def\bv{\ensuremath{\mathbf{v}}}
\def\ch{\mathop{\mathrm{ch}}\nolimits}
\def\Coh{\mathop{\mathrm{Coh}}\nolimits}
\def\codim{\mathop{\mathrm{codim}}\nolimits}
\def\deg{\mathop{\mathrm{deg}}}
\def\dim{\mathop{\mathrm{dim}}\nolimits}
\def\ev{\mathop{\mathrm{ev}}\nolimits}
\def\End{\mathop{\mathrm{End}}}
\def\Ext{\mathop{\mathrm{Ext}}\nolimits}
\def\Hilb{\mathrm{Hilb}}
\def\Hom{\mathop{\mathrm{Hom}}\nolimits}
\def\id{\mathop{\mathrm{id}}\nolimits}
\def\im{\mathop{\mathrm{im}}\nolimits}
\def\Kn{\mathop{\mathrm{K}_{\mathrm{num}}}}
\def\min{\mathop{\mathrm{min}}\nolimits}
\def\Pic{\mathop{\mathrm{Pic}}}
\def\rk{\mathop{\mathrm{rk}}}
\def\RlHom{\mathop{\mathbf{R}\mathcal Hom}\nolimits}
\def\td{\mathop{\mathrm{td}}\nolimits}
\def\Stab{\mathop{\mathrm{Stab}}\nolimits}
\def\glr{\ensuremath{\mathrm{GL}^+_2(\mathbb R)}}
\def\Db{\mathrm{D}^{b}}
\def\glt{\widetilde{\mathrm{GL}}^+_2(\R)}
\def\AJ{\ensuremath{\text{Abel--Jacobi}} }
\def\IJ{\ensuremath{\Phi} }
\def\ms{M^s_\sigma}
\def\RHom{\mathrm{RHom}}
\def\Rep{\mathrm{Rep}}
\def\Pe#1#2{\ensuremath{\mathbb P_\sigma(#1,#2)}}
\newtheorem*{rep@theorem}{\rep@title}
\newcommand{\newreptheorem}[2]{%
\newenvironment{rep#1}[1]{%
 \def\rep@title{#2 \ref{##1}}%
 \begin{rep@theorem}}%
 {\end{rep@theorem}}}
\newtheorem{Thm}{Theorem}[section]
\newtheorem{Prop}[Thm]{Proposition}
\newtheorem{PropDef}[Thm]{Proposition and Definition}
\newtheorem{Lem}[Thm]{Lemma}
\newtheorem{Cor}[Thm]{Corollary}
\newtheorem{Ques}[Thm]{Question}
\newtheorem{Asp}[Thm]{Assumption}
\newtheorem{SubLem}[Thm]{Sublemma}
\newtheorem{thm-int}{Theorem}
\theoremstyle{definition}
\newtheorem{Def-s}[Thm]{Definition}
\newtheorem{Def}[Thm]{Definition}
\newtheorem{Rem}[Thm]{Remark}
\newtheorem{Ex}[Thm]{Example}
\newtheorem{Not}[Thm]{Notation}
\def\C{\ensuremath{\mathbb{C}}}
\def\H{\ensuremath{\mathbb{H}}}
\def\P{\ensuremath{\mathbb{P}}}
\def\Q{\ensuremath{\mathbb{Q}}}
\def\R{\ensuremath{\mathbb{R}}}
\def\Z{\ensuremath{\mathbb{Z}}}
\def\cA{\ensuremath{\mathcal A}}
\def\cE{\ensuremath{\mathcal E}}
\def\cF{\ensuremath{\mathcal F}}
\def\cH{\ensuremath{\mathcal H}}
\def\cI{\ensuremath{\mathcal I}}
\def\cK{\ensuremath{\mathcal K}}
\def\cL{\ensuremath{\mathcal L}}
\def\cO{\ensuremath{\mathcal O}}
\def\cP{\ensuremath{\mathcal P}}
\def\cR{\ensuremath{\mathcal R}}
\def\cT{\ensuremath{\mathcal T}}
\def\cU{\ensuremath{\mathcal U}}
\def\cV{\ensuremath{\mathcal V}}
\def\TT{\ensuremath{\mathcal T}}
\def\PP{\ensuremath{\mathcal P}}
\def\MM{\ensuremath{\mathcal M}}
\def\kn{\mathrm{K}_{\mathrm{num}}}
\def\Cone{\mathrm{Cone}}
\def\gd{\mathrm{gldim}}
\def\hom{\mathrm{hom}}
\def\Ku{\mathrm{Ku}}
\def\cHom{\mathcal Hom}
\def\be{\mathbf e}
\def\BN{\mathfrak{Z}}
\def\ET{\mathfrak{E}}
\def\HN{\mathrm{HN}}
\def\BNP{\pi}
\newcommand*{\parallelogramm}{%
  \rlap{\rotatebox{-30}{\rule[.05ex]{.4pt}{.77em}}}%
  \kern.04em%
  \rlap{\kern.36em\raisebox{0.649519052835em}{\rule{.6em}{.4pt}}}%
  \rule{.6em}{.4pt}\kern-.04em%
  \rotatebox{-30}{\rule[.05ex]{.4pt}{.77em}}}
\title{Higher dimensional moduli spaces on Kuznetsov components of Fano threefolds}
\author{Chunyi Li}
\address{C. L.:
Mathematics Institute, University of Warwick,
Coventry, CV4 7AL,
United Kingdom}
\email{C.Li.25@warwick.ac.uk}
\urladdr{https://sites.google.com/site/chunyili0401/}
\author{Yinbang Lin}
\address{Y. L.: School of Mathematical Sciences, Tongji University, Shanghai 200092, China
}
\email{yinbang.lin@icloud.com}
\urladdr{https://yinbang-lin.github.io/}
\author{Laura Pertusi} 
\address{L. P.:
Dipartimento di Matematica ``F.\ Enriques'' \\
Via Cesare Saldini 50 \\
Universit\`a degli Studi di Milano \\
20133 Milano, Italy
}
\email{laura.pertusi@unimi.it}
\urladdr{http://www.mat.unimi.it/users/pertusi/index.html}
\author{Xiaolei Zhao}
\address{X. Z.:
Department of Mathematics \\
South Hall 6607 \\
University of California \\
Santa Barbara, CA 93106, USA
}
\email{xlzhao@math.ucsb.edu}
\urladdr{https://sites.google.com/site/xiaoleizhaoswebsite/}
\begin{document}
\begin{abstract}
We study moduli spaces of stable objects in the Kuznetsov components of Fano threefolds. We prove a general non-emptiness criterion for moduli spaces, which applies to the cases of prime Fano threefolds of index $1$, degree $10 \leq d \leq 18$, and index $2$, degree $d \leq 4$. In the second part, we focus on cubic threefolds. We show the irreducibility of the moduli spaces, and that the general fibers of the Abel--Jacobi maps from the moduli spaces to the intermediate Jacobian are Fano varieties. When the dimension is sufficiently large, we further show that the general fibers of the Abel--Jacobi maps are stably birational equivalent to each other. As an application of our methods, we prove Conjecture A.1 in \cite{FGLZ:EPW} concerning the existence of Lagrangian subvarieties in moduli spaces of stable objects in the Kuznetsov components of very general cubic fourfolds.
\end{abstract}
\maketitle
\setcounter{tocdepth}{1}
\tableofcontents

\section{Introduction}

Hilbert schemes parametrizing points or curves, or moduli spaces of stable coherent sheaves on a smooth Fano threefold of Picard rank one do not admit nice geometric properties in general. When the expected dimension is high, such a moduli space is typically reducible, consisting of components with different dimensions, and with singularities hard to control. 

On the other hand, recent research directions investigate residual components in the bounded derived category of Fano varieties, known as Kuznetsov components, arising from semiorthogonal decompositions defined by exceptional collections. There is a rich emerging theory of moduli spaces of stable objects in Kuznetsov components of Fano varieties of Picard rank one, with applications in both classical geometry and categorical Torelli theorems, see \cite{MS_survey, PS_survey} for surveys.

In this paper, we focus on the case of Fano threefolds of Picard rank $1$. We show a non-emptiness result for moduli spaces of semistable objects in the Kuznetsov components, and in the case of cubic threefolds we obtain interesting analogies with moduli spaces of vector bundles on genus $\geq 2$ curves.

\subsection{Kuznetsov components of Fano threefolds}

Let $X$ be a Fano threefold of Picard rank $1$. When $X$ is of index $2$, in other words, the anticanonical divisor satisfies $-K_X=2H$, where $H$ is the ample generator for $\Pic(X)$, the \emph{Kuznetsov component} of $X$ is defined as
\begin{align*}
    \Ku(X):=\{E\in\Db(\Coh(X))\;|\;\Ext^i(\cO_X,E)=\Ext^i(\cO_X(H),E)=0 \text{ for all }i\in \Z\}.
\end{align*}
The definition of the Kuznetsov component in the index $1$ case and more details are discussed in Section \ref{sec:kuz}.

Stability conditions have been constructed on Kuznetsov components of Fano threefolds of index $1$ and $2$ in \cite{BLMS:kuzcomponent}. We denote by $\sigma$ a stability condition on $\Ku(X)$ in the same $\glt$-orbit of the constructed one (see Remark \ref{not:glt}). By the general theory in \cite{liurendapaper}, given a numerical character $v\in\Kn(\Ku(X))$, there is a good moduli space $M_\sigma(\Ku(X),v)$ parametrizing $\sigma$-semistable objects with character $v$ in $\Ku(X)$, having the structure of a proper algebraic space.

Despite of many results, especially in low dimensions, general structure theorems for these moduli spaces are still missing. In particular, the non-emptiness of these moduli spaces was only known in special cases \cite{PY, LiuZhang_note, Arend:cubic3, JLZ, PPZ_Enriques}, or when the Kuznetsov component is in fact equivalent to the bounded derived category of a curve of genus $\geq 2$ (this happens precisely when $X$ has index $2$, degree $4$, or index $1$, degree $12$, $16$, $18$). In the other cases, the main difficulty is that the categories $\Ku(X)$ arising from Fano threefolds within a given deformation class are typically not equivalent to the derived category of a smooth projective variety. This prevents the application of any specialization argument to reduce to a known geometric case, such as done for cubic fourfolds.

In our first result, we settle the non-emptiness problem of the moduli spaces for Fano threefolds of index $2$, and index $1$, degree $\geq 10$.

\begin{Thm}[Theorem \ref{thm:existinKurank2}]\label{thm:existintro}
    Let $X$ be a Fano threefold with Picard rank $1$, index $1$, degree $10\leq d \leq 18$, or index $2$, degree $d\leq 4$. Then for every nonzero character $v\in\kn(\Ku(X))$, the moduli space $M_\sigma(\Ku(X),v)$ of $\sigma$-semistable objects of class $v$ is non-empty.
\end{Thm}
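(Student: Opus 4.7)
The plan is to combine a reduction via autoequivalences of $\Ku(X)$ with a case-by-case construction of $\sigma$-semistable objects for representatives of the resulting orbits on $\kn(\Ku(X))$. In the cases where $\Ku(X)$ is already known to be equivalent to $\Db(C)$ for a smooth curve $C$ of genus $\geq 2$ (namely index $2$ degree $4$, and index $1$ degrees $12, 16, 18$), non-emptiness of $M_\sigma(\Ku(X), v)$ for every nonzero $v$ is a classical consequence of the existence of stable vector bundles on curves, upon transporting $v$ to a positive-rank class via the equivalence. The remaining cases -- index $2$ degrees $1, 2, 3$ and index $1$ degrees $10, 14$ -- require genuinely new input.

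For those, I would first identify the group $G$ of autoequivalences of $\Ku(X)$ acting faithfully on $\kn(\Ku(X))$: the shift $[1]$, the Serre functor $S_{\Ku(X)}$, and (in the index $2$ case) the rotation functor coming from the semiorthogonal decomposition, which behaves as a spherical-type twist. Each of these sends $\sigma$-semistable objects to $\sigma'$-semistable objects for some $\sigma'$ in the $\glt$-orbit of $\sigma$, so by Remark \ref{not:glt} it preserves non-emptiness of moduli spaces in the sense we need. Since $\kn(\Ku(X))$ has rank $2$ and the Euler form has indefinite signature, $G$ acts by hyperbolic-type matrices and I expect finitely many $G$-orbits on $\kn(\Ku(X))\setminus\{0\}$. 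I would write down an explicit finite list of orbit representatives, chosen to be ``minimal'' with respect to an invariant such as $|\chi(v,v)|$ or a discriminant.

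Next, for each representative $v_i$ I would construct an explicit $\sigma$-semistable object. Natural candidates are projections to $\Ku(X)$ of ideal sheaves of lines and conics on $X$, of twists of $\cO_X$, and of rank-$2$ instanton-type bundles; these account for the primitive classes. To promote such geometric constructions to $\sigma$-semistability in $\Ku(X)$, I would invoke the tilt-to-Bridgeland comparison of \cite{BLMS:kuzcomponent}: check that each candidate is tilt-semistable on $X$, that its projection lies in the heart of $\sigma$, and that no numerical wall in $\Ku(X)$ can destabilise it. For classes that are $G$-equivalent to a representative $v_i$ we then obtain $\sigma$-semistable objects by applying the corresponding autoequivalence.

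The main obstacle will be handling orbit representatives that do not come from any evident coherent sheaf, which I expect to occur especially for index $1$ degree $14$ and index $2$ degrees $1, 2$, where the literature offers few explicit constructions. There one must likely assemble objects as extensions or cones of simpler ones and verify $\sigma$-semistability by a direct Ext computation together with a wall-crossing argument across all numerical walls in a suitable weak stability chamber. Ensuring the resulting object has vanishing $\mathrm{Ext}^{<0}$ with $\cO_X$ and $\cO_X(H)$ (so it genuinely lies in $\Ku(X)$) is another subtle point to be addressed with care.
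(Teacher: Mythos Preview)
Your proposal has a genuine gap at its central step. You expect finitely many $G$-orbits on $\kn(\Ku(X))\setminus\{0\}$, but this is false. In the non-curve cases the symmetrized Euler form on $\kn(\Ku(X))\cong\Z^2$ is negative definite (see Assumption~\ref{rem:1}(c) and the explicit forms $I_\pm,J_\pm,K_\pm$ in Appendix~\ref{app:bounds}), so the group of isometries of $\kn(\Ku(X))$ is finite. Concretely, the Serre functor satisfies $\mathsf S^3=[5]$ or $[7]$ and $\mathsf S^2=[4]$ in the index $2$ degree $3,1,2$ cases respectively, so $\mathsf S$ has order $6$, $6$, or $4$ on $\kn$; the ``rotation'' $\mathsf L$ is $\mathsf S$ up to shift; and $[1]$ acts by $-1$. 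Hence $G$ is finite, there are infinitely many orbits, and reducing to orbit representatives cannot produce a finite list to check. Your remark that the Euler form has ``indefinite signature'' and that $G$ acts by ``hyperbolic-type matrices'' is precisely what fails.

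The paper's proof replaces the orbit-reduction idea with an \emph{inductive} argument on $|\lambda(v)|$. The key new ingredient is Lemma~\ref{lem:extendstabobj}: if $E_v,E_w$ are $\sigma$-stable with $\lambda(v)\times\lambda(w)=1$ (the ``smallest possible gap''), then \emph{every} nontrivial extension $E_f$ is $\sigma$-stable. Combined with Pick's theorem (Proposition and Definition~\ref{pd:pick}), any primitive $v$ with $|\lambda(v)|>1$ splits as $v_++v_-$ with $|\lambda(v_\pm)|<|\lambda(v)|$ and $\lambda(v_-)\times\lambda(v_+)=1$. One then only needs $\chi(v_+,v_-)<0$ to guarantee $\Ext^1\neq 0$; an elementary computation with the Euler form (Lemma~\ref{lem:22matrix}) shows this holds once $-\chi(v,v)$ exceeds an explicit small bound $N_\chi$ (equal to $1$, $2$, or $3$ in the respective cases). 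This reduces non-emptiness to the finitely many primitive $v$ with $-\chi(v,v)\leq N_\chi$, which---and only which---are then handled via the Serre functor and explicit geometric constructions of the kind you outline. So your idea of exhibiting ideal sheaves of lines/conics and checking their stability is correct in spirit, but it only covers the base of the induction; the bulk of the characters are reached by the extension argument, not by autoequivalences.
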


The actual new cases covered by Theorem \ref{thm:existintro} are those of cubic threefolds (index $2$, degree $3$), Fano threefolds with index $1$ and degree $14$, and double Veronese cones (index $2$, degree $1$). For quartic double solids and Gushel--Mukai threefolds (index $2$, degree $2$, and index $1$, degree $10$) this result was proved in \cite{PPZ_Enriques} with different methods which apply in the context of Enriques categories. Theorem \ref{thm:existintro} provides a unified argument for the proof of the non-emptiness of the moduli spaces for all the cases.

Note also that if $X$ has index $1$, degree $22$ (or index $2$, degree $5$), the component $\Ku(X)$ is equivalent to the bounded derived category of finite dimensional representations of the Kronecker quiver with three arrows. In the remaining cases (index $1$, degree $2$, $4$, $6$, $8$) the convention on the Kuznetsov component is floating and they seems to require a different strategy, see Remark \ref{rem:kuoflowgenusfano3}.

Theorem \ref{thm:existintro} is deduced from a general non-emptiness criterion proved in Proposition \ref{prop:existencegeneralcase}, which is of independent interest. We develop an inductive argument that effectively reduces the question to checking the statement in the low dimensional cases, where the moduli spaces are related to low degree curves on Fano threefolds, and the non-emptiness can be verified directly. 

As an application, we obtain the following result about the connected component of the stability manifold of $\Ku(X)$ containing the stability condition $\sigma$.

\begin{Cor}[Corollary \ref{cor:stabmfdofku}] \label{cor:stabmfdofku_intro}
Let $X$ be a Fano threefold with Picard rank $1$, index $1$, degree $\in \{10, 12, 14, 16, 18\}$ or index $2$, degree $\leq 4$. Then the connected component of the stability manifold $\Stab(\Ku(X))$ containing $\sigma$ is isomorphic to $\C\times \H$.     
\end{Cor}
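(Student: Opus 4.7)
The plan is to identify the connected component $\Stab^\dagger := \Stab^\dagger(\Ku(X))$ containing $\sigma$ with a single orbit of $\glt$, the universal cover of $\mathrm{GL}^+_2(\R)$, which is itself homeomorphic to $\C\times\H$ and acts freely on the stability manifold. For every Fano threefold in the statement, $\kn(\Ku(X))$ has rank $2$ (a classical computation via Hirzebruch--Riemann--Roch), so $\Stab(\Ku(X))$ is a real $4$-manifold of the same dimension as $\glt$, and the orbit $\glt\cdot\sigma$ is automatically open in $\Stab^\dagger$ by freeness together with equality of dimensions.

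I would then show the orbit is also closed in $\Stab^\dagger$, which combined with connectedness of $\Stab^\dagger$ forces equality. The cleanest packaging is via the forgetful map $\mathcal{Z}\colon \Stab^\dagger\to \Hom(\kn(\Ku(X)),\C)\cong \C^2$, which is a local homeomorphism by Bridgeland's deformation theorem. Its image lies in the open subset $\mathcal{P}^+$ of central charges satisfying the support-like positivity, and, in rank $2$, this domain is homeomorphic to $\C\times\H$ after a choice of basis of $\kn(\Ku(X))$. I would then establish that $\mathcal{Z}|_{\Stab^\dagger}$ is a covering map onto $\mathcal{P}^+$: surjectivity follows from exhibiting, for every $Z\in\mathcal{P}^+$, a $\glt$-translate of $\sigma$ with central charge $Z$, while the covering property follows from local finiteness of walls and the standard path-lifting argument. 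Since $\C\times\H$ is simply connected and $\Stab^\dagger$ is connected, the covering must be a homeomorphism, yielding $\Stab^\dagger\cong \C\times\H$.

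The main obstacle is the local finiteness of walls, which is what enables path lifting for the covering map. This is precisely where Theorem \ref{thm:existintro} enters in an essential way: knowing that $M_\sigma(\Ku(X),v)$ is non-empty for every $v\in\kn(\Ku(X))$, combined with the support property of $\sigma$, allows one to control the set of numerical classes whose walls intersect a prescribed compact subset of $\mathcal{P}^+$ and, crucially, rules out the pathological possibility that a chamber in $\Stab^\dagger$ terminates before its image in $\mathcal{P}^+$ reaches a wall. Once this finiteness is in hand, the simple-connectedness of $\C\times\H$ closes the argument.
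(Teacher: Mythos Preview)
Your overall strategy---identify $\Stab^\dagger$ with the single free $\glt$-orbit through $\sigma$---is the same as the paper's. But the covering-space packaging contains a concrete error and misplaces the role of Theorem~\ref{thm:existintro}.

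The error is the claim that $\mathcal P^+\cong\C\times\H$. In rank $2$ the set of non-degenerate, positively oriented central charges is the open $\GL_2^+(\R)$-orbit of $Z_\sigma$, hence biholomorphic to $\GL_2^+(\R)\cong\C^*\times\H$, with $\pi_1=\Z$. It is not simply connected, so the step ``since $\C\times\H$ is simply connected $\ldots$ the covering must be a homeomorphism'' fails: a connected cover of $\C^*\times\H$ can have any number of sheets, and what you must prove is precisely that $\Stab^\dagger$ is the \emph{universal} cover. (It is $\glt$, not $\mathcal P^+$, that is homeomorphic to $\C\times\H$.)

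You also misplace where non-emptiness enters. Local finiteness of walls is already a consequence of the support property and does not require Theorem~\ref{thm:existintro}. The genuine issue---which you do not address---is why $\mathcal Z(\Stab^\dagger)\subset\mathcal P^+$ at all, i.e.\ why no stability condition in the component has a degenerate central charge. This is exactly where the paper uses non-emptiness (Corollary~\ref{cor:stab}): the orbit $\sigma\cdot\glt$ is open by the dimension count; if it were not closed, a boundary point $\sigma'=(Z',\cP')$ would necessarily have $Z'$ degenerate, so either $\ker Z'$ contains a primitive $v$, or $0$ is an accumulation point of $Z'(\Kn)\setminus\{0\}$. Since every primitive $v$ supports a $\sigma$-semistable object (Theorem~\ref{thm:existintro}), and semistability persists to the closure of the orbit, there is a $\sigma'$-semistable object whose class violates the support property for $Z'$. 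This open--closed argument is both shorter than the covering route and uses non-emptiness at the decisive point.
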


The above result was known for cubic threefolds (index $2$, degree $3$), quartic double solids (index $2$, degree $2$) and Gushel--Mukai threefolds (index $1$, degree $10$) by \cite[Theorem 1.1]{zhiyu:contractKu}. 

\subsection{Moduli spaces and cubic threefolds}

Once the non-emptiness is settled, the next step is to investigate the geometric properties of the moduli spaces. Here we focus on the case of cubic threefolds. 

Let $Y_3$ be a cubic threefold, and $\Ku(Y_3)$ be its Kuznetsov component. The small dimensional moduli spaces on $\Ku(Y_3)$ are well understood \cite{FP, Arend:cubic3}: up to isomorphism, there exists a $2$-dimensional moduli space, isomorphic to the Fano surface of lines on $Y_3$, and a $4$-dimensional space, isomorphic to the blowup of the theta divisor at the unique singular point. More details on these two moduli spaces are reviewed in Section \ref{sec:mssmall}.

The focus of the second part of this paper is on the properties of higher dimensional moduli spaces. We show that they behave similarly to moduli spaces of vector bundles on a curve of genus $g\geq 2$. First we recall the following classical theorem:
\begin{Thm}[\cite{ramanan1973moduli, DrezetNarasimhan:modulioncurves, KingSchofield:rationality}]
    Let $C$ be a smooth projective curve of genus $g\geq 2$, and let $v=(r,d)$ be a primitive character with $r\geq 2$. Then we have the following statements.
    \begin{enumerate}[(1)]
        \item The moduli space $M(v)$ of slope stable vector bundles of class $v$ is a smooth irreducible projective variety of the expected dimension $1-\chi(v,v)$.
        \item The determinant map $\det\colon M(v)\to \Pic^d(C)$ is isotrivial. The fiber $M(r,\cL) $, parametrizing rank $r$ stable vector bundles with fixed determinant $\cL$, is a smooth Fano variety of Picard rank $1$ and index $2$.
        \item The fiber $M(r,\cL) $ is rational.
    \end{enumerate}
\end{Thm}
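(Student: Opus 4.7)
The plan is to prove the three parts in order, each relying on standard but substantial machinery from the geometry of moduli of vector bundles on curves. For part (1), I would handle smoothness and dimension via deformation theory: at a stable bundle $E$, the Zariski tangent space of $M(v)$ is $\Ext^1(E,E)$, and the obstruction space is $\Ext^2(E,E)$, which vanishes because $C$ is a curve. Stability gives $\hom(E,E)=1$, and Riemann--Roch then yields $\dim\Ext^1(E,E)=1-\chi(E,E)=1+r^2(g-1)$, matching the expected dimension $1-\chi(v,v)$. Projectivity follows from Seshadri's GIT construction, which realizes $M(v)$ as a GIT quotient of an open subset of a Quot scheme parametrizing quotients of $\cO_C(-N)^{\oplus P}$; this parameter space is smooth and irreducible for $N\gg 0$, and primitivity of $v$ ensures that semistability coincides with stability, so smoothness and irreducibility descend to the quotient. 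An alternative route to irreducibility is the Narasimhan--Seshadri correspondence, which identifies $M(v)$ with a moduli of irreducible projective unitary representations of $\pi_1(C)$, whose topological connectedness is classical.

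For part (2), the isotriviality of $\det$ follows from tensor-product symmetries. Given $\cL_1,\cL_2\in\Pic^d(C)$, the degree-$0$ line bundle $\cL_2\otimes\cL_1^{-1}$ admits an $r$-th root $\cM$ after passage to an étale cover of $\Pic^d(C)$, namely via the multiplication-by-$r$ isogeny on $\Pic^0(C)$ (which is étale and surjective); tensoring by $\cM$ then induces an isomorphism $M(r,\cL_1)\to M(r,\cL_2)$, exhibiting the fibers of $\det$ as an étale-locally trivial family. The assertions that $M(r,\cL)$ has Picard rank $1$, generated by the (generalized) theta line bundle $\Theta$, and that $K_{M(r,\cL)}=-2\Theta$, constitute the main theorem of Drezet--Narasimhan in \cite{DrezetNarasimhan:modulioncurves}; their proof uses a universal bundle on $C\times M(r,\cL)$, a Leray spectral sequence computation of $\Pic(M(r,\cL))$, and Grothendieck--Riemann--Roch to identify the canonical bundle. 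The Fano index $2$ is then immediate.

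The main obstacle is part (3), the rationality of $M(r,\cL)$, which is the content of King--Schofield \cite{KingSchofield:rationality}. The strategy is an induction on the rank via Hecke correspondences: elementary modifications at a point $p\in C$ produce birational equivalences between moduli spaces of different numerical types, and a Euclidean-algorithm reduction on the pair $(r,d)$ eventually relates $M(r,\cL)$ birationally to a Brauer--Severi variety over a lower-rank moduli space, iterating until one reaches a base case (rank $1$, or a small explicit case) admitting a manifestly rational model. The technical heart of the argument is to control the Brauer class of the universal Hecke correspondence and show that it vanishes, so that the birational fibration is actually a Zariski-local projective bundle and rationality transports from the base to the total space; this is considerably more delicate than the arguments in (1) and (2) and is where the bulk of the work in \cite{KingSchofield:rationality} lies.
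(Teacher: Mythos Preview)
The paper does not prove this theorem at all: it is stated in the introduction purely as a classical result, with proofs attributed to the cited references \cite{ramanan1973moduli, DrezetNarasimhan:modulioncurves, KingSchofield:rationality}, and serves only as motivation and a point of comparison for the paper's own Theorem~\ref{thm:Y3moduli} on cubic threefolds. So there is no ``paper's own proof'' to compare your proposal against.

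That said, your outline is a faithful summary of the classical arguments in those references. Part~(1) is standard deformation theory plus GIT; your description of part~(2) correctly identifies both the étale-local tensor-product trick for isotriviality and the Drezet--Narasimhan computation of the Picard group and canonical class; and your sketch of part~(3) captures the inductive Hecke-correspondence strategy of King--Schofield, including the key difficulty of controlling the Brauer obstruction. If anything, one could note that the paper itself later remarks (in the paragraph following Theorem~\ref{thm:Y3moduli}) that the specific mechanism of \cite[Proposition~2.3]{KingSchofield:rationality} underlying the rationality induction does \emph{not} carry over to the $\Ku(Y_3)$ setting, which is precisely why the analogous rationality statement for cubic threefolds remains open.
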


Recall that the intermediate Jacobian of a cubic threefold $Y_3$ is defined by \[J(Y_3):= \frac{H^{2,1}(Y_3)^*}{H_3(Y_3,\mathbb Z)}.\] This is naturally an abelian fivefold, and plays a central role in the beautiful classic result of Clemens and Griffiths on the irrationality of cubic threefolds \cite{CG:RationalityCubics}.

For every primitive $v\in \Kn(\Ku(Y_3))$, set $M_\sigma(v):=M_\sigma(\Ku(Y_3),v)$ for simplicity. After choosing a base point $F_0$, we can define an \AJ map by the (cycle-theoretic) second Chern class:
\begin{align*}
    \IJ\colon M_\sigma(v)\to J(Y_3):
       F\mapsto c_2(F)-c_2(F_0)
\end{align*}
(see Section \ref{sec:ajmap} for more details and the construction in the case of non-primitive numerical class). The map $\IJ$ is a morphism of varieties. For every $c\in J(Y_3)$, we denote by $M_\sigma(v,c):=\IJ^{-1}(c)$ the fiber of the \AJ map. The main results of this paper can be summarized as follows.

\begin{Thm}\label{thm:Y3moduli}
    Let $Y_3$ be a cubic threefold, $\Ku(Y_3)$ be its Kuznetsov component, and let $v\in\Kn(\Ku(Y_3))$ be a primitive character. Then we have the following statements.
    \begin{enumerate}[(1)]
        \item (Corollary \ref{cor:cub3smoothmoduli}) The moduli space $M_\sigma(v)$ is smooth irreducible projective of the expected dimension $1-\chi(v,v)$.
        \item (Theorem \ref{thm:connectedfiber} and \ref{thm:fanofiber}) If $\dim M_\sigma(v)> 5$, then the \AJ map $\IJ\colon M_\sigma(v)\to J(Y_3)$ is surjective with connected fibers. For a general point $c\in J(Y_3)$, the fiber $M_\sigma(v,c)$ is a smooth Fano variety with primitive canonical divisor class.
        \item (Theorem \ref{thm:connectedfiber}) Assume that $v,w$ are primitive characters such that $\dim M_\sigma(v)\geq 23$ and $\dim M_\sigma(w)\geq 23$. Then for general points $c,c'\in J(Y_3)$, the fibers $M_\sigma(v,c)$ and $M_\sigma(w,c')$ are stably birational equivalent.
    \end{enumerate}
\end{Thm}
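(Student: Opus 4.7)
The plan is to leverage the deep analogy with vector bundles on a higher genus curve, using the inductive non-emptiness framework of Theorem \ref{thm:existintro} to reduce properties of high-dimensional $M_\sigma(v)$ to the two explicitly described low-dimensional moduli spaces (the Fano surface of lines and the blowup of the theta divisor, Section \ref{sec:mssmall}). The fractional Serre functor of $\Ku(Y_3)$, which satisfies $S^3 \simeq [5]$, will be used repeatedly to control higher $\Ext$ groups, playing here the role that the canonical bundle plays for moduli on curves.

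For part (1), smoothness at a $\sigma$-stable $E$ reduces to showing $\Ext^2(E,E)=0$. I would derive this from $\sigma$-stability together with the fractional Calabi--Yau property: Serre duality identifies $\Ext^2(E,E)$ with a shifted $\Ext$ whose nonvanishing is incompatible with the phases of $\sigma$-stable objects. This forces $\dim M_\sigma(v) = 1-\chi(v,v)$. For irreducibility, the strategy is to show that every $\sigma$-stable $E$ can be connected, via a finite sequence of spherical twists along exceptional objects (which act on the stability component of Corollary \ref{cor:stabmfdofku_intro}) and wall-crossings, to a reference object lying in a known irreducible locus built from the low-dimensional base cases. Projectivity then follows from the properness of the good moduli space from \cite{liurendapaper} combined with the absence of strictly semistables for primitive $v$.

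For part (2), I would first establish surjectivity of $\IJ$ by a differential computation at a general $[E]$: the tangent map is determined by a second-Chern-class map on $\Ext^1(E,E)$, and its rank can be bounded below by degenerating $E$ to a configuration of line-supported objects whose second Chern classes are known, by the classical Clemens--Griffiths analysis, to generate $J(Y_3)$. Connectedness of the fibers then follows from Stein factorization together with simple connectedness of the abelian variety $J(Y_3)$: the intermediate finite factor must be étale, hence trivial once a section is produced. The Fano property of the general fiber $M_\sigma(v,c)$ is obtained by computing $K_{M_\sigma(v,c)}$ via a determinantal formula applied to the restriction of a (twisted) universal family, and testing ampleness of $-K$ through a Positivity Lemma attached to $\sigma$; primitivity of the canonical class requires a Picard rank one statement on the fiber, which I would extract from restriction of line bundles on $M_\sigma(v)$ using the high relative dimension of $\IJ$.

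Part (3) is where I anticipate the main obstacle. The threshold $\dim M_\sigma \geq 23$ suggests that in this range one can exhibit an explicit rational fibration of $M_\sigma(v,c)$ whose base is a lower dimensional moduli space independent of $v$ and whose generic fiber is a projective space, for instance by parametrizing extensions of a fixed stable object by a family of auxiliary stable objects of controlled class. Verifying that the passage from $v$ to $w$ can be realized by modifying only the projective bundle factor, while keeping the base common, is the delicate point: one must identify numerical conditions on $v, w$ guaranteeing that such an extension structure exists, check that $\sigma$-stability is generically preserved on the extensions, and finally confirm that the whole construction descends to the Abel--Jacobi fibers rather than only to the total moduli. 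The combinatorial and stability-theoretic bookkeeping inside $\Ku(Y_3)$ required for this descent is, in my view, the most intricate part of the entire theorem.
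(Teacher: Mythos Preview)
Your proposal captures the broad shape but misses the central technical engine and contains a few concrete errors.

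For Part (1), spherical twists along exceptional objects cannot work: the Euler form on $\Kn(\Ku(Y_3))$ is negative definite, so $\Ku(Y_3)$ contains no exceptional objects at all. The paper instead gets irreducibility from the standard Mukai trick (smoothness plus $\Ext^2$-vanishing plus non-emptiness), and projectivity from the Bayer--Macr\`i divisor together with a Moishezon-to-projective criterion.

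For Part (2), your Stein-factorization argument for connected fibers has a gap: abelian varieties admit many nontrivial finite \'etale covers, so \'etaleness of the finite factor over $J(Y_3)$ does not make it an isomorphism, and you have not produced a section. The paper's route is entirely different and inductive. The key device you are missing is the \emph{stratification theorem}: every object of $\ms(n\alpha+m\beta)$ lies in some extension locus $\ET(v_1,v_2)$, and in fact $\overline{\ET(n\alpha,m\beta)}=\ms(n\alpha+m\beta)$. This is proved by relating the Brill--Noether locus $\BN(F,v)$ to a union of extension loci (Proposition \ref{prop:imofprimpliesext}) and bounding the dimensions of the latter. Surjectivity and fiber-connectedness of $\IJ_v$ then follow by induction on the character, since the dominant generically-finite extension map $e_{v_1,v_2}$ is compatible with the addition map $J_{v_1}(Y_3)\times J_{v_2}(Y_3)\to J_v(Y_3)$.

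For the Fano property, the paper does not prove Picard rank one on the fiber. It shows instead, via Beauville's diagonal trick and a Betti-number count ($b_2(\ms(v))=46$), that the \emph{relative} N\'eron--Severi rank of $\IJ_v$ is one, so $\omega_{\ms(v,c)}$ is proportional to the restriction of an ample class. Antiampleness is then decided by computing $\omega$ restricted to the explicit $\P^{r-1}\cong\ET(E_1,E_2)$ of extensions of two fixed stable objects with characters $v_\pm$: a quiver-model calculation gives degree $\chi(v_-,v_+)-\chi(v_+,v_-)=-1$, which simultaneously proves primitivity.

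For Part (3), your picture of a single common base is not how the argument runs. The paper constructs explicit birational maps
\[
e^R_{E[-1],w}\colon\Pe{E[-1]}{w}\dashrightarrow\Pe{w'}{E}
\]
between projectivized relative $\Ext^1$-spaces, for $E$ stable of character $\gamma$, $s\gamma$, $\beta+\gamma$, or $\beta+2\gamma$. Since $w,w'$ are primitive these are genuine projective bundles, yielding $\ms(w)\times\P^r$ birational to $\ms(w')\times\P^{r'}$. A combinatorial lemma on the lattice shows that any two primitive characters with $m+n\geq 6$ (equivalently $\dim\geq 23$) are linked by a finite chain of such moves, and compatibility with $\IJ$ (Remark \ref{rem:compatibleextandajmap}) descends the chain to general fibers. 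The real work in each step is verifying that $e^R$ is well-defined generically, which again comes down to codimension estimates on Brill--Noether and extension loci via Proposition \ref{prop:imofprimpliesext}.
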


Note that in Corollary \ref{cor:msvirreducible} we show in fact the irreducibility of $M_\sigma(v)$ for every (not necessarily primitive) character $v \in\Kn(\Ku(Y_3))$. As a consequence of Theorem \ref{thm:Y3moduli} we obtain the following statement.

\begin{Cor}[Corollary \ref{Cor:mrcquotient}]
Let $Y_3$ be cubic threefold and $\Ku(Y_3)$ be its Kuznetsov component. Then for every primitive $v\in\Kn(\Ku(Y_3))$ with $\chi(v,v)\leq -4$, the maximal rationally connected quotient of $M_\sigma(v)$ is the intermediate Jacobian $J(Y_3)$.
\end{Cor}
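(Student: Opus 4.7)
My plan is to derive the corollary directly from Theorem~\ref{thm:Y3moduli} combined with two classical inputs: smooth Fano varieties are rationally connected (by Koll\'ar--Miyaoka--Mori and Campana), and abelian varieties are not uniruled.

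By part~(1) of Theorem~\ref{thm:Y3moduli}, $M_\sigma(v)$ is smooth, projective, and irreducible of dimension $1-\chi(v,v)\geq 5$. When $\chi(v,v)\leq -5$, part~(2) provides a surjective Abel--Jacobi morphism $\IJ\colon M_\sigma(v)\to J(Y_3)$ with connected fibers whose general fiber is a smooth Fano variety, hence rationally connected. The target $J(Y_3)$ is an abelian fivefold and therefore not uniruled.

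To conclude, I would invoke the characterization of the MRC fibration: up to birational equivalence, the MRC fibration $\pi\colon M_\sigma(v)\dashrightarrow Z$ is the unique dominant rational map with rationally connected general fibers and non-uniruled base. Since fibers of $\IJ$ are rationally connected, they are contained in fibers of $\pi$, so $\IJ$ factors as $g\circ\pi$ for a rational map $g\colon Z\dashrightarrow J(Y_3)$. The fibers of $g$ are surjective images of rationally connected varieties, hence rationally connected; since $Z$ is non-uniruled these fibers must be zero-dimensional, and connectedness of the fibers of $\IJ$ then forces $g$ to be birational. This identifies $J(Y_3)$ as the MRC quotient of $M_\sigma(v)$.

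The boundary case $\chi(v,v)=-4$ is degenerate: $\dim M_\sigma(v)=\dim J(Y_3)=5$, and the surjectivity plus connected-fiber properties of $\IJ$ already force it to be birational, so $M_\sigma(v)$ and $J(Y_3)$ are directly birational and the conclusion is immediate. The only non-formal step is invoking the Graber--Harris--Starr theorem to guarantee that the base of the MRC fibration is non-uniruled; once that is in hand, the corollary is essentially formal.
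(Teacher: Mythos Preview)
Your proposal is correct and follows essentially the same approach as the paper: the paper's proof simply cites Theorem~\ref{thm:fanofiber}, the fact that smooth Fano varieties over $\C$ are rationally connected, and that rational connectivity is open and closed in smooth proper families, leaving the MRC factorization argument implicit, whereas you spell it out and invoke Graber--Harris--Starr explicitly. One minor citation issue: for the boundary case $\chi(v,v)=-4$ you assert surjectivity and connected fibers of $\IJ$, but Theorem~\ref{thm:Y3moduli}(2) as stated in the introduction requires $\dim M_\sigma(v)>5$; you should instead cite Theorem~\ref{thm:connectedfiber} directly, which is stated for $\chi(v,v)\leq -4$.
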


Here we briefly explain the difference among the curve case and that of cubic threefolds. In the classical case of curves, the property of being Fano for $M(r,\cL)$ follows from the unirationality and the Picard rank one. However, for cubic threefolds, we do not yet know how to prove these two properties for $M_\sigma(v,c)$. Our argument to show the Fano statement involves observations on the moduli space from different perspectives. Firstly, by Beauville's diagonal trick, as in the curve case, the cohomological ring of $M_\sigma(v)$ is generated by the tautological classes. It follows that the image of $H^2(M_\sigma(v),\Z)\to H^2(M_\sigma(v,c),\Z)$ has rank at most $1$. When $\dim M_\sigma(v)>5$, we can prove that the \AJ map is surjective, and all divisors on $M_\sigma(v,c)$ that are restricted from $M_\sigma(v)$ are proportional to each other. In particular, the canonical divisor $\omega_{M_\sigma(v,c)}$ is proportional to the restriction of an ample divisor. Finally, the (anti)ampleness of $\omega_{M_\sigma(v,c)}$ is determined by its degree on a curve in $M_\sigma(v,c)$. To do this, we choose a curve $C\cong\P^1$ parametrizing objects which are extensions of two objects $E_1$ and $E_2$, and show that the restriction $\omega_{M_\sigma(v,c)}|_C$ only relies on the $\Ext^1(E_i,E_j)$'s. This reduces the computation to a quiver model and concludes that $\omega_{M_\sigma(v,c)}|_C$ is of degree $-1$.

The rationality of $M(r,\cL)$ \cite[Theorem 1.2]{KingSchofield:rationality} was proved by building up rational dominant maps $\lambda_F\colon M(r,\cL)\dashrightarrow M(r_1,\cL_1)$ for $r_1<r$ and studying carefully the Brauer class. However, the statement \cite[Proposition 2.3]{KingSchofield:rationality}, which is essential for the construction of $\lambda_F$, does not hold in the $\Ku(Y_3)$ case. This is technically due to the difference between Euler pairings on $\Kn$ in these two cases. Also, in our case, it is known that the general fiber $M_\sigma(2\alpha+\beta,c)$ is birational to $Y_3$, which is irrational (see Example \ref{ex:quartics}).

\subsection{Brill--Noether loci and extension loci} \label{subsec:BNlociandEloci}
To explain the strategy of the proof of Theorem \ref{thm:Y3moduli}, we need to introduce the following key concepts in this paper. 

For a numerical character $v\in\Kn(\Ku(Y_3))$, while the smoothness and irreducibility of the whole moduli space $M_\sigma(v)$ in Theorem \ref{thm:Y3moduli}.(1) follows from the Mukai trick, the irreducibility of the general fiber $M_\sigma(v,c)$ resists classical approaches. Our strategy is to show that there exists $M_\sigma(v_1)$ and $M_\sigma(v_2)$ with strictly smaller dimensions such that every general object in $M_\sigma(v)$ is the extension of two objects in $M_\sigma(v_1)$ and $M_\sigma(v_2)$. This induces a rational map from the relative $\Ext^1$ space over $M_\sigma(v_1) \times M_\sigma(v_2)$ to $M_\sigma(v)$, which is compatible with the \AJ map. The irreducibility of $M_\sigma(v,c)$ then follows by induction.

To achieve the above strategy, the key point is to understand the locus of objects in $M_\sigma(v)$ that are extended by two `smaller' stable pieces. More precisely, for two numerical characters $v_i\in\Kn(\Ku(Y_3))$, we define the extension locus:
\[\ET(v_1,v_2):=\left\{E\in M^s_\sigma(v_1+v_2)\;\middle|\;\begin{aligned}\exists&\text{ distinguished}
\text{ triangle } \\& E_1\to E\to E_2\xrightarrow{+} \\ &\text{for some } E_i\in M^s_\sigma(v_i)
\end{aligned} \right\}\subset M^s_\sigma(v_1+v_2).\]
For a given $F\in\ms(v_1)$, we denote by $\ET(F,v_2)$ the sublocus of $\ET(v_1,v_2)$ where the first factor is fixed as $F$.

This is closely related to the Brill--Noether locus defined as:
\begin{align*}
    \BN(F,v):=\{E_v\in\ms(v)\;|\;  \Hom(F,E_v)\neq 0\}\subset \ms(v).
\end{align*}
 When $v_1+v_2=v$, and $F\in\ms(v_1)$, we have the following key formula on the relation between these two loci:
\begin{align} \label{eq:relation}
    \ET(F,v_2)\subseteq\BN(F,v)\subseteq \ET(F,v_2)\cup \left(\scalebox{1.5}{\ensuremath{\cup}}_{v'\in\triangle^*(v_1,v)}\ET(v',v-v')\right).
\end{align}
Here $\triangle^*(v_1,v)$ stands for all characters in the triangle spanned by $v_1$ and $v$, see Notation \ref{not:triangleandpara} and Proposition \ref{prop:imofprimpliesext} for the precise definition and proof. In practice, we can usually control the dimensions of the remaining terms $\ET(v',v-v')$'s so that they are strictly smaller than that of $\BN(F,v)$. Given this technical interpretation between the extension locus and Brill--Noether locus, we may show a stratification theorem for the moduli spaces. 

 To state the result, we consider the characters $\beta:=[\cI_\ell]=(1,0,-L,0)$, where $\ell$ is a line in $Y_3$, and $\alpha:=3[\cO_{Y_3}]-[\cI_\ell(H)]$, which form a basis for $\Kn(\Ku(Y_3))$. By \cite[Proposition 5.7]{PY}, the Serre functor preserves the $\sigma$-stability of the objects, and up to the Serre functor, every non-zero character is of the form $n\alpha+m\beta$ for some $n\in\Z_{\geq 1}$ and $m\in\Z_{\geq 0}$.

\begin{Thm}[Theorem \ref{thm:stratcubic3} and Proposition \ref{prop:stratofmbeta}]\label{thm:introstrat}
    Let $Y_3$ be a cubic threefold and $\Ku(Y_3)$ be its Kuznetsov component. Then for every  $m,n\in\Z_{>0}$, we have
    \begin{align}
        \ms(n\alpha+m\beta) & =\scalebox{1.5}{\ensuremath{\cup}}_{0\leq i\leq n,0\leq j\leq m,\tfrac{j}{i}<\tfrac{m}n}\ET(i\alpha+j\beta,(n-i)\alpha+(m-j)\beta), \label{eq:11}\\
        \ms(m\beta) & =\ET(\alpha,m\beta-\alpha).\nonumber
    \end{align}
    Furthermore, we have $\overline{\ET(n\alpha,m\beta)}= \ms(n\alpha+m\beta)$ and the other terms in \eqref{eq:11} are of strictly smaller dimensions.
\end{Thm}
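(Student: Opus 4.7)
The plan is to proceed by induction on $n+m$, combining the Brill--Noether relation \eqref{eq:relation} from Proposition \ref{prop:imofprimpliesext} with a dimension count on extension loci. The inclusion $\supseteq$ in both equalities is tautological from the definition of $\ET$, so the content lies in the reverse inclusions, the density of $\ET(n\alpha, m\beta)$, and the strict dimension drop for the other terms. Base cases with small $n+m$ would be handled using the explicit descriptions of Section \ref{sec:mssmall}.

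First I would carry out the dimension bookkeeping. For any splitting $v = v_1 + v_2$ into classes of stable objects, the extension locus satisfies the general bound
$$\dim \ET(v_1, v_2) \leq \dim M^s_\sigma(v_1) + \dim M^s_\sigma(v_2) + \ext^1(v_2, v_1) - 1.$$
Using Serre duality in the fractional Calabi--Yau category $\Ku(Y_3)$ together with the expected generic vanishings $\hom(v_2, v_1) = 0 = \ext^{\geq 2}(v_2, v_1)$ for stable objects of non-proportional classes, one replaces $\ext^1(v_2, v_1)$ with $-\chi(v_2, v_1)$, and the right-hand side simplifies to $\dim M^s_\sigma(v) + \chi(v_1, v_2)$. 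A direct computation with the Euler form on $\Kn(\Ku(Y_3))$ in the $(\alpha, \beta)$ basis yields $\chi(n\alpha, m\beta) = 0$, while for any other $v_1 = i\alpha + j\beta$ satisfying $j/i < m/n$ and $(i,j) \neq (n, 0)$, the same computation gives $\chi(v_1, v - v_1) = -(n-i)(i+j) - j(m-j) < 0$. Combined with the irreducibility of $M^s_\sigma(v)$ from Corollary \ref{cor:cub3smoothmoduli}, this yields both $\overline{\ET(n\alpha, m\beta)} = M^s_\sigma(v)$ and the strict dimension drop for the other strata.

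For the inclusion $\subseteq$ in \eqref{eq:11}, take any stable $E \in M^s_\sigma(v)$ and apply \eqref{eq:relation} with $v_1 = n\alpha$. If there exists $F \in M^s_\sigma(n\alpha)$ with $\Hom(F, E) \neq 0$, then
$$E \in \BN(F, v) \subseteq \ET(F, m\beta) \cup \bigcup_{v' \in \triangle^*(n\alpha, v)} \ET(v', v - v'),$$
and by Notation \ref{not:triangleandpara} the set $\triangle^*(n\alpha, v)$ is exactly the collection of classes $i\alpha + j\beta$ with $0 \leq i \leq n$, $0 \leq j \leq m$, $j/i < m/n$, matching the union in \eqref{eq:11}. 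Existence of such $F$ is a sweep argument: $M^s_\sigma(n\alpha)$ is non-empty by Theorem \ref{thm:existintro}, and $\ext^1(m\beta, n\alpha) \geq mn > 0$ together with the irreducibility of $M^s_\sigma(v)$ forces $\bigcup_F \BN(F, v) = M^s_\sigma(v)$. The second equality $\ms(m\beta) = \ET(\alpha, m\beta - \alpha)$ is the degenerate case $v = m\beta$, $v_1 = \alpha$: the triangle $\triangle^*(\alpha, m\beta)$ contains no intermediate lattice classes, so \eqref{eq:relation} collapses directly to $\BN(F, m\beta) = \ET(F, m\beta - \alpha)$.

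The principal obstacle I expect is verifying that generic extensions in $\ET(v_1, v_2)$ are themselves $\sigma$-stable, so that the extension loci sit genuinely inside $M^s_\sigma(v)$ rather than only in the strictly semistable locus. This should be forced by the slope inequality $\mu_\sigma(v_1) < \mu_\sigma(v) < \mu_\sigma(v_2)$ (which is precisely the condition $j/i < m/n$) together with the vanishing $\Hom(v_2, v_1) = 0$ and a careful chamber analysis of $\Stab(\Ku(Y_3))$; the non-emptiness inputs of Theorem \ref{thm:existintro} then feed into the sweep. A secondary technical point is justifying the generic vanishings of $\hom$ and $\ext^{\geq 2}$ between stable objects of non-proportional classes, which follows from the fractional Calabi--Yau structure $S^3 \cong [5]$ of $\Ku(Y_3)$ and the generic non-proportionality of classes under the action of $S$ on $\Kn(\Ku(Y_3))$.
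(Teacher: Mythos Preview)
Your dimension counts are correct and match the paper's Lemma \ref{lem:extspaceofsmallgapvw}, and you correctly identify Proposition \ref{prop:imofprimpliesext} as the engine relating Brill--Noether loci to extension loci. However, the argument has a circular step that is precisely the heart of the matter.

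You claim a ``sweep argument'': that $\bigcup_{F \in \ms(n\alpha)} \BN(F, v) = \ms(v)$, justified by non-emptiness of $\ms(n\alpha)$, the inequality $\ext^1(m\beta, n\alpha) > 0$, and irreducibility. But none of these inputs force the Brill--Noether sweep to cover $\ms(v)$; that covering is equivalent to saying every stable object of class $v$ admits a stable subobject of class $n\alpha$ in the heart, which is essentially what \eqref{eq:11} asserts. The paper runs the logic in reverse: first it proves $e_{n\alpha, m\beta}$ is dominant (hence $\overline{\ET(n\alpha, m\beta)} = \ms(v)$), and only then does Proposition \ref{prop:imofprimpliesext} yield the full stratification. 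The dominance is established by induction on $n$: the base case extends by a single $\alpha$ (Lemma \ref{lem:extbyonealpha}), and the inductive step constructs an explicit stable extension from $E_{(n+1)\alpha}$ and $E_{m\beta}$ by taking a general point in $\ET(n\alpha, m\beta)$, adjoining one more $E'_\alpha$ via the octahedral axiom, and then deforming the strictly semistable $E_{n\alpha} \oplus E'_\alpha$ to a stable object in $\ms((n+1)\alpha)$ (Lemmas \ref{lem:openextobjKuY}, \ref{lem:openextobjstable}). This explicit construction, not a chamber analysis, is what verifies stability of the general extension; the slope inequality $\mu_\sigma(v_1) < \mu_\sigma(v_2)$ alone is far from sufficient.

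Two further issues. First, you invoke irreducibility of $\ms(v)$ via Corollary \ref{cor:cub3smoothmoduli}, but that result is only for primitive $v$; the non-primitive case (Corollary \ref{cor:msvirreducible}) is proved in the paper using Proposition \ref{prop:stabbir1} and Theorem \ref{thm:irred_ulrich}, so you cannot assume it freely. Second, for $\ms(m\beta) = \ET(\alpha, m\beta - \alpha)$, the triangle $\triangle^*(\alpha, m\beta)$ is indeed empty, but you still need $\overline{\BNP_{m\beta}(\BN(\alpha, m\beta))} = \ms(m\beta)$, and here $\chi(\alpha,m\beta)=0$ gives no leverage. The paper obtains this by a separate trick (Proposition \ref{prop:stratofmbeta}): since $e_{\alpha, m\beta}$ is dominant and generically finite but \emph{not} birational (ruled out by comparing first Betti numbers), a general fiber has at least two points $(E_\alpha, E_{m\beta}, f)$ and $(E'_\alpha, E'_{m\beta}, f')$, and comparing them forces $E'_\alpha \not\cong E_\alpha$ and $\Hom(E'_\alpha, E_{m\beta}) \neq 0$.
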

Our proof of the stably birationality of the fibers is to directly build up birational maps between different $\Pe E v$'s, where  $\Pe E v$ is the projectivization of relative $\Ext^1(E,-)$ space over $\ms(v)$, see Section \ref{sec4.2} for details on the definition. When $v$ is primitive, the space $\Pe Ev$ is birational to $\ms(v)\times \P^r$ for some $r\in\Z_{\geq0}$. Birational maps between $\Pe Ev$'s are constructed in Propositions \ref{prop:stabbir1}, \ref{prop:stabbir2}, \ref{prop:stabbir3}, and \ref{prop:stabbir4}. These constructions also imply stably birationality (resp.\ birationality) between moduli spaces as in Corollary \ref{cor:stabbir}.

If $\Ext^1$ has dimension $1$, we can actually prove that $\ms(v,c)$ is birational to $\ms(v,c')$ for some $v$. Such examples are given in Proposition \ref{prop:birfiber}.  

An interesting and mysterious part for us is that when the dimension of $\ms(v)$ is not large enough. More precisely, when $v=n\alpha+m\beta$ with $m,n>0,m+n\leq 5$, using this approach, it seems difficult to get a birational map from $\Pe Ev$ to another $\Pe E{v'}$, where $v'$ is with smaller $m'+n'$ (see Figure \ref{fig:bir} for reference). This explains the dimension bound $23$ in Theorem \ref{thm:Y3moduli}.(3).  

\subsection{Applications}

 One application of Theorem~\ref{thm:introstrat} is the following result, which answers to \cite[Remark 4.10]{heart}.

\begin{Prop}[Proposition \ref{prop:dbofheart}, Corollary \ref{cor:unique_dg}] \label{prop:dbofheart_intro}
Let $Y_3$ be a cubic threefold, $\Ku(Y_3)$ be its Kuznetsov component, and $\cA$ be the heart of the stability condition $\sigma$. Then $\Db(\cA)$ is equivalent to $\Ku(Y_3)$. The Kuznetsov component $\Ku(Y_3)$ has a strongly unique dg enhancement, and every equivalence $\Ku(Y_3) \to \Ku(Y_3')$ for another cubic threefold $Y_3'$ is of Fourier--Mukai type.
\end{Prop}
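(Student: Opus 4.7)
The plan is to attack the three assertions in sequence, exploiting the stratification theorem (Theorem \ref{thm:introstrat}) to control the heart explicitly, and then feeding the equivalence into standard representability/uniqueness machinery.

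For the equivalence $\Db(\cA)\simeq\Ku(Y_3)$, I would start by showing that the canonical realization functor $\rho\colon \Db(\cA)\to \Ku(Y_3)$ is exact, fully faithful and essentially surjective. Since $\sigma$ is the BLMS stability condition, its heart $\cA$ is noetherian of finite length. The first task is to identify the simple objects: by openness of stability and the classification of spherical/stable objects in $\Ku(Y_3)$, the simples of $\cA$ are (up to shift) the $\sigma$-stable objects whose class lies in the ``bottom'' of the lattice, i.e.\ in the classes $\alpha$ and $\beta$ and their Serre translates. Full faithfulness of $\rho$ is equivalent to the agreement $\Ext^i_\cA(E,F)\xrightarrow{\sim}\Ext^i_{\Ku(Y_3)}(E,F)$ for all $E,F\in\cA$; the cases $i=0,1$ are automatic, so the core of the argument is the higher-$\Ext$ comparison. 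To handle it I would reduce to simple objects via the finite Jordan--Hölder filtrations supplied by noetherianity and then use Theorem \ref{thm:introstrat} to represent every class in $\Ext^{\geq 2}_{\Ku}$ between simples by a Yoneda extension inside $\cA$, controlling the process through the extension loci $\ET(v_1,v_2)$ of Section \ref{subsec:BNlociandEloci}. Essential surjectivity follows by showing that every $\sigma$-semistable object of $\Ku(Y_3)$ (and hence, after HN-filtration, every object) lies in the image of $\rho$; again this is exactly the content of Theorem \ref{thm:introstrat} combined with the non-emptiness result Theorem \ref{thm:existintro} for the relevant classes.

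For the strongly unique dg enhancement, having $\Db(\cA)\simeq\Ku(Y_3)$ with $\cA$ a noetherian abelian category of finite length, I would pass to the Ind-completion $\mathrm{Ind}(\cA)$, which is a Grothendieck abelian category with a compact generator given by a finite direct sum of simples of $\cA$. The theorem of Lunts--Orlov on uniqueness of dg enhancements for derived categories of Grothendieck abelian categories, in the sharpened form of Canonaco--Stellari for admissible subcategories of $\Db$ of smooth projective varieties, then applies and yields the strongly unique enhancement of $\Ku(Y_3)$.

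For the Fourier--Mukai statement, given an exact equivalence $\Phi\colon\Ku(Y_3)\to\Ku(Y_3')$, I would lift it to a quasi-equivalence between the (now unique) dg enhancements and then extend through the admissible embeddings $\Ku(Y_3)\hookrightarrow \Db(Y_3)$ and $\Ku(Y_3')\hookrightarrow \Db(Y_3')$ by composing with the dg projection onto the Kuznetsov components. Applying the Canonaco--Stellari representability theorem (which upgrades a dg quasi-functor between admissible subcategories of $\Db$ of smooth projective varieties, under the uniqueness-of-enhancement hypothesis just established, to a Fourier--Mukai functor) produces an object on $Y_3\times Y_3'$ representing $\Phi$, concluding the proof.

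The main obstacle will be the higher-$\Ext$ comparison for $\rho$: because $\Ku(Y_3)$ is fractional Calabi--Yau of dimension $5/3$ with Serre functor satisfying $S^3\simeq[5]$, $\Ext^i_{\Ku}$ groups are typically nonzero in degrees far beyond the global dimension of $\cA$, so one cannot appeal to a cheap gldim argument. The non-trivial content is that every such class is nevertheless Yoneda-effaceable in $\cA$; this is where the explicit stratification via extension loci, together with the smoothness and irreducibility of the moduli spaces $\ms(v)$, is indispensable.
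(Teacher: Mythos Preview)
Your identification of the simple objects of $\cA$ is wrong, and this undermines the entire first part. The simple objects of the heart $\cA=\cP_\sigma((0,1])$ are exactly the $\sigma$-stable objects lying in $\cA$, and by Theorem~\ref{thm:existintro} these exist for \emph{every} nonzero class in $\Kn(\Ku(Y_3))$, not just $\alpha$, $\beta$ and their Serre translates. So $\cA$ is finite length, but with uncountably many simples organized in two-parameter families $\ms(v)$ for each primitive $v$. Reducing the $\Ext$-comparison to a finite list of simples is therefore not available; you would have to control $\Ext^2_{\cA}(E,F)\to\Hom_{\Ku}(E,F[2])$ uniformly over all pairs $(E,F)$ of stable objects, and Theorem~\ref{thm:introstrat} does not give you a direct handle on Yoneda-effaceability in that generality. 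Your final paragraph also misdiagnoses the obstacle: since $\gd(\sigma)<2$, for stable $E,F\in\cA$ one has $\Hom_{\Ku}(E,F[i])=0$ for $i\geq 3$, so there is no issue of ``degrees far beyond the global dimension''; the subtle point is purely in degree $2$.

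The paper does not attempt a direct $\Ext$-comparison. Instead it invokes a general criterion from \cite[Theorem 3.8]{heart}: the realization functor is an equivalence provided that for every nonzero $E\in\cA$ and every $s>0$ there exists a $\sigma$-stable $F\in\cA$ with $\phi_\sigma(F)<s$ and $\Hom(F,E)\neq 0$. This is a ``approximate projectives'' condition and is exactly what the Brill--Noether machinery of the paper supplies: Lemma~\ref{lem:profbnN} shows that for $v=n\alpha+m\beta$ with $n,m>0$ and all $N\geq n+m-1$, every stable object of class $v$ receives a nonzero map from some stable object of class $N\alpha+\beta$, whose phase tends to $0$ as $N\to\infty$. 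Together with Theorem~\ref{thm:stratcubic3} for the $m\beta$ case, this verifies the criterion. The dg-enhancement and Fourier--Mukai statements then follow as formal consequences via \cite[Theorem 3.12, Corollary 3.15]{heart}, without any separate Ind-completion or Lunts--Orlov argument. Your plan for parts two and three is reasonable in spirit, but the paper's route is shorter because the relevant black boxes are already packaged in \cite{heart} once the first part is in hand.
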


In a different direction, we use Proposition \ref{prop:existencegeneralcase} to prove \cite[Conjecture A.1]{FGLZ:EPW}. Recall that if $Y_4$ is a cubic fourfold, then its Kuznetsov component $\Ku(Y_4)$ has a stability conditions $\sigma_4$ constructed in \cite{BLMS:kuzcomponent} (see Section \ref{sec:lagrangian} for further details).

\begin{Thm}[Theorem \ref{thm:C2}, \cite{FGLZ:EPW}, Conjecture A.1] \label{thm:C2intro}
Let $Y_4$ be a very general cubic fourfold, and let $j\colon Y_3\to Y_4$ be a smooth hyperplane section. Then for every primitive character $v\in\Kn(\Ku(Y_3))$, there exists a non-empty open subset $U_v\subset M^s_{\sigma}(\Ku(Y_3),v)$ such that for every $E\in U_v$, the projection in $\Ku(Y_4)$ of $j_*E$ is $\sigma_4$-stable. 
\end{Thm}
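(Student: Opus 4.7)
The plan is to prove the theorem by induction on the class $v=n\alpha+m\beta$ (ordered by $m+n$), using the extension-locus stratification from Theorem \ref{thm:introstrat}. Write $\Xi\colon\Ku(Y_3)\to\Ku(Y_4)$ for the functor sending $E$ to the projection of $j_*E$ in $\Ku(Y_4)$. For a very general cubic fourfold, $\Kn(\Ku(Y_4))$ has rank $2$, and one may compute the images $\Xi(\alpha),\Xi(\beta)\in\Kn(\Ku(Y_4))$ explicitly. Since $\sigma_4$-stability is an open condition in families of objects, it suffices to exhibit, for each primitive $v$, a single $E\in M^s_\sigma(\Ku(Y_3),v)$ with $\Xi(E)$ $\sigma_4$-stable in order to obtain the desired open subset $U_v$ (using that $M^s_\sigma(\Ku(Y_3),v)$ is irreducible by Theorem \ref{thm:Y3moduli}(1)).

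The base cases are the classes of smallest $m+n$: for $v\in\{\alpha,\beta,\alpha+\beta\}$ and their Serre-translates, the moduli spaces are classical (the Fano surface of lines, etc., and the blow-up of the theta divisor). In these cases the projected objects $\Xi(E)$ are explicit sheaves or two-term complexes on $Y_4$, and their $\sigma_4$-stability can be verified directly, or by reducing to tilt stability on $Y_4$ through the framework of \cite{BLMS:kuzcomponent}, or from known results in \cite{FGLZ:EPW}.

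For the inductive step, fix $v=n\alpha+m\beta$ with $n,m\geq 1$ and $m+n$ larger than the base. By Theorem \ref{thm:introstrat}, $\overline{\ET(n\alpha,m\beta)}=M^s_\sigma(v)$, so a general $E\in M^s_\sigma(v)$ fits in a distinguished triangle $E_1\to E\to E_2\xrightarrow{+}$ with $E_1\in M^s_\sigma(n\alpha)$ and $E_2\in M^s_\sigma(m\beta)$. By the inductive hypothesis and the openness of $U_{n\alpha},U_{m\beta}$, one may choose $E_1,E_2$ so that $F_i:=\Xi(E_i)$ are both $\sigma_4$-stable; a $\chi$-computation ensures $\Ext^1_{\Ku(Y_3)}(E_2,E_1)\neq 0$, so nontrivial extensions exist. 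Using the explicit formula for $\Xi(\alpha),\Xi(\beta)$ and the central charge of $\sigma_4$, one then verifies the phase inequality $\phi_{\sigma_4}(F_1)<\phi_{\sigma_4}(F_2)$. Under this inequality, any destabilizing subobject $A\hookrightarrow F:=\Xi(E)$ in the heart of $\sigma_4$ would have numerical class in the rank-$2$ lattice $\Kn(\Ku(Y_4))$; considering the composition $A\to F\to F_2$ and separately the kernel in $F_1$, stability of $F_1,F_2$ together with the phase inequality rules out any such $A$.

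The main obstacle is the phase inequality $\phi_{\sigma_4}(F_1)<\phi_{\sigma_4}(F_2)$, and more generally the control of phases of $\Xi(E)$. These depend delicately on the central charge of $\sigma_4$ from \cite{BLMS:kuzcomponent} and on how $\Xi$ transports characters; when the naive inequality fails or becomes an equality, one must either swap $(E_1,E_2)$ in the decomposition or switch to a different stratum of Theorem \ref{thm:introstrat} (e.g.\ $\ET(i\alpha+j\beta,(n-i)\alpha+(m-j)\beta)$ with $(i,j)$ chosen so that the induced phases separate correctly). A second essential ingredient is the very generality of $Y_4$: it forces $\Kn(\Ku(Y_4))$ to have rank $2$, restricting the possible numerical classes of destabilizing subobjects to an explicit lattice where the phase comparison is tractable. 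Without very generality, the argument would need to account for many more candidate classes.
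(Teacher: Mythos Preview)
Your inductive strategy has a genuine gap in the stability argument for the extension. The decomposition $v=n\alpha+m\beta$ into $E_1\in M^s_\sigma(n\alpha)$ and $E_2\in M^s_\sigma(m\beta)$ via Theorem \ref{thm:introstrat} has two problems. First, for $n>1$ or $m>1$ the pieces $n\alpha,m\beta$ are \emph{not primitive}, so your inductive hypothesis (stated for primitive classes) does not furnish the open sets $U_{n\alpha},U_{m\beta}$. Second, and more seriously, even granting stable $F_1,F_2$ with $\phi_{\sigma_4}(F_1)<\phi_{\sigma_4}(F_2)$, the argument ``consider $A\to F\to F_2$ and the kernel in $F_1$'' does \emph{not} prove $F$ is stable: it only bounds $\phi_{\sigma_4}(A)\le\phi_{\sigma_4}(F_2)$, which is vacuous. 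An extension of two stable objects need not be stable unless the lattice gap is minimal.

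The paper avoids both issues by inducting instead on $-\chi(v,v)$ and using the Pick decomposition $v=v_++v_-$ with $v_-\times v_+=1$ (Proposition and Definition \ref{pd:pick}). Both $v_\pm$ are automatically primitive, and crucially the functor $\varphi=\pry\circ j_*$ is an orientation-preserving isomorphism $\Kn(\Ku(Y_3))\to\Kn(\Ku(Y_4))$ (this is where very generality enters), so $\varphi(v_-)\times\varphi(v_+)=1$ as well. Lemma \ref{lem:extendstabobj} then applies on \emph{both} sides: on $\Ku(Y_3)$ it guarantees $E_f$ is stable, and on $\Ku(Y_4)$ it reduces stability of $\pry j_*E_f$ to the single vanishing $\Hom(\pry j_*E_+,\pry j_*E_f)=0$. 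That vanishing is then obtained by adjunction together with the triangle $\mathsf S^{-1}E_+[2]\to j^*\pry j_*E_+\to E_+\xrightarrow{+}$ from \cite[Lemma A.2]{FGLZ:EPW}, using only phase comparisons in $\Ku(Y_3)$. Your ``main obstacle'' (the phase inequality) is in fact automatic from the orientation-preserving isomorphism; the real obstacle is controlling destabilizing subobjects of the extension, and for that the cross-product-one condition is essential.
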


Combined with \cite[Theorem A.4]{FGLZ:EPW}, Theorem \ref{thm:C2intro} provides the construction of Lagrangian subvarieties inside hyperk\"ahler manifolds arising as moduli spaces of stable objects in $\Ku(Y_4)$ (see Theorem~\ref{thm:lagrangian_existence} for the statement). This result could have potential applications to the construction of atomic objects supported on Lagrangian subvarieties as in \cite{Bottini, GL:newpaper}.

\subsection*{Further directions}
We explain in Section \ref{sec:hilb_curves} some potential applications of our results to Hilbert schemes of curves on cubic threefolds. In particular, up to showing the stability of a certain twist of the ideal sheaf of a smooth 
irreducible curve $C$ on $Y_3$ (or its projection in the Kuznetsov component), one would deduce a birational description of the component of the Hilbert scheme containing $C$ as a moduli space of stable objects, and that its mrc quotient is the intermediate Jacobian. 

Finally, we expect our method to apply to other examples, for instance the quartic double solid case. However, the actual computations heavily depend on the numeric in different cases, so we leave these to future projects.

\subsection*{Organization of the paper}
In Section \ref{sec2}, we review the notion of Bridgeland stability conditions. Then we prove a general theorem on the non-emptiness of moduli spaces in Proposition \ref{prop:existencegeneralcase}. In Section \ref{sec3}, we recall the notion of Kuznetsov component of a Fano threefold. Then we apply the general theory in Section \ref{sec2} to prove the non-emptiness of $M_\sigma(\Ku(X),v)$ as in Theorem \ref{thm:existintro}. In Section \ref{sec4}, we introduce some technical notions on the loci of moduli spaces and maps between them. In Section \ref{sec:moduliY3} we consider the case of cubic threefolds. After reviewing some general properties on $\Ku(Y_3)$, we introduce the \AJ map, we recall the known results on moduli spaces of small dimension and we prove some technical lemmas involving dimension estimates for moduli spaces. Section \ref{sec6} is devoted to the construction of the birational maps among the projectivized $\Ext^1$ spaces, and contains the proof of Proposition \ref{prop:dbofheart_intro}. In Section \ref{sec7} we conclude the proof of Theorem \ref{thm:Y3moduli}. In Section \ref{sec:lagrangian}, we prove Theorem \ref{thm:C2intro} on the existence of Lagrangian subvarieties in hyperk\"ahler moduli spaces associated to cubic fourfolds. In Section \ref{sec:hilb_curves}, we look at some applications to the classical geometry of curves on cubic threefolds. Finally in Section \ref{sec:questions}, we raise some questions for further study.

\subsection*{Acknowledgements}
We would like to thank Arend Bayer, Hannah Dell, Lie Fu, James Hotchkiss, Zhiyu Liu, Emanuele Macr\`i, Alex Perry, Junliang Shen, Ruijie Yang for many useful discussions related to this work.

Part of this work was written while the third and fourth named authors were attending the Junior Trimester program “Algebraic geometry: derived categories, Hodge theory, and Chow groups” at the Hausdorff Institute for Mathematics in Bonn, funded by the Deutsche Forschungsgemeinschaft (DFG, German Research Foundation) under Germany Excellence Strategy– EXC-2047/1– 390685813, and visiting the Simons Laufer Mathematical Sciences Institute (formerly MSRI) with the support of the NSF grant DMS-1928930. We are pleased to thank these institutions for the warm hospitality and the wonderful working environment.

C.L.\ is supported by the Royal Society URF$\backslash$R1$\backslash$201129 “Stability condition and application in algebraic geometry”.
Y.L.\ is supported by Applied Basic Research Programs of Science and Technology Commission Foundation of Shanghai Municipality.
L.P.\ is a member of the Indam group GNSAGA. 
X.Z.\ is partially supported by NSF grant DMS-2101789, and NSF FRG grant DMS-2052665.\\

\noindent\textbf{Notation}

\begin{longtable}{cp{0.85\textwidth}}
$\Db(X)$ & bounded derived category of coherent sheaves on $X$\\
$\cT$ &  a $\C$-linear triangulated category, an SOD factor of $\Db(X)$ \\
$\Kn$ &  (lifted) numerical Grothendieck group, see Remark \ref{rem:Knum}\\
$v,w$ & characters in $\Kn(\cT)$\\
$\chi(-,-)$ & Euler paring \\
$\sigma$ &   stability condition \\
$\cA$ &   the heart of a bounded $t$-structure / an abelian category \\
$Z$ &  central charge; (weak) stability function on a bounded heart \\
$\phi_\sigma$ &   phase function of a stability condition $\sigma$ \\
$\mu_Z$ &  slope of a (weak) stability function $Z$ \\
$\gd$ &  global dimension function\\
$\ms(-)(M_\sigma(-))$ &  moduli space of (semi)stable objects \\
$\Lambda$ &  rank $2$ lattice with a fixed isomorphism with $\Z^{\oplus 2}$\\
$\triangle(-,-)$ &  lattice points in a triangle, see Notation \ref{not:triangleandpara}  \\
$E_f$ &  object $\Cone(f)[-1]$\\
$\mathsf S$ &  Serre functor\\
$\Ku(X)$ & the Kuznetsov component of a variety $X$\\
$Y_3$ &  smooth cubic threefold\\

 $\alpha,\beta,\gamma$    &characters in $\Kn(\Ku(Y_3))$, see Notation \ref{not:abc}\\

$\BN^*(-,-)$ &  Brill--Noether jumping locus, see Definition \ref{def:BNlocus}\\
$\ET^*(-,-)$ &  extension locus, see Definition \ref{def:ETlocus}\\
$\ms(v,w)^\dag$ &  non-jumping locus of $\ms(v)\times \ms(w)$, see Definition \ref{def:non-jumping}\\
$\Pe vw$ & projectivization of the relative $\Ext^1$ over $\ms(v,w)^\dag$, see Definition \ref{def:proj_nj}\\
$e_{v,w}^*$ & rational maps from $\Pe vw$, see Definition \ref{def:evw} and \ref{def:evwR}\\
$J_v(Y_3)$ & (twisted) intermediate Jacobian of $Y_3$\\
\end{longtable}

\section{Non-emptiness of the moduli spaces}
\label{sec2}

In this section we recall some definitions and properties about numerical stability conditions and moduli spaces. Then we assume the stability condition has discrete central charge and in Lemma \ref{lem:extendstabobj} we show that the extension of two stable objects with the ``smallest possible space'' among their characters is stable. We apply this in Proposition \ref{prop:existencegeneralcase} to prove the non-emptiness of moduli spaces assuming the existence of stable objects in smaller dimensional moduli. 

\subsection{Review: Stability conditions on triangulated categories}
Let $X$ be a smooth projective variety defined over the field of complex numbers $\C$. Assume that $\cT$ is a full admissible subcategory of  $\Db(X)=\Db(\Coh(X))$, the bounded derived category of coherent sheaves on $X$. In other words, the inclusion functor $\cT \to \Db(X)$ is fully faithful and has left and right adjoints. The Grothendieck group $\mathrm{K}_0(\TT)$ is equipped with the Euler pairing $\chi\colon \mathrm K_0(\TT)\times \mathrm K_0(\TT)\to \Z$ defined as:
\[\chi([E],[F]):=\sum_{n\in\Z}(-1)^n\dim\Hom(E,F[n]).\]
The numerical Grothendieck group $\mathrm{K}_{\mathrm{num}}(\TT):=\mathrm{K}_0(\TT)/\ker(\chi)$ is isomorphic to a subgroup of $\mathrm{K}_{\mathrm{num}}(X)$ which is a finitely generated free abelian group.

We recall the definition and first properties of stability conditions on $\cT$, introduced by Bridgeland in \cite{Bridgeland:Stab}. 

\begin{Def}[(Weak) Stability function]\label{def:weakstabfunction}
    Let $\cA$ be an abelian category. A \emph{weak numerical stability function} on $\cA$ is a group homomorphism $Z\colon\Kn(\cA)\to \C$ such that for any non-zero object $ E \in \cA$ we have $\Im Z(E) \geq 0$, and in the case that $\Im Z(E) = 0$, we have $\Re Z(E) \leq 0$. 

    We call $Z$ a \emph{stability function} if, moreover, $Z(E)\neq 0$ for every $0\neq E\in\cA$.
\end{Def}

 \noindent The \emph{slope} of $0\neq E \in \cA$ is defined as 
    \[\mu_{Z}(E)= 
    \begin{cases}
    -\frac{\Re Z(E)}{\Im Z(E)} & \text{if } \Im Z(E) > 0, \\
    + \infty & \text{otherwise.}
    \end{cases}\]
    An object $E \in \cA$ is $\mu_Z$-\emph{(semi)stable} if for every nonzero proper subobject $F $ of $E$ in $\cA$ we have $\mu_{Z}(F) < (\leq) \ \mu_{Z}(E/F)$.

\begin{Def} [Stability condition]\label{def:stabilitycond}
Let  $\cA$ be the heart of a bounded t-structure on $\cT$ and $Z$ be a (weak) numerical stability function on $\cA$. Denote by $\sigma=(\cA, Z)$ the pair of these data. 
A non-zero object $E\in\cT$ is called $\sigma$-\emph{(semi)stable} if $E[n]\in\cA$ for some $n\in\Z$ and $E[n]$ is $\mu_Z$-(semi)stable.

We call $\sigma$ a (weak) numerical \textit{stability condition}  on $\cT$ when it  satisfies the following properties:
\begin{enumerate}  
    \item (Harder--Narasimhan Filtration) Every non-zero object $E \in \cA$ has a unique filtration 
    \begin{align}\label{eq:hnfil}
    0=E_0 \hookrightarrow E_1 \hookrightarrow \dots E_{m-1} \hookrightarrow E_m=E 
    \end{align}
    where $A_i:=\Cone(E_{i-1}\hookrightarrow E_i)$ is $\sigma$-semistable and $\mu_Z(A_1) > \dots > \mu_Z(A_m)$. 
    \item (Support Property) There exists a quadratic form $Q$ on $\Kn(\cT) \otimes \R$ such that \begin{itemize}
    \item $Q|_{\ker Z}$ is negative definite;
    \item $Q([E],[E]) \geq 0$ 
    for every $\sigma$-semistable object $E$.
\end{itemize}
    \end{enumerate}
\end{Def}
In this paper, we will only consider numerical stability conditions and will omit the term numerical for simplicity. 
\begin{Not}[Phase and slicing] \label{notion:slicing}
Let $\sigma=(\cA, Z)$ be a stability condition  on $\TT$. The stability function $Z$ is also called the \emph{central charge} of the stability condition.

For a non-zero object $E \in \cA$, its \emph{phase} is defined as
\[\phi_\sigma(E)= 
\begin{cases}
\frac{1}{\pi}\text{Arg}(Z(E)) & \text{if } \Im Z(E)>0, \\
1 & \text{otherwise}. 
\end{cases}
\]
If an object $F=E[n]$ for some $E \in \cA$ and $n\in \Z$, then we define its phase as $\phi_\sigma(F)=\phi_\sigma(E)+n$. 

We can associate $\sigma$ with a \emph{slicing} \[\cP_\sigma\colon \R\to \{\,\text{full additive subcategories in }\cT\,\}\] on $\cT$ as follows. For $\theta\in\R$,
\begin{enumerate}
\item if $\theta \in (0,1]$, the subcategory $\PP_\sigma(\theta)$ is the union of the zero object and all $\sigma$-semistable objects with phase $\theta$;
\item otherwise, set $\PP_\sigma(\theta):=\PP_\sigma(\theta-n)[n]$ for $\theta-n \in (0,1]$ and $n \in \Z$.
\end{enumerate}
 For every non-zero object $E\in \cT$, there is a unique filtration as that in \eqref{eq:hnfil} with $A_i\in\cP_\sigma(\theta_i)$ and $\theta_1>\dots>\theta_m$. The objects $A_i$'s are called the \emph{Harder--Narasimhan factors} of $E$. We denote by $\HN^+_\sigma(E):=A_1$ (resp. $\HN^-_\sigma(E):=A_m$) the Harder--Narasimhan factor with the largest (resp. smallest) phase.
 
 For an interval $I \subset \R$, we use $\cP_\sigma(I)$ to denote the extension-closed subcategory of $\TT$ generated by the subcategories $\PP_\sigma(\theta)$ with $\theta \in I$. In particular, it is clear from the definition that $\PP_\sigma((0, 1])= \cA$. 

The support property implies that $\PP_\sigma(\theta)$ has finite length for every $\theta \in \R$.\footnote{See \cite[Lemma A.4]{BMS:stabCY3s} for the equivalent definitions of support property. Definition A.2 in \cite{BMS:stabCY3s} implies the slice is Artinian directly.} In particular, every object $E \in \PP_\sigma(\theta)$ admits a (non-unique) finite filtration with $\sigma$-stable factors  of the same phase $\theta$, which are called \emph{Jordan--H\"older factors}.
\end{Not}

\begin{Def}[Global dimension]\label{def:gd} 
Let $\sigma$ be a stability condition on $\TT$. We define its \emph{global dimension}  as 
\begin{equation}\label{def:gldimvalue}
\gd (\sigma) := \sup\{ \theta_2-\theta_1 \mid
    \Hom(E_1,E_2)\neq0 \text{ for some } E_i\in\cP(\theta_i)\}.
\end{equation}
\end{Def}

\begin{Not}[Moduli space]\label{not:moduli}
Let $\sigma=(\cA, Z)$ be a stability condition on $\TT$. For $v \in \mathrm{K}_{\mathrm{num}}(\TT)$, and $\theta\in \R$ satisfying $Z(v)\in\R_{>0}\cdot e^{\pi i\theta}$, we  consider the functor 
\[\MM_\sigma(\TT, (v,\theta)) \colon (\mathrm{Sch})^{\mathrm{op}} \to \mathrm{Gpd}\]
from the category of schemes over $\C$ to the category of groupoids. This functor associates to a scheme $S$ the groupoid $\MM_\sigma(\TT, (v,\theta))(S)$ of all perfect complexes $E \in \text{D}(X \times S)$, such that for every $s \in S$, the restriction $E_s$ of $E$ to the fiber $X \times \lbrace s \rbrace$ is in $\cP_\sigma(\theta)$, in other words, $\sigma$-semistable of phase $\theta$, and with $[E_s]=v$. 

In the examples we will consider in this paper, the functor $\MM_{\sigma}(\TT, (v,\theta))$ admits a good moduli space $M_\sigma(\TT, (v,\theta))$, in the sense of \cite{Alper:goodmoduli}, which is a proper algebraic space over $\C$. We will denote by $M_\sigma^s(\TT, (v,\theta))$ the locus of classes of $\sigma$-stable objects in $M_\sigma(\TT, (v,\theta))$. Whenever the category $\cT$ and the phase $\theta$ are clear from the context, we will drop them in the notation and denote the moduli space (resp. the stable locus) as $M_\sigma(v)$ (resp. $\ms(v)$).   
\end{Not} 

\begin{Rem}[Stability manifold and $\glt$-action]\label{not:glt}
Denote by $\text{Stab}(\TT)$ the set of stability conditions on $\TT$. By Bridgeland's Deformation Theorem \cite{Bridgeland:Stab}, the set $\text{Stab}(\TT)$ (given that it is non-empty) admits a complex manifold structure of dimension equal to the rank of $\mathrm{K}_{\mathrm{num}}(\TT)$.

Denote by $\glr:=\{M\in\mathrm{GL}_2(\mathbb R)\;|\; \det(M)>0\}$, and let $\glt$ be the universal cover of $\glr$. We have the following right group action of $\glt$ on $\Stab(\TT)$.  Given $\widetilde{g}=(g,M) \in \glt$ with $M \in \glr$ and $g\colon  \mathbb{R} \to \mathbb{R}$ an increasing function with $g(\phi +1 )= g(\phi) +1$, the action on $\sigma = (\mathcal{P}_\sigma((0,1]), Z) \in \Stab(\TT)$ is given by 
\[\sigma \cdot \widetilde{g} = (\mathcal{P}_\sigma((g(0), g(1)]), M^{-1} \circ Z).\]
In particular, stability conditions $\sigma$ and $\sigma \cdot \widetilde{g}$ have the same set of (semi)stable objects (with possibly different phases). Their moduli spaces $\ms(v)$ and $M_{\sigma\cdot \tilde g}^s(v)$ are isomorphic to each other.   
\end{Rem}

\subsection{Harder--Narasimhan factors of the extension of stable objects}
The aim of this subsection is to prove the key Lemma \ref{lem:extendstabobj} saying that the extension between two stable objects with the ``smallest possible gap'' is stable. To make sense of this, we need to assume that there are no other characters in the triangle spanned by these two characters. 
\begin{Asp} [Discrete central charge]  \label{asp2}
 In this paper, we will always assume that  $\cT$ is a full admissible subcategory of  $\Db(X)=\Db(\Coh(X))$, where $X$ is a smooth projective variety defined over the field of complex numbers $\C$. We will always assume that the image of the central charge is discrete.  More precisely, the central charge \[Z\colon\kn(\cT)\xrightarrow{\lambda}\Lambda\hookrightarrow \C\] factors via a rank $2$ lattice $\Lambda$.
\end{Asp}

\begin{Not}[Basic notion on lattice points]
In this section, we fix an isomorphism $\Lambda\cong \Z^{\oplus 2}$. On $\Z^{\oplus2}$ we define the norm $|\bullet|$ as $|(m,n)|:=\sqrt{m^2+n^2}$, and the cross-product as $(a,b)\times (c,d):= ad-bc$. We further assume that the orientation on $\Lambda$ induced by the counterclockwise orientation on $\C$ is compatible via the isomorphism with the orientation on $\Z^{\oplus2}$ induced by the cross-product.
\end{Not}

\begin{Not}[Lattice points in a triangle]\label{not:triangleandpara}
    For $v,w\in \Lambda$ satisfying $v\times w>0$, we denote by  $\triangle(v,w)$ the set of all lattice points in the triangle spanned by $v$ and $w$. To be precise, \begin{align*}
        \triangle(v,w)&:=\{av+bw\; |\; a,b\geq 0, a+b\leq 1]\}\cap \Lambda.
    \end{align*}
\end{Not}

\begin{Def}[Extension object]\label{def:extobj}
    Let $v,w\in \Lambda$ satisfying $v\times w>0$. Let $E_v$ and $E_w$ be two $\sigma$-semistable objects such that $\lambda(E_v)=v$, $\lambda(E_w)=w$, and $\phi_\sigma(E_w)-\phi_\sigma(E_v)\in(0,1)$.  For $0\neq f\in\Hom(E_w,E_v[1])$, we define the extension object \[E_f:=\Cone(E_w\xrightarrow[]{f}E_v[1])[-1].\]
\end{Def}

The following easy lemma provides an essential tool to control the HN factors of extensions of stable objects.

\begin{Lem}\label{lem:hnfactorextobj}
In the setup of Definition~\ref{def:extobj}, we have
\[\lambda(\HN^+_\sigma(E_f))\in \triangle(v+w,w) \text{ and } \lambda(\HN^-_\sigma(E_f))\in \triangle(v,v+w).\]
\end{Lem}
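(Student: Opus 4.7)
The plan is to convert the geometric statement ``$\lambda(\HN^\pm_\sigma(E_f))$ lies in a certain triangle'' into a purely lattice-theoretic statement about cross-products. The key point is the dictionary: for $u_1, u_2 \in \Lambda \otimes \R$ with $u_1, u_2$ in the (closed) upper half plane, the inequality $\phi_\sigma(u_1) \leq \phi_\sigma(u_2)$ is equivalent to $u_1 \times u_2 \geq 0$. Using this together with the assumption $v \times w > 0$, which in particular makes $v, w$ a basis of $\Lambda \otimes \R$, every phase comparison becomes a linear inequality in the coordinates of $\lambda(A_i)$ with respect to the basis $(v,w)$.

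First I would confine all HN factors of $E_f$ to a narrow slice. Set $I := [\phi_\sigma(E_v), \phi_\sigma(E_w)]$; by hypothesis $I$ has length $<1$. The subcategory $\cP_\sigma(I)$ is extension-closed by definition and contains $E_v, E_w$, so the distinguished triangle $E_v \to E_f \to E_w$ gives $E_f \in \cP_\sigma(I)$. Consequently every HN factor of $E_f$ satisfies $\phi_\sigma(E_v) \leq \phi_\sigma(A_i) \leq \phi_\sigma(E_w)$. Writing $\lambda(A_1) = xv + yw$ and using bilinearity with $v \times w > 0$, these two inequalities translate respectively into $y \geq 0$ and $x \geq 0$. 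The additional input that $A_1 = \HN^+_\sigma(E_f)$ has the largest phase gives $\phi_\sigma(A_1) \geq \phi_\sigma(E_f)$, i.e.\ $(v+w) \times \lambda(A_1) \geq 0$, which unpacks to $y \geq x$.

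It remains to see $y \leq 1$, for which I would apply exactly the same translation to the cone $E_f/A_1$ of the inclusion $A_1 \hookrightarrow E_f$ inside the quasi-abelian category $\cP_\sigma(I)$ (its HN factors are $A_2, \ldots, A_m$, all with phases in $I$, so it again lies in $\cP_\sigma(I)$). Its class is $(1-x)v + (1-y)w$, and the lower phase bound $\phi_\sigma(E_f/A_1) \geq \phi_\sigma(E_v)$ yields $1 - y \geq 0$. Unwinding the barycentric description
\[\triangle(v+w, w) = \{xv + yw : 0 \leq x \leq y \leq 1\},\]
the combined inequalities $0 \leq x \leq y \leq 1$ place $\lambda(A_1) \in \triangle(v+w,w)$. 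The claim for $\HN^-_\sigma(E_f)$ is entirely symmetric: $A_m$ being the HN min factor flips $\phi_\sigma(A_m) \geq \phi_\sigma(E_f)$ to $\phi_\sigma(A_m) \leq \phi_\sigma(E_f)$, i.e.\ $x \geq y$, and an identical check on $E_f/A_{m-1}$ and on $A_m$ itself gives $\lambda(A_m) \in \triangle(v, v+w) = \{xv + yw : 0 \leq y \leq x \leq 1\}$.

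There is no real obstacle here once the phase/cross-product dictionary is set up and one notices that the two triangles are exactly the intersection of $\cP_\sigma(I)$-constraints with the HN-extremality constraint $\phi(A_1) \geq \phi(E_f)$ (resp.\ $\phi(A_m) \leq \phi(E_f)$). The only mild subtlety is justifying that $E_f/A_1$ is an object to which the lattice arithmetic legitimately applies, which is clear because $\cP_\sigma(I)$ is quasi-abelian for $I$ of length $<1$, so strict short exact sequences make sense.
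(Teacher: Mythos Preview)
Your proposal is correct and follows essentially the same approach as the paper. The only cosmetic difference is in how you extract the upper bound $y\leq 1$: the paper observes that \emph{all} HN factors $A_i$ have $\lambda(A_i)=a_iv+b_iw$ with $a_i,b_i\geq 0$, and since $\sum a_i=\sum b_i=1$ this immediately gives $a_i,b_i\leq 1$ for every $i$; you instead apply the lower phase bound to the single quotient $E_f/A_1$, which amounts to the same thing.
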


\begin{proof}
    Denote by $E_1,\dots, E_m$ the Harder--Narasimhan factors of $E_f$ with phases from high to low. Then $\phi_\sigma(E_i)\in[\phi_\sigma(E_v),\phi_\sigma(E_w)]$. As $0< \phi_\sigma(E_w)-\phi_\sigma(E_v)<1$, the lattice points $\lambda(E_i)=a_i v +b_i w$ for some $a_i,b_i\geq 0$. 

     Because $v+w=\lambda(E_f)=\sum\lambda(E_i)$, we have $a_i,b_i\leq 1$. 
     As \[\phi_\sigma(E_w)\geq \phi_\sigma(E_1)\geq\phi_\sigma(v+w)\geq \phi_\sigma(E_m\geq \phi_\sigma(E_v),\] where we informally write $\phi_\sigma(v+w)$ for the value in $[\phi_\sigma(E_v),\phi_\sigma(E_w)]$ satisfying $Z(v+w)\in\R_{>0}\cdot e^{\pi i\phi_\sigma(v+w)}$. The statement follows.
\end{proof}

Using this lemma, we have the following stability result for extensions of two stable objects, when their characters are ``closest to each other''.

\begin{Lem}\label{lem:extendstabobj}
In the setup of Definition~\ref{def:extobj}, we further assume that $v\times w=1$. Then $E_f$ is always $\sigma$-stable.
\end{Lem}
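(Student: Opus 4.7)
The hypothesis $v \times w = 1$ forces $(v, w)$ to be a $\Z$-basis of $\Lambda$; in particular $v$, $w$, and $v+w$ are all primitive. A recurring ingredient I plan to use is that any $\sigma$-semistable object of primitive class is automatically $\sigma$-stable: since $Z \colon \Lambda \hookrightarrow \C$ is injective, the classes of its Jordan--H\"older factors are forced to be positive integer multiples of the total class summing to it, hence reduce to a single factor equal to it.

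Given Lemma~\ref{lem:hnfactorextobj}, the first step is to enumerate the possible classes of the extremal HN factors of $E_f$. A direct check in the basis $(v, w)$ shows that the only nonzero lattice points of $\triangle(v+w, w)$ are $\{w, v+w\}$ and those of $\triangle(v, v+w)$ are $\{v, v+w\}$. If either $\HN^+_\sigma(E_f)$ or $\HN^-_\sigma(E_f)$ already has class $v+w$, then every remaining HN factor would have zero class and thus be absent, so $E_f$ itself is $\sigma$-semistable; the preliminary observation applied to the primitive class $v+w$ then upgrades this to $\sigma$-stability.

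The main work is to rule out the remaining case $\lambda(\HN^+_\sigma(E_f)) = w$ and $\lambda(\HN^-_\sigma(E_f)) = v$, which produces a two-step HN triangle $E_1 \to E_f \to E_2 \to E_1[1]$ with $E_1, E_2$ stable. The strategy is to compare this HN triangle against the defining triangle $E_v \to E_f \to E_w \xrightarrow{f} E_v[1]$ through $\Hom$-chases. Applying $\Hom(-, E_2)$ to the defining triangle, the vanishing $\Hom(E_w, E_2) = 0$ (from $\phi_\sigma(E_w) > \phi_\sigma(E_v) = \phi_\sigma(E_2)$) together with the nonvanishing of $\Hom(E_f, E_2)$ coming from the HN projection forces $\Hom(E_v, E_2) \neq 0$, and hence $E_v \cong E_2$ by stability; a symmetric use of $\Hom(E_1, -)$ on the defining triangle gives $E_1 \cong E_w$. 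The HN triangle therefore supplies a nonzero map $E_w \to E_f$, whereas applying $\Hom(E_w, -)$ to the defining triangle identifies $\Hom(E_w, E_f)$ with $\ker\bigl(f_* \colon \Hom(E_w, E_w) \to \Hom(E_w, E_v[1])\bigr) = 0$ since $f \neq 0$, giving the desired contradiction. The main delicacy will be the phase gap $\phi_\sigma(E_w) - \phi_\sigma(E_v) \in (0,1)$: it is exactly what keeps the auxiliary $\Ext^1$ terms appearing in these Hom chases from derailing the identifications.
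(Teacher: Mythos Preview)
Your proposal is correct and follows essentially the same approach as the paper. The only difference is economy: the paper applies $\Hom(E_1,-)$ to the defining triangle once, obtaining both $E_1\cong E_w$ (from $\Hom(E_1,E_w)\neq 0$) and, in that same exact sequence, the vanishing $\Hom(E_w,E_f)=0$ that gives the contradiction; your separate identification $E_2\cong E_v$ via $\Hom(-,E_2)$ is never actually used and can be dropped.
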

\begin{proof}
    As $v\times w=1$, $v\times(v+w)=1$. In particular, the lattice point $\lambda(E_f)=v+w$ is primitive. So we only need to show that $E_f$ is $\sigma$-semistable.

    Suppose that $E_f$ is not $\sigma$-semistable.  By Lemma \ref{lem:hnfactorextobj}, there are only two Harder--Narasimhan factors $E_1$ and $E_2$ of $E_f$, with characters $\lambda(E_1)=w$ and $\lambda(E_2)=v$. 
    
    Applying $\Hom(E_1,-)$ to the distinguished triangle $E_v\to E_f\to E_w\xrightarrow[]{f}E_v[1]$, we get \begin{align}\label{eq21}
        0=\Hom(E_1,E_v)\to \Hom(E_1,E_f)\to \Hom(E_1,E_w)\xrightarrow[]{f\circ -}\Hom(E_1,E_v[1]).
    \end{align}
    Because $E_1$ is the first Harder--Narasimhan factor of $E_f$, we have $\Hom(E_1,E_f)\neq 0$. By \eqref{eq21}, we have $\Hom(E_1,E_w)\neq 0$. As $w$ is primitive and $E_w$ is $\sigma$-stable,  this can only happen when $E_1\cong E_w$. However, as $f\neq 0$, the map $f\circ - $ is injective from $\Hom(E_1,E_w)=\C$ to $\Hom(E_1,E_v[1])$. It follows that $\Hom(E_1,E_f)= 0$, which leads to a contradiction. Therefore, $E_f$ is $\sigma$-semistable, hence stable.
\end{proof}

\subsection{Existence of stable objects: a general result} \label{sec22}
In this subsection, we apply Lemma \ref{lem:extendstabobj} to prove Proposition \ref{prop:existencegeneralcase} saying that under certain assumptions, the non-emptiness of $M_\sigma(v)$ with small dimensions implies the non-emptiness of other $M_\sigma(v)$.

We review the following form of Pick's Theorem which will be essential in the inductive proof for Proposition \ref{prop:existencegeneralcase}. 
\begin{PropDef}[Pick's Theorem]\label{pd:pick}
    For every primitive vector $v\in\Z^{\oplus 2}$ with $|v|>1$, there exists a unique pair of vectors $v_\pm$ satisfying:
\[v_-\times v=v\times v_+=v_-\times v_+=1,\text{ and }|v_\pm|<|v|
.\]
This makes the pair satisfy $v=v_-+v_+$. We denote by $\delta(v)$ the difference between the `phases' of $v_\pm$. More precisely,   \[\delta(v):=\tfrac{1}{\pi}\arg(v_-,v_+)=\tfrac{1}{\pi}\arcsin{\frac{1}{|v_+||v_-|}}\in(0,\tfrac{1}{2}].\] 
\end{PropDef}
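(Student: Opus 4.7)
The plan is to reduce the theorem to a Diophantine counting problem on a single affine line in $\R^2$, solved by elementary lattice bounds that use the primitivity of $v$.

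The first step is to observe that the three cross-product conditions are overdetermined: given $v_\pm$ with $v_- \times v = v \times v_+ = v_- \times v_+ = 1$, the vector $w := v - v_- - v_+$ satisfies $w \times v = 0$ and $w \times v_+ = 0$, and since $v \times v_+ = 1$ makes $v, v_+$ linearly independent, $w = 0$. So $v = v_- + v_+$ is automatic, and the problem reduces to finding a single $v_+ \in \Z^2$ with $v \times v_+ = 1$, $|v_+| < |v|$, and $|v - v_+| < |v|$; then $v_- := v - v_+$ satisfies the remaining two identities directly.

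Since $v$ is primitive, B\'ezout produces some $v_+^{(0)}$ with $v \times v_+^{(0)} = 1$, and all such solutions form the progression $v_+^{(0)} + \Z v$ along a line $\ell$ parallel to $v$ at distance $1/|v|$ from the origin. Parametrizing $\ell$ by $s$ via $v_+ = s\hat v + \hat v^\perp/|v|$ with $\hat v = v/|v|$, the norm bounds translate to $s \in (|v|-L, L)$ where $L := \sqrt{|v|^2 - 1/|v|^2}$. Lattice values of $s$ form an arithmetic progression of common difference $|v|$, so since the target interval has length $2L - |v| < |v|$, it contains at most one lattice point, giving uniqueness.

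The main obstacle is existence, which requires the fixed lattice coset modulo $|v|$ to actually hit $(|v|-L, L)$. The key Diophantine input is that $|v|^2$ cannot divide $v_+^{(0)} \cdot v$: writing $v = (a,b)$ and $v_+^{(0)} = (c,d)$, the identity $a(ac+bd) - c(a^2+b^2) = b(ad-bc) = b$, together with its symmetric counterpart giving $-a$, shows that $(a^2+b^2) \mid (ac+bd)$ would force $|v|^2 \mid \gcd(a,b) = 1$, contradicting $|v|>1$. Hence $v_+^{(0)} \cdot v \bmod |v|^2$ is a nonzero integer in $[1, |v|^2 - 1]$, placing the corresponding value of $s$ in $[1/|v|,\, |v|-1/|v|]$. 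The elementary estimate $|v| - L = (1/|v|^2)/(|v|+L) < 1/|v|$ then locates this $s$ inside $(|v|-L, L)$, producing the desired $v_+$. The formula $\delta(v) = \tfrac{1}{\pi}\arcsin(1/(|v_-||v_+|))$ follows from $v_- \times v_+ = |v_-||v_+|\sin\arg(v_-, v_+) = 1$; the bound $\delta(v) \leq 1/2$ is equivalent to $|v_-||v_+| \geq 1$, trivial for nonzero integer vectors, while the principal branch of $\arcsin$ is correct because $v_- \cdot v_+ = s_+ s_- - 1/|v|^2 \geq 0$, as $s_\pm$ are positive integer multiples of $1/|v|$ summing to $|v|$.
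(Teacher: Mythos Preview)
Your proof is correct and takes a genuinely different route from the paper's. The paper constructs $v_\pm$ explicitly via the continued fraction expansion of $n/m$ (with separate case analysis for $|m|\leq 1$ and for negative $m$), and then argues uniqueness by noting that any other $v'_-$ with $v'_-\times v=1$ differs from $v_-$ by a multiple of $v$, with the norm and orientation constraints leaving only the two possibilities $v_-$ and $-v_+$, and similarly for $v'_+$; the condition $v'_-\times v'_+=1$ then pins down the pair.

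Your argument is more uniform and geometric: you first observe that the three cross-product identities already force $v_-=v-v_+$, reducing the problem to a single unknown on the affine line $\{v\times x=1\}$; then the two norm constraints become an explicit interval of length $2L-|v|<|v|$ in the $s$-parameter, so at most one coset representative can lie in it; and existence comes from the neat observation that $|v|^2\nmid v_+^{(0)}\cdot v$ (via the B\'ezout-type identities $a(ac+bd)-c|v|^2=b$ and $b(ac+bd)-d|v|^2=-a$), which places the residue $s$-value in $[1/|v|,\,|v|-1/|v|]\subset(|v|-L,L)$. This avoids all case distinctions and handles existence and uniqueness in one stroke. The paper's approach has the advantage of giving an explicit algorithmic description of $v_\pm$ in terms of the continued fraction convergents, which is occasionally useful elsewhere; yours is cleaner as a pure existence-and-uniqueness statement and also makes the verification of $v_-\cdot v_+\geq 0$ (hence the principal-branch formula for $\delta(v)$) transparent.
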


\begin{proof}
For every primitive vector $v=(n,m)\in\Z^{\oplus 2}$ with $m\geq 2$, one may write $\frac{n}{m}$ as a continued fraction 
\[[a_0,a_1,\dots,a_i]:=a_0+\frac{1}{a_1+\frac{1}{\ddots+\frac{1}{a_i}}},\]
where $i\geq 1$, $a_j\in\Z$ for all $0\leq j\leq i$, and $a_i \geq 2$. Then it is easy to see that the rational number $\tfrac{n_1}{m_1}:=[a_0,a_1,\dots,a_{i-1}]$ satisfies $mn_1-m_1n=(-1)^i$, and the rational number $\tfrac{n_2}{m_2}:=[a_0,a_1,\dots,a_i-1]=\tfrac{n-n_1}{m-m_1}$ satisfies $mn_2-m_2n=(-1)^{i+1}$. Also $m=m_1+m_2$, $n=n_1+n_2$, and $|(m,n)|>|(m_j,n_j)|$ for $j=1,2$. We may define $v_\pm=(n_1,m_1)$ or $(n_2,m_2)$ accordingly so that $v_-\times v=v\times v_+=1$.

For $v=(n,1)$ with $n> 0$, we may define $v_+:=(n-1,1)$ and $v_-:=(1,0)$; for $v=(n,1)$ with $n< 0$, we may define $v_+:=(-1,0)$ and $v_-:=(n+1,1)$. For primitive $v=(n,m)\neq(0,-1)$ with $m\leq -1$, we may define $v_+:=-(-v)_+$ and $v_-:=-(-v)_-$. \\

For the uniqueness, assume that another pair $v'_\pm$ satisfies the assumption, then as $(v'_--v_-)\times v=v'_-\times v-v_-\times v=0$, the vector $v'_-=v_-+av$ for some $a\in\Z$. As $|v'_-|<|v|$ and $\arg(v_-,v)<\pi$, the vector $v'_-$ can only be $v_-$ or $v_--v=-v_+$. 

Similarly, the vector $v'_+$ can only be $v_+$ or $v_+-v=-v_-$. As $v'_-\times v'_+=1$, we must have $v'_\pm=v_\pm$.
\end{proof}

\begin{Prop}\label{prop:existencegeneralcase}
Let $\sigma$ be a stability condition on a $\C$-linear triangulated category $\cT$ satisfying Assumption \ref{asp2}. Let $S_0\subset S\subset \kn(\cT)$ be non-empty subsets such that 
\begin{enumerate}
    \item the moduli space $M^s_\sigma(v)\neq \emptyset$ for any $v\in S_0$;
    \item for any $v\in S\setminus S_0$, $\lambda(v)$ is primitive, $|\lambda(v)|>1$, and $\gd(\sigma)<3-\delta(\lambda(v))$;
    \item for any $v\in S\setminus S_0$, there exist $v_1,v_2\in S$ such that $v=v_1+v_2$, $\lambda(v_1)=\lambda(v)_+$, $\lambda(v_2)=\lambda(v)_-$, and $\chi(v_1,v_2)<0$.
\end{enumerate}
    Then the moduli space $M^s_\sigma(v)\neq \emptyset$ for any $v\in S$.
\end{Prop}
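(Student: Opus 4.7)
The plan is to proceed by strong induction on $|\lambda(v)|$. The base case $v\in S_0$ is covered by hypothesis (1). For $v\in S\setminus S_0$, Proposition and Definition \ref{pd:pick} decomposes the primitive class $\lambda(v)$ as $\lambda(v)_-+\lambda(v)_+$ with $\lambda(v)_-\times\lambda(v)_+=1$ and $|\lambda(v)_\pm|<|\lambda(v)|$, and hypothesis (3) supplies $v_1,v_2\in S$ with $v=v_1+v_2$, $\lambda(v_1)=\lambda(v)_+$, $\lambda(v_2)=\lambda(v)_-$, and $\chi(v_1,v_2)<0$. Since $|\lambda(v_i)|<|\lambda(v)|$, the inductive hypothesis furnishes stable objects $E_i\in M^s_\sigma(v_i)$.

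Next I would apply Lemma \ref{lem:extendstabobj} to $E_1,E_2$. The orientation compatibility together with $\lambda(v_2)\times\lambda(v_1)=1>0$ forces $E_1$ to have the larger phase, with gap $\phi_\sigma(E_1)-\phi_\sigma(E_2)=\delta(\lambda(v))\in(0,\tfrac{1}{2}]\subset(0,1)$, matching the setup of Definition \ref{def:extobj}. The remaining task is to produce a non-zero $f\in\Hom(E_1,E_2[1])$, after which Lemma \ref{lem:extendstabobj} (applied with $v\times w=\lambda(v)_-\times\lambda(v)_+=1$) will yield a $\sigma$-stable extension object $E_f$ of class $v_1+v_2=v$, closing the induction.

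To produce $f$, I would exploit the global dimension bound $\gd(\sigma)<3-\delta(\lambda(v))$. By the definition of $\gd(\sigma)$, any non-vanishing $\Hom(E_1,E_2[n])$ imposes $0\leq n-\delta(\lambda(v))\leq\gd(\sigma)$; the lower inequality with $\delta(\lambda(v))>0$ gives $n\geq 1$ and the upper gives $n<3$, so only $n\in\{1,2\}$ can contribute. Hence the Euler pairing collapses to $\chi(v_1,v_2)=-\hom(E_1,E_2[1])+\hom(E_1,E_2[2])$, and the hypothesis $\chi(v_1,v_2)<0$ delivers $\hom(E_1,E_2[1])>0$. The main subtlety is precisely this phase/global-dimension pinch: the numerical shape of hypothesis (2) is calibrated so that only two Hom terms survive in the Euler sum, after which the negativity of $\chi(v_1,v_2)$ gives the required extension for free, and the rest is an assembly of Pick's theorem, Lemma \ref{lem:extendstabobj}, and the induction.
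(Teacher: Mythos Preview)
Your proposal matches the paper's proof essentially step for step: both argue by induction on $|\lambda(v)|$ (the paper phrases it as a minimal-counterexample argument), use hypothesis (c) to produce $v_1,v_2\in S$ with $|\lambda(v_i)|<|\lambda(v)|$, invoke the inductive hypothesis to obtain stable $E_i\in M^s_\sigma(v_i)$, use the global-dimension bound from (b) to restrict the degrees $m$ for which $\Hom(E_1,E_2[m])\neq 0$, deduce $\hom(E_1,E_2[1])>0$ from $\chi(v_1,v_2)<0$, and finish with Lemma~\ref{lem:extendstabobj}.

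One small caution: your identity $\phi_\sigma(E_1)-\phi_\sigma(E_2)=\delta(\lambda(v))$ is not literally correct, since $\delta$ is computed with the Euclidean metric on $\Z^{\oplus 2}$ while the actual phase gap is measured via the central charge $Z\colon\Lambda\hookrightarrow\C$, which need not be angle-preserving. The paper is correspondingly more cautious: it only asserts $\Hom(E_1,E_2[m])=0$ for $m\leq -1$ and $m\geq 3$, and keeps the (harmless) term $\hom(E_1,E_2)$ in the Euler sum rather than claiming it vanishes. That weaker range is already enough to force $\hom(E_1,E_2[1])>0$ from $\chi(v_1,v_2)<0$, so your overall strategy is sound once you relax this one overclaim.
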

\begin{proof}
    Suppose that there exists $v\in S$ such that $M^s_\sigma(v)=\emptyset$. As the values of $|\lambda(-)|$ are discrete, we may assume that $v$ has the smallest $|\lambda(-)|$ among all such vectors. 

    By \emph{(a)}, $v\notin S_0$. Let $v_1$ and $v_2$ be the two characters as in assumption \emph{(c)}. In particular, $|\lambda(v_i)|<|\lambda(v)|$. By the minimality assumption on $|\lambda(v)|$, $M^s_\sigma(v_i)\neq \emptyset$. 
    
    Let $E_i$ be an object in $M^s_\sigma(v_i)$, then by \emph{(b)}, $\Hom(E_1,E_2[m])= 0$ when $m\geq 3$ or $\leq -1$. It follows that \[0>\chi(v_1,v_2)=\hom(E_1,E_2)-\hom(E_1,E_2[1])+\hom(E_1,E_2[2]).\]
    Hence, $\hom(E_1,E_2[1])\neq 0$. Choosing any $0\neq f\in \Hom(E_1,E_2[1])$, by Lemma \ref{lem:extendstabobj}, the object $E_f:= \Cone(E_1\xrightarrow[]{f}E_2[1])[-1]$ is $\sigma$-stable with character $v$, which contradicts $M^s_\sigma(v)=\emptyset$.

    Therefore, $M^s_\sigma(v)\neq \emptyset$ for any $v\in S$.
\end{proof}

To illustrate how to apply this result, we use it to prove the following classical statement.
\begin{Ex}[Non-emptiness of the moduli space in the curve case]\label{eg:nonempDbC}
    Let $C$ be a smooth projective curve with genus $g\geq 1$. Let $\sigma=(\Coh(C),Z=-\deg+i\rk)$ be a stability condition on $\Db(C)$, which coincides with slope stability for coherent sheaves. Then for every primitive character $v\in \kn(C)$, the moduli space $M^s_\sigma(v)\neq \emptyset$. 
\end{Ex}
\begin{proof}
   Apply Proposition \ref{prop:existencegeneralcase} by letting $S_0=\{(\rk,\deg)\mid (\rk,\deg)=(0,\pm 1)\text{ or }|\rk|=1\}$ and $S=\{$all primitive characters in $\kn(C)\}$. Assumption \emph{(a)} and \emph{(b)} hold automatically. For every $v=(r,d)\in S\setminus S_0$, we may assume that $r\geq 2$. Choose $v_i=(r_i,d_i)$ accordingly, then $\frac{d_1}{r_1}>\frac{d_2}{r_2}$. By Riemann--Roch, we have $\chi(v_1,v_2)=(r_1d_2-r_2d_1)-(g-1)r_1r_2<0$. So Assumption \emph{(c)} holds. By Proposition \ref{prop:existencegeneralcase}, the statement holds.
\end{proof}

For the main cases we consider in this paper, the Kuznetsov components of Fano threefolds, the following extra assumptions are satisfied.

\begin{Asp}\label{rem:1}
     In addition to Assumption \ref{asp2}, we assume that the stability condition $\sigma$ satisfies the following conditions:
    \begin{enumerate}
        \item $\gd(\sigma)<\tfrac{5}{2}$;
        \item $\lambda\colon\kn(\cT)\xrightarrow{\cong}\Lambda \cong \Z^{\oplus 2}$ and we will identify  numerical characters as lattice points in $\Z^{\oplus 2}$;
        \item $\chi(v,v)\leq 0$ for every $v\in\Kn(\cT)$;
        \item there exists $N_{\chi}\in\Z$, depending on the Euler form $\chi$ on $\Kn(\cT)$, such that for every primitive $v\in\Kn(\cT)$ with $\chi(v,v)< -N_\chi$ and $|v|>1$, one has $\chi(v_+,v_-)<0$.
    \end{enumerate} 
\end{Asp}

The computation of $N_\chi$ for the Kuznetsov components of various Fano threefolds is included in Appendix \ref{app:bounds}. As we will see in Section \ref{sec:nonempty_fano3}, the following corollary reduces the non-emptiness problem to that of small dimensional moduli spaces.

\begin{Cor} \label{cor:existence}
    Let $\sigma$ be a stability condition on $\cT$ satisfying Assumption \ref{asp2} and \ref{rem:1}. Assume that $M^s_\sigma(v)\neq \emptyset$ for every primitive character $v$ satisfying either $\chi(v,v)\geq -N_\chi$ or $|v|=1$. Then the moduli space $M^s_\sigma(v)\neq \emptyset$ for every primitive character $v$. 
\end{Cor}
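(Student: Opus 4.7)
The plan is to deduce this immediately from Proposition \ref{prop:existencegeneralcase} by choosing
\[
S := \{v \in \Kn(\cT) \,:\, v \text{ is primitive}\}, \qquad
S_0 := \{v \in S \,:\, \chi(v,v) \geq -N_\chi \text{ or } |\lambda(v)|=1\}.
\]
Since $\lambda \colon \Kn(\cT) \to \Lambda$ is an isomorphism by Assumption \ref{rem:1}(b), I can freely identify primitive characters with primitive lattice points in $\Z^{\oplus 2}$, and in particular lift the pair $(\lambda(v)_+, \lambda(v)_-)$ produced by Pick's theorem to a pair of characters in $\Kn(\cT)$.

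The verification of hypotheses (a) and (b) of Proposition \ref{prop:existencegeneralcase} is essentially automatic. Hypothesis (a) is nothing but the standing assumption of the corollary. For (b), given $v \in S \setminus S_0$, primitivity of $\lambda(v)$ and the bound $|\lambda(v)| > 1$ are built into the definitions of $S$ and $S_0$; the required inequality $\gd(\sigma) < 3 - \delta(\lambda(v))$ follows at once by combining $\gd(\sigma) < 5/2$ from Assumption \ref{rem:1}(a) with the uniform bound $\delta(\lambda(v)) \leq 1/2$ from Proposition and Definition \ref{pd:pick}.

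Hypothesis (c) is where the structure of Assumption \ref{rem:1} does the actual work. Given $v \in S \setminus S_0$, I would define $v_1, v_2 \in \Kn(\cT)$ as the $\lambda$-preimages of $\lambda(v)_+$ and $\lambda(v)_-$ respectively. Then $v_1 + v_2 = v$ by construction, and both $v_1, v_2$ lie in $S$ because the relation $\lambda(v)_+ \times \lambda(v)_- = 1$ from Pick's theorem forces each $\lambda(v)_\pm$ to be primitive. The remaining condition $\chi(v_1, v_2) < 0$ is precisely Assumption \ref{rem:1}(d), which applies because the condition $v \notin S_0$ simultaneously ensures $\chi(v,v) < -N_\chi$ and $|\lambda(v)| > 1$.

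There is no substantive obstacle here; the corollary is effectively a repackaging of Proposition \ref{prop:existencegeneralcase} under the hypotheses that streamline its application to Kuznetsov components of Fano threefolds. The only delicate point in setting things up is the bookkeeping around $S_0$: it must be wide enough to absorb the unit-length lattice points, where Pick's decomposition $v = v_+ + v_-$ in the required form does not exist, while remaining narrow enough that hypothesis (a) becomes a manageable finite check in the geometric applications of Section \ref{sec:nonempty_fano3}.
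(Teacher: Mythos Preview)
Your proof is correct and follows exactly the same approach as the paper: apply Proposition \ref{prop:existencegeneralcase} with $S$ the set of primitive characters and $S_0$ those satisfying $\chi(v,v)\geq -N_\chi$ or $|\lambda(v)|=1$, using $\gd(\sigma)<\tfrac{5}{2}$ together with $\delta(\lambda(v))\leq \tfrac{1}{2}$ for condition (b) and Assumption \ref{rem:1}(d) for condition (c). You have in fact spelled out more of the routine verifications than the paper does.
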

\begin{proof}
 Apply Proposition \ref{prop:existencegeneralcase} by letting $S$ be the set of all primitive characters and $S_0$ be the subset of primitive characters with $\chi(v,v)\geq -N_\chi$ or $|\lambda(v)|=1$. Note that the condition $\gd(\sigma)<3-\delta(\lambda(v))$ in assumption ({\em b}) holds automatically when $\gd(\sigma)<\tfrac{5}{2}$. It follows by Proposition \ref{prop:existencegeneralcase} that $M^s_\sigma(v)\neq \emptyset$ for every primitive $v$.
\end{proof}

Before ending this section, we note the following application on the stability manifolds.

\begin{Cor}\label{cor:stab}
    In the setting of Corollary \ref{cor:existence}, the set of stability conditions $\sigma\cdot\glt$ forms a connected component of the stability manifold $\Stab(\cT)$.   
\end{Cor}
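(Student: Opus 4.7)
My plan is to establish that the orbit $\sigma\cdot\glt$ is simultaneously open and closed in $\Stab(\cT)$; being connected, it is then a connected component.

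For openness, I would invoke that $\rk\Kn(\cT)=2$ by Assumption~\ref{rem:1}.(b), so Bridgeland's deformation theorem gives $\dim_\R\Stab(\cT)=4$ locally around $\sigma$. Since the $\glt$-action is free and $\dim_\R\glt=4$, the orbit is a $4$-dimensional smooth real submanifold of a $4$-dimensional real manifold, and therefore open.

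For closedness, suppose $\tau\in\overline{\sigma\cdot\glt}$ and write $\tau=\lim_n\sigma\cdot\tilde g_n$ with $\tilde g_n=(g_n,M_n)\in\glt$. If a subsequence of $\{\tilde g_n\}$ converges in $\glt$, the limit lies in $\sigma\cdot\glt$ and we are done. Otherwise the sequence escapes every compact subset of $\glt$, and after extraction $M_n^{-1}$ converges to a matrix $L$ of rank at most $1$ (norms cannot blow up since the limiting central charge $Z_\tau$ is finite). Then the limit central charge $Z_\tau=L\circ Z$ has a nontrivial kernel on $\Kn(\cT)_\R$, and since the lattice $\Lambda\cong\Z^{\oplus 2}$ from Assumption~\ref{rem:1}.(b) is primitive inside $\C$, we may pick a primitive $v\in\Kn(\cT)$ with $Z_\tau(v)=0$. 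By Corollary~\ref{cor:existence} there exists a $\sigma$-stable object $E$ of class $v$, and since the $\glt$-action preserves stability (Remark~\ref{not:glt}), $E$ remains $\sigma\cdot\tilde g_n$-stable for every $n$. The support-property quadratic form $Q$ of $\sigma$ satisfies $Q(v)\ge 0$, and since the action leaves $\Kn(\cT)$ and $Q$ unchanged, the same $Q$ witnesses the support property along the entire orbit $\sigma\cdot\glt$; passing to the limit, a support-property form for $\tau$ would need to be negative definite on $\ker Z_\tau$, contradicting $0\neq v\in\ker Z_\tau$ together with $Q(v)\ge 0$.

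The main obstacle will be making precise the last continuity step, i.e., that a support-property quadratic form deforms continuously across the boundary point $\tau$ of the orbit, so that the non-existence of a nonzero class in $\ker Z_\tau$ with $Q\ge 0$ is genuinely forced; once this is in place, the decisive input is Corollary~\ref{cor:existence}, whose guaranteed stable objects of \emph{every} primitive class are precisely what prevents the central charge from developing a kernel along the boundary of $\sigma\cdot\glt$ and thereby closes the orbit.
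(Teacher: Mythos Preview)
Your overall strategy matches the paper's: show the orbit is open (by dimension count) and closed (by arguing a boundary point $\tau$ would have degenerate central charge, then derive a contradiction from the existence of stable objects of every primitive class). The openness argument is fine.

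There are two genuine gaps in the closedness part. First, the claim that you ``may pick a primitive $v\in\Kn(\cT)$ with $Z_\tau(v)=0$'' is not always valid. When the limiting matrix $L$ has rank exactly $1$, the kernel of $Z_\tau$ in $\Kn(\cT)\otimes\R$ is a real line which need not contain any lattice point (irrational slope). The paper explicitly splits into two cases: either $\ker Z_\tau$ contains a primitive lattice vector, or $0$ is an accumulation point of $Z_\tau(\Kn(\cT))\setminus\{0\}$. In the second case one works with a sequence of primitive classes $v_n$ with $|Z_\tau(v_n)|\to 0$ rather than a single $v$ in the kernel.

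Second, the step you flag as the ``main obstacle'' is a real gap, but the remedy is not continuity of quadratic forms. The form $Q$ from $\sigma$ cannot serve as a support form for $\tau$: you already have $Q(v)\ge 0$, so $Q$ is certainly \emph{not} negative definite on $\ker Z_\tau\supset\R v$. What you actually need is that your $\sigma$-stable object $E$ of class $v$ is \emph{$\tau$-semistable}. This follows from the standard fact that the phase functions $\sigma\mapsto\phi^\pm_\sigma(E)$ are continuous on $\Stab(\cT)$: since $E$ is semistable along the entire orbit, $\phi^+=\phi^-$ there, and hence also at the limit $\tau$. Once $E$ is $\tau$-semistable, $Z_\tau([E])=0$ directly contradicts that $Z_\tau$ is a stability function (no quadratic form needed); in the irrational-slope case the same argument gives $\tau$-semistable objects $E_n$ with $|Z_\tau([E_n])|\to 0$, violating the support property. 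The paper's terse step ``$\ms(v)\neq\emptyset$ implies $M_{\sigma'}(v)\neq\emptyset$'' is exactly this closedness of semistability.
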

\begin{proof}
   As the dimension of $\Stab(\cT)$ is $\rk(\Kn(\cT))$, which is $2$ by Assumption \ref{rem:1}.(b), the subset $\sigma\cdot\glt$ is open in $\Stab(\cT)$.  Suppose that $\sigma\cdot\glt$ is not closed, and let $\sigma'=(Z',\cP')$ be a stability condition on its boundary. Then its central charge $Z'$ is degenerate, in other words, the image of $Z'$ is contained in a real line in $\C$. In this case, either $\ker Z'$ contains a primitive $v\in\Kn(\cT)$, or  $0\in\overline{\{Z'(\Kn(\cT))\}\setminus \{0\}}$. Note that for every primitive $v\in\Kn(\cT)$, the space $\ms(v)\neq \emptyset$. It follows that $M_{\sigma'}(v)\neq \emptyset$. Therefore, the central charge $Z'$ cannot satisfy the support property. This leads to the contradiction. So $ \sigma\cdot\glt$ is a connected component of the stability manifold.
\end{proof}

\section{Kuznetsov components of Fano threefolds}\label{sec3}
In this section, we apply the non-emptiness criteria in the previous section to the Kuznetsov component of a Fano threefold, proving Theorem \ref{thm:existintro}, Corollary \ref{cor:stabmfdofku_intro} and Theorem \ref{thm:Y3moduli}.(1).

\subsection{Review: Stability conditions on Kuznetsov components of Fano threefolds}\label{sec:kuz}

\begin{Def}[Semiorthogonal decomposition]
Let $\cT$ be a $\C$-linear triangulated category. A \textit{semiorthogonal decomposition} (SOD) for $\TT$, denoted by $\TT = \langle \TT_1, \dots, \TT_m \rangle$, is a sequence of full triangulated subcategories $\TT_1, \dots, \TT_m$ of $\TT$ such that: 
	\begin{enumerate}
		\item $\Hom_{\TT}(E, F) = 0$, for all $E \in \TT_i$, $F \in \TT_j$ and $i>j$;	\item For any non-zero object $E \in \TT$, there is a sequence of morphisms
		\begin{equation*}  
		0 = E_m \to E_{m-1} \to \cdots \to E_1 \to E_0 = E,
		\end{equation*}
		such that $\mathrm{Cone}(E_i \to E_{i-1}) \in \TT_i$ for $1 \leq i \leq m$.  
	\end{enumerate}
\end{Def}

\begin{Def}[Exceptional collection]
An object $E\in\TT$ is \textit{exceptional} if $\Hom_{\TT}(E,E[k])=0$ for all integers $k \neq0$, and $\Hom_{\TT}(E,E)= \C$. An \textit{exceptional collection} is a collection of exceptional objects $E_1,\dots,E_m$ in $\TT$ such that $\Hom_{\TT}(E_i,E_j[k])=0$ for all $k\in\Z$ and $1\leq j<i\leq m$.
\end{Def}

Assume that $\TT$ is a proper $\C$-linear triangulated category, in other words, for every $A$, $B \in \TT$ the vector space $\oplus_i \Hom(A, B[i])$ is finite-dimensional. Given an exceptional collection $E_1,\dots,E_m$ in $\TT$, we have the semiorthogonal decomposition
\[\TT=\langle \cK, E_1,\dots,E_m \rangle,\]
where $\cK:=\langle E_1,\dots,E_m \rangle^{\perp}= \lbrace F \in \TT\;|\; \Hom_{\TT}(E_i, F[k])=0  \text{ for all $k\in\Z$ and } i=1, \dots, m\rbrace$.\\

\noindent Let $X$ be a smooth Fano threefold of Picard rank one. Let $H$ be the primitive ample divisor. In particular, the Picard group $\mathrm {Pic}(X)=\Z H$. Denote by $K_X$ the canonical divisor of $X$.  The \emph{index} $i_X$ is the integer satisfying $K_X=-i_XH$. The \emph{degree} of $X$ is defined as $d_X:=H^3$.

Deformation types of Fano threefolds of Picard rank one are classified by their index and degree \cite{Is, MU}. More precisely, the index $i_X\in\{1,2,3,4\}$. When $i_X=4$, the threefold is the projective space; when $i_X=3$, the threefold is the quadric. In each of these two cases, the category  $\Db(X)$ admits a full exceptional collection.\\

\noindent When $i_X=2$, there are five deformation types classified by $d_X\in\{1,2,3,4,5\}$. The Kuznetsov component has a straightforward definition in this case.
\begin{Def}[Kuznetsov component for index $2$ case]\label{def:kuzind2}
    Let $X$ be a smooth Fano threefold with $\Pic(X)=\Z H$ and $K_X=-2H$. The \emph{Kuznetsov component} $\Ku(X)$ is defined by the semiorthogonal decomposition
    \[\Db(X)=\langle \Ku(X),\cO_X,\cO_X(H)\rangle.\]
\end{Def}

\noindent When $i_X=1$, there are ten deformation types classified by $d_X\in \{2,4,6,8,10,12,14, 16,18,22\}$. The degree can be rewritten as $d_X=2g_X-2$. Here $g_X$ is called the \emph{genus} of $X$. In this case, the definition of the Kuznetsov component relies on the existence of certain exceptional vector bundles. 

\begin{Thm}[{\cite[Theorem 6.2, Proposition and Definition 6.3]{BLMS:kuzcomponent}}, \cite{BKM}]\label{thm:KuofF3}
    Let $X$ be a Fano threefold of Picard rank $1$, index $1$,
and even genus $g\geq 6$ (equivalently, $d\in\{10,14,18,22\}$) over any algebraically closed field. Then there exists a stable vector bundle $\cE_2$ on $X$ of rank $2$, with $\ch_1(\cE_2)=-H$ and $\ch_2(\cE_2)=(\tfrac{g}{2}-2)L$, where $L$ is the class of a line on $X$. The pair $(\cE_2,\cO_X)$ is exceptional, and the \emph{Kuznetsov component} $\Ku(X)$ of $X$ is defined by the semiorthogonal decomposition \[\Db(X)=\langle \Ku(X),\cE_2,\cO_X\rangle.\]
\end{Thm}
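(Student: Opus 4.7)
My plan is to argue case by case along the four deformation classes with $g\in\{6,8,10,12\}$ (equivalently $d\in\{10,14,18,22\}$), using Mukai's homogeneous-space models. In each case $X$ is realised inside a homogeneous variety $G$ (a Grassmannian $\Gr(2,n)$ for $g=6,8$, an orthogonal/adjoint variety for $g=10$, and one of Mukai's models for $g=12$), on which there sits a distinguished rank-$2$ tautological or spinor bundle $\cU$ with $c_1(\cU)=-H_G$. I would set $\cE_2:=\cU|_X$ and verify the Chern class formula, slope stability, and exceptionality case by case.

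The first Chern class $\ch_1(\cE_2)=-H$ is immediate from pullback, and $\ch_2(\cE_2)=(\tfrac{g}{2}-2)L$ either follows from direct intersection on $G$ combined with the fundamental class of $X\subset G$, or is forced by imposing $\chi(\cO_X,\cE_2)=0$ (a necessary consequence of right-orthogonality) through Hirzebruch--Riemann--Roch on $X$. Exceptionality is then checked uniformly using the identity $\cE_2^\vee\cong\cE_2(H)$ (from $\det\cE_2=\cO_X(-H)$ and $\rk\cE_2=2$), which splits $\cE_2^\vee\otimes\cE_2$ and reduces both $\Ext^*(\cE_2,\cE_2)$ and $H^*(X,\cE_2)$ to vanishing statements for explicit symmetric powers and line bundles on $G$; Borel--Weil--Bott on $G$ combined with the Koszul resolution of $X\subset G$ and Kodaira-type vanishing then delivers these vanishings. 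Slope stability of $\cE_2$ follows because $H^0(X,\cE_2)=0$ precludes any destabilising inclusion $\cO_X(kH)\hookrightarrow\cE_2$ with $k\geq 0$. Once $(\cE_2,\cO_X)$ is an exceptional pair, the SOD is automatic by setting $\Ku(X):=\langle\cE_2,\cO_X\rangle^\perp$.

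The hardest case is $g=12$, where $X_{22}$ does not embed into a classical Grassmannian as straightforwardly, so $\cE_2$ must be produced via Mukai's construction of $X_{22}$ as a zero locus of a section inside $\Gr(3,7)$ (or via its moduli-theoretic description), and the exceptionality calculation requires separate geometric input there. A second technical hurdle is the clause ``over any algebraically closed field'': the Borel--Weil--Bott vanishings I rely on need a characteristic-free replacement on the relevant Grassmannians (available but more delicate in positive characteristic), and flat-lifting arguments in the spirit of \cite{BKM} are needed to transport characteristic-zero constructions of $\cE_2$ to characteristic~$p$.
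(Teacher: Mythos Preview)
The paper does not supply its own proof of this statement: it is quoted verbatim as a known result, with attribution to \cite[Theorem 6.2, Proposition and Definition 6.3]{BLMS:kuzcomponent} and \cite{BKM}, and is immediately followed by further cited background. There is therefore no argument in the paper to compare your proposal against.

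That said, your sketch is broadly in line with how these results are established in the cited sources: $\cE_2$ is indeed obtained by restricting the tautological rank-$2$ bundle from the ambient Mukai homogeneous space, and exceptionality is checked via Borel--Weil--Bott plus Koszul on the ambient variety. Two small corrections: for $g=10$ the ambient variety is the adjoint variety of $G_2$ (not an orthogonal Grassmannian), and for $g=6$ one must also handle the extra quadric cut; your remark that the characteristic-free clause requires the separate input of \cite{BKM} is accurate and is precisely why that reference appears. Since the paper treats this as imported background, a full case-by-case verification here would be out of scope.
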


When  the genus of $X$ is $9$ (resp. $7$), there exists a rank $3$ (resp. $5$) stable bundle $\cE_3$ (resp. $\cE_5$) such that $(\cE_3,\cO_X)$ (resp. $(\cE_5,\cO_X)$) is exceptional. The \emph{Kuznetsov component} $\Ku(X)$ of $X$ is defined as $\langle \cE_*,\cO_X\rangle^\perp$. By \cite[Section 6]{Kuznetsov:Hyperplane}, the triangulated category $\Ku(X)$ is equivalent to $\Db(C_3)$ (resp. $\Db(C_7)$) for some smooth projective curve $C_g$ with genus $g$.\\

Stability conditions have been constructed on these Kuznetsov components. Here we only recall the following theorem, some details of the construction can be found in Appendix \ref{app:small_moduli}.

\begin{Thm}[{\cite[Theorem 1.1]{BLMS:kuzcomponent}}]\label{thm:stabonku}
    Let $X$ be a smooth Fano threefold over an algebraic closed field. Assume that $X$ is of index two, or index one and genus $\geq 6$, then its Kuznetsov component $\Ku(X)$ admits a Bridgeland stability condition $\sigma=(\cA,Z)$ with central charge satisfying Assumption \ref{asp2}.
\end{Thm}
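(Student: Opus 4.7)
The plan is to follow the strategy of \cite{BLMS:kuzcomponent}: construct the stability condition on $\Ku(X)$ as the restriction of a suitable weak stability condition on the ambient category $\Db(X)$, then verify the support property and the discreteness of the central charge.

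First I would recall the family of weak stability conditions $\sigma_{\alpha,\beta} = (\Coh^\beta(X), Z_{\alpha,\beta})$ on $\Db(X)$ constructed via tilting. The heart $\Coh^\beta(X)$ is obtained by tilting $\Coh(X)$ at the torsion pair determined by slope $\beta$ with respect to $H$, and the central charge has the standard form
\[
Z_{\alpha,\beta}(E) = -\ch^\beta_2(E) \cdot H + \tfrac{\alpha^2}{2} H^3\, \ch^\beta_0(E) + i\, H^2 \ch^\beta_1(E).
\]
For every $(\alpha,\beta)$ with $\alpha>0$ in the standard region, $\sigma_{\alpha,\beta}$ is a weak stability condition on $\Db(X)$ satisfying the Bogomolov-type quadratic form inequality.

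Next, I would locate a pair $(\alpha,\beta)$ (and possibly perform one further tilt) such that the exceptional objects generating $\Ku(X)^{\perp}$ (namely $\cO_X, \cO_X(H)$ in the index two case, or $\cE_2, \cO_X$ in the index one even-genus case) all lie in the same $\sigma_{\alpha,\beta}$-slice, and moreover any $\sigma_{\alpha,\beta}$-semistable object with the same phase as these exceptional generators is isomorphic to a direct sum of them. This ensures that the intersection
\[
\cA := \Ku(X) \cap \cP_{\sigma_{\alpha,\beta}}\bigl((\mu_0, \mu_0 + 1]\bigr),
\]
for the appropriate phase $\mu_0$, is the heart of a bounded t-structure on $\Ku(X)$. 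The restriction of the rotated central charge to $\Kn(\Ku(X))$ is then a stability function on $\cA$: the key point is that the imaginary part of the rotated $Z$ vanishes exactly on objects with the same phase as the exceptional generators, which are orthogonal to $\Ku(X)$ and hence absent in $\cA$. The Harder--Narasimhan property for $(\cA, Z|_{\Ku(X)})$ follows from that of $\sigma_{\alpha,\beta}$ on $\Db(X)$ together with the fact that $\Ku(X)$ is closed under HN filtrations in the rotated slicing.

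For the support property, I would take the pullback of the BMS-type quadratic form from $\Kn(\Db(X))$ to $\Kn(\Ku(X))$ via the inclusion. Since objects in $\Ku(X)$ have vanishing Euler pairing with the exceptional generators, the pullback form remains negative definite on the kernel of $Z|_{\Kn(\Ku(X))}$, and it is non-negative on $\sigma$-semistable objects in $\Ku(X)$ because those are also $\sigma_{\alpha,\beta}$-semistable (or extensions of such) in the ambient category. Finally, Assumption \ref{asp2} (discreteness of $Z$) holds because in all cases at hand $\Kn(\Ku(X))$ has rank two and is generated by classes whose $Z$-values span a rank two $\Z$-sublattice $\Lambda \subset \C$; discreteness then follows from the finite generation of $\Kn(\Ku(X))$.

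The main obstacle in this plan is the existence of the correct tilting pair $(\alpha,\beta)$ for which the exceptional generators assemble into one slice and the resulting heart on $\Ku(X)$ has a well-defined stability function; this requires the detailed numerical analysis carried out in \cite{BLMS:kuzcomponent}, case by case according to the index and degree of $X$, and depends on the exceptional bundles $\cE_2$ (or $\cE_3, \cE_5$) constructed in Theorem \ref{thm:KuofF3}.
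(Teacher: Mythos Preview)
The paper does not give its own proof of this theorem; it is stated as a citation from \cite[Theorem 1.1]{BLMS:kuzcomponent}, and the construction is only recalled in Appendix~\ref{app:small_moduli} for use in other arguments. Your sketch is a reasonable outline of the BLMS strategy and is consistent with what the paper recalls.

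Two small imprecisions are worth flagging. First, the second tilt is not optional: as the paper's appendix makes explicit, one must pass from $\sigma_{\alpha,\beta}$ to $\sigma^0_{\alpha,\beta}=(\Coh^0_{\alpha,\beta}(X),Z^0_{\alpha,\beta})$ before restricting to $\Ku(X)$; the heart $\cA_\alpha$ is $\Coh^0_{\alpha,\beta}(X)\cap\Ku(X)$, not a slice of the single-tilt heart. Second, the exceptional generators need not lie in a single slice; the actual requirement in \cite{BLMS:kuzcomponent} is that they are semistable in the doubly-tilted heart with phases arranged so that the restriction criterion (their Proposition~5.1) applies. Apart from these points, your description of the support property via the pullback quadratic form and of discreteness via the rank-two lattice $\Kn(\Ku(X))$ matches the approach, and the paper's Remark~\ref{rem:stabonku}(i) confirms the factorization of $Z$ through a rank-two lattice $\Lambda$ as in Assumption~\ref{asp2}.
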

\begin{Rem}[Kuznetsov component in the low genus case]\label{rem:kuoflowgenusfano3}
    When the Fano threefold is of index $1$ and $g\leq 5$ (in other words, $d\in\{2,4,6,8\}$), the choice of the Kuznetsov component is more complicated. In some cases, there exists a stable exceptional vector bundle $\cE$ in $\cO_X^\perp$, but it is unknown if there exists a stability condition on $\langle \cE,\cO\rangle^\perp$. Indeed one can show that there is no Serre invariant stability condition in these cases \cite[Section 1.4]{KuzPerry_serre}.
    
    Alternatively, in these cases the Kuznetsov components can be simply defined as $\cO_X^\perp$, and \cite[Theorem 1.1]{BLMS:kuzcomponent} also constructs stability conditions on these categories. Here the non-emptiness problem for the moduli spaces is more complicated, and we leave it for a future project.
\end{Rem}

\subsection{Non-emptiness of the moduli spaces}\label{sec:nonempty_fano3}
 
\begin{Thm}\label{thm:existinKurank2}
    Let $X$ be a smooth Fano threefold with Picard rank $1$, index $1$, and genus $\in \{6,7,8,9,10\}$ or index $2$, and degree $\leq 4$. Then for every nonzero character $v\in\kn(\Ku(X))$, the moduli space $M_\sigma(v)$ of $\sigma$-semistable objects of class $v$ is non-empty.
\end{Thm}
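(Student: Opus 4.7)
The plan is to reduce the problem to a finite base-case check via Corollary~\ref{cor:existence}, and then verify those base cases geometrically, threefold-by-threefold. The first step is to dispose of the cases in which $\Ku(X)$ is already equivalent to $\Db(C_g)$ for a smooth projective curve $C_g$ of genus $g \geq 2$: this happens for index $1$ with degree $12, 16, 18$ and for index $2$ with degree $4$. In these cases the stability condition $\sigma$ corresponds (up to the $\glt$-action of Remark~\ref{not:glt}) to the slope stability condition on $\Db(C_g)$, and Example~\ref{eg:nonempDbC} gives non-emptiness of $M^s_\sigma(v)$ for every primitive $v \in \Kn(\Ku(X))$.

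For the remaining (genuinely new) cases --- the cubic threefold, the index~$1$ threefolds of genus $6$ and $8$, the quartic double solid, and the double Veronese cone --- I would verify the hypotheses of Corollary~\ref{cor:existence} as follows. Assumption~\ref{rem:1}.(a) ($\gd(\sigma) < 5/2$) and Assumption~\ref{rem:1}.(b) (rank~$2$ lattice identification) both follow from the construction of $\sigma$ recalled in Section~\ref{sec:kuz} and Appendix~\ref{app:small_moduli}. Assumption~\ref{rem:1}.(c) ($\chi(v,v) \leq 0$) can be read off the explicit Euler form on $\Kn(\Ku(X))$ for each deformation type. Assumption~\ref{rem:1}.(d) is established in the appendix (cited as Appendix~\ref{app:bounds}), yielding an explicit bound $N_\chi$. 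This reduces everything to producing, for each of the finitely many primitive characters $v$ satisfying $\chi(v,v) \geq -N_\chi$ or $|v|=1$, a single $\sigma$-stable object.

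Up to Serre-functor translates (which preserve $\sigma$-stability by \cite{PY}), these base cases correspond to very small-dimensional moduli whose objects come from concrete geometry: projections into $\Ku(X)$ of structure sheaves and ideal sheaves of lines, conics and small-degree curves on $X$, as well as shifts of the exceptional bundle $\cE_2$ (or $\cE_3$, $\cE_5$ in the appropriate genus) appearing in Theorem~\ref{thm:KuofF3}. For each case I would identify the candidate object, compute its class in $\Kn(\Ku(X))$, and check $\sigma$-stability using the weak stability conditions on $\Db(X)$ from which $\sigma$ is tilted. Serre-invariance of $\sigma$ provides additional leverage: a destabilizer of a Serre-invariant stable object must occur in a $\mathsf{S}$-invariant way, which in the low-dimensional regime is easy to rule out.

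Once primitive non-emptiness is known, the final step handles non-primitive and semistable classes. If $v = k v'$ with $v' \in \Kn(\Ku(X))$ primitive and $k \geq 1$, then for any $E \in M^s_\sigma(v')$ the object $E^{\oplus k}$ is strictly $\sigma$-semistable with class $v$, showing $M_\sigma(v) \neq \emptyset$. The main obstacle I expect is the explicit base-case verification for the cubic threefold and the genus~$8$ Fano, where the relevant small moduli involve Brill--Noether-type loci whose $\sigma$-stability is not entirely automatic; there one must combine the tilt-stability arguments from \cite{BLMS:kuzcomponent} with the Serre-invariance of $\sigma$ to rule out potential destabilizing subobjects.
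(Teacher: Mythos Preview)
Your proposal follows the same architecture as the paper's proof: dispose of the curve cases first, then invoke Corollary~\ref{cor:existence} for the rest, checking Assumption~\ref{rem:1} and a finite list of base cases, and conclude for non-primitive $v$ via direct sums. This is correct.

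One point of calibration is off, however. You flag the cubic threefold and the genus~$8$ case as the main obstacle, but in fact these are the \emph{easiest} non-curve cases. There the Serre functor satisfies $\mathsf{S}^3=[5]$, the Euler form is (in the notation of Lemma~\ref{lem:22matrix}) of type $I_+$, and the bound is $N_\chi=1$; so the only base case is $\chi(v,v)=-1$, handled in one stroke by the $\sigma$-stability of the ideal sheaf of a line (all such classes being permuted by $\mathsf{S}$). The genuinely delicate cases are the quartic double solid / Gushel--Mukai threefold ($\mathsf{S}^2=[4]$, bound $N_\chi=2$, requiring two base classes including one covered by Lemma~\ref{lemma_lastcaseGM3}) and especially the double Veronese cone ($\mathsf{S}^3=[7]$, Euler form $I_-$, bound $N_\chi=3$), where the class with $\chi(v,v)=-3$ needs a separate geometric argument (Lemma~\ref{lemma_lastcasedVc}, producing a conic whose twisted ideal sheaf lies in $\Ku(X)$ and is $\sigma$-stable). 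Also, a small slip: the exceptional bundle $\cE_2$ is by construction orthogonal to $\Ku(X)$, so its shifts do not furnish stable objects there; the base-case objects are ideal sheaves of low-degree curves and their projections or Serre translates, as the paper uses.
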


The proof of Theorem \ref{thm:existinKurank2} consists in checking that the stability condition $\sigma$ satisfies the conditions in Assumption \ref{rem:1} and Corollary \ref{cor:existence}. All assumptions are straightforward to check, except for the bound $N_\chi$ and some of the non-emptiness of $\ms(v)$ for $v$ with $\chi(v,v)\geq -N_\chi$, which follows from a case-by-case study. The computation of $N_\chi$ is completely elementary and is included in Appendix \ref{app:bounds} for completeness. The non-emptiness of several small dimensional moduli spaces that is not covered in the literature is included in Appendix \ref{app:small_moduli}.

\begin{proof}[Proof of Theorem \ref{thm:existinKurank2}]
    First note that it is enough to show the non-emptiness for primitive $v$. Indeed, if $\ms(v)\neq \emptyset$, then direct sums of objects in $\ms(v)$ gives (strictly) semistable objects in $M_\sigma(mv)$ for $m>1$.\\

    When the Fano threefold is of index $2$ degree $4$ (resp. index $1$ genus $7,9,10$), the Kuznetsov component is known to be equivalent to $\Db(C_g)$ for some curve with genus $g=2$ (resp. $7,3,2$) \cite[Tables in Section 6]{BLMS:kuzcomponent}. By \cite{Macri:curves}, the Bridgeland stability condition $\sigma$ on $\Db(C_g)$, up to a $\glt$-action which does not affect the moduli space, is the slope stability condition. The statement follows from the classical result or Example \ref{eg:nonempDbC}. \\

    When the Fano threefold is of index $2$ and degree $3$ (or index $1$ and genus $8$), the Serre functor $\mathsf S$ on the Kuznetsov component satisfies $\mathsf S^3=[5]$. The stability condition $\sigma$ is Serre invariant by \cite[Proposition 5.7]{PY}, so it has global dimension $<2$.  We apply Lemma \ref{lem:22matrix} by letting $Q$ be the Euler pairing $\chi(-,-)$ and $D$ to be the isometry given by the Serre functor. As $\mathsf S^6=[10]$, which acts trivially on $\kn(\Ku(X))$, the Euler pairing is of the form  $I_\pm$ as that in Lemma \ref{lem:22matrix}. As $\phi_\sigma(\mathsf S \bx)-\phi_\sigma(\bx)\in(1,2)$, the ordered set $\{\bx,-\mathsf S\bx\}$ forms a right-hand oriented $\Z$-linear basis of $\Lambda$. So the Euler form is of the form $I_+$.
    
    By Lemma \ref{lem:22matrix} and Corollary \ref{cor:existence}, we only need to check that  for all primitive character $v\in \kn(\Ku(X))$ with $\chi(v,v)\geq  -1$, the space $M^s_\sigma(v)\neq \emptyset$. The only case is when $\chi(v,v)=  -1$. The Serre  functor $\mathsf S$ permutes all such $v$'s. As the ideal sheaf of a line is $\sigma$-stable \cite{PY}, we have $M^s_\sigma(v)\neq \emptyset$. The statement follows in this case.\\

    When the Fano threefold is of index $2$ and degree $2$ (or index $1$ and genus $6$), the Serre functor $\mathsf S$ on the Kuznetsov component satisfies $\mathsf S^2=[4]$. The stability condition $\sigma$ is Serre invariant by \cite[Proposition 5.7]{PY} and \cite{PR}, so it is with global dimension $2$. By \cite[Section 3]{Kuz:Fano3fold}, the Euler pairing under certain basis is of the form $\begin{pmatrix}
        -1 & -1 \\ -1 & -2
    \end{pmatrix}$. So there exist $\Z$-linear independent $v_1, v_2\in\kn(\Ku(X))$ with $\chi(v_i,v_j)=-\delta_{ij}$. Using $\{v_1,v_2\}$ as basis of $\kn(\Ku(X))$, it is clear that for every primitive character $v$ with $\chi(v,v)<-2$, we have $\chi(v_+,v_-)=-v_+\cdot v_-<0$. 
    
    By  Corollary \ref{cor:existence}, we only need to check that  for all primitive $v\in \kn(\Ku(X))$ with $\chi(v,v)\geq  -2$, the space $M^s_\sigma(v)\neq \emptyset$. This follows from the more general results in \cite{PPZ_Enriques}, but in fact the non-emptiness for these small dimensional moduli spaces is known by previous works. 
    
    More precisely, if $X$ has index $2$ and degree $2$, then the non-emptiness of $\ms(v)$ when $\chi(v,v)=-1$, resp.\ $\chi(v,v)=-2$, follows from \cite[Proposition 4.1]{PY}, resp.\ \cite[Proposition 4.7]{APR}, together with the fact that the functor $\mathbb{L}_{\cO_X}(- \otimes \cO_X(H))$ preserves the stability by \cite[Proposition 5.7]{PY}. 
    
    If $X$ has index $1$ and genus $6$, then $\ms(v) \neq \emptyset$ when $\chi(v,v)=-1$ by \cite[Proposition 7.11, Theorem 8.9]{JLLZ}. Moreover, there are two classes in $\kn(\Ku(X))$ satisfying $\chi(v,v)=-2$ with Chern character 
    \[1 - \frac{3}{10}H^2+ \frac{1}{2}P \quad \text{ and } \quad 3-2H+\frac{3}{10}H^2+\frac{7}{6}P.\]
    In the first case, the non-emptiness of the corresponding moduli space follows from \cite[Lemma A.7]{JLLZ}, using that this class corresponds to the class of the ideal sheaf of a general twisted cubic curve in $X$. The remaining case is discussed in Lemma \ref{lemma_lastcaseGM3}.\\

     When the Fano threefold is of index $2$ and degree $1$, the Serre functor $\mathsf S$ on the Kuznetsov component satisfies $\mathsf S^3=[7]$. The stability condition $\sigma$ is Serre invariant \cite[Proposition 5.7]{PY}, so it is with global dimension $<\tfrac{5}{2}$.  By Lemma \ref{lem:22matrix}, the Euler pairing is of the form  $I_\pm$. As $\phi_\sigma(\mathsf S \bx)-\phi_\sigma(\bx)\in(2,3)$, the ordered set $\{\bx,\mathsf S\bx\}$ forms a right-hand oriented $\Z$-linear basis of $\Lambda$. So the Euler form is of the form $I_-$.
    
    By Lemma \ref{lem:22matrix} and Corollary \ref{cor:existence}, we only need to check that  for all primitive character $v\in \kn(\Ku(X))$ with $\chi(v,v)\geq  -3$, the space $M^s_\sigma(v)\neq \emptyset$. The only cases are when $\chi(v,v)=  -1$ or $-3$. The functors $\mathsf S$ permutes all $v$ with $\chi(v,v)=-1$. As the ideal sheaf of a line is $\sigma$-stable, the space $M^s_\sigma(v)\neq \emptyset$ for every $v$ with $\chi(v,v)=-1$. The functor $\mathsf S$ permutes all $v$ with $\chi(v,v)=-3$. Thus the statement follows from Lemma \ref{lemma_lastcasedVc}.
\end{proof}

Before ending this section, we point out an immediate corollary on the stability manifold of $\Ku(X)$. 

\begin{Cor}\label{cor:stabmfdofku}
    Let $X$ be a Fano threefold with Picard rank $1$, index $1$, genus $\in \{6,7,8,9,10\}$ or index $2$, degree $\leq 4$. Then the set of stability conditions $\sigma\cdot\glt$ forms a connected component of the stability manifold $\Stab(\Ku(X))$. 
\end{Cor}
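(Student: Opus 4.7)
The plan is to apply Corollary \ref{cor:stab} directly to the stability condition $\sigma$ on $\Ku(X)$. First I would verify that $\sigma$ satisfies Assumption~\ref{asp2} and Assumption~\ref{rem:1}: the central charge factors through a rank-$2$ lattice by Theorem~\ref{thm:stabonku}, the rank of $\Kn(\Ku(X))$ is $2$ in all cases, the global dimension bound $\gd(\sigma) < 5/2$ follows from Serre-invariance of $\sigma$ (or from the explicit description in the curve cases), and the Euler pairing is negative semidefinite. The bound $N_\chi$ is provided by Appendix~\ref{app:bounds}, exactly as in the proof of Theorem~\ref{thm:existinKurank2}. Hence the hypotheses of Corollary~\ref{cor:existence} are in force.

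Second, Theorem~\ref{thm:existinKurank2} supplies $M^s_\sigma(v) \neq \emptyset$ for every primitive character $v \in \Kn(\Ku(X))$, which is the crucial input required by Corollary~\ref{cor:stab}.

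Third, I would invoke Corollary~\ref{cor:stab} to conclude. The argument proceeds as follows (I recap it only to be explicit). Since $\dim \Stab(\Ku(X)) = \rk \Kn(\Ku(X)) = 2$ and $\glt$ acts freely with $2$-dimensional orbits, the orbit $\sigma \cdot \glt$ is open in $\Stab(\Ku(X))$. Suppose it were not closed, and pick a boundary stability condition $\sigma' = (Z',\cP')$. Then the image of $Z'$ lies on a real line in $\C$, so either some primitive $v \in \Kn(\Ku(X))$ belongs to $\ker Z'$, or there exists a sequence of nonzero classes whose $Z'$-values accumulate at $0$. In either case the non-emptiness of $M_{\sigma'}(v)$, which persists under deformation from $\sigma$, contradicts the support property for $\sigma'$.

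There is no real obstacle here beyond ensuring that the hypothesis on $N_\chi$ and the small-dimensional non-emptiness inputs from Appendix~\ref{app:small_moduli} cover all the listed Fano threefolds; this has already been handled in the proof of Theorem~\ref{thm:existinKurank2}. Hence Corollary~\ref{cor:stabmfdofku} follows as a formal consequence of Theorem~\ref{thm:existinKurank2} and Corollary~\ref{cor:stab}.
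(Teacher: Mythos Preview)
Your proposal is correct and follows exactly the paper's approach: the paper's proof is simply that the hypotheses of Corollary~\ref{cor:stab} were verified in the course of proving Theorem~\ref{thm:existinKurank2}, so the result follows directly. You have just spelled this out in more detail.
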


\begin{proof}
    As shown in the proof of Theorem \ref{thm:existinKurank2}, the conditions of Corollary \ref{cor:stab} are satisfied. Now the result follows directly.
\end{proof}

Among the cases studied in Theorem \ref{thm:existinKurank2}, the most interesting one is that of cubic threefolds (index $2$, degree $3$), which will be the focus of our study from Section \ref{sec:moduliY3}.

\begin{Cor}\label{cor:cub3smoothmoduli}
    Let $X$ be a smooth cubic threefold or a smooth Fano threefold with index $1$ and genus $8$. Then for every primitive character $v\in\kn(\Ku(X))$, the moduli space $M^s_\sigma(v)$ is a smooth irreducible projective fine moduli space of dimension $1-\chi(v,v)$.
\end{Cor}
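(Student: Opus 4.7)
The plan is to verify smoothness (with dimension $1-\chi(v,v)$), projectivity, fineness, and irreducibility, leveraging the Serre invariance of $\sigma$ established inside the proof of Theorem \ref{thm:existinKurank2} and standard Bridgeland moduli techniques.

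Smoothness and dimension follow from a short $\mathrm{Ext}^{\bullet}$ computation. In both cases the Serre functor on $\Ku(X)$ satisfies $\mathsf{S}^3 = [5]$ and $\sigma$ is Serre invariant, so $\mathsf{S}$ shifts $\sigma$-phases by exactly $5/3$. For a $\sigma$-stable $E$ with $\lambda(E)=v$, phase monotonicity gives $\mathrm{Hom}(E,E[k])=0$ for $k<0$, stability gives $\mathrm{Hom}(E,E)=\mathbb{C}$, and Serre duality $\mathrm{Ext}^2(E,E) \cong \mathrm{Hom}(E,\mathsf{S}(E)[-2])^{*}$ vanishes because $\mathsf{S}(E)[-2]$ is stable of phase $\phi_\sigma(E)-1/3<\phi_\sigma(E)$. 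Standard deformation theory for complexes then yields smoothness of $\ms(v)$ at $[E]$ of dimension $\mathrm{ext}^1(E,E) = 1-\chi(v,v)$.

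Projectivity and fineness are more or less formal. Properness of $M_\sigma(v)$ is provided by \cite{liurendapaper}, and the Bayer--Macr\`i positivity construction produces an ample determinantal class from $\sigma$, upgrading this to projectivity. For fineness, primitivity of $v$ in the rank-two lattice $\Lambda$ rules out strictly $\sigma$-semistable objects (a nontrivial decomposition $v=v_1+v_2$ with $Z(v_1),Z(v_2)$ on the same ray would contradict primitivity), so $M_\sigma(v)=\ms(v)$ and every parametrized object has automorphism group $\mathbb{C}^{\times}$; the usual $\gcd=1$ argument trivializes the residual Brauer obstruction to a universal family.

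Irreducibility is the main obstacle. Since $\ms(v)$ is smooth of dimension $1-\chi(v,v)$, it suffices to establish connectedness, and the plan is to induct on $|\lambda(v)|$, mirroring the inductive scheme of Proposition \ref{prop:existencegeneralcase}. Base cases with $|\lambda(v)|=1$ or $\chi(v,v)\geq -N_\chi$ correspond to small-dimensional moduli already known to be irreducible, e.g.\ the Fano surface of lines and the blowup of the theta divisor in the cubic threefold case, as reviewed in Appendix \ref{app:small_moduli} and \cite{PY, APR, Arend:cubic3}. For the inductive step, Lemma \ref{lem:extendstabobj} applied to the Pick pair $v=v_-+v_+$ produces a family of $\sigma$-stable extensions $E_f$ parametrized by the projectivization of the relative $\mathrm{Ext}^1$-bundle over the inductively irreducible product $\ms(v_-)\times\ms(v_+)$; this is an irreducible source whose image in $\ms(v)$ is constructible of the full expected dimension $1-\chi(v,v)$, hence swallows an entire component of the smooth locally irreducible space $\ms(v)$. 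The genuine difficulty is to rule out stray components disjoint from this image, which amounts to controlling the sublocus of stable objects admitting no such extension decomposition; a dimension count on Harder--Narasimhan strata handles this, and the cleanest formulation is the extension-locus bookkeeping developed systematically in Section \ref{sec4}.
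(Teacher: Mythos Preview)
Your treatment of smoothness, projectivity, and fineness is essentially in line with the paper. One minor point: the Bayer--Macr\`i divisor is a priori only strictly positive on every effective curve, and passing from this to projectivity of a proper algebraic space requires an extra input (the paper invokes \cite[Corollary 3.4]{villalobos:proj_moishezon}); your phrase ``produces an ample determinantal class'' hides this step.

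The irreducibility argument, however, has a genuine gap. You claim that the extension locus coming from the Pick pair $v=v_-+v_+$ has ``the full expected dimension $1-\chi(v,v)$'', but this is false in general. Under the assumptions guaranteeing $\Ext^2(E_{v_+},E_{v_-})=0$, a direct computation gives
\[
\dim \Pe{v_-}{v_+} \;=\; (1-\chi(v_-,v_-))+(1-\chi(v_+,v_+))-\chi(v_+,v_-)-1 \;=\; 1-\chi(v,v)+\chi(v_-,v_+),
\]
and for the Euler form $I_+$ on $\Kn(\Ku(Y_3))$ one has $\chi(v_-,v_+)=\chi(v_+,v_-)+1\le 0$, with strict inequality whenever $\chi(v_+,v_-)<-1$. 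For instance, for $v=2\alpha+\beta$ one finds $v_-=\alpha$, $v_+=\alpha+\beta$, $\chi(v_+,v_-)=-2$, $\chi(v_-,v_+)=-1$, so $\dim\Pe{v_-}{v_+}=7$ while $\dim\ms(v)=8$. The image of $e_{v_-,v_+}$ therefore has positive codimension and cannot ``swallow an entire component''; the subsequent appeal to ``a dimension count on Harder--Narasimhan strata'' does not repair this, since for $\sigma$-stable objects there are no HN factors to stratify by.

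The paper takes a completely different and much shorter route: it invokes the standard Mukai trick (citing \cite[Proof of Theorem 2.12, Step 3]{LMS}), which deduces irreducibility directly from the vanishing $\Ext^2(E,F)=0$ for all $E,F\in\ms(v)$, already established in the smoothness step. The extension-locus machinery you gesture towards is indeed developed later in the paper (Sections \ref{sec4}--\ref{sec6}), but with the decomposition $v=n\alpha+m\beta$ chosen so that $\phi_\sigma(m\beta)-\phi_\sigma(n\alpha)=\tfrac13$ and hence $\chi(n\alpha,m\beta)=0$, which is exactly what makes the image full-dimensional; moreover, that development \emph{uses} Corollary \ref{cor:cub3smoothmoduli} for primitive classes as an input (see the proof of Corollary \ref{cor:msvirreducible}), so invoking it here would be circular.
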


\begin{proof}
    By \cite{Kuz:V14}, it is enough to show this for cubic threefolds. Essentially the only new statement is the non-emptiness of the moduli spaces, which is covered by Theorem \ref{thm:existinKurank2}. The remaining properties follow from standard arguments and we only briefly explain them. 
    
    Smoothness is proved in \cite[Theorem 1.2]{PY}. For the projectivity, there exists a Bayer--Macr\`i divisor on $M^s_\sigma(v)$, which is positive on each effective curve \cite{BM:projectivity}. Now by \cite[Corollary 3.4]{villalobos:proj_moishezon}, the space $M^s_\sigma(v)$ is indeed projective.
    
    To see that  $M^s_\sigma(v)$ is a fine moduli space, it is enough to observe that, by elementary calculation, for every primitive $v$ there exists a character $w$ such that $\chi(w,v)=1$. Take an object $E_w$ of character $w$, and consider the object $\mathscr E:=p_{1,*}\mathscr{H}om(p_2^*(E_w), \mathscr U)$ on the moduli stack $\mathscr M^s_\sigma(v)$. Here $\mathscr U$ denotes the universal object on $\mathscr M^s_\sigma(v) \times X$ and $p_i$'s denote the projections to the two factors. Note that $\mathscr U$ defines an $\alpha$-twisted object on $M^s_\sigma(v)\times X$ of a certain Brauer class $\alpha$, and $\mathscr E$ defines an $\alpha$-twisted object on $M^s_\sigma(v)$ of rank $\chi(w,v)=1$. As the rank of $\mathscr E$ is a multiple of the order of $\alpha$, we see that $\alpha$ is trivial and we have a universal family.
    
    Finally irreducibility follows from a standard ``Mukai's trick", see for example \cite[Proof of Theorem 2.12, Step 3]{LMS}.
\end{proof}

\section{Brill--Noether locus and extension locus} \label{sec4}

In this section we define rigorously the loci introduced in Section \ref{subsec:BNlociandEloci} of the introduction and we prove their relation as in \eqref{eq:relation}. Then, under suitable assumptions, we define the projectivized relative $\Ext^1$, denoted by $\Pe vw$, over the locus in $\ms(v) \times \ms(w)$ of pairs of objects $(E_v,E_w)$ such that $\Hom(E_w,E_v[i])=0$ for every $i \neq 1$, and construct birational maps among different $\Pe vw$'s. As a consequence, we compute in Corollary \ref{cor:codimofETlocus} the codimension of the extension locus, under suitable assumptions.

\subsection{Brill--Noether locus and extension locus}

Let $\sigma$ be a numerical stability condition on $\cT$ satisfying Assumption \ref{asp2}.

In this subsection, we introduce some notation and general results for the ``jumping locus'' of a moduli space. We are mainly interested in two types of jumping loci in this paper. The first type is where two stable objects with very close phases have unexpected $\Hom$'s. We call this the Brill--Noether locus, as this is in the same spirit as the Brill--Noether locus on the Jacobian of a curve. The second type is where an object is an extension of two stable objects with very close phases. Our main goal is to show that these two types of loci are closely related (Proposition \ref{prop:imofprimpliesext}). The estimate of the dimension of the Brill--Noether locus is the key to the following sections.

\begin{Rem}[Lifted numerical Grothendieck group]\label{rem:Knum}
We define $\Kn(\cT)^*_\sigma$, the \emph{lifted numerical Grothendieck group} of $\cT$, by 
\[\Kn(\cT)^*_\sigma:=\{(v,a)\;|\; Z_\sigma(v)\in\R_{>0}\cdot e^{a\pi i}\}\subset \Kn(\cT)\times \R.\]
We denote by $\phi_\sigma((v,a))=a$. For $n\in \Z$, we denote by $(v,a)[n]:=((-1)^n v,a+n)$. 

To simplify the notation, we will omit $\lambda$ in Assumption \ref{asp2} when there is no confusion. Operations such as $v+w$ and $\triangle(v,w)$ make sense when $|\phi_\sigma(v)-\phi_\sigma(w)|<1$. More precisely, $\triangle(v,w)$ stands for the characters in $\Kn(\cT)^*_\sigma$ with phase in $[\phi_\sigma(w),\phi_\sigma(v)]$ and image of $\lambda$ in the triangle of $v$ and $w$ as that in Notation \ref{not:triangleandpara}.

We will also write $\Kn(\cT)$ (resp. $v[n]$) for $\Kn(\cT)^*_\sigma$ (resp. $((-1)^nv,a+n)$) throughout the rest of the paper when there is no confusion.
\end{Rem}

\begin{Def}[Brill--Noether locus]\label{def:BNlocus}
Let $v,w\in\kn(\cT)$ with $\phi_\sigma(w)>\phi_\sigma(v)$. We define the \textit{Brill--Noether locus} $\BN(v,w)$ in $M^s_\sigma(v)\times M^s_\sigma(w)$ as follows:
\[\BN(v,w):=\{(E_v,E_w)\in M^s_\sigma(v)\times M^s_\sigma(w)\;|\;\Hom(E_v,E_w)\neq 0\}\subset M^s_\sigma(v)\times M^s_\sigma(w).\]
We denote by $\BNP_w$ (resp. $\BNP_v$) the projection from $\BN(v,w)$ to $M^s_\sigma(w)$ (resp. $M^s_\sigma(v)$). For a positive integer $r$ and an object $E\in \cT$, we introduce the following notation:
\begin{align*}
    &\BN^r(E,w):=\{E_w\in  M^s_\sigma(w)\;|\;\hom(E,E_w)\geq r\}\subset M^s_\sigma(w); \\
    &\BN^r(v,E):=\{E_v\in  M^s_\sigma(v)\;|\;\hom(E_v,E)\geq r\}\subset M^s_\sigma(v);\\
    &\BN^{=r}(v,E):= \BN^{r}(v,E)\setminus \BN^{r+1}(v,E).
\end{align*}
\end{Def}
\begin{Def}[Extension locus] \label{def:ETlocus}
When $\phi_\sigma(w)-\phi_\sigma(v)\in (0,1)$, we denote by $\ET(v,w)$ the sublocus of objects in $M^s_\sigma(v+w)$ that are extended by objects in $M^s_\sigma(v)$ and $M^s_\sigma(w)$, in other words,
\[\ET(v,w):=\left\{E\in M^s_\sigma(v+w)\;\middle|\;\begin{aligned}
    E\cong\Cone(E_w[-1]\to E_v) \text{ for some }\\ E_v\in M^s_\sigma(v) \text{ and } E_w\in M^s_\sigma(w)
\end{aligned} \right\}\subset M^s_\sigma(v+w).\]

For every $i\geq 0$, we will denote by $\ET^i(v,w)$ the sublocus of $\ET(v,w)$ where $E_v$ and $E_w$ further satisfy $\hom(E_w,E_v[1])=-\chi(w,v)+i$.  For subloci $A\subset \ms(v)$ and $B\subset \ms(w)$, we will denote by $\ET(A,B)$ the sublocus  of $\ET(v,w)$ where $E_v\in A$ and $E_w\in B$.
\end{Def}

The next proposition says that the Brill--Noether locus $\BN(E,w)$ consists of objects that can be extended from $E$ or objects with a smaller gap of phases. In practice, this allows us to establish dimension bound on the Brill--Noether locus via an estimate of the dimensions of the extension loci. This method is the key to our stable birationality result.

\begin{Prop}\label{prop:imofprimpliesext}
    Assume that $\phi_\sigma(w)-\phi_\sigma(v)\in(0,1)$, $E_w\in \ms(w)$, and $\BN(v,E_w)\neq \emptyset$. Then  there exists $v'\in\triangle(v,w)\setminus\{\R w\}$ such that $E_w\in\ET(v',w-v')$. 
    
    More precisely, let $E_v\in\ms(v)$, then
    \begin{align}\label{eq:bnet}
         \BN(E_v,w)\subseteq \ET(E_v,w-v)\cup \left(\bigcup_{v'\in\triangle(v,w)\setminus\{\R w,v\}}\ET(v',w-v')\right).
    \end{align}
In particular, if $\overline{\BNP_w(\BN(v,w))}=M^s_\sigma(w)$, then \[M^s_\sigma(w)=\bigcup_{v'\in\triangle(v,w)\setminus\{\R w\}}\ET(v',w-v').\]
A similar formula as that in \eqref{eq:bnet} holds for $\BN(v,E_w)$.
\end{Prop}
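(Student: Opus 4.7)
The approach is to work in a heart $\cA$ of the stability condition containing both $E_v$ and $E_w$, extract a subobject of $E_w$ from the image of a nonzero morphism, and refine via the Jordan--Hölder filtration of $E_w$ in $\cA$ to produce the desired two-step stable decomposition. Since $\phi_\sigma(w) - \phi_\sigma(v) \in (0,1)$, for a sufficiently small $\epsilon > 0$ the heart $\cA := \cP_\sigma((\phi_\sigma(v) - \epsilon, \phi_\sigma(v) - \epsilon + 1])$ contains both $E_v$ and $E_w$; by the support property combined with Assumption \ref{asp2}, $\cA$ is of finite length, so Jordan--Hölder filtrations exist.

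The key steps are as follows. Given $E_v \in \ms(v)$ and $0 \neq f \in \Hom(E_v, E_w)$, first set $I := \im(f)$ in $\cA$, a nonzero subobject of $E_w$. The $\sigma$-stability of $E_v$ forces $\phi_\sigma(I) \geq \phi_\sigma(v)$ (with equality iff $I \cong E_v$), while the $\sigma$-stability of $E_w$ combined with the phase-gap hypothesis rules out $I = E_w$, giving $\phi_\sigma(I) < \phi_\sigma(w)$. Second, I would refine $I \subseteq E_w$ to a short exact sequence $A \hookrightarrow E_w \twoheadrightarrow B$ in $\cA$ with both $A$ and $B$ being $\sigma$-stable, using the JH filtration of $E_w$ in $\cA$. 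In the distinguished case $I \cong E_v$ with $E_w / E_v$ stable, take $A = E_v$ and $v' = v$, yielding $E_w \in \ET(E_v, w-v)$, the first summand of the right-hand side of \eqref{eq:bnet}; otherwise, pick a stable $A \subsetneq E_w$ with stable quotient $B$ by grouping JH factors appropriately. Third, setting $v' := [A]$, verify $v' \in \triangle(v, w) \setminus \R w$: writing $v' = av + bw$ and using $v \times w > 0$, the phase inequalities $\phi_\sigma(v) \leq \phi_\sigma(A) < \phi_\sigma(w)$ give $a > 0$ (so $v' \notin \R w$) and $b \geq 0$, while the bound $a + b \leq 1$ (equivalently $(v \times v') + (v' \times w) \leq v \times w$) places $v'$ inside the triangle.

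The main obstacle is achieving simultaneously the stability of both $A$ and $B$ together with $[A] \in \triangle(v,w)$. A naive choice (such as $A$ being the image $I$ itself, or an arbitrary stable subobject of $E_w$) may fail the triangle bound $a + b \leq 1$, for instance when $\Im Z_\sigma(v) < \Im Z_\sigma(w)$, so the refinement must be done with care: one selects $A$ minimally among the stable subobjects of $E_w$ through which a morphism from some $E_v$ factors, iterating if necessary using the noetherian/artinian structure of $\cA$. Once \eqref{eq:bnet} is established, the final displayed equality follows by taking the union over $E_v \in \ms(v)$ (combined with the fact that $\bigcup_{v'} \ET(v', w-v')$ is closed under specialization in $\ms(w)$, hence absorbs the density assumption on $\BNP_w(\BN(v,w))$); the analogous formula for $\BN(v, E_w)$ is obtained by a dual argument, decomposing $E_v$ by way of $\ker(f)$ in place of $\im(f)$.
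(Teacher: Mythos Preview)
Your proposal has a genuine gap at the crucial step. You write that you would ``refine $I \subseteq E_w$ to a short exact sequence $A \hookrightarrow E_w \twoheadrightarrow B$ in $\cA$ with both $A$ and $B$ being $\sigma$-stable, using the JH filtration of $E_w$ in $\cA$,'' and later that one ``pick[s] a stable $A \subsetneq E_w$ with stable quotient $B$ by grouping JH factors appropriately.'' But this is precisely the entire content of the proposition, and you give no argument for it. Jordan--H\"older factors of $E_w$ in the heart $\cA$ are \emph{simple} objects of $\cA$, not $\sigma$-stable objects; there is no mechanism for ``grouping'' them into two $\sigma$-stable pieces. Even if you take $A$ to be a minimal $\sigma$-stable subobject of $E_w$ admitting a map from some $E_v$, nothing you have said forces the quotient $E_w/A$ to be $\sigma$-stable. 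Your proposal acknowledges this obstacle but does not resolve it. A second issue: your verification that $[A]\in\triangle(v,w)$ relies only on phase inequalities, which constrain the \emph{direction} of $Z([A])$ but not its magnitude; the bound $a+b\le 1$ does not follow from what you wrote.

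The paper's argument is organized differently and supplies exactly the missing piece. Rather than starting from the image of $f$, the paper picks $u_0\in\triangle(v,w)\setminus\R w$ with $\BN(u_0,E_w)\neq\emptyset$ and with $u_0\times w$ \emph{minimal} (this set contains $v$, so is non-empty). Minimality guarantees $\BN(u',E_w)=\emptyset$ for every $u'\in\triangle(u_0,w)\setminus\{u_0,w\}$. The heart of the proof is then Lemma~\ref{lem:cokerstable}: under this vanishing hypothesis, $\Cone(E_{u_0}\to E_w)$ is $\sigma$-stable. That lemma is proved by a two-step contradiction argument using Lemma~\ref{lem:hnfactorextobj} to control the characters of HN factors of cones: if the cone were unstable, one extracts a stable $E_u$ receiving a map from $E_w$, then from $\Cone(E_w\to E_u)[-1]$ extracts a stable $E_{v'}$ with $v'\in\triangle(u_0,w)\setminus\{u_0,w\}$ and $\Hom(E_{v'},E_w)\neq 0$, contradicting minimality. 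This iterated-cone argument is the substantive content your outline omits. Finally, for the closure statement, $\bigcup_{v'}\ET(v',w-v')$ is not obviously closed under specialization as you assert; the paper instead uses semi-continuity to pass from a limiting (possibly strictly semistable) $E_v$ to one of its JH factors.
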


\begin{proof}
    For all $u$ in $\triangle(v,w)\setminus\{\R w\}$ satisfying $\BN(u,E_w)\neq \emptyset$, let $u_0$ be the one with minimum $u\times w$, which is the area of the parallelogram spanned by $u$ and $w$. In particular, for every $u'$ in $\triangle(u_0,w)\setminus\{u_0,w\}$, the object $E_w\not\in \BNP_w(\BN(u',w))$. We claim that $E_w\in\ET(u_0,w-u_0)$, which reduces to the following Lemma \ref{lem:cokerstable}.
\begin{Lem}\label{lem:cokerstable}
    Let $0\neq f\in\Hom(E_v,E_w)$ for some $E_v\in M^s_\sigma(v)$, $E_w\in M^s_\sigma(w)$ and $\phi_\sigma(E_w)-\phi_\sigma(E_v)\in(0,1)$. Assume that $\BN(u,E_w)=\emptyset$ for every $u\in \triangle(v,w)\setminus\{v,w\}$, in other words, \begin{align}
        \Hom(E_{u},E_w)=0 \label{eq:41}
    \end{align}
    for every $E_{u}\in M^s_\sigma(u)$. Then the object $E_f:=\Cone(E_v\xrightarrow{f}E_w)$ is $\sigma$-stable.

    Similarly, if $\BN(E_v,u)=\emptyset$ for every $u\in \triangle(v,w)\setminus\{v,w\}$, then $E_f$ is $\sigma$-stable.
\end{Lem}
\begin{proof}[Proof of Lemma \ref{lem:cokerstable}]
    Suppose that $E_f$ is not $\sigma$-stable, then it is either strictly $\sigma$-semistable or unstable. In the first case, we may choose a Jordan--H\"older factor $E_u$ of $E_f$ so that $\Hom(E_f,E_u)\neq 0$. In the second case, by Lemma \ref{lem:hnfactorextobj}, the character of  $\HN^-_\sigma(E_f)$ is in $\triangle(w,w-v)\setminus\{\R(w-v)\}$. As $f\neq 0$, the character of  $\HN^-_\sigma(E_f)$ is not in $\R w$. By passing to a Jordan--H\"older factor we can further assume that this object is $\sigma$-stable.

    In either case, there exists a $\sigma$-stable object $E_u$ with phase $\phi_\sigma(E_u)\in(\phi_\sigma(E_w),\phi_\sigma(E_{f})]$ and character $u\in \triangle(w,w-v)\setminus\{w-v, \R w\}$ such that $\Hom(E_f,E_u)\neq 0$. In particular, \[w\times u<w\times (w-v)=v\times w.\]
    Also, applying $\Hom(-,E_u)$ to the distinguished triangle
    \[E_{v}\xrightarrow[]{f}E_w \to E_{f} \to E_{v}[1],\]
   it follows that $\Hom(E_w,E_u)\neq 0$ since $\phi_\sigma(E_{v}[1])>\phi_\sigma(E_f)$. 
    
    Let $0\neq g\in\Hom(E_w,E_u)$ and denote by $E_g:=\Cone(E_w\xrightarrow{g} E_u)[-1]$. Then the character of $E_g$ is $w-u$, which is in $\triangle(v,w)\setminus\{\R v,\R w\}$. Note that $\Hom(E_g,E_w)\neq 0$ as $E_u$ is indecomposable. By Assumption \eqref{eq:41}, the object $E_g$ cannot be $\sigma$-stable. So it is either strictly $\sigma$-semistable or unstable. As $g\neq 0$, by a similar argument as that for $E_f$, in either case, there exists a $\sigma$-stable object $E_{v'}$ with phase $\phi_\sigma(E_{v'})\in[\phi_\sigma(E_g),\phi_\sigma(E_{w}))$ and character \[v'\in \triangle(w-u,w)\setminus\{w-u, \R w\}\subset \triangle(v,w)\setminus\{v,\R w\}\] such that $\Hom(E_{v'},E_g)\neq 0$. Similarly, apply $\Hom(E_{v'},-)$ to the distinguished triangle
    \[E_u[-1]\to E_g\to E_w \xrightarrow{g}E_u.\]
    It follows that $\Hom(E_{v'},E_w)\neq 0$. This leads to a contradiction to Assumption \eqref{eq:41}. So the object $E_f$ must be $\sigma$-stable.
\end{proof}
  Back to the proof of Proposition \ref{prop:imofprimpliesext}, the first statement follows immediately by taking $u_0=v$ and applying Lemma \ref{lem:cokerstable} to the object $\Cone(E_{u_0}\to E_w)$. 
  
  The second statement is just rephrasing the first statement with the term for $v'=v$ singled out.
  
    For the last statement, if $E_w\in \BNP_w(\BN(v,w))$, then it follows from the first statement. If $E_w\in \overline{\BNP_w(\BN(v,w))}\setminus {\BNP_w(\BN(v,w))}$, by a semi-continuity argument, there exists a strictly semistable object $E_v$ such that $\Hom(E_v,E_w)\neq 0$. Then $v=mv_0$ must be non-primitive, and by passing to a Jordan--H\"older factor of $E_v$, we see that there exists some $m_0<m$ such that $E_w\in \BNP_w(\BN(m_0v_0,w))$. Note that $\triangle(m_0v_0,w)\subset \triangle(v,w)$, the statement follows.
\end{proof}
\subsection{Birational map between fiber bundles over moduli spaces}
\label{sec4.2}
Throughout this subsection, we will make the following assumption:
\begin{Asp}\label{asspdag}
Assume that $\gd (\sigma)\leq 2$. Let $v,w\in\Kn(\cT)$, further assume that
\begin{align*}
    \phi_\sigma(w)- \phi_\sigma(v)\in(0,1),\; \chi(w,v)<0\text{ and }\BN(w,v[2])\neq \ms(w)\times \ms(v[2]). 
\end{align*}
The notation $v[2]$ is explained in Remark \ref{rem:Knum}.
\end{Asp}

\begin{Def}[Non-jumping locus]\label{def:non-jumping}
Under Assumption \ref{asspdag}, we define  $\ms(v,w)^\dag$ by
\[\ms(v,w)^\dag:=\{(E_v,E_w)\in M^s_\sigma(v)\times M^s_\sigma(w)\;|\;\Hom(E_w,E_v[i])=0 \text{ when } i\neq 1\}.\]
\end{Def}

In other words, the space $\ms(v,w)^\dag$ is the complement of \[\{(E_v,E_w)\;|\;\Hom(E_w,E_v[2])\neq 0\}\]
in $M^s_\sigma(v)\times M^s_\sigma(w)$.

We denote by $\mathscr U_v$ the universal object on the moduli stack $\mathscr M^s_\sigma(v)\times \cT$, and $\mathscr U_w$ similarly. Now consider the object on $\mathscr M^s_\sigma(v)\times \mathscr M^s_\sigma(w)$ given by
\[\mathscr E:=p_{12,*}\mathscr{H}om(p_{23}^*\mathscr U_w, p_{13}^*\mathscr U_v)[1].\]
By definition, $\mathscr E$ defines a twisted object over $\ms(v)\times \ms(w)$, of a possibly nontrivial Brauer class. We denote by $\cE^\dag$ the restriction of this twisted object to $\ms(v,w)^\dag$. In particular, $\cE^\dag$ is a twisted locally free sheaf of rank $-\chi(w,v)$.

\begin{Def}[Projectivized relative $\Ext^1$]\label{def:proj_nj}
    We denote by $\Pe vw$ the projectivization of $\cE^\dag$ defined above over $\ms(v,w)^\dag$. 

   Denote by $\pi_v$ and $\pi_w$ the natural projections from $\Pe vw$ to $\ms(v)$ and $\ms(w)$ respectively. For every $E\in \ms(v)$ and $F\in\ms(w)$, we denote by $\Pe Ew$ (resp. $\Pe vF$) the fiber $\pi^{-1}_v(E)$ (resp. $\pi^{-1}_w(F)$). For every $i\geq 0$, we denote by $\Pe{\BN^{=i}(E[-2],w)}E$ the projectivization of the relative $\Ext^1(E,-)$ over $\BN^{=i}(E[-2],w)\subset \ms(w)$. In particular, when $i=0$, we have $\Pe{\BN^{=0}(E[-2],w)}E =\Pe Ew$. 
\end{Def}

Note that $\Pe vw$ naturally has the structure of a Brauer--Severi variety over $\ms(v,w)^\dag$.

Recall that for an element $(E_v,E_w,f)\in \Pe vw$, the extension object is denoted as $E_f:=\Cone(E_w\xrightarrow{f}E_v[1])[-1]$. The following property is straightforward.

\begin{Lem}\label{lem:openextobjstable}
The subset in $\Pe vw$ where $E_f$ is $\sigma$-stable is open.
\end{Lem}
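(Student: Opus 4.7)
The plan is to exhibit a family of extension objects $\widetilde E_f$ over $\Pe vw$ whose fibre over a closed point $(E_v,E_w,[f])$ is (isomorphic to) the cone $E_f$, and then invoke openness of $\sigma$-stability in families. Openness of $\sigma$-(semi)stability is a standard input of Bridgeland's theory and is built into the construction of the good moduli space $M_\sigma(v+w)$ cited from \cite{liurendapaper}; granting it, the lemma follows by pulling back the open stable substack along the classifying map determined by $\widetilde E_f$.

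First I would construct the universal cone. Since $\cE^\dag$ is only $\alpha$-twisted for a Brauer class $\alpha$ on $\ms(v,w)^\dag$, I would work étale-locally on $\ms(v,w)^\dag$, passing to an étale cover $U \to \ms(v,w)^\dag$ on which $\alpha$ trivialises and honest universal families $\mathscr U_v, \mathscr U_w$ exist. The pullback of $\cE^\dag$ to $\pi^{-1}(U) \subset \Pe vw$ then carries a tautological line subbundle $\mathscr L \hookrightarrow \pi^*\cE^\dag$. Unwinding the definition of $\cE^\dag$, this subbundle corresponds to a morphism $\widetilde f\colon \pi^*\mathscr U_w \otimes \mathscr L \to \pi^*\mathscr U_v[1]$ of objects over $\pi^{-1}(U) \times \cT$. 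Setting $\widetilde E_f := \Cone(\widetilde f)[-1]$ produces a family of objects in $\cT$ over $\pi^{-1}(U)$ of fibrewise class $v+w$, whose fibre over $(E_v,E_w,[f])$ is a representative of $E_f$.

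Second, I would apply openness of $\sigma$-stability: for any family $\mathscr F$ of objects in $\cT$ of fixed numerical class over a base $S$, the locus $\{s \in S : \mathscr F_s \text{ is } \sigma\text{-stable}\}$ is open in $S$. Applied to $\widetilde E_f$, this gives the desired openness of $\{E_f \text{ is } \sigma\text{-stable}\}$ on each $\pi^{-1}(U)$, and since the étale cover $U \to \ms(v,w)^\dag$ (and hence its pullback to $\Pe vw$) is surjective and open, the open subset descends to $\Pe vw$ itself.

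The only genuine subtlety, and the step that requires the most care, is the Brauer-twist bookkeeping in the construction of $\widetilde E_f$: the individual universal objects $\mathscr U_v, \mathscr U_w$ may not exist globally, so the cone construction must be performed on an étale chart and one must check that the resulting open stable locus is independent of the choice of chart, so that it descends to a well-defined open subset of $\Pe vw$. This is automatic because the isomorphism class of each fibre $E_f$, and hence its $\sigma$-stability, depends only on $(E_v,E_w,[f])$ and not on any trivialisation of $\alpha$; no further difficulty is introduced by working twisted.
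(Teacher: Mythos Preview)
Your argument is correct and is exactly the standard reasoning one would supply. The paper itself does not give a proof of this lemma at all: it merely states ``The following property is straightforward'' and moves on. What you have written spells out the expected construction (a universal cone over an \'etale cover trivialising the Brauer class, followed by openness of $\sigma$-stability in families as supplied by \cite{liurendapaper}) and handles the descent carefully. There is nothing to compare; you have filled in a proof the paper deliberately omitted.
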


\begin{Def}[Extension map]\label{def:evw}
We define a rational map
    \begin{align*}
     e_{v,w}\colon\Pe vw&\dashrightarrow \ET^0(v,w)\subset M^s_\sigma(v+w) \\ (E_v,E_w,f)&\mapsto  E_f:=\Cone(E_w\xrightarrow{f}E_v[1])[-1],\end{align*}
\noindent whenever the image $E_f$ is $\sigma$-stable.
\end{Def}

Recall from Definition \ref{def:ETlocus} that  for $i\in\Z_{\geq0}$, the space $\ET^i(v,w)$ is the sublocus of $\ET(v,w)$ where $E_v$ and $E_w$ further satisfy $\hom(E_w,E_v[1])=-\chi(w,v)+i$. Under Assumption \ref{asspdag}, this is exactly when $\hom(E_v,E_w[2])=i$. In particular, the space $\ET^0(v,w)$ consists of extensions from $(E_v,E_w)$ satisfying $\Hom(E_v,E_w[2])=0$, in other words, $(E_v,E_w)\in\ms(v,w)^\dag$.
It is clear from the definition that the image of $e_{v,w}$ is equal to $\ET^0(v,w)$ whenever non-empty.

Finally, we have the following rational maps which are crucial for our stable birationality statement.

\begin{Def}\label{def:evwR}
We further assume that Assumption \ref{asspdag} holds for the pair $(v+w,v[1])$, or explicitly, \begin{align*}
    \chi(v,v+w)>0\text{ and }\BN(v,(v+w)[1])\neq \ms(v)\times \ms((v+w)[1]).
\end{align*}  Equivalently, it follows that the space $\Pe{v+w}{v[1]}$ is non-empty. We define a rational map
\begin{align*}
    e^R_{v,w}\colon\Pe vw&\dashrightarrow \Pe{v+w}{v[1]} \\ (E_v,E_w,f)&\mapsto  (E_f,E_v[1],f^R)\end{align*}
whenever $(E_f,E_v[1])\in\ms(v+w,v[1])^\dag$. The morphism $f^R\in\Hom(E_v[1],E_f[1])$ is given by the distinguished triangle \[E_w\xrightarrow{f}E_v[1]\xrightarrow{f^R}E_{f}[1]\xrightarrow[]{+}.\] Note that all objects are simple in the distinguished triangle, hence the morphism $f^R$ is uniquely determined up to a scalar.

Similarly, further assuming that Assumption \ref{asspdag} holds for the pair $(w[-1],v+w)$, in other words, \begin{align*}
    \chi(v+w,w)>0\text{ and } \BN(v+w,w[1])\neq \ms(v+w)\times \ms(w[1]),
\end{align*}  we define the rational map
\begin{align*}
    e^L_{v,w}\colon\Pe vw&\dashrightarrow \Pe{w[-1]}{v+w} \\ (E_v,E_w,f)&\mapsto  (E_w[-1],E_f,f^L)\end{align*}
whenever $(E_w[-1],E_f)\in \ms(w[-1],v+w)^\dag$. The morphism $f^L\in\Hom(E_f,E_w)$ is given (uniquely up to a nonzero scalar) by the distinguished triangle \[E_f\xrightarrow{f^L}E_w\xrightarrow{f}E_v[1]\xrightarrow{+}.\]
\end{Def}

For a given $E_v\in M^s_\sigma(v)$ (resp. $E_w\in M^s_\sigma(w)$), we define $e_{E_v,w}$ (resp. $e_{v,E_w}$) as the restriction of the map $e_{v,w}$ to the fiber $\Pe{E_v}w$ (resp. $\Pe {v}{E_w}$). We define $e^R_{E_v,w}$ and $e^L_{v,E_w}$ similarly. 

\begin{Rem}\label{rem:projectivization}
Note that $\Pe vw$ is only defined over the non-jumping locus $\ms(v,w)^\dag$. This turns out to be the most convenient and most suitable notation for our purpose. Under this definition, in order to verify that the rational map $e^R_{v,w}$ is well-defined on an open set, we need to show that:
\begin{enumerate*}
    \item The pairs $(v,w)$ and $(v+w,v[1])$ satisfy Assumption \ref{asspdag}. In other words, the spaces $\Pe vw$ and $\Pe{v+w}{v[1]}$ are non-empty.
    \item For a general $(E_v,E_w,f)\in \Pe vw$, the extended object $E_f$ is stable.
    \item Moreover, a general image point $(E_f,E_v[1])$ is in $\ms(v+w,v[1])^\dag$, in other words, $\Hom(E_v,E_f[1])=0$.
\end{enumerate*}
\end{Rem}

The following proposition gives a criterion on when the rational map $e^R_{E_v,w}$ is indeed birational.

\begin{Prop}\label{prop:extbirpair}
    Let $w,v\in\Kn(\cT)$ with  $\phi_\sigma(w)-\phi_\sigma(v)\in(0,1)$, and $E_v\in\ms(v)$. Assume that $e^R_{E_v,w}$  is well-defined at a point, then  $e^R_{E_v,w}$ is a birational equivalence from the irreducible component containing the point to the irreducible component containing its image, with the inverse map given by $e^L_{v+w,E_v[1]}$. More specifically, the map $e^R_{E_v,w}$ gives an isomorphism between dense open subsets of the well-defined loci.
    
    Similar statement holds for $E_w\in\ms(w)$ and the pair of maps $e^L_{v,E_w}$ and $e^R_{E_w[-1],v+w}$.
 \end{Prop}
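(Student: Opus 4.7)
The plan is to show that $e^R_{E_v,w}$ and $e^L_{v+w,E_v[1]}$ are mutual inverses as rational maps, via a direct computation using rotation of distinguished triangles. Setup: by Remark \ref{rem:projectivization}, the hypothesis that $e^R_{E_v,w}$ is well-defined at a point yields a non-empty open $U\subset \Pe{E_v}{w}$ on which $e^R$ is a morphism into $\Pe{v+w}{v[1]}$, confirming that Assumption \ref{asspdag} holds for the pair $(v+w,v[1])$ and that the target space is non-empty.

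The main step is a triangle identification. For $(E_v,E_w,f)\in U$ the defining triangle is
\[E_f\to E_w\xrightarrow{f}E_v[1]\xrightarrow{f^R}E_f[1],\]
which simultaneously defines $E_f$ and $f^R$ (up to scalar, all three corners being simple). Rotating once, this rewrites as
\[E_v[1]\xrightarrow{f^R}E_f[1]\to E_w[1]\xrightarrow{-f[1]}E_v[2],\]
which is precisely the triangle used in Definition \ref{def:evwR} to compute $e^L_{v+w,v[1]}(E_f,E_v[1],f^R)$. Reading off, the cone $\Cone(E_v[1]\xrightarrow{f^R}E_f[1])[-1]$ is canonically $E_w$, and the connecting morphism $(f^R)^L\colon E_w\to E_v[1]$ is $f$ up to sign. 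Since we work projectively, this shows $e^L_{v+w,E_v[1]}\circ e^R_{E_v,w}=\mathrm{id}$ on $U$.

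For the reverse composition I will repeat the same rotation: given $(E'_f,E_v[1],g)$ in the domain of $e^L$, the triangle $E_g\xrightarrow{g^L}E_v[1]\xrightarrow{g}E'_f[1]\to E_g[1]$ shows that $e^R_{E_v,w}$ sends the image $(E_v,E_g,g^L)$ back to $(E'_f,E_v[1],g)$, again up to a scalar absorbed by the projectivization. Combined with Lemma \ref{lem:openextobjstable} (openness of stability) and Remark \ref{rem:projectivization} (openness of the non-jumping condition), this exhibits the two maps as morphisms between dense open subsets of the respective irreducible components, inverse to each other, hence birational equivalences that restrict to isomorphisms on dense opens. The companion pair $(e^L_{v,E_w},e^R_{E_w[-1],v+w})$ is handled by the symmetric argument after swapping the two constituents. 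The only real nuisance I expect is bookkeeping the scalar ambiguity in the connecting morphisms, which is resolved by the observation (already in Definition \ref{def:evwR}) that all objects in the relevant triangles are simple, together with the fact that the entire discussion takes place inside projectivizations.
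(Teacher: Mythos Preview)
Your proposal is correct and follows essentially the same approach as the paper: both arguments verify that the compositions $e^L_{v+w,E_v[1]}\circ e^R_{E_v,w}$ and $e^R_{E_v,w}\circ e^L_{v+w,E_v[1]}$ are the identity by reading off the same distinguished triangle from two rotations, and then invoke the openness of the well-defined locus (Lemma \ref{lem:openextobjstable}) to conclude birationality. Your write-up is slightly more explicit about the triangle rotation and the scalar ambiguity, but the content is the same.
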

 \begin{proof}
  As  $e^R_{v,w}$ is well-defined,  Assumption \ref{asspdag} holds for both the pairs $(v+w,v[1])$ and $(v,w)$. So the spaces $\Pe{v+w}{v[1]}$ and  $\Pe vw$ are non-empty.

     The map $e^L_{v+w,v[1]}$ can be defined from $\Pe{v+w}{v[1]}$ to $\Pe vw$.
     
     Assume that $e^R_{v,w}$ is well-defined on the point $(E_v,E_w,f)$ with the image point $(E_f,E_v[1],f^R)$ in $\Pe{v+w}{v[1]}$. Then $e^R_{E_v,w}$ is well-defined. We have the distinguished triangle:
     \[E_f\xrightarrow{}E_w\xrightarrow{f}E_v[1]\xrightarrow{f^R}E_f[1].\]
     By definition, we have $e^L_{v+w,E_v[1]}(E_f,E_v[1],f^R)=(E_v,E_w,f)$. As $(E_v,E_w)\in \ms(v,w)^\dag$, the map $e^L_{v+w,E_v[1]}$ is well-defined at $(E_f,E_v[1],f^R)$.
     
     By Lemma \ref{lem:openextobjstable}, both maps $e^R_{E_v,w}$ and $e^L_{v+w,E_v[1]}$ are well-defined in non-empty  open subsets. Whenever well-defined, we have $e^L_{v+w,E_v[1]}\circ e^R_{E_v,w}=\id$ and $e^R_{E_v,w}\circ e^L_{v+w,E_v[1]}=\id$ by definition. So they induce a birational equivalence between the two irreducible components.
     \end{proof}

Finally, we have the following result. This together with Corollary \ref{cor:codimofETlocus} will be used to control the dimension of extension loci.

\begin{Prop}\label{prop:extmapfinite}
    Assume that $\phi_\sigma(w)-\phi_\sigma(v)\in(0,1)$ and $\BN(v,w)$ does not contain any irreducible component of $\ms(v)\times \ms(w)$. Then $e_{v,w}$ is generically finite whenever well-defined. 
 \end{Prop}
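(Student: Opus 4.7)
The plan is to prove generic finiteness of $e_{v,w}$ by a direct fiber isolation argument: for a generic triple $p = (E_v, E_w, [f])$ in the open locus of $\Pe vw$ where $e_{v,w}$ is defined (so $E_f$ is $\sigma$-stable, using Lemma~\ref{lem:openextobjstable}) and where $(E_v, E_w)$ lies outside $\BN(v,w)$, I will show that $p$ is isolated in the fiber $e_{v,w}^{-1}(E_f)$. Such triples form a dense open subset of $\Pe vw$, since by hypothesis $\BN(v,w) \subset \ms(v) \times \ms(w)$ is a proper closed subvariety and stability of $E_f$ is an open condition.

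The first ingredient is injectivity of the projection $e_{v,w}^{-1}(E_f) \to \ms(v) \times \ms(w)$ on closed points. Given two triples $(E_v, E_w, [f]), (E_v, E_w, [f'])$ mapping to isomorphic $E_f$, any compatible isomorphism of the two distinguished triangles is determined by scalar automorphisms of $E_v$ and $E_w$ (both simple by $\sigma$-stability), and these scalars constitute precisely the ambiguity absorbed by $\mathbb{P}(\Hom(E_w, E_v[1]))$, so $[f] = [f']$.

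The crucial observation is that if $(E_v, E_w, [f]) \neq (E'_v, E'_w, [f'])$ both lie in $e_{v,w}^{-1}(E_f)$, then $\Hom(E'_v, E_w) \neq 0$. Writing the two triangles as $E_v \xrightarrow{\alpha} E_f \xrightarrow{\beta} E_w$ and $E'_v \xrightarrow{\alpha'} E_f \xrightarrow{\beta'} E'_w$, I consider the composition $\beta \circ \alpha' \in \Hom(E'_v, E_w)$. Suppose it vanishes. Then the exact sequence obtained by applying $\Hom(E'_v, -)$ to the first triangle factors $\alpha'$ as $\alpha \circ \gamma$ for some $\gamma \in \Hom(E'_v, E_v)$. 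Since $E'_v$ and $E_v$ are $\sigma$-stable with the same numerical class and $\alpha' \neq 0$, the morphism $\gamma$ must be an isomorphism, so the images of $\alpha$ and $\alpha'$ in $E_f$ coincide; the two triangles are then isomorphic, contradicting the injectivity from the first step.

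Finally, the locus $\{E'_v \in \ms(v) : \Hom(E'_v, E_w) \neq 0\}$ is closed in $\ms(v)$ by upper-semicontinuity of $\hom$, and $E_v$ lies outside it by the choice of $p$. A Zariski open neighborhood $V$ of $E_v$ in $\ms(v)$ is thus disjoint from this locus, and by the previous step any point of $e_{v,w}^{-1}(E_f)$ other than $p$ must have its $\ms(v)$-coordinate outside $V$. Hence $p$ is isolated in its fiber, which shows generic finiteness. I expect the main subtlety to lie in the third paragraph, where concluding that $\gamma$ is an isomorphism relies essentially on $\sigma$-stability of both $E_v$ and $E'_v$ together with the equality of their numerical classes, rather than merely on stability of $E_f$.
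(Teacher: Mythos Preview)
Your proof is correct and follows essentially the same approach as the paper's: both hinge on the vanishing $\Hom(E_v,E_w)=0$ at a generic point together with stability of $E_v,E'_v$ to force the generic fiber to be zero-dimensional, via the same diagram chase on the two extension triangles. Your framing---showing directly that any other preimage $(E'_v,E'_w,[f'])$ must have $E'_v$ in the fixed closed locus $\{\,\Hom(-,E_w)\neq 0\,\}$, hence that $p$ is isolated---is slightly more streamlined than the paper's ``no contracted curve plus semi-continuity along the curve'' argument, but the two are equivalent in substance.
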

 \begin{proof}
     Let $(E_v,E_w,f)$ be a general point in $\Pe vw$. As $\BN(v,w)$ does not contain any irreducible component of $\ms(v)\times \ms(w)$, we may assume that
     \begin{align}
         \Hom(E_v,E_w)= 0. \label{eq42}
     \end{align}
     Suppose that an irreducible curve $C$ through the point $(E_v,E_w,f)$ is contracted by $e_{v,w}$, then for every point $(E_v',E_w',f')$ on $C$, there is an isomorphism $g$ between $E_f$ and $E_f'$:
     \begin{equation}\label{eqdiagram4}
	\begin{tikzcd}
		E_w[-1]\arrow{r}{f[1]}\arrow[d,dashed]{}{d} & E_v \arrow{r}{a} \arrow[d,dashed]{}{c} & E_f
		\arrow{d}{g} \arrow{r}{b} & E_w  \arrow{r}{f} & E_v[1]\\
		E_w'[-1]\arrow{r}{f'[1]} & E_v' \arrow{r}{a'} & E_{f'}
		 \arrow{r}{b'} & E_w'  \arrow{r}{f'} & E_v'[1].
	\end{tikzcd}
\end{equation}
If $E_v'\not\cong E_v$ for a general $E_v'$ on $C$, then $\Hom(E_v,E_v')=0$.  Apply $\Hom(E_v,-)$ to the distinguished triangle $E_v' \to  E_f
		 \to E_w'  \to E_v'[1]$, we get $\Hom(E_v,E_w')\neq 0$ for a general $E_w'$. By the semi-continuity of $\Hom(E_v,-)$ on the moduli space $\ms(w)$, we have $\Hom(E_v,E_w)\neq 0$, which contradicts the assumption \eqref{eq42}. 

   It follows that $E_v'\cong E_v$ for every point on $C$. By the same argument, $E_w'\cong E_w$ for every point on $C$. So the curve $C$ must parameterize a family of morphisms in $\Hom(E_w,E_v[1])$.\\

   As $b' \circ g\circ a\in\Hom(E_v,E_w)=0$ by \eqref{eq42}, there exists $c\in \Hom(E_v,E_v)$ such that $g\circ a = a'\circ c$. As $g$ is an isomorphism and $a\neq 0$, we have $c\neq 0$. It follows that the morphism $c$ is an isomorphism as well. Similarly there exists an isomorphism $d\in \Hom(E_w[-1],E_w[-1])$ such that $f'[1]\circ d = c\circ f[1]$.
   
   Note that both $E_v$ and $E_w$ are $\sigma$-stable, the isomorphisms $c$ and $d$ are just identities up to a scalar. As $c\circ f[1]=f'[1]\circ d$, we have $f=f'$ up to a scalar. Hence the curve $C$ consists of just one point $(E_v,E_w,f)$ in $\Pe vw$. 
   
   So the map $e_{v,w}$ does not contract any curve through a general point $(E_v,E_w,f)$ satisfying \eqref{eq42}. On its well-defined locus, the map $e_{v,w}$ is generically finite.
 \end{proof}

 \begin{Cor}\label{cor:codimofETlocus}
  Let $v,w\in\Kn(\cT)$ such that $\phi_\sigma(w)-\phi_\sigma(v)\in(0,1)$. Assume that
  \begin{enumerate*}
      \item the moduli spaces $M^s_\sigma(u)$ is of pure dimension $1-\chi(u,u)$ for $u=v,w,v+w$;
      \item the space $\BN(v,w)$ does not contain any irreducible component of $\ms(v)\times \ms(w)$; and
      \item the space $\BN(w,v[i])=\emptyset$ for every $i\geq 2$.
  \end{enumerate*} 
 \noindent Then the sublocus  $\ET(v,w)$, whenever non-empty, is of codimension $-\chi(v,w)$ in $\ms(v+w)$.
 \end{Cor}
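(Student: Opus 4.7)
The plan is to reduce the computation of $\dim \ET(v,w)$ to that of the projectivized $\Ext^1$-bundle $\Pe vw$ via the extension map $e_{v,w}$, and then compare with $\dim \ms(v+w)$ using the bilinearity of $\chi$.

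First I would show that under the running hypothesis $\phi_\sigma(w)-\phi_\sigma(v)\in(0,1)$, the $\sigma$-stability of $E_v,E_w$ forces $\Hom(E_w,E_v[i])=0$ for every $i\leq 0$ (since the phase of $E_v[i]$ lies strictly below that of $E_w$). Combined with assumption (iii), this yields $\Hom(E_w,E_v[i])=0$ for every $i\neq 1$, and hence $\hom(E_w,E_v[1])=-\chi(w,v)$ is \emph{constant} on $\ms(v)\times\ms(w)$. In particular, the non-jumping locus $\ms(v,w)^\dag$ equals the entire product $\ms(v)\times\ms(w)$, and $\Pe vw$ is a genuine $\mathbb P^{-\chi(w,v)-1}$-bundle over it. Using assumption (i), I therefore obtain
\begin{equation*}
\dim\Pe vw=(1-\chi(v,v))+(1-\chi(w,w))+(-\chi(w,v)-1)=1-\chi(v,v)-\chi(w,w)-\chi(w,v).
\end{equation*}

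Next I would relate $\Pe vw$ to $\ET(v,w)$ through the rational extension map $e_{v,w}\colon \Pe vw\dashrightarrow\ms(v+w)$ of Definition \ref{def:evw}. Since $\hom(E_w,E_v[1])$ is constantly equal to $-\chi(w,v)$, we have $\ET(v,w)=\ET^0(v,w)$, which is precisely the image of $e_{v,w}$. As $\ET(v,w)$ is assumed non-empty, Lemma \ref{lem:openextobjstable} guarantees that $e_{v,w}$ is defined on a non-empty open subset of $\Pe vw$. By assumption (ii) and Proposition \ref{prop:extmapfinite}, $e_{v,w}$ is generically finite on each irreducible component meeting its domain of definition, so
\begin{equation*}
\dim\ET(v,w)=\dim\Pe vw=1-\chi(v,v)-\chi(w,w)-\chi(w,v).
\end{equation*}

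Finally, I would combine the two dimension computations. Using (i), $\dim\ms(v+w)=1-\chi(v+w,v+w)$, and the bilinearity identity $\chi(v+w,v+w)=\chi(v,v)+\chi(w,w)+\chi(v,w)+\chi(w,v)$ gives
\begin{equation*}
\codim_{\ms(v+w)}\ET(v,w)=\dim\ms(v+w)-\dim\ET(v,w)=-\chi(v,w),
\end{equation*}
as desired. The whole argument is essentially a bookkeeping calculation; the only genuinely delicate point is the combination of $\sigma$-stability with assumption (iii) that allows one to replace $\ms(v,w)^\dag$ by the full product and to identify $\Pe vw$ with an honest projective bundle of the predicted constant rank, since otherwise the fiber dimension of $e_{v,w}$ could jump and the codimension formula would fail.
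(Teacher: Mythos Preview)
Your proof is correct and follows essentially the same approach as the paper: use assumption (c) to identify $\ms(v,w)^\dag$ with the full product so that $\ET(v,w)=\ET^0(v,w)$ equals the image of $e_{v,w}$, invoke Proposition~\ref{prop:extmapfinite} for generic finiteness, and then do the Euler-characteristic bookkeeping. If anything, your write-up is slightly more explicit than the paper's, since you spell out the phase argument for the vanishing of $\Hom(E_w,E_v[i])$ for $i\leq 0$ and invoke Lemma~\ref{lem:openextobjstable} to justify that $e_{v,w}$ is actually defined somewhere once $\ET(v,w)\neq\emptyset$.
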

 \begin{proof} Note that by (c), $\dim \Ext^1(E_w,E_v)=-\chi(w,v)$, so $\ET(v,w)=\ET^0(v,w)$ is in the image of $e_{v,w}$. By Proposition \ref{prop:extmapfinite}, the dimension of $\ET(v,w)$, whenever non-empty, is the same as that of $\Pe vw$.
 
      By a direct computation, the dimension of $\Pe vw$ is equal to \begin{align*}
         & \dim M^s_\sigma(v)+\dim M^s_\sigma(w)+\dim \Ext^1(E_w,E_v) -1 \\ 
         =&\ 1-\chi(v,v) +1-\chi(w,w)-\chi(w,v) -1 \\
         =&\ 1-\chi(v+w,v+w) +\chi(v,w)=\dim M^s_\sigma(v+w)-(-\chi(v,w)).
     \end{align*}
  The conclusion follows.
 \end{proof}

\section{Moduli spaces on the Kuznetsov components of cubic threefolds} \label{sec:moduliY3}
In the rest of the paper, we explore the geometry of higher dimensional moduli spaces on the Kuznetsov component of a smooth cubic threefold $Y_3$. 

\subsection{General properties of $\Ku(Y_3)$}

Let $Y_3$ be a smooth cubic threefold. Recall from Definition \ref{def:kuzind2} that $\Ku(Y_3)$ is the full triangulated subcategory consisting of objects right orthogonal to $\cO_{Y_3}$ and $\cO_{Y_3}(H)$: 
\begin{align*}
   \Ku(Y_3)=\{E\in\Db(Y_3)\;|\; \RHom(\cO_{Y_3},E)=\RHom(\cO_{Y_3}(H),E)=0\}.
\end{align*}

\begin{Not}[Serre functor on $\Ku(Y_3)$]\label{not:serrey3}
For an object $F\in \Ku(Y_3)$, the inverse of the Serre functor is given by \begin{align}\label{eq:serrefunctorY3}
    \mathsf S^{-1}_{\Ku(Y_3)}(F)=\mathbb L_{\cO}\mathbb L_{\cO(H)}(\mathsf S^{-1}_{Y_3}(F))= \mathbb L_{\cO}\mathbb L_{\cO(H)}(F\otimes \cO_{Y_3}(2H))[-3],
\end{align}
where we write $\mathbb L_E$ for the left mutation functor of the exceptional object $E$: for every object $G\in\Db(Y_3)$,
\[\mathbb L_E(G):=\Cone(E\otimes \mathrm{RHom}(E,G)\xrightarrow{\mathsf{ev}}G).\]

\noindent For simplicity we write $\mathsf S$ for $\mathsf S_{\Ku(Y_3)}$. We also write $\mathsf L$ for the functor $\mathsf L(-):=\mathbb L_{\cO}(-\otimes \cO(H))$. It is then clear from formula \eqref{eq:serrefunctorY3} that $\mathsf S^{-1}=\mathsf L^2[-3]$ and $\mathsf S=\mathsf L^{-2}[3]$. By \cite[Lemma 7.1.29]{Huybrechts:cubichypersurfacebook}, the functor $\mathsf L^3=[2]$. It follows that  $\mathsf S=\mathsf L[1]$. The category $\Ku(Y_3)$ is $\tfrac{5}{3}$-Calabi--Yau, in the sense that $\mathsf{S}^3=[5]$, see \cite{Kuznetsov:fracCY}.
\end{Not}
\begin{Not}[Characters in $\Kn(\Ku(Y_3))$]\label{not:abc}
   The ideal sheaf $\cI_\ell$ of a line is in $\Ku(Y_3)$.  Denote by $\beta\in \Kn(\Ku(Y_3))$  its numerical class. Denote by $\alpha=\mathsf S\beta[-2]$ and $\gamma=\mathsf S^{-1}\beta[2]=\mathsf S\alpha[1]$. It is clear that $\Kn(\Ku(Y_3))$ is generated by $\alpha$ and $\beta$. For the relation, we have $\alpha+\gamma=\beta$.

   More explicitly, one may interpret $\alpha$, $\beta$, and $\gamma$ as Chern characters in $\Kn(Y_3)$ as
\begin{align}\label{eqcharofabc}
    \alpha=(2,-H,-\tfrac{L}{2},\tfrac{P}{2}), \;\;\beta=(1,0,-L,0), \mbox{ and} \;\;\gamma=(-1,H,-\tfrac{L}{2},-\tfrac{P}{2}),
\end{align}
where $H$ stands for the class of a hyperplane section, $L$ for the class of a line, and $P$ for the class of a point.

It is worth recalling the Euler form on these characters:
\begin{align*}
\chi(\alpha,\alpha)=\chi(\beta,\alpha)=\chi(\gamma,\beta)=-1;\;\; \chi(\alpha,\beta)=\chi(\beta,\gamma)=\chi(\gamma,\alpha)=0; \;\;\chi(\alpha,\gamma)=1.\\
\end{align*}
\end{Not}

\begin{Not}[Hexagonal coordinate for $\Kn(\Ku(Y_3))$]
In this paper, we apply an action of $\glt$ on the stability conditions on $\Ku(Y_3)$ as that constructed in \cite{BLMS:kuzcomponent}, so that the central charge is of the form
\begin{align}
    Z(E)=e^{\frac{\pi i}{3}}\rk(E)+\tfrac{\sqrt 3}{3}i H^2\ch_1(E)\text{ and } \phi_\sigma(\cI_\ell)=\tfrac{1}{3}.
\end{align} 
Recall that the action of $\glt$ does not change the moduli spaces.

By doing so, the central charge maps $\Kn(\Ku(Y_3))$ to the integer lattice points under the hexagonal coordinates (see Figure \ref{fig:hex}). Compared with the usual Euclidean coordinates for which $\alpha$ and $\beta$ span the first quadrant, the lattice points in the two coordinate systems are interchanged by applying the transform $\begin{pmatrix} 1 & 0\\ \tfrac{1}{2} & 1\end{pmatrix}$.
Visualizing elements of $\Kn(\Ku(Y_3))$ under the hexagonal coordinate is more convenient for us to keep track of the phases of characters and the relations between different moduli spaces. 

 In particular, for every $0\neq v\in \Kn(\Ku(Y_3))$, we have 
\begin{align}\label{eq:phaseofserre}
\phi_\sigma(\mathsf Sv)-\phi_\sigma(v)=\tfrac{5}{3}; \\
    \phi_\sigma(\alpha)=0;\;\;\phi_\sigma(\beta)=\tfrac{1}{3};\;\;\phi_\sigma(\gamma)=\tfrac{2}{3}.
\end{align}   
\end{Not}
A crucial property of the Serre functor is that it preserves the stability of objects. This property was used in the proof of Theorem \ref{thm:existinKurank2}, and will appear several times in the next two sections of the paper. 

\begin{Prop}[{\cite[Proposition 5.7]{PY}}]\label{prop:serreinv}
    The stability condition $\sigma$ is Serre invariant. More precisely, $\mathsf S\sigma=\sigma[\tfrac{5}{3}]$. An object $E$ is $\sigma$-stable with phase $\theta$ if and only if $\mathsf S(E)$ is $\sigma$-stable with phase $\theta+\tfrac{5}{3}$.
\end{Prop}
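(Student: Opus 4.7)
The two assertions in the proposition are equivalent after unpacking the definition of the $\glt$-action recalled in Remark~\ref{not:glt}, so I focus on proving the identity $\mathsf S \sigma = \sigma[5/3]$ of stability conditions on $\Ku(Y_3)$. Since $\mathsf S$ is a $\C$-linear autoequivalence, the pushforward $\mathsf S\sigma := (Z_\sigma \circ \mathsf S^{-1},\,\mathsf S(\cP_\sigma))$ is automatically a stability condition; the content of the claim is that it coincides with the fractional shift of $\sigma$ by $5/3$.

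The first step is to verify that the central charges of $\mathsf S\sigma$ and $\sigma[5/3]$ agree. Using $\mathsf S\beta = \alpha[2]$ and $\mathsf S\alpha = \gamma[-1]$ from Notation~\ref{not:abc} (together with $\alpha+\gamma = \beta$), the induced action on $\Kn(\Ku(Y_3))$ satisfies $\mathsf S^{-1}\alpha = \beta$ and $\mathsf S^{-1}\beta = \beta - \alpha = \gamma$. Evaluating the explicit formula for $Z_\sigma$ on the Chern characters in \eqref{eqcharofabc} gives $Z_\sigma(\alpha) = 1$, $Z_\sigma(\beta) = e^{i\pi/3}$, $Z_\sigma(\gamma) = e^{2i\pi/3}$, and combining these yields
\begin{equation*}
Z_{\mathsf S\sigma}(v) = Z_\sigma(\mathsf S^{-1}v) = e^{i\pi/3}\,Z_\sigma(v) \qquad \text{for all } v \in \Kn(\Ku(Y_3)).
\end{equation*}
On the other hand, the $\glt$-element realizing $\sigma[5/3]$ has matrix $M$ equal to rotation by angle $5\pi/3$, so $M^{-1}$ acts on $Z_\sigma$ as multiplication by $e^{-5i\pi/3} = e^{i\pi/3}$. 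Hence the two central charges coincide.

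It then remains to match the slicings. The cleanest route is to identify the heart $\mathsf S(\cA) = \cP_{\mathsf S\sigma}((0,1])$ of $\mathsf S\sigma$ with the heart $\cP_\sigma((5/3,8/3])$ of $\sigma[5/3]$; once the hearts coincide and the central charges agree, the equality of all slices will follow from Bridgeland's deformation theorem. Using the explicit description $\mathsf S = \mathsf L[1]$ with $\mathsf L = \mathbb L_{\cO_{Y_3}}(-\otimes\cO_{Y_3}(H))$ and $\mathsf L^3 = [2]$ from Notation~\ref{not:serrey3}, the plan is to trace how $\mathsf L$ interacts with the two successive tilts in $\Db(Y_3)$ defining the heart $\cA$ via the BLMS construction recalled in Theorem~\ref{thm:stabonku}. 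I expect the main obstacle to be precisely this tracking step, since $\mathsf L$ does not commute with the ordinary tilting operations on $\Db(Y_3)$, and identifying the stable factors of $\mathsf L(E)$ for $E\in \cA$ is delicate. A more formal alternative that avoids this explicit bookkeeping is to combine continuity of the $\mathsf S$-action on $\Stab(\Ku(Y_3))$ with the identity $\mathsf S^3 = [5]$ to show that $\mathsf S\sigma$ lies in the same connected component as $\sigma$, and then apply the local injectivity of $\sigma \mapsto Z_\sigma$ from Bridgeland's deformation theorem, which together with the central charge computation above forces $\mathsf S\sigma = \sigma[5/3]$.
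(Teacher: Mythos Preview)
The paper does not supply its own proof of this statement; it is quoted from \cite[Proposition 5.7]{PY}, where the argument proceeds by directly analysing how the rotation functor $\mathsf L=\mathbb L_{\cO}(-\otimes\cO(H))$ interacts with the weak stability conditions $\sigma_{\alpha,\beta}$ on $\Db(Y_3)$ used to induce $\sigma$. Your central charge computation is correct, and your first approach---tracking $\mathsf L$ through the two tilts---is indeed essentially what is done in \cite{PY}, though you stop short of carrying it out.

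Your second ``formal alternative'' has a genuine gap. First, continuity of the $\mathsf S$-action together with $\mathsf S^3=[5]$ only tells you that $\mathsf S^3$ fixes the connected component $\Stab^\dag$ containing $\sigma$; it does not rule out $\mathsf S$ cyclically permuting three distinct components. Second, even granting $\mathsf S\sigma\in\Stab^\dag$, Bridgeland's theorem says the forgetful map $\Stab^\dag\to\Hom(\Kn,\C)$ is a \emph{local} homeomorphism, not globally injective: for instance $\sigma$ and $\sigma[2]$ have the same central charge. You might try to salvage this by arguing that $\mathsf S\sigma=\sigma[5/3+2k]$ for some $k\in\Z$ and then using $\mathsf S^3=[5]$ to force $k=0$, but that step requires knowing $\Stab^\dag=\sigma\cdot\glt$, which in this paper is Corollary~\ref{cor:stabmfdofku} and is proved \emph{using} Serre invariance (the bound $\gd(\sigma)<2$ in the proof of Theorem~\ref{thm:existinKurank2} is deduced from $\mathsf S^3=[5]$ together with Serre invariance). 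So the shortcut is circular, and the honest proof really must go through the explicit comparison of hearts that you correctly flagged as the main obstacle.
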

\begin{figure}
\centering
\begin{tikzpicture}[line cap=round,line join=round,>=triangle 45,x=1.7cm,y=1.7cm]

\clip(-2,-0.3) rectangle (6.420,4.3);
\draw (0,0) node[anchor=north west] {O};
%\beta and \gamma axix
\draw [domain=0:2.2] plot(\x,{(-0--1.7240222753698782*\x)/1.0032964027047553}) node[above]{$\beta$-axis};
\draw [domain=-1.7:0] plot(\x,{(-0--1.732303518008477*\x)/-0.9759205879203182});
\draw (-1.3,3) node {$\gamma$-axis};

\draw (0.26098380957792494,3.73) node[anchor=north west] {$(m-1)\beta+\gamma$};
\draw (2,3.578561176063406) node[anchor=north west] {$m\beta$};
\draw [dashed] (1,3.422)-- (0,0);
\draw [dashed] (1,3.422)-- (2,3.437);
\draw [dashed] (2,3.437)-- (1,0);

\draw [dashed] (0,0)-- (1,0) node[below]{$\alpha$};
\draw (1,0)--(5.5,0) node[below]{$\alpha$-axis};
\draw [shift={(0,0)},line width=0.7pt]  plot[domain=0.3:0.66,variable=\t]({1*4.79*cos(\t r)+0*4.79*sin(\t r)},{0*4.79*cos(\t r)+1*4.79*sin(\t r)});
\draw (4.35,2.466) node[anchor=north west] {$\mathsf S^{-1}[2]$ rotates};
\draw (4.4,2.2) node[anchor=north west] {the lattices by $\frac{\pi}{3}$};

\begin{scriptsize}
\draw [fill=ududff] (0,0) circle (2.5pt);

\draw [fill=xdxdff] (1,0) circle (2.5pt);

\draw [fill=xdxdff] (2,0) circle (2.5pt);

\draw [fill=xdxdff] (3,0) circle (2.5pt);

\draw [fill=xdxdff] (4,0) circle (2.5pt);

\draw [fill=xdxdff] (5,0) circle (2.5pt);

\draw [fill=ududff] (1,1.724) circle (2.5pt);
\draw (1,1.9) node{Bl$_{F(Y_3)}J(Y_3)$};

\draw [fill=ududff] (-1,1.732) circle (2.5pt);

\draw [fill=xdxdff] (0.5,0.858) circle (2.5pt);
\draw (0.5,1)node{$\ms(\beta)\cong F(Y_3)$};
\draw [fill=ududff] (-2.516,0.854) circle (2.5pt);

\draw (1.5,0.7) node{Bl$_p\Theta$};
\draw [fill=xdxdff] (1.5,0.86) circle (2.5pt);

\draw (2,1.6) node{$\pi^{-1}\cong V_{14}$};
\draw (2.5,1) node{$\sim $Hilb$^{4,0}(Y_3)$};
\draw (3.8,1.55) node{$\sim $Hilb$^{10,6}(Y_3)$};
\draw [fill=xdxdff] (2.5,0.86) circle (2.5pt);

\draw [fill=xdxdff] (3.52,0.862) circle (2.5pt);

\draw [fill=xdxdff] (1.5,2.57) circle (2.5pt);

\draw [fill=ududff] (2.49,2.577) circle (2.5pt);

\draw [fill=ududff] (3.5,2.585) circle (2.5pt);

\draw [fill=xdxdff] (2,3.437) circle (2.5pt);

\draw [fill=xdxdff] (3,1.724) circle (2.5pt);

\draw [fill=xdxdff] (4.,1.724) circle (2.5pt);

\draw [fill=xdxdff] (4.5,0.864) circle (2.5pt);

\draw [fill=xdxdff] (2,1.724) circle (2.5pt);

\draw [fill=ududff] (1,3.422) circle (2.5pt);

\draw  (3.77,2.927) -- (3.77,2.8);
\draw  (3.77,2.927) -- (3.9,2.92);
\end{scriptsize}
\end{tikzpicture}
    \caption{Characters in $\Kn(\Ku(Y_3))$ under the hexagonal coordinate.}
    \label{fig:hex}
\end{figure}
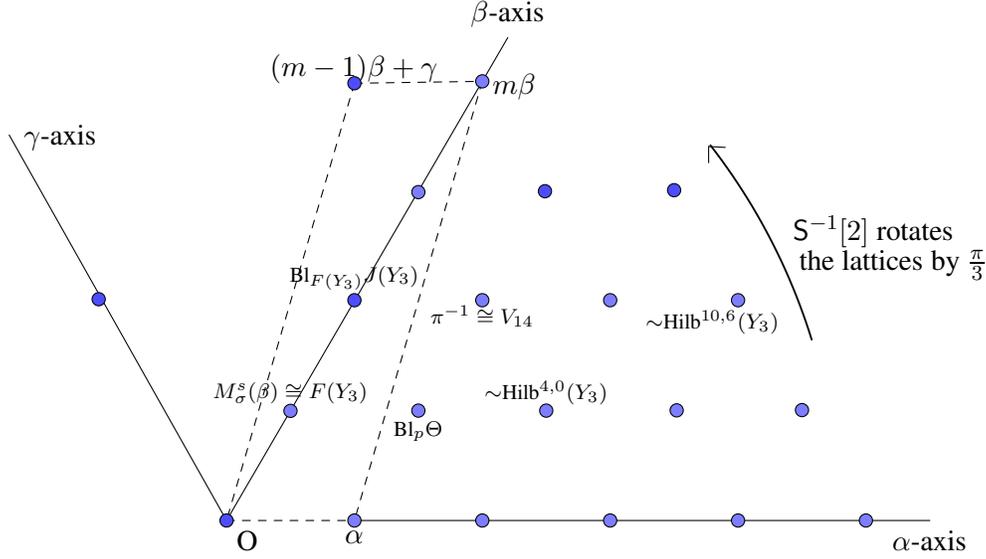

\begin{Rem}[Serre functor permutes the sextants]\label{rem:Sswapabc}
    Note that the action of the Serre functor $\mathsf S$ permutes the ``sextants" of $\Kn(\Ku(Y_3))$. More precisely, by Proposition \ref{prop:serreinv}, an object $E\in \ms(n\beta+m\gamma)$ for some $m,n\geq 0$ if and only if $\mathsf S(E)\in \ms((n\alpha+m\beta)[2])$. So if a statement for lattice points in one of the sextants holds, then it holds for all lattice points. For simplicity we usually state properties only for moduli space of characters in the $(\alpha,\beta)$-sextant, in other words, characters of the form $n\alpha+m\beta$ with $m,n\geq 0$.
\end{Rem}

\subsection{\AJ map}\label{sec:ajmap}

In this subsection, we recall the intermediate Jacobian and Abel--Jacobi map for a cubic threefold. This will be useful for our study of moduli spaces.

Recall that the intermediate Jacobian of a cubic threefold $Y_3$ is defined by $J(Y_3):= \frac{H^{2,1}(Y_3)^*}{H_3(Y_3,\mathbb Z)}$. This has the structure of an abelian fivefold and plays a central role in the proof of irrationality of cubic threefolds \cite{CG:RationalityCubics}. For a family of 1-cycles $Z\to T$ over a smooth irreducible base scheme $T$, we can define an \AJ map $\Phi\colon T\to J(Y_3)$, which is a morphism of schemes. Here $J(Y_3)$ is viewed as a subgroup of CH$_1(Y_3)$ of a given algebraic equivalence class.

For every $v\in \Kn(\Ku(Y_3))$, now we construct the \AJ map by the (cycle-theoretic) second Chern class:
\begin{align*}
    \IJ_v\colon M_\sigma(v)\to J_v(Y_3):
       F\mapsto c_2(F),
\end{align*}
where we use $J_v(Y_3)$ to denote the component of CH$_1(Y_3)$ receiving the image of $\Phi_v$. In fact this can be defined also when $v$ is non-primitive and $M_\sigma(v)$ is singular and does not admit a universal family: the moduli stack $\mathscr M_\sigma(v)$ is a smooth Artin stack. Choose a smooth presentation $\mathscr U\to \mathscr M_\sigma(v)$. Now $\mathscr U$ is a smooth scheme and by considering the pullback of the universal object, we can define the \AJ map $\mathscr U \to J_v(Y_3): F\mapsto c_2(F)$ as a morphism of schemes. This descends to a morphism $\mathscr M_\sigma(v) \to J_v(Y_3)$. Recall that $\mathscr M_\sigma(v) \to M_\sigma(v)$ is a good moduli space, which is universal for maps to algebraic spaces \cite[Theorem 6.6]{Alper:goodmoduli}. Hence this factors through a morphism $\IJ_v\colon M_\sigma(v)\to J_v(Y_3)$. For every $c\in J_v(Y_3)$, we denote by \[\ms(v,c):=\IJ_v^{-1}(c)\cap \ms(v)\]
the stable locus of the fiber of the \AJ map.

Note that the group structure on the Chow group gives us a map $+\colon J_v(Y_3)\times J_w(Y_3)\to J_{v+w}(Y_3)$. For $E_v\in \ms(v)$, $E_w\in\ms(w)$ and a $\sigma$-stable extension $E_f=\Cone(E_v\xrightarrow{f}E_w[1])[-1]$, we have $\IJ_v(E_v)+\IJ_w(E_w)=\IJ_{v+w}(E_f)$.

\subsection{Moduli spaces $\ms(v)$ of small dimensions}\label{sec:mssmall}

In this subsection, we recall the first several examples of moduli spaces on $\Ku(Y_3)$.

\begin{Ex}[Stable objects with character $\alpha$, $\beta$, and $\gamma$]   \label{eg511}
The moduli space $M^s_\sigma(\beta)$ consists of ideal sheaves $\cI_\ell$ of lines. The moduli spaces $\ms(\alpha)$, $\ms(\beta)$, and $\ms(\gamma)$ are identified by the Serre functor, or equivalently the functor $\mathsf L$ as that in Notation \ref{not:serrey3}. These moduli spaces are all isomorphic to the Fano variety of lines on $Y_3$, see \cite[Theorem 1.1]{PY} and \cite[Theorem 1.3]{FP}.

The moduli space $M^s_\sigma(\alpha)$ consists of rank two vector bundles $\mathsf L(\cI_\ell)[-1]$. More precisely, every object $\cE_\ell$ in $M^s_\sigma(\alpha)$ fits in a short exact sequence:
\begin{align}\label{eq415}
    0\to\cE_\ell\to \cO\otimes \Hom(\cO,\cI_\ell(H))\xrightarrow{\mathrm{ev}}\cI_\ell(H)\to 0.
\end{align}
The moduli space $\ms(\gamma)$ consists of objects $\cF_\ell:=\Cone(\cO_\ell(-H)[-1]\xrightarrow{\ev}\cO(-H)[1])$.

We may compute the Brill--Noether locus $\BN(\alpha,\beta)$ as follows. Apply $\Hom(-,\cI_{\ell'})$ to \eqref{eq415}, then 
\begin{align}\hom(\cE_\ell,\cI_{\ell'})=\hom(\cI_\ell(H),\cI_{\ell'}[1])=\begin{cases}
    0 & \text{when }\ell'\cap \ell=\emptyset; \\
    1 & \text{when }\ell', \ell \text{ intersect at a point}; \\
    2 & \text{when }\ell'= \ell. \\
\end{cases}
  \label{eq4111}  
\end{align}

\noindent In particular, $\BN(\alpha,\beta)$ is of codimension one in the four-dimensional space $\ms(\alpha)\times\ms(\beta)$.     
\end{Ex}

\begin{Ex}[Stable objects with character $\beta+\gamma$, see {\cite[Theorem 7.1]{Arend:cubic3}} for more details]\label{eg:ab}
    Denote by $U_\beta$ and $U_\gamma$ the universal family on $\ms(\beta)$ and $\ms(\gamma)$. Consider the relative $\Ext^1$ sheaf H$^1(p_{12,*}\mathscr{H}om(p_{23}^* U_\gamma, p_{13}^* U_\beta))$, and we denote its projectivization over $\ms(\beta)\times \ms(\gamma)$ by $\tilde\P_\sigma(\beta,\gamma)$. Note that \[\hom(E_\gamma,E_\beta[1])=\begin{cases}
        2 & \text{when } E_\beta=\mathsf L(E_\gamma)[-1];\\
        1 & \text{otherwise.}
    \end{cases}\]
    So the natural map $\pi_{\beta,\gamma}\colon\tilde\P_\sigma(\beta,\gamma) \to \ms(\beta)\times \ms(\gamma)$ is one-to-one on general points and has a $\P^1$-fiber on the diagonal $\Delta$ of $\ms(\beta)\times \ms(\gamma)$, while identifying $\ms(\gamma)$ and $\ms(\beta)$ via $\mathsf L[-1]$. As each irreducible component is with at least the expected dimension, the space $\tilde \P_\sigma(\beta,\gamma)$ is irreducible. As a variety, $\tilde \P_\sigma(\beta,\gamma)$ is isomorphic to $\mathrm{Bl}_\Delta(F(Y_3)\times F(Y_3))$.
    
     By Lemma \ref{lem:extendstabobj}, for every $E_\beta\in\ms(\beta)$, $E_\gamma\in\ms(\gamma)$, and $0\neq f\in\Hom(E_\gamma,E_\beta[1])$, the object $E_f:=\Cone(E_\gamma\xrightarrow{f}E_\beta[1])[-1]$ is $\sigma$-stable. Hence we get a well-defined morphism $\tilde{e}_{\beta,\gamma}$    from $\tilde\P_\sigma(\beta,\gamma)$ to $\ms(\beta+\gamma)$.  This morphism extends the map $e_{\beta,\gamma}$ to the jumping locus of the $\Ext^1$ group, or equivalently, the exceptional locus over the diagonal $\Delta$. By Proposition~\ref{prop:extmapfinite}, the morphism $\tilde{e}_{\beta,\gamma}$ is dominant and generically finite.

    For each pair of objects $(\cI_\ell,\cF_{\ell'})\in \ms(\beta)\times \ms(\gamma)$, the extended objects depend on the position of $\ell$ and $\ell'$.

    When $\ell\neq  \ell'$, as $\hom(\cO_{\ell'}(-H)[-1],\cI_\ell)=0$, we have the following commutative diagrams of distinguished triangles for the unique non-trivial extension $e(\cI_\ell,\cF_{\ell'})$:
      \begin{center}
	\begin{tikzcd}
   \cO_{\ell'}(-H)[-2] \arrow{r}\arrow{d}{\ev}& 0 \arrow{r}\arrow{d}  &  \cO_{\ell'}(-H)[-1] \arrow[r,equal]\arrow{d}{\ev}
		&   \cO_{\ell'}(-H)[-1] \arrow{d}{\ev}\\
	\cO(-H) \arrow{r}{g}\arrow{d}{\ev}	& \cI_\ell \arrow{r} \arrow[d,equal]& E_g
		\arrow{d} \arrow{r} & \cO(-H)[1] \arrow[d] \\
	\cF_{\ell'}[-1] \arrow{r}{\ev} \arrow{d}	&	\cI_\ell \arrow{r}\arrow {d}  & e(\cI_\ell,\cF_{\ell'}) \arrow{r}{}\arrow{d}{} & \cF_{\ell'} \arrow{d}\\
\cO_{\ell'}(-H)[-1] \arrow{r}& 0\arrow{r} &\cO_{\ell'}(-H) \arrow[r,equal] & \cO_{\ell'}(-H).
	\end{tikzcd}
\end{center}
In this diagram, we label $\ev$ on an arrow suggesting that the homomorphism group between the two objects is one-dimensional. Denote by $\mathbf P_{\ell,\ell'}$ the projective subspace spanned by $\ell$ and $\ell'$ in $\mathbf P^4$ when $\ell\cap\ell'=\emptyset$, or the tangent space of $Y_3$ at $\ell\cap\ell'$ when $\ell\cap\ell'$ is a point. In particular, the space $\mathbf P_{\ell,\ell'}\cong\mathbf P^3$.  Denote by $S_{\ell,\ell'}:=\mathbf P_{\ell,\ell'}\cap Y_3$ and $\iota\colon S_{\ell,\ell'}\to Y_3$ the embedding.  Then the map $g$ is determined up to a scalar by the property that $E_g$ is supported on $S_{\ell,\ell'}$. When $\ell\cap \ell'=\emptyset$, the object $E_g\cong\iota_*\cO_{S_{\ell,\ell'}}(-\ell)$ and $e(\cI_\ell,\cF_{\ell'})\cong \iota_*\cO_{S_{\ell,\ell'}}(\ell'-\ell)$.

In other words, a general object in $\ms(\beta+\gamma)$ is of the form $\iota_*\cO_{S_{\ell,\ell'}}(\ell'-\ell)$. This induces a rational map from $\ms(\beta+\gamma)$ to $(\mathbf P^4)^*$ of degree $72$. In the general case that $S_{\ell,\ell'}$ is a smooth cubic surface, there are exactly six ordered pairs of lines $(\ell_i,\ell_i')$ on $S_{\ell,\ell'}$ such that $[\ell_i-\ell'_i]=[\ell-\ell']$. So the degree of the morphism $\tilde{e}_{\beta,\gamma}$ is $6$.\\

When $\ell'=\ell$, we have \begin{align}\label{eq4213}
    \Hom(\cF_\ell[-1],\cI_\ell)\cong\Hom(\cO_{\ell}(-H)[-1],\cI_\ell)\cong\Hom(\cO_\ell(-H),\cO_\ell)\cong\C^2.
\end{align}
For every $0\neq f\in\Hom(\cF_\ell[-1],\cI_\ell)$, we have the commutative diagram of distinguished triangles for the extension  $\Cone(f)$:
   \begin{center}
	\begin{tikzcd}
	\cO(-H) \arrow{r}{g}\arrow{d}{\ev}	& 0 \arrow{r} \arrow[d]& \cO(-H)[1]
		\arrow{d} \arrow[r,equal] & \cO(-H)[1] \arrow[d] \\
	\cF_{\ell}[-1] \arrow{r}{f} \arrow{d}{\ev}	&	\cI_\ell \arrow{r}\arrow[d,equal]  & \Cone(f) \arrow{r}{}\arrow{d}{} & \cF_{\ell} \arrow{d}\\
\cO_{\ell}(-H)[-1] \arrow{r}{f_p}& \cI_\ell\arrow{r} &\cI_p \arrow[r] & \cO_{\ell}(-H).
	\end{tikzcd}
\end{center}
Via \eqref{eq4213}, the morphisms $f$ and $f_p$ in the diagram correspond to those in $\Hom(\cO_\ell(-H),\cO_\ell)$ with cokernel $\cO_p$ for some point $p\in \ell$. The object $\Cone(f)$ is isomorphic to \[\Cone(\cI_p[-1]\xrightarrow{\ev}\cO(-H)[1])=\R_{\cO(-H)}(\cI_p).\] 

In summary, we have the following commutative diagram of morphisms for $\tilde e_{\beta,\gamma}$:
     \begin{center}
	\begin{tikzcd}
    & \mathrm{Bl}_1^{-1}(\Delta_{F(Y_3)}) \arrow[r]{}{6:1}\arrow[d,hook]{d}\arrow[dl]
		&  Y_3 \arrow[d,hook]{}{\mathsf{pr}}\arrow[dr]\\
		\Delta_{F(Y_3)} \arrow[d,hook]{}& \mathrm{Bl}_\Delta(F(Y_3)\times F(Y_3)) \arrow{r} {\tilde{e}_{\beta,\gamma}}\arrow{dl}{\mathrm{Bl}_1}& M^s_\sigma(\beta+\gamma)
		\arrow{d} \arrow{dr}{\mathrm{Bl}_2} & \{\text{pt}\}\arrow[d,hook]\\
		F(Y_3)\times F(Y_3) \arrow{rr}{\AJ} & & J(Y_3)\arrow[r, phantom, sloped, "\supset"] \arrow[u,phantom]{}{} & \Theta_{J(Y_3)}.
	\end{tikzcd}
\end{center}
    \end{Ex}

\subsection{Bounds on dimensions of extensions}
The following computational lemmas will be useful in the proof of several propositions in the next section.
\begin{Lem}\label{lem:vwchicomputing}
    Assume that $v,w\in \Kn(\Ku(Y_3))$ satisfy $\phi_\sigma(w)-\phi_\sigma(v)\in(-1,1)$. Then $\chi(v,w)<0$ (resp. $=0$) if and only if $\phi_\sigma(w)-\phi_\sigma(v)\in(-\tfrac{2}{3},\tfrac{1}{3})$ (resp. $=-\tfrac{2}{3}$ or $\tfrac{1}{3}$).
\end{Lem}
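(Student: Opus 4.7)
The plan is to produce an explicit trigonometric expression for $\chi(v,w)$ in terms of $|Z(v)|$, $|Z(w)|$ and the phase difference $\phi_\sigma(w)-\phi_\sigma(v)$, and then read the sign of $\chi(v,w)$ directly off that expression.

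First I would expand. Writing $v=m\alpha+n\beta$ and $w=m'\alpha+n'\beta$, bilinearity of $\chi$ combined with the six Euler pairings listed in Notation~\ref{not:abc} yields
\[\chi(v,w)\;=\;-(m+n)m'-nn'.\]
On the other hand, the central charge normalized as in Section~\ref{sec:moduliY3} satisfies $Z(\alpha)=1$ and $Z(\beta)=e^{i\pi/3}$, so $Z(v)=m+ne^{i\pi/3}$, from which one extracts
\[n=\tfrac{2r_v}{\sqrt 3}\sin(\pi\phi_\sigma(v)),\qquad m+n=\tfrac{2r_v}{\sqrt 3}\sin\!\bigl(\pi\phi_\sigma(v)+\tfrac{\pi}{3}\bigr),\]
where $r_v:=|Z(v)|$, and analogously for $m',n'$. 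Plugging in and collapsing the product of sines via $2\sin A\sin B=\cos(A-B)-\cos(A+B)$ followed by $\cos X-\cos Y=-2\sin\tfrac{X+Y}{2}\sin\tfrac{X-Y}{2}$ gives the clean formula
\[\chi(v,w)\;=\;\frac{2\,r_v r_w}{\sqrt 3}\,\sin\!\Bigl(\pi\bigl(\phi_\sigma(w)-\phi_\sigma(v)-\tfrac{1}{3}\bigr)\Bigr).\]
Equivalently, this is $\tfrac{2}{\sqrt 3}\Im\!\bigl(e^{-i\pi/3}\,\overline{Z(v)}\,Z(w)\bigr)$, an identity which may be verified by checking it on the basis $\{Z(\alpha),Z(\beta)\}=\{1,\,e^{i\pi/3}\}$ against the tabulated $\chi$-values and then extending by $\R$-bilinearity in both arguments.

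With this formula in hand, the statement reduces to locating the sign of a sine. The assumption $\phi_\sigma(w)-\phi_\sigma(v)\in(-1,1)$ forces the argument $\pi(\phi_\sigma(w)-\phi_\sigma(v)-\tfrac{1}{3})$ to lie in $(-\tfrac{4\pi}{3},\tfrac{2\pi}{3})$. On this interval, $\sin$ vanishes exactly at the values corresponding to $\phi_\sigma(w)-\phi_\sigma(v)\in\{-\tfrac{2}{3},\tfrac{1}{3}\}$, is strictly negative on the open subinterval $\phi_\sigma(w)-\phi_\sigma(v)\in(-\tfrac{2}{3},\tfrac{1}{3})$, and strictly positive on the remaining part of $(-1,1)$. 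This matches the statement of the lemma.

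The proof is essentially computational, and the only real obstacle is administrative: one must be careful with the normalization of $Z$ and the orientation conventions, since a stray sign in the central charge would flip the sine and swap the roles of the two phase thresholds $-\tfrac{2}{3}$ and $\tfrac{1}{3}$. Once the complex-number identity $\chi(v,w)=\tfrac{2}{\sqrt 3}\Im\!\bigl(e^{-i\pi/3}\overline{Z(v)}Z(w)\bigr)$ is verified on the basis $\{\alpha,\beta\}$, the rest is a two-line inspection of the sine.
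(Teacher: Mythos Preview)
Your proof is correct. The paper actually omits the proof entirely, saying only ``The proof is quite elementary and left to the reader,'' so your argument supplies exactly the missing computation. The closed-form identity $\chi(v,w)=\tfrac{2}{\sqrt 3}\,r_vr_w\sin\bigl(\pi(\phi_\sigma(w)-\phi_\sigma(v)-\tfrac{1}{3})\bigr)$ is the natural thing to write down given the hexagonal normalization of $Z$ the authors set up, and your verification on the basis $\{\alpha,\beta\}$ together with the sign analysis of the sine is complete and accurate.
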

The proof is quite elementary and left to the reader.
   
\begin{Lem}\label{lem:extspaceofsmallgapvw}
    Assume that $v,w\in \Kn(\Ku(Y_3))$ satisfy $\phi_\sigma(w)-\phi_\sigma(v)\in(0,\tfrac{1}{3})$. Then 
    \begin{enumerate}[(1)]
        \item $\dim \ET(v,w)\leq \dim \ms(v+w)+\chi(v,w)<\dim M^s_\sigma(v+w)$;
        \item $\overline{\BNP_{v+w}(\BN(v,v+w))}\neq M^s_\sigma(v+w)$. In other words, for a general $E_{v+w}\in M^s_\sigma(v+w)$, we have $\Hom(E_v,E_{v+w})=0$ for every $E_v\in M^s_\sigma(v)$.
    \end{enumerate}
\end{Lem}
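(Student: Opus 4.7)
The plan is to establish (1) by a dimension count on the projectivized relative $\Ext^1$, and then to derive (2) from (1) together with Proposition~\ref{prop:imofprimpliesext}.

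For (1), I would first compute the $\Ext$-groups between arbitrary stable representatives. Using the hypothesis $\phi_\sigma(w) - \phi_\sigma(v) \in (0, \tfrac{1}{3})$ together with Serre invariance $\mathsf{S}\sigma = \sigma[\tfrac{5}{3}]$ (Proposition~\ref{prop:serreinv}), one sees that for every $(E_v, E_w) \in \ms(v) \times \ms(w)$ and every $i \neq 1$ the group $\Hom(E_w, E_v[i])$ vanishes: for $i \leq 0$ this is by stability, while for $i = 2$ the phase gap $\phi_\sigma(v) + 2 - \phi_\sigma(w) \in (\tfrac{5}{3}, 2)$ strictly exceeds $\gd(\sigma) = \tfrac{5}{3}$, and the same bound yields vanishing for $i \geq 3$. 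Consequently $\ms(v,w)^\dag = \ms(v) \times \ms(w)$ and $\dim \Ext^1(E_w, E_v) = -\chi(w, v)$ is locally constant, so $\Pe{v}{w}$ is a Brauer--Severi bundle of relative dimension $-\chi(w,v) - 1$. Using $\dim \ms(u) = 1 - \chi(u,u)$, a direct expansion yields
\[\dim \Pe{v}{w} = 1 - \chi(v,v) - \chi(w,w) - \chi(w,v) = \dim \ms(v+w) + \chi(v, w).\]
Since every object of $\ET(v, w)$ is the image of a point of $\Pe{v}{w}$ under $e_{v, w}$, this forces $\dim \ET(v, w) \leq \dim \ms(v+w) + \chi(v, w)$. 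The strict inequality $\chi(v, w) < 0$ then comes from Lemma~\ref{lem:vwchicomputing}.

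For (2), I would argue by contradiction. Assume $\overline{\BNP_{v+w}(\BN(v, v+w))} = \ms(v+w)$. Since $\phi_\sigma(v+w) - \phi_\sigma(v) \in (0, \tfrac{1}{3}) \subset (0, 1)$, the last assertion of Proposition~\ref{prop:imofprimpliesext} applied to the pair $(v, v+w)$ gives
\[\ms(v+w) = \bigcup_{v' \in \triangle(v, v+w) \setminus \R(v+w)} \ET\bigl(v', (v+w) - v'\bigr).\]
Every lattice point of $\triangle(v, v+w)$ has the form $v' = \lambda v + \mu w$ with $0 \leq \mu \leq \lambda \leq 1$, and a short computation shows $\phi_\sigma\bigl((v+w) - v'\bigr) - \phi_\sigma(v') \in \bigl[0, \phi_\sigma(w) - \phi_\sigma(v)\bigr]$, with the left endpoint attained precisely when $v' \in \R(v+w)$. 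So every contributing $v'$ satisfies the hypothesis of (1) with $(v,w)$ replaced by $(v', (v+w) - v')$, whence $\dim \ET(v', (v+w)-v') < \dim \ms(v+w)$. Since $\triangle(v, v+w)$ contains only finitely many lattice points, the displayed union is a finite union of proper closed subsets, contradicting the assumption.

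The delicate point in (1) is that the phase gap for $\Hom(E_w, E_v[2])$ just barely exceeds $\gd(\sigma) = \tfrac{5}{3}$, relying crucially on the strict inequality $\phi_\sigma(w) - \phi_\sigma(v) < \tfrac{1}{3}$; without this strictness the vanishing would fail and the whole bound would collapse. The key structural input for (2) is that the phase gap $(0, \tfrac{1}{3})$ is preserved under forming complements inside $\triangle(v, v+w)$, which is what lets us recycle (1) on each stratum.
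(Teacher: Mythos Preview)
Your proposal is correct and follows essentially the same route as the paper's proof: the vanishing of $\Hom(E_w,E_v[i])$ for $i\neq 1$ via Serre invariance (you phrase it through $\gd(\sigma)=\tfrac{5}{3}$, the paper via Serre duality and the phase inequality $\phi_\sigma(E_v[2])>\phi_\sigma(\mathsf S E_w)$---these are the same computation), the identification $\ET(v,w)=\ET^0(v,w)$, the dimension count on $\Pe vw$, and the appeal to Lemma~\ref{lem:vwchicomputing} for the strict inequality. Part~(2) is likewise identical: contradiction via Proposition~\ref{prop:imofprimpliesext}, then checking each $v'\in\triangle(v,v+w)\setminus\R(v+w)$ again satisfies the phase-gap hypothesis so that~(1) applies. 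One cosmetic remark: the loci $\ET(v',(v+w)-v')$ are only constructible, not closed, so ``finite union of proper closed subsets'' should read ``finite union of constructible subsets of strictly smaller dimension''; the contradiction is of course unaffected.
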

\begin{proof}
    {\em (1)} For $E_w\in M^s_\sigma(w)$ and $E_v\in M^s_\sigma(v)$, as $\phi_\sigma(w)>\phi_\sigma(v)$, we have $\Hom(E_w,E_v[i])=0$ for all $i\leq 0$. By \eqref{eq:phaseofserre}, we have $\phi_\sigma(E_v[2])=\phi_\sigma(E_v)+2>\phi_\sigma(E_w)+\tfrac{5}{3}=\phi_\sigma(\mathsf {S}(E_w))$. So by Proposition \ref{prop:serreinv}, we have 
    \[\Hom(E_w,E_v[i])= \Hom(E_v[i],\mathsf {S}(E_w))^*=0,\]
    for every $i\geq 2$. So $\ms(v,w)^\dag=M^s_\sigma(v)\times M^s_\sigma(w)$. 
    
    It follows that $\ET(v,w)=\ET^0(v,w)$ which is the image of $e_{v,w}$. Hence \[\dim\ET(v,w)=\dim\ET^0(v,w)\leq \dim\Pe vw.\] Similar to the proof of Corollary \ref{cor:codimofETlocus}, we have $\dim\Pe vw=\dim M^s_\sigma(v+w)+\chi(v, w)$.
    As $\phi_\sigma(w)-\phi_\sigma(v)\in(0,\tfrac{1}{3})$, it follows by Lemma \ref{lem:vwchicomputing} that \[\dim M^s_\sigma(v+w)+\chi(v, w)<\dim M^s_\sigma(v+w).\] 
 The statement holds.\\
 
 \noindent  {\em (2)} Suppose that $\overline{\BNP_{v+w}(\BN(v,v+w))}= M^s_\sigma(v+w)$, then by Proposition \ref{prop:imofprimpliesext}, we have
\begin{align}\label{eq44}
M^s_\sigma(v+w)=\bigcup_{v'\in\triangle(v,v+w)\setminus\{\R (v+w)\}}\ET(v',v+w-v').
\end{align}
Note that for every $v'\in\triangle(v,v+w)\setminus\{\R (v+w)\}$, we have \[\phi_\sigma(v)\leq \phi_\sigma(v') <\phi_\sigma(v+w)<\phi_\sigma(v+w-v')\leq \phi_\sigma(w).\]
So $\phi_\sigma(v+w-v')-\phi_\sigma(v')\in(0,\tfrac{1}{3})$. 

It follows by {\em (1)} that $\dim(\ET(v',v+w-v'))<\dim M^s_\sigma(v+w)$, which contradicts \eqref{eq44}. Hence the statement holds.
\end{proof}

\begin{Lem}\label{lem:extspaceof13gap}
    Let $v,w\in \Kn(\Ku(Y_3))$ satisfying $\phi_\sigma(w)-\phi_\sigma(v)=\tfrac{1}{3}$. Assume that $\BN(v,w)$ does not contain any irreducible component of $\ms(v)\times \ms(w)$ and $e_{v,w}$ is well-defined on a non-empty locus. Then 
    $e_{v,w}$ is generically finite and dominant onto the irreducible component containing its image. 
\end{Lem}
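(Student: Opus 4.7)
The plan is to reduce the statement to a dimension count that precisely matches $\dim\ms(v+w)$, and then invoke the generic finiteness from Proposition \ref{prop:extmapfinite}. The crucial numerical input is that the phase gap $\tfrac{1}{3}$ is the borderline case of Lemma \ref{lem:vwchicomputing}, giving $\chi(v,w)=0$.

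First I would verify that Assumption \ref{asspdag} holds for the pair $(v,w)$, so that $\Pe vw$ is defined. For a general pair $(E_v,E_w)\in\ms(v)\times\ms(w)$, $\sigma$-stability yields $\Hom(E_w,E_v[i])=0$ for $i\leq 0$; Serre duality together with Proposition \ref{prop:serreinv} identifies $\Hom(E_w,E_v[2])$ with $\Hom(E_v,\mathsf S E_w[-2])^{*}$, and since the phase of $\mathsf S E_w[-2]$ equals $\phi_\sigma(E_v)$, stability forces this to vanish whenever $E_v\not\cong\mathsf S E_w[-2]$. This simultaneously shows $\BN(w,v[2])\neq\ms(w)\times\ms(v[2])$ and, via the hypothesis that $e_{v,w}$ is well-defined at some point (so $\Ext^1(E_w,E_v)\neq 0$ there) together with semicontinuity, forces $\chi(w,v)<0$. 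Both conditions of Assumption \ref{asspdag} are then in place, and on the non-jumping locus $\dim\Ext^1(E_w,E_v)=-\chi(w,v)$.

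Next I would compute $\dim\Pe vw$ following the proof of Corollary \ref{cor:codimofETlocus}. Using that $\ms(v)$ and $\ms(w)$ are smooth of the expected dimension at stable points (Corollary \ref{cor:cub3smoothmoduli} and standard deformation theory on $\Ku(Y_3)$), one gets
\[\dim\Pe vw=(1-\chi(v,v))+(1-\chi(w,w))+(-\chi(w,v))-1=1-\chi(v+w,v+w)+\chi(v,w)=\dim\ms(v+w),\]
where the last equality uses $\chi(v,w)=0$ from Step~1. By Proposition \ref{prop:extmapfinite}, the assumption that $\BN(v,w)$ contains no irreducible component of $\ms(v)\times\ms(w)$ implies that $e_{v,w}$ is generically finite on its well-defined locus, so the image has dimension $\dim\ms(v+w)$. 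Since $\ms(v+w)$ is smooth of pure expected dimension $1-\chi(v+w,v+w)$ at every stable point, the closure of the image is an entire irreducible component, i.e.\ $e_{v,w}$ is dominant onto the irreducible component containing its image.

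The main obstacle I foresee is the verification of Assumption \ref{asspdag}, particularly the strict inequality $\chi(w,v)<0$ extracted from the well-definedness hypothesis; once this and the Serre-duality vanishing above are established, the remainder of the argument reduces to bookkeeping of Euler pairings against the phase data and a direct application of Proposition \ref{prop:extmapfinite}.
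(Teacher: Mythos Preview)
Your proof is correct and takes the same approach as the paper: invoke Proposition \ref{prop:extmapfinite} for generic finiteness, use Lemma \ref{lem:vwchicomputing} to obtain $\chi(v,w)=0$, and perform the dimension count $\dim\Pe vw=\dim\ms(v+w)$ as in the proof of Corollary \ref{cor:codimofETlocus}. Your explicit verification of Assumption \ref{asspdag} is more careful than the paper (which leaves it implicit in the hypothesis that $e_{v,w}$ is well-defined, and which cites Corollary \ref{cor:codimofETlocus} somewhat loosely since condition (c) there fails on the diagonal $E_v\cong\mathsf S^{-1}E_w[2]$), but the argument is otherwise identical.
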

\begin{proof}
    First note that by Proposition \ref{prop:extmapfinite}, the rational map $e_{v,w}$ is generically finite. Also as $\phi_\sigma(w)-\phi_\sigma(v)=\tfrac{1}{3}$, by Lemma \ref{lem:vwchicomputing}, we have $\chi(v,w)=0$. By Corollary \ref{cor:codimofETlocus}, we have $\dim\Pe vw=\dim M^s_\sigma(v+w)+\chi(v, w)=\dim M^s_\sigma(v+w)$, hence the domination statement follows.
\end{proof}

\begin{Lem}\label{lem:extbyonealpha}
    Let $w=n\alpha +m\beta\in \Kn(\Ku(Y_3))$ with $n\geq0$, $m\geq1$. Then for every general $E_w\in M^s_\sigma(w)$, every $E_\alpha\in M^s_\sigma(\alpha)$ and $0\neq f\in\Hom(E_w,E_\alpha[1])$, the object $E_f:=\Cone(E_w\xrightarrow[]{f}E_\alpha[1])[-1]$ is $\sigma$-stable.
\end{Lem}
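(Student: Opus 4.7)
The plan is to apply the second form of Lemma \ref{lem:cokerstable} to the morphism $f\colon E_w\to E_\alpha[1]$. Since $\phi_\sigma(\alpha[1])-\phi_\sigma(w)=1-\phi_\sigma(w)\in(\tfrac{2}{3},1)\subset(0,1)$, the phase hypothesis of that lemma is satisfied. It would then suffice to prove that, for a general $E_w\in M^s_\sigma(w)$, one has $\Hom(E_w,E_u)=0$ for every $u\in\triangle(w,\alpha[1])\setminus\{0,w,\alpha[1]\}$ and every $E_u\in M^s_\sigma(u)$.

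First I would enumerate the lattice points $u=n'\alpha+m'\beta\in\triangle(w,\alpha[1])$: writing $u=aw+b\alpha[1]$ with $a,b\geq 0$, $a+b\leq 1$, one gets $0\leq m'\leq m$ and $m'(n+1)/m-1\leq n'\leq m'n/m$. Excluding the three vertex classes, the remaining characters split into two kinds. The \emph{equal-phase} ones satisfy $n'=m'n/m$ and are therefore proper integral multiples of $w/\gcd(n,m)$; for such $u$, the vanishing $\Hom(E_w,E_u)=0$ is automatic because two $\sigma$-stable objects of the same phase and non-isomorphic numerical classes have no morphisms between them. The remaining characters are \emph{larger-phase}, meaning $\phi_\sigma(u)>\phi_\sigma(w)$.

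For the larger-phase case, Serre invariance (Proposition \ref{prop:serreinv}) together with the global dimension bound $\gd(\sigma)\leq 2$ forces $\Hom(E_w,E_u[i])=0$ for every $i\neq 0,1$, so the Euler pairing reduces to $\chi(w,u)=\hom(E_w,E_u)-\hom(E_w,E_u[1])$. Using the explicit form of the Euler pairing from Notation \ref{not:abc} together with the inequalities above, a direct computation gives $\chi(w,u)<0$ throughout this range. To finish, I would argue by contradiction that the locus $\BNP_w(\BN(w,u))$ is not all of $M^s_\sigma(w)$: via the isomorphism $\BN(w,u)\cong\BN(u,\mathsf S w)$ induced by Serre duality, a dominance hypothesis $\BNP_w(\BN(w,u))=M^s_\sigma(w)$ translates to $\BNP_{\mathsf S w}(\BN(u,\mathsf S w))=M^s_\sigma(\mathsf S w)$, and after shifting so that $\phi_\sigma(\mathsf S w)-\phi_\sigma(u)$ sits inside $(0,1)$, Proposition \ref{prop:imofprimpliesext} would furnish a covering of $M^s_\sigma(\mathsf S w)$ by extension loci $\ET(v',\mathsf S w-v')$. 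Each such locus has strictly smaller dimension than $M^s_\sigma(\mathsf S w)$ by Corollary \ref{cor:codimofETlocus} together with Lemma \ref{lem:extspaceofsmallgapvw}(1), producing the desired contradiction.

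The main obstacle I anticipate is the borderline situation where $\phi_\sigma(u)-\phi_\sigma(w)$ is so small that no single shift lands the Serre-dualized phase gap strictly inside $(0,1)$, so that Proposition \ref{prop:imofprimpliesext} does not apply directly. In that regime I would fall back on induction on $|w|$: writing $w=w_++w_-$ using Pick's theorem (Proposition and Definition \ref{pd:pick}), I would construct $E_w$ as a stable extension of two stable objects of smaller norm via Lemma \ref{lem:extendstabobj}, and then derive the required vanishing $\Hom(E_w,E_u)=0$ from the long exact sequence associated to the defining triangle, using the inductive hypothesis on each factor together with the fact that the relevant $u$'s for $w$ are also covered by the inductive hypothesis for $w_\pm$.
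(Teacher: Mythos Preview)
Your Serre-duality route does not work, and the obstruction is not a ``borderline'' case but the entire larger-phase range. You correctly identify that the nontrivial $u\in\triangle(w,\alpha[1])\setminus\{0,w,\alpha[1]\}$ all lie in the closed $(\alpha,\beta)$-sextant, so $\phi_\sigma(u)-\phi_\sigma(w)\in[0,\tfrac13]$. But then
\[
\phi_\sigma(\mathsf S w)-\phi_\sigma(u)=\tfrac{5}{3}-\bigl(\phi_\sigma(u)-\phi_\sigma(w)\bigr)\in\bigl[\tfrac{4}{3},\tfrac{5}{3}\bigr),
\]
which is never in $(0,1)$, so Proposition~\ref{prop:imofprimpliesext} never applies to $\BN(u,\mathsf S w)$. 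Shifting does not help: replacing $u$ by $u[1]$ changes the Hom group you are trying to control. Your fallback induction also breaks: for $w=\alpha+2\beta$ one has $w_+=\beta$, $w_-=\alpha+\beta$, and the sole larger-phase $u$ is $\beta=w_+$; applying $\Hom(-,E_\beta)$ to the defining triangle of $E_w$ picks up $\Hom(E_\beta,E_\beta)=\C$, so the long exact sequence does not yield $\Hom(E_w,E_\beta)=0$.

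The paper avoids all of this by working on the other side of the extension. Instead of controlling morphisms \emph{out of} $E_w$, it controls morphisms \emph{into} $E_w$: if $E_f$ is not stable, take a stable factor $F$ of $\HN^+_\sigma(E_f)$; then $\Hom(F,E_f)\neq 0$, and since $\Hom(F,E_\alpha)=0$ by phase, the triangle $E_\alpha\to E_f\to E_w$ forces $\Hom(F,E_w)\neq 0$. By Lemma~\ref{lem:hnfactorextobj}, the character $[F]$ lies in $\triangle(w+\alpha,w)$, and one checks that every nontrivial $u=n_1\alpha+m_1\beta$ in this triangle has $0\le n_1\le n$, $0<m_1<m$, so both $u$ and $w-u$ sit in the $(\alpha,\beta)$-sextant. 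Hence Lemma~\ref{lem:extspaceofsmallgapvw}(2) applies directly to give $\overline{\BNP_w(\BN(u,w))}\neq M^s_\sigma(w)$, and a general $E_w$ outside the finite union of these proper loci works for every $E_\alpha$ and every $f$. The moral: use the triangle $\triangle(w+\alpha,w)$ and look at destabilizing \emph{subobjects} mapping to $E_w$, rather than the triangle $\triangle(w,\alpha[1])$ and quotients receiving maps from $E_w$.
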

\begin{proof}
    For every lattice point $u\in \triangle(w+\alpha,w)\setminus \{w+\alpha,w,0\}$, we have $u=n_1\alpha+m_1\beta$ for some $0\leq n_1\leq n$ and $0<m_1<m$. So $w-u=(n-n_1)\alpha+(m-m_1)\beta$ is in the $(\alpha,\beta)$-sextant as well. It follows that \[\phi_\sigma(w-u)-\phi_\sigma(u)\in(0,\tfrac{1}{3}).\]

    By Lemma \ref{lem:extspaceofsmallgapvw}, $\overline{\BNP_{w}(\BN(u,w))}\neq M^s_\sigma(w)$ for every such $u$. Let $E_w$ be an object in the non-empty open set
    \begin{align}\label{eq45}
    M^s_\sigma(w)\setminus \bigcup_{u\in \triangle(w+\alpha,w)\setminus\{w+\alpha,w,0\}}\overline{\BNP_{w}(\BN(u,w))}.
    \end{align}

    Now for every $E_\alpha$ and $f$, by Lemma \ref{lem:hnfactorextobj}, the character of $F=\HN^+_\sigma(E_f)$ is in $\triangle(w+\alpha,w)$. As $f\neq 0$, the character cannot be $w$. (If $\HN^+_\sigma(E_f)$ is strictly semistable, then we may choose one of its Jordan--H\"older factors $F$ such that $\Hom(F,E_f)\neq 0$.)

    Applying $\Hom(F,-)$ to the distinguished triangle $E_\alpha\to E_f\to E_w\xrightarrow[]{f}E_\alpha[1]$, we get $0\neq \Hom(F,E_f)\hookrightarrow\Hom(F,E_w)$. On the other hand, by the choice of $E_w$ in \eqref{eq45}, for every $\sigma$-stable object $F$ with character in $\triangle(w+\alpha,w)\setminus\{w+\alpha,w\}$, $\Hom(F,E_w)=0$. Hence the character of $F$ must be $w+\alpha$ which is the same as that of $E_f$. It follows that $E_f$ is $\sigma$-stable.
\end{proof}

\section{Stably birational equivalence between moduli spaces}\label{sec6}
In this section, we prove that rational maps from $\Pe vw$ to other $\Pe{v'}{w'}$ or $\ms(v+w)$ are well-defined, dominant, and even birational under certain assumptions on the characters. 

\subsection{Birational maps between $\Pe vw$'s}
We first estimate the codimension of the Brill--Noether locus involving the character $\alpha$. This is an essential step to show some of the rational maps $e_{\alpha,-}^*$ (see Definitions \ref{def:evw} \& \ref{def:evwR}) are well-defined.
\begin{Lem}\label{lem:homb}
    Let  $m,n$ be positive integers, $E_\alpha\in\ms(\alpha)$ and $E_{\beta}\in\ms(\beta)$. Then
   \begin{enumerate}[(1)]
       \item $\dim \BN(n\alpha+m\beta,E_{\beta}) \leq \dim\ms(n\alpha+m\beta)-m-1;$\\
        $\dim \BN(E_\alpha, n\alpha+m\beta) \leq \dim\ms(n\alpha+m\beta)-n-1;$
       \item $\dim\BN(\alpha,n\alpha+m\beta)\leq \dim\ms(n\alpha+m\beta)-n+1$;
       \item $\dim\BN^2(n\alpha+\beta,E_\beta)\leq \dim\ms(n\alpha+\beta)-3$.
   \end{enumerate}
        
\end{Lem}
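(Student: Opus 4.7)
The plan is to exhibit each Brill--Noether locus as (the image of) a space of extensions by a ``smaller'' character, and to match the claimed codimensions by direct Euler pairing computations in the spirit of Corollary \ref{cor:codimofETlocus}.

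For the first inequality of (1), let $v = n\alpha + m\beta$ and take $E \in \BN(v, E_\beta)$. A non-zero morphism $f\colon E \to E_\beta$ produces a distinguished triangle $G \to E \to E_\beta \xrightarrow{+}$ with $\lambda(G) = v - \beta$ and $\phi_\sigma(G) < \phi_\sigma(E) < \phi_\sigma(E_\beta)$, so $(E,[f])$ is recovered from the pair $(G,\eta)$ with $\eta \in \P\Ext^1(E_\beta, G)$, and the forgetful map to $\BN(v, E_\beta)$ has projective fibers of dimension $\hom(E, E_\beta)-1$. When $G$ is $\sigma$-stable, $(G,\eta)$ lies in $\Pe{v-\beta}{E_\beta}$, and an expansion using $\chi(v-\beta,v-\beta) = \chi(v,v) + n + 2m - 1$ and $\chi(\beta, v-\beta) = 1 - n - m$ yields
\[
\dim \Pe{v-\beta}{E_\beta} = \dim M^s_\sigma(v-\beta) + \dim\P\Ext^1(E_\beta, G) = \dim M^s_\sigma(v) - m - 1.
\]
The second inequality of (1) is the symmetric statement, proved by the analogous parameterization via the cone of $E_\alpha \to E$. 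Part (2) follows from the same principle applied inside $M^s_\sigma(\alpha) \times M^s_\sigma(v)$: a non-zero $E_\alpha \to E$ has cone $C$ of character $v - \alpha$, and the parameter space of triples $(E_\alpha, C, \eta')$ with $\eta' \in \P\Ext^1(C, E_\alpha)$ collapses by an identical Euler form computation to dimension $\dim M^s_\sigma(v) - n + 1$.

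For (3), the key observation is that if $\hom(E, E_\beta) \geq 2$ and $G$ is the $[-1]$-shifted cone of any non-zero $f \colon E \to E_\beta$, the long exact sequence from $\Hom(-, E_\beta)$ applied to $G \to E \to E_\beta$, combined with $\hom(E_\beta, E_\beta) = 1$, forces $\hom(G, E_\beta) \geq 1$. Hence $G \in \BN(n\alpha, E_\beta)$, and combining the $m = 0$ analog of the first inequality of (1), namely $\dim\BN(n\alpha, E_\beta) \leq \dim M^s_\sigma(n\alpha) - 1$, with $\dim\P\Ext^1(E_\beta, G) = n-1$ gives the required bound $\dim M^s_\sigma(n\alpha+\beta) - 3$ after a short Euler pairing computation.

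The main obstacle is the careful treatment of the strictly semistable or unstable cones appearing in each extension parameterization: the $\sigma$-stable stratum is governed directly by the Euler form and matches the claimed bounds on the nose, but one must show each HN stratum of unstable cones contributes a locus of strictly smaller dimension. I expect this is arranged by an induction on $n+m$ using the Brill--Noether bounds for smaller characters, balancing the $\Ext^1$-dimension jumps against codimension drops in the moduli of the HN factors.
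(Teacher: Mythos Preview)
Your overall strategy is correct and coincides with the paper's: realize the Brill--Noether locus as (contained in) images of extension data, and bound dimensions by an Euler form computation. Your numerics for the main stable-cone stratum in (1) are right and match the paper exactly.

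The gap is precisely the one you flag, and your proposed fix (``an induction on $n+m$ balancing $\Ext^1$-jumps against codimension drops'') does not capture what actually happens. The paper's mechanism, Proposition~\ref{prop:imofprimpliesext}, does \emph{not} parametrize the unstable-cone locus by HN factors of the cone. Instead it shows that if the cone of $E\to E_\beta$ is unstable then $E$ itself lies in some \emph{other} extension locus $\ET(n\alpha+m\beta-v',v')$ with $v'\in\triangle(n\alpha+m\beta,\beta)\setminus\{\R(n\alpha+m\beta),\beta\}$; this is a nontrivial argument (Lemma~\ref{lem:cokerstable}). Each such $v'=n_1\alpha+m_1\beta$ has both coordinates strictly positive and strictly smaller than $(n,m)$, so $\phi_\sigma(v')-\phi_\sigma(n\alpha+m\beta-v')\in(0,\tfrac13)$, and Lemma~\ref{lem:extspaceofsmallgapvw} then gives the needed codimension bound via $-\chi(n\alpha+m\beta-v',v')\geq m+1$. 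Your inductive sketch on HN factors of the cone does not obviously yield this; in particular the cone character $n\alpha+(m-1)\beta$ can be non-primitive, so its HN strata are not controlled by (1) for smaller $n+m$ in any straightforward way.

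For (3) your argument only treats the term $\ET(n\alpha,E_\beta)$ in the decomposition~\eqref{eq420}. The remaining terms $\ET((n-s)\alpha,\,s\alpha+\beta)$ for $1\le s\le n-1$ must also be shown to meet $\BN^2(n\alpha+\beta,E_\beta)$ in codimension $\ge 3$. When $n\ge 4$ the coarse bound $-\chi((n-s)\alpha,\,s\alpha+\beta)=s(n-s+1)\ge 3$ already suffices, but for $n=2,3$ the paper needs a finer analysis: one must intersect each $\ET$-stratum with the $\BN^i$-conditions on the two factors, using the explicit dimension counts for $\BN(\alpha,E_\beta)$, $\BN^2(\alpha,E_\beta)$ from Example~\ref{eg511} and the $(n,m)=(1,1)$ case from Example~\ref{eg:ab}. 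Your proposal does not address any of this.

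Two minor points. For (2) the paper's argument is simpler than yours: it is immediate from the second inequality of (1) plus $\dim\ms(\alpha)=2$. For (3), the ``$m=0$ analog of (1)'' you invoke, namely $\codim\BN(n\alpha,E_\beta)\ge 1$, is not covered by (1) (which assumes $m\ge 1$); the paper supplies this separately via Lemma~\ref{lem:znamb}.
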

\begin{proof}
\emph{(1)} By Proposition \ref{prop:imofprimpliesext}, the space $\BN(n\alpha+m\beta,E_\beta)$ is contained in
\begin{align}\label{eq420}
    \ET(n\alpha+(m-1)\beta,E_\beta)\cup(\bigcup_{
        v\in\triangle^*_{n,m}
    }\ET(n\alpha+m\beta-v,v)),
\end{align} 
    where $\triangle^*_{m,n}:=\triangle(n\alpha+m\beta,\beta)\setminus  \{\R(n\alpha+m\beta),\beta\}$. The set $\triangle^*_{m,n}$ is non-empty only when $n\geq 2$. In this case, for every $v\in\triangle^*_{m,n}$, both characters $v$ and $n\alpha+m\beta-v$ are strictly in the $(\alpha,\beta)$-sextant. In particular, we have $0<\phi_\sigma(v)-\phi_\sigma(n\alpha+m\beta-v)<\tfrac{1}{3}$. Denote by $v=n_1\alpha+m_1\beta$, then $1\leq n_1<n$ and $1\leq m_1<m$. By  Lemma \ref{lem:extspaceofsmallgapvw}, the codimension of $\ET(n\alpha+m\beta-v,v)$ is at least 
    \begin{align*}
        -\chi(n\alpha+m\beta-v,v)&=n_1(n-n_1)+(n_1+m_1)(m-m_1)\\&\geq 1+(1+m_1)(m-m_1)\geq 1+m. 
    \end{align*} 
    So we only need to bound the dimension of the first factor $\ET(n\alpha+(m-1)\beta,E_\beta)$ in \eqref{eq420}.
    
    When $(n,m)=(1,1)$, this statement follows from  Example \ref{eg:ab}. In all other cases, the Brill--Noether locus $\BN(\beta,(n\alpha+(m-1)\beta)[2])=\emptyset$. It follows that $\ET(n\alpha+(m-1)\beta,E_\beta)=\ET^0(n\alpha+(m-1)\beta,E_\beta)$. So its dimension is less than or equal to $\dim\Pe{n\alpha+(m-1)\beta}{E_\beta}$. 
    
    By a direct computation, we have $\dim \ms(n\alpha+m\beta)-\dim\Pe{n\alpha+(m-1)\beta}{E_\beta}=2-\chi(n\alpha+(m-1)\beta,\beta)=m+1$. The first inequality follows.

    The second inequality can be proved by a similar argument.\\

    \noindent\emph{(2)}  The statement follows from \emph{(1)} as $\dim\ms(\alpha)=2$.\\

    \noindent\emph{(3)} When $n=1$, the statement follows from  Example \ref{eg:ab}. We may assume $n\geq 2$.
    
    In \eqref{eq420}, every object $E$ in the first term $\ET(n\alpha,E_\beta)$ fits in a distinguished triangle $E_{n\alpha}\to E\to E_{\beta}\xrightarrow{+} $.  If $\hom(E,E_\beta)\geq 2$, then $E_{n\alpha}\in \BN(E_\beta,n\alpha)$. By Lemma \ref{lem:znamb}, the codimension of $\BN(E_\beta,n\alpha)$ is at least one. As $n\geq 2$, we have $\BN(\beta,n\alpha[2])=\emptyset$.  It follows that \begin{align*}
    & \dim(\ET(n\alpha,E_\beta)\cap \BN^2(n\alpha+\beta,E_\beta)) =  \dim(\ET^0(n\alpha,E_\beta)\cap \BN^2(n\alpha+\beta,E_\beta)) \\  \leq & \dim(\ms(n\alpha))-1-\chi(\beta,n\alpha)-1=n^2+n-1=\dim(\ms(n\alpha+\beta))-3.
    \end{align*}
    
    For the other terms in \eqref{eq420}, it is clear that a lattice point in $\triangle^*_{m,n}$ is of the form $s\alpha+\beta$ for some $1\leq s\leq  n-1$. By Lemma \ref{lem:extspaceofsmallgapvw}, the codimension of $\ET((n-s)\alpha,s\alpha+\beta)$ is $s(n-s)$. When $n\geq 4$, the inequality holds.
    
    The remaining cases are $n=2$ or $3$.

    When $n=2$, every element $E$ in $\ET(\alpha,\alpha+\beta)$ fits in a distinguished triangle $E_{\alpha}\to E\to E_{\alpha+\beta}\xrightarrow{+} $. As in Example \ref{eg:ab}, we have $\dim \BN(\alpha+\beta,E_\beta)=2$, $\dim \BN^2(\alpha+\beta,E_\beta)=1$. As computed in Example \ref{eg511}, we have $\dim\BN(\alpha,E_\beta)=1$, and $\dim\BN^2(\alpha,E_\beta)=0$. 

    Note that $\BN(\beta,(\alpha+\beta)[2])=\emptyset$. It follows that 
    \begin{align*}
        & \dim (\ET(\alpha,\alpha+\beta)\cap \BN^2(2\alpha+\beta,E_\beta))=\dim (\ET^0(\alpha,\alpha+\beta)\cap \BN^2(2\alpha+\beta,E_\beta))\\
        \leq &\dim(\ET^0(\alpha,\BN^2(\alpha+\beta,E_\beta))\cup \ET^0(\BN(\alpha,E_\beta),\BN(\alpha+\beta,E_\beta))\cup\ET^0(\BN^2(\alpha,E_\beta),\alpha+\beta))\\
        = &\ 4-\chi(\beta,\alpha+\beta)-1=5=\dim(\ms(2\alpha+\beta))-3.
    \end{align*}

    When $n=3$, every element $E$ in $\ET(\alpha,2\alpha+\beta)$ fits in a distinguished triangle $E_{\alpha}\to E\to E_{2\alpha+\beta}\xrightarrow{+} $. Note that $\codim \BN(2\alpha+\beta,E_\beta)\geq 1$ and $\codim\BN(\alpha,E_\beta)=1$.  By Lemma \ref{lem:extspaceofsmallgapvw}, $\codim \ET(\alpha,2\alpha+\beta)=2$. So the codimension of $\ET(\alpha,2\alpha+\beta)\cap \BN(3\alpha+\beta,E_\beta)$ is at least $3$. Similarly, the codimension of $\ET(2\alpha,\alpha+\beta)\cap \BN(3\alpha+\beta,E_\beta)$ is at least $3$.
\end{proof}

Now we have the first birational equivalence statement.

\begin{Prop}\label{prop:stabbir1}
    Let $Y_3$ be a smooth cubic threefold and $\Ku(Y_3)$ be its Kuznetsov component. Then for every $n\alpha+m\beta\in\Kn(\Ku(Y_3))$ with $n\geq 1$ and $m\geq2$ and every $E_\gamma\in\ms(\gamma)$, we have a birational equivalence \begin{align}
e^R_{E_\gamma[-1],n\alpha+m\beta}\colon\Pe{E_\gamma[-1]}{n\alpha+m\beta}\dashrightarrow \Pe{(n+1)\alpha+(m-1)\beta}{E_\gamma}.
    \end{align}
    More specifically, the map $e^R_{E_v,w}$ gives an isomorphism between dense open subsets at the well-defined locus.
\end{Prop}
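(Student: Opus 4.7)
The plan is to invoke Proposition \ref{prop:extbirpair}: it suffices to exhibit one point at which $e^R_{E_\gamma[-1],\,n\alpha+m\beta}$ is well-defined, and birationality with inverse $e^L_{v+w,E_\gamma}$ will then follow. Set $v:=\gamma[-1]$ and $w:=n\alpha+m\beta$, so $v+w=(n+1)\alpha+(m-1)\beta$ and $v[1]=\gamma$.

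First I will verify Assumption \ref{asspdag} for both pairs $(v,w)$ and $(v+w,v[1])$. The phase gaps lie in $(1/3,2/3)\subset(0,1)$; the Euler pairings $\chi(w,v)=-n$ and $\chi(v[1],v+w)=-(m-1)$ are negative (using $n\geq 1$ and $m\geq 2$); and the subset conditions $\BN(w,v[2])\neq\ms(w)\times\ms(v[2])$ and its analog reduce, via Serre duality together with the identities of Notation \ref{not:serrey3}, to the existence of one pair $(E_w,E_\gamma)$ with $\Ext^1(E_w,E_\gamma)=0$, which follows from the Euler computation $\chi(w,\gamma)=n$ together with upper semicontinuity of $\hom^0$. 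A dimension count using Corollary \ref{cor:cub3smoothmoduli} then shows that $\Pe{E_\gamma[-1]}{w}$ and $\Pe{v+w}{E_\gamma}$ are irreducible of the same dimension $n^2+nm+m^2+n$.

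The main obstacle will be to prove that for a general element $(E_w,f)\in\Pe{E_\gamma[-1]}{w}$ the extension $E_f:=\Cone(E_w\xrightarrow{f}E_\gamma)[-1]$ is $\sigma$-stable. Suppose not. By Lemma \ref{lem:hnfactorextobj} applied to the defining triangle $E_\gamma[-1]\to E_f\to E_w\to E_\gamma$ and passage to a Jordan--H\"older factor, there is a $\sigma$-stable $F$ of character $u$ either in $\triangle(v+w,w)\setminus\{v+w,w\}$ with $\Hom(F,E_f)\neq 0$ (the $\HN^+_\sigma$ case), or in $\triangle(v,v+w)\setminus\{v,v+w\}$ with $\Hom(E_f,F)\neq 0$ (the $\HN^-_\sigma$ case). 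In the first case the long exact sequence obtained from $\Hom(F,-)$ combined with $\Hom(F,E_\gamma[-1])=0$ (by phase, since $\phi_\sigma(F)>\phi_\sigma(v+w)>-1/3$) forces $E_w\in\BNP_w(\BN(u,w))$; all such $u$ lie in the $(\alpha,\beta)$-sextant with $\phi_\sigma(w)-\phi_\sigma(u)\in(0,1/3)$, so Lemma \ref{lem:extspaceofsmallgapvw} gives $\overline{\BNP_w(\BN(u,w))}\subsetneq\ms(w)$, with boundary cases handled by Lemma \ref{lem:homb}. In the second case, the analogous long exact sequence together with the phase vanishing $\Hom(E_w,F)=0$ (using $\phi_\sigma(w)>\phi_\sigma(v+w)>\phi_\sigma(F)$) and the composition map $\Hom(E_\gamma[-1],F)\xrightarrow{\,-\circ f[-1]\,}\Hom(E_w[-1],F)$ show that the obstruction is cut out by a proper subvariety of the fiber of $\Pe{E_\gamma[-1]}{w}$ over $E_w$, which is controlled by Lemma \ref{lem:homb} together with the Serre-dual reformulation of $\Hom(E_\gamma[-1],F)$. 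Taking the union over the finitely many admissible $u$ and passing to complements yields a non-empty open subset of $\Pe{E_\gamma[-1]}{w}$ over which $E_f$ is $\sigma$-stable.

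Finally, the non-jumping condition $\Hom(E_\gamma,E_f[i])=0$ for $i\neq 1$ at the image is automatic: for $i\leq 0$ by phase and stability; for $i\geq 3$ via Serre duality $\Hom(E_\gamma,E_f[i])^*\cong\Hom(E_f[i],\mathsf{S}E_\gamma)=0$ using $\phi_\sigma(E_f[i])>\phi_\sigma(\mathsf{S}E_\gamma)=7/3$; and for $i=2$ via $\Hom(E_\gamma,E_f[2])^*\cong\Hom(E_f,E_\beta)$, which vanishes on a dense open by upper semicontinuity together with the Euler identity $\chi(v+w,\beta)=-(m-1)$ and higher Ext vanishings from the global dimension bound. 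Proposition \ref{prop:extbirpair} then produces the desired birational equivalence, restricting to an isomorphism between dense open subsets of the well-defined loci, as claimed.
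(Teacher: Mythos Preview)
Your overall scaffold follows the paper's Steps 1--2 reasonably, but there is a genuine gap at the decisive step (the paper's Step 3): the verification that the image $(E_f,E_\gamma)$ lands in the non-jumping locus $\ms((n+1)\alpha+(m-1)\beta,\gamma)^\dag$.

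You argue that $\Hom(E_\gamma,E_f[2])\cong\Hom(E_f,E_\beta)^*$ ``vanishes on a dense open by upper semicontinuity together with the Euler identity $\chi(v+w,\beta)=-(m-1)$''. This does not work. Upper semicontinuity only says the locus where $\Hom(E_f,E_\beta)\neq 0$ is closed; it does not say it is proper inside the image of $e_{E_\gamma[-1],w}$. The Euler characteristic $-(m-1)$ is \emph{negative}, so it gives no control on $\hom^0$. The real danger is that the entire image of your map might sit inside $\BN((n+1)\alpha+(m-1)\beta,E_\beta)$, even though that locus has large codimension in $\ms(v+w)$. The paper resolves this by a dimension comparison on the \emph{target side}: it observes that the extended map $\tilde e^R_{E_\gamma[-1],w}$ into $\bigcup_{i\geq 0}\Pe{\BN^{=i}(E_\gamma[-2],v+w)}{E_\gamma}$ is one-to-one, then uses the long exact sequence from $\Hom(-,\mathsf S E_\gamma)$ to bound $\hom(E_\gamma[-2],E_f)\leq 2$, and finally invokes Lemma~\ref{lem:homb}.(1) (and crucially Lemma~\ref{lem:homb}.(3) when $m=2$) to show $\dim\Pe{\BN^{=1,2}}{E_\gamma}<\dim\Pe{E_\gamma[-1]}{w}$. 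None of this dimension bookkeeping appears in your argument, and without it the proof does not close.

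There is also a circularity in your use of irreducibility. You cite Corollary~\ref{cor:cub3smoothmoduli} to claim $\Pe{E_\gamma[-1]}{w}$ and $\Pe{v+w}{E_\gamma}$ are irreducible, but that corollary is only for primitive characters; for general $n,m$ the characters $w=n\alpha+m\beta$ and $v+w=(n+1)\alpha+(m-1)\beta$ need not be primitive, and irreducibility of their stable moduli (Corollary~\ref{cor:msvirreducible}) is deduced \emph{from} the present proposition. The paper avoids this by proving the birationality component-by-component via Proposition~\ref{prop:extbirpair}, and then running the same argument for the inverse map $e^L_{(n+1)\alpha+(m-1)\beta,E_\gamma}$ (Step~4) to see that every component of the target is hit. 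Your plan ``exhibit one point and invoke Proposition~\ref{prop:extbirpair}'' is therefore not enough on its own.

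Two smaller remarks. In your stability argument, only the $\HN^+_\sigma$ case is needed: once you know $\Hom(F,E_w)=0$ for every stable $F$ with $[F]\in\triangle_{n,m}$ (via Lemma~\ref{lem:extspaceofsmallgapvw}), stability of $E_f$ follows; your $\HN^-_\sigma$ case is redundant and its justification is vague. And in Step~1, the claim that $\BN(w,v[2])\neq\ms(w)\times\ms(v[2])$ follows from ``$\chi(w,\gamma)=n$ and upper semicontinuity of $\hom^0$'' is again incomplete for the same reason as above: a positive Euler characteristic does not force $\hom^1$ to vanish anywhere; the paper instead reduces to $\BN(E_\alpha,n\alpha+m\beta)$ and applies the dimension bound of Lemma~\ref{lem:homb}.(1).
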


We briefly explain the strategy of the proof. Recall from Remark \ref{rem:projectivization} the three conditions to verify, which correspond to the three steps of the proof. Here Condition \emph{(a)} is straightforward to check. 
For Condition \emph{(b)}, if the extension object is unstable, by considering the destabilizing object we produce a nontrivial morphism for the Brill--Noether locus. Then Proposition \ref{prop:imofprimpliesext} will relate the Brill--Noether locus to extension loci, and Lemma \ref{lem:extspaceofsmallgapvw} gives us bounds on the dimension of extension loci. This shows unstable extensions do not occur generically. 
Condition \emph{(c)} is the most difficult, but its verification follows exactly the same idea: use Proposition \ref{prop:imofprimpliesext} to relate the Brill--Noether locus to extension loci and now use Lemma \ref{lem:homb} to control dimensions of extension loci.

\begin{proof}
\textbf{Step 1.} We first verify Assumption \ref{asspdag} for $(\gamma[-1],n\alpha+m\beta)$ and $((n+1)\alpha+(m-1)\beta,\gamma)$, which is necessary to define the source and target of $e^R_{E_\gamma[-1],n\alpha+m\beta}$.

It is clear that $\phi_\sigma(n\alpha+m\beta)-\phi_\sigma(\gamma)\in(0,1)$, $\chi(n\alpha+m\beta,\gamma)=n>0$, and $\chi(\gamma,(n+1)\alpha+(m-1)\beta)=-m+1<0$.

By Serre duality, the jumping locus $\BN(n\alpha+m\beta,E_\gamma[1])\cong\BN(\mathsf S^{-1}(E_\gamma)[1],n\alpha+m\beta)$. By Proposition \ref{prop:serreinv}, the object $\mathsf S^{-1}(E_\gamma)[1]\in\ms(\alpha)$. By Lemma \ref{lem:homb}.(1), the Brill--Noether locus $\BN(n\alpha+m\beta,E_\gamma[1])$ does not contain any irreducible component of $\ms(n\alpha+m\beta)$.

Similarly, by Serre duality,
\[\BN(E_\gamma,((n+1)\alpha+(m-1)\beta)[2])\cong\BN(((n+1)\alpha+(m-1)\beta)[2],\mathsf S(E_\gamma)).\]
By Proposition \ref{prop:serreinv},  the object $\mathsf S(E_\gamma)[2]\in\ms(\beta)$. By Lemma \ref{lem:homb}.(1), the Brill--Noether locus $\BN(((n+1)\alpha+(m-1)\beta)[2],\mathsf S(E_\gamma))$ does not contain any irreducible component of $\ms(((n+1)\alpha+(m-1)\beta)[2])$. \\
\\
\textbf{Step 2.} We then show that $e_{E_\gamma[-1],n\alpha+m\beta}$ is well-defined on general points in  $\Pe{E_\gamma[-1]}{n\alpha+m\beta}$. More precisely, we will show that for every general $E_{n\alpha+m\beta}$ in $\ms({n\alpha+m\beta})$ and  every $0\neq f\in\Hom(E_{n\alpha+m\beta},E_\gamma)$, the object $E_f:=\Cone(f)[-1]$ is $\sigma$-stable.
    
    Note that every lattice point \[v\in \triangle_{n,m}:=\triangle((n+1)\alpha+(m-1)\beta,n\alpha+m\beta)\setminus\{(n+1)\alpha+(m-1)\beta,n\alpha+m\beta\},\]
    can be written in the form $a\alpha+b\beta$ for some $a\leq n$ and $b<m$. So \[\phi_{\sigma}(n\alpha+m\beta-v)-\phi_\sigma(v)\in(0,\tfrac{1}{3}).\]
    By Proposition \ref{prop:imofprimpliesext} and Lemma \ref{lem:extspaceofsmallgapvw}, 
    \[\dim\bigcup_{v\in\triangle_{n,m}}\BNP_{n\alpha+m\beta}(\BN(v,n\alpha+m\beta))=\dim\bigcup_{v\in\triangle_{n,m}}\ET(v,n\alpha+m\beta)<\dim \ms(n\alpha+m\beta).\]
    So for a general point $E_{n\alpha+m\beta}\in\ms(n\alpha+m\beta)$, we have $\Hom(E_v,E_{n\alpha+m\beta})=0$ for every $E_v\in\ms(v)$ with $v\in \triangle_{n,m}$.

    For every $0\neq f\in\Hom(E_{n\alpha+m\beta},E_\gamma)$,  suppose that the object $E_f$ is not $\sigma$-stable, then by Lemma \ref{lem:hnfactorextobj}, there exists $E_v\in \ms(v)$ with $v\in \triangle_{n,m}$ such that $\Hom(E_v,E_f)\neq 0$. One may apply $\Hom(E_v,-)$ to the distinguished triangle 
    \begin{align}\label{eq417}
        E_\gamma[-1]\to E_f \to E_{n\alpha+m\beta}\xrightarrow{+}
    \end{align} to get a contradiction. \\
\\
\textbf{Step 3.} Next, we show that for general $f\in\Hom(E_{n\alpha+m\beta},E_\gamma)$, the object $E_f$ is not in the Brill--Noether locus $\BN(E_\gamma[-2],((n+1)\alpha+(m-1)\beta))$.  For this purpose, we further assume that $E_{n\alpha+m\beta}\not\in \BN(n\alpha+m\beta,\mathsf{S}(E_\gamma)[-2])$, which has strictly smaller dimension than that of $\ms(n\alpha+m\beta)$ by Lemma \ref{lem:homb}.\emph{(1)}. 

    Applying $\Hom(-,\mathsf{S}(E_\gamma))$ to the distinguished triangle \eqref{eq417}, we get a long exact sequence:
    \begin{align}\label{eq419}
         0\to \Hom(E_f,\mathsf{S}(E_\gamma)[-2])\to \Hom(E_\gamma,\mathsf{S}(E_\gamma)[-1]) \to\Hom(E_{n\alpha+m\beta},\mathsf{S}(E_\gamma)[-1])\to...
    \end{align}
Here the first term $\Hom(E_{n\alpha+m\beta},\mathsf S(E_\gamma)[-2])=0$ as we assumed that $E_{n\alpha+m\beta}\not\in \BN(n\alpha+m\beta,\mathsf{S}(E_\gamma)[-2])$. It follows that \[\hom(E_\gamma[-2],E_f)=\hom(E_f,\mathsf S(E_\gamma)[-2])\leq  \hom(E_\gamma,\mathsf{S}(E_\gamma)[-1])=2.\]
So $E_f\not\in \BN^3(E_\gamma[-2],(n+1)\alpha+(m-1)\beta)$.

Note that the map 
\begin{align*}
\tilde e^{R}_{E_\gamma[-1],n\alpha+m\beta}\colon\Pe{E_\gamma[-1]}{n\alpha+m\beta} & \dashrightarrow \cup_{i\geq 0}\Pe{\BN^{=i}(E_\gamma[-2],(n+1)\alpha+(m-1)\beta)}{E_\gamma}\\
(E_{n\alpha+m\beta},f) & \mapsto (E_f,f^R)
\end{align*} 
is one-to-one whenever well-defined. As we have shown that the image is not contained in the locus $\BN^3(E_\gamma[-2],(n+1)\alpha+(m-1)\beta)$, to prove that the general image $E_f\notin \BN(E_\gamma[-2],(n+1)\alpha+(m-1)\beta)$, we only need to show that the dimension of $\Pe{\BN^{=1,2}(E_\gamma[-2],(n+1)\alpha+(m-1)\beta)}{E_\gamma}$ is strictly less than the dimension of the source $\Pe{E_\gamma[-1]}{n\alpha+m\beta}$, which is $n^2+mn+m^2+n$.

When $m\geq 3$, by Lemma \ref{lem:homb}.(1),
\begin{align*}
    & \dim\Pe{\BN^{=1,2}(E_\gamma[-2],(n+1)\alpha+(m-1)\beta)}{E_\gamma} \\ \leq & \dim(\BN(E_\gamma[-2],(n+1)\alpha+(m-1)\beta))-\chi(\gamma,(n+1)\alpha+(m-1)\beta)+2-1 \\
    = &\dim (\BN((n+1)\alpha+(m-1)\beta,\mathsf S E_\gamma[-2]))+m\\
    \leq &\ (n+1)^2+(n+1)(m-1)+(m-1)^2+1-(m-1)-1+m\\
    =&\ n^2+n+mn+m^2-m+2.
\end{align*}
When $m=2$, we may apply Lemma \ref{lem:homb}.(1) to compute the $\BN^{=1}$ part and Lemma \ref{lem:homb}.(3) to compute the $\BN^{=2}$ part. 

In each case, the dimension of $\Pe{\BN^{=1,2}(E_\gamma[-2],(n+1)\alpha+(m-1)\beta)}{E_\gamma}$ is strictly less than that of $\Pe{E_\gamma[-1]}{n\alpha+m\beta}$. Therefore, the image of a general point in every irreducible component of $\Pe{E_\gamma[-1]}{n\alpha+m\beta}$ is not in $\BN(E_\gamma[-2],((n+1)\alpha+(m-1)\beta))$. Hence the map $e^R_{E_\gamma[-1],n\alpha+m\beta}$ is generically well-defined. By Proposition \ref{prop:extbirpair}, the map $e^R_{E_\gamma[-1],n\alpha+m\beta}$ induces birational equivalences from every irreducible component in $\Pe{E_\gamma[-1]}{n\alpha+m\beta}$ to its target component.\\
\\
\textbf{Step 4.} Finally, by the same argument as above, when $n\geq2$ and $m\geq1$, the map $e^L_{n\alpha+m\beta,E_\gamma}$ is well-defined on general points on every irreducible component in $\Pe{n\alpha+m\beta}{E_\gamma}$. By Proposition \ref{prop:extbirpair}, the map $e^R_{E_\gamma[-1],n\alpha+m\beta}$ is a birational equivalence. 
\end{proof}

One corollary of Proposition \ref{prop:stabbir1} is the irreducibility of moduli spaces with non-primitive characters appearing in the chain. Together with the following result by Feyzbakhsh and Pertusi on the irreducibility of $\ms(m\beta)$, we may conclude the irreducibility of $\ms(v)$ for all non-zero $v\in\Kn(\Ku(Y_3))$.

\begin{Thm}[{\cite[Corollary 6.5]{FP}}]\label{thm:irred_ulrich}
    The moduli space $M_\sigma(m\beta)$ is irreducible for any $m>0$.
\end{Thm}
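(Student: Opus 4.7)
The plan is a joint induction on $v \in \Kn(\Ku(Y_3))$, ordered by $-\chi(v,v) = \dim M_\sigma(v) - 1$, to prove irreducibility of $M_\sigma(v)$ for every non-zero $v$; the statement about $M_\sigma(m\beta)$ then follows as a special case. The base cases for small $-\chi(v,v)$ are handled in Section \ref{sec:mssmall}: the moduli $M_\sigma(\beta)$, $M_\sigma(\alpha)$, and $M_\sigma(\gamma)$ are each isomorphic to the Fano surface of lines $F(Y_3)$, while $M_\sigma(\beta+\gamma)$ is described in Example \ref{eg:ab}; all are irreducible.

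For the inductive step, I would first establish irreducibility of the stable locus $\ms(v)$. By the stratification theorem (Theorem \ref{thm:introstrat}), $\ms(v)$ is a finite union $\bigcup \ET(v_1,v_2)$ over pairs with $v_1+v_2=v$, and the top-dimensional stratum is $\overline{\ET(n\alpha,m\beta)}$ in the case $v=n\alpha+m\beta$ (and $\ET(\alpha,m\beta-\alpha)$ in the case $v=m\beta$). On this top stratum, $\Pe{v_1}{v_2}$ is a Brauer--Severi variety over a dense open subset of $\ms(v_1)\times\ms(v_2)$, both irreducible by the inductive hypothesis, hence itself irreducible. The extension map $e_{v_1,v_2}$ is generically finite by Proposition \ref{prop:extmapfinite} and dominant onto the top stratum, yielding irreducibility of $\ms(v)$.

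To pass from $\ms(v)$ to $M_\sigma(v)$, one estimates the strictly semistable locus. For $v=m\beta$, S-equivalence classes of polystable objects decompose as $\bigoplus_i E_i^{\oplus a_i}$ with $E_i \in \ms(k_i\beta)$, $\sum_i a_i k_i = m$, and at least two summands counted with multiplicity. Each such stratum has dimension at most $\sum_i (1+k_i^2) < 1+m^2$ by the strict convexity of $x\mapsto x^2$. Hence the strictly semistable locus has strictly smaller dimension than $\ms(m\beta)$, so the stable locus is open dense in $M_\sigma(m\beta)$, and the full moduli space inherits irreducibility.

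The principal obstacle is that the stratification produces pairs $(v_1,v_2)$ whose classes typically lie outside the $\beta$-axis (for instance $(\alpha,(m-1)\beta+\gamma)$), so the induction cannot be run only on the characters $m\beta$. This forces a joint induction across all of $\Kn(\Ku(Y_3))$, managed by invoking the Serre functor (Remark \ref{rem:Sswapabc}) to reduce to the $(\alpha,\beta)$-sextant and the birational equivalences from Proposition \ref{prop:stabbir1} to transport irreducibility among neighboring classes in the hexagonal lattice. A further subtlety is ensuring the non-jumping locus $\ms(v_1,v_2)^\dag$ is dense in $\ms(v_1)\times\ms(v_2)$, which is exactly the content of the dimension estimates of Section \ref{sec:moduliY3}.
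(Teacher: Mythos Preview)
Your proposal is circular. The paper does not prove Theorem \ref{thm:irred_ulrich} at all: it is quoted from \cite[Corollary 6.5]{FP} and used as an \emph{input} to the rest of the argument. In particular, Lemma \ref{lem:znamb} invokes Theorem \ref{thm:irred_ulrich} to pass from a semistable direct sum $E \in M_\sigma(m\beta)$ with $\Hom(E_\alpha,E)=0$ to the conclusion that $\BN(E_\alpha,m\beta) \neq \ms(m\beta)$ (semicontinuity gives an open set of vanishing, but only irreducibility of $M_\sigma(m\beta)$ guarantees this open set meets the stable locus). Lemma \ref{lem:znamb} is then used in the proofs of Lemma \ref{lem:homb}.(3), Theorem \ref{thm:stratcubic3}, and (via Lemma \ref{lem:homb}) Proposition \ref{prop:stabbir1}. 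Lemma \ref{lem:openextobjKuY}, also used in Theorem \ref{thm:stratcubic3}, appeals to \cite[Corollary 6.5]{FP} directly. So every tool you invoke---the stratification of Theorem \ref{thm:introstrat}/\ref{thm:stratcubic3}, Proposition \ref{prop:stratofmbeta}, and the birational equivalences of Proposition \ref{prop:stabbir1}---sits logically downstream of the statement you are trying to prove.

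Your outline is in fact a sketch of Corollary \ref{cor:msvirreducible}, whose proof in the paper \emph{begins} by citing Theorem \ref{thm:irred_ulrich} to dispose of the $m\beta$ (and $m\alpha$, $m\gamma$) cases, and only then runs the $\Pe{E_\gamma[-1]}{n\alpha+m\beta}$ argument for the interior of the sextant. The $m\beta$ case is precisely the one that cannot be reached by the chain in Proposition \ref{prop:stabbir1}, since that chain terminates at the primitive class $n\alpha+\beta$ and never lands on $m\beta$. A self-contained proof for $M_\sigma(m\beta)$ needs a genuinely different idea (in \cite{FP} this comes from the geometry of instanton/Ulrich bundles), not the extension-locus machinery developed here.
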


\begin{Cor}\label{cor:msvirreducible}
    Let $Y_3$ be a smooth cubic threefold and $\Ku(Y_3)$ be its Kuznetsov component. Then for every $0\neq v\in\Kn(\Ku(Y_3))$, the moduli space $M_\sigma(v)$ is irreducible.
\end{Cor}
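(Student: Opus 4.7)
By Proposition \ref{prop:serreinv}, the Serre functor induces isomorphisms between moduli spaces of characters in different sextants of $\Kn(\Ku(Y_3))$ (see Remark \ref{rem:Sswapabc}), so it suffices to treat $v = n\alpha + m\beta$ with $n, m \geq 0$ and $(n, m) \neq (0, 0)$. If $n = 0$, Theorem \ref{thm:irred_ulrich} gives irreducibility of $M_\sigma(m\beta)$; if $m = 0$, a Serre twist identifies $M_\sigma(n\alpha)$ with $M_\sigma(n\beta)$, again covered by Theorem \ref{thm:irred_ulrich}. If $\gcd(n, m) = 1$ with $n, m \geq 1$, then $v$ is primitive and Corollary \ref{cor:cub3smoothmoduli} gives that $M_\sigma(v) = \ms(v)$ is smooth irreducible.

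The remaining case is $v = n\alpha + m\beta$ with $n, m \geq 1$ and $\gcd(n, m) \geq 2$, which forces $m \geq 2$. I would argue by induction on $m$, with base cases $m = 0, 1$ already handled above. Given any $E_\gamma \in \ms(\gamma)$, Proposition \ref{prop:stabbir1} provides a birational equivalence
\[
\Pe{E_\gamma[-1]}{n\alpha + m\beta} \dashrightarrow \Pe{(n+1)\alpha + (m-1)\beta}{E_\gamma}.
\]
By the inductive hypothesis $M_\sigma((n+1)\alpha + (m-1)\beta)$ is irreducible. A phase computation together with Lemma \ref{lem:homb}(1) transported via Serre duality shows that the non-jumping loci $\ms((n+1)\alpha + (m-1)\beta, \gamma)^\dag$ and $\ms(\gamma[-1], n\alpha + m\beta)^\dag$ are open and dense in their respective products (the complementary Brill--Noether loci have positive codimension). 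Hence the target $\Pe{(n+1)\alpha + (m-1)\beta}{E_\gamma}$ is a Brauer--Severi fibration over an irreducible base, and so is itself irreducible. The birational equivalence then exhibits a dense open subvariety of $\Pe{E_\gamma[-1]}{n\alpha + m\beta}$ as isomorphic to an open subset of the irreducible target; since the source is smooth (a Brauer--Severi variety over a smooth base, by \cite{PY}), its irreducible components are disjoint, and a dense open irreducible subvariety therefore forces the whole source to be irreducible. Projecting down through the Brauer--Severi structure, the non-jumping locus inside $\ms(n\alpha + m\beta)$ is irreducible, open, and dense; the smoothness of $\ms(n\alpha + m\beta)$ then gives irreducibility of the whole stable locus.

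To upgrade from $\ms(v)$ to $M_\sigma(v)$, I would verify that the stable locus is dense. Writing $v = k v_0$ with $v_0$ primitive and $k \geq 2$, the strictly semistable locus is the union over partitions $k = k_1 + \cdots + k_r$ with $r \geq 2$ of images of $\prod_i \ms(k_i v_0)$, each stratum of dimension $r + (\sum_i k_i^2)(-\chi(v_0, v_0))$. Since $k^2 - \sum_i k_i^2 \geq 2(k-1)$ for any such partition and $-\chi(v_0, v_0) \geq 1$, one has $\dim \ms(v) - \dim(\text{strictly semistable locus}) \geq 2k - 3 \geq 1$ whenever $k \geq 2$. Hence $\ms(v)$ is open and dense in $M_\sigma(v)$, and irreducibility transfers. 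The chief subtlety in the argument is the density of the non-jumping loci at every inductive step: the expected vanishings are forced by phase and global dimension considerations, while the remaining Brill--Noether jumps are controlled by Lemma \ref{lem:homb}(1) combined with Serre duality.
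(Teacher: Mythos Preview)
Your approach is essentially the same as the paper's: use Proposition \ref{prop:stabbir1} to chain $\ms(n\alpha+m\beta)$ to $\ms((n+1)\alpha+(m-1)\beta)$ and eventually down to the primitive character $(n+m-1)\alpha+\beta$, then handle the strictly semistable locus by a dimension count. The paper phrases the inductive step as ``the number of irreducible components of $\Pe{E_\gamma[-1]}{n\alpha+m\beta}$ equals that of $\ms(n\alpha+m\beta)$'' and invokes Proposition \ref{prop:stabbir1} directly, but this is your induction unwound.

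There is one genuine gap in your last step. Showing that the strictly semistable locus has dimension strictly less than $\dim \ms(v)$ does \emph{not} by itself imply that $\ms(v)$ is dense in $M_\sigma(v)$: a priori $M_\sigma(v)$ could have an extra low-dimensional irreducible component consisting entirely of strictly semistable points. The paper closes this by observing that $\Ext^2(E,E)=0$ for \emph{every} $\sigma$-semistable $E$ (not just stable ones, using that $\gd(\sigma)<2$), so by deformation theory every irreducible component of $M_\sigma(v)$ has dimension at least $1-\chi(v,v)$; combined with the dimension bound on the strictly semistable locus, this forces every component to meet $\ms(v)$. You should insert this observation. Also, your derivation of the bound $\dim\ms(v)-\dim(\text{ss stratum})\geq 2k-3$ is not quite right as written: using only $k^2-\sum k_i^2\geq 2(k-1)$ gives $(1-r)+2(k-1)c$, which for $r>2$ is smaller than $2k-3$. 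The conclusion is nonetheless correct, since for a partition into $r$ parts one has $k^2-\sum k_i^2\geq (r-1)(2k-r)$, whence the difference is at least $(r-1)(2k-r-1)\geq 1$ for $2\leq r\leq k$; but you should either give this sharper bound or simply argue as the paper does.
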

\begin{proof}
By Remark \ref{rem:Sswapabc} and Theorem \ref{thm:irred_ulrich}, we only need to prove this for all $v=n\alpha+m\beta$ with $m,n>0$. We will first show the irreducibility of the stable locus. By Lemma \ref{lem:homb}.(1), the dimension of the jumping locus $\BN(n\alpha+m\beta,E_\gamma[1])$ and $\BN(E_\gamma[-2],n\alpha+m\beta)$ are both less than $\dim\ms(n\alpha+m\beta)$. So the number of irreducible components of $\Pe{E_\gamma[-1]}{n\alpha+m\beta}$ or $\Pe{n\alpha+m\beta}{E_\gamma}$ is the same as that of $\ms(n\alpha+m\beta)$. As $n\alpha+\beta$ is primitive, the moduli space $\ms(n\alpha+\beta)$ is irreducible. By Proposition \ref{prop:stabbir1},  both $\Pe{E_\gamma[-1]}{n\alpha+m\beta}$ and $\Pe{n\alpha+m\beta}{E_\gamma}$ are irreducible. Hence $\ms(v)$ is irreducible.

Now it remains to show that there is no irreducible component of $M_\sigma(v)$ consisting of only strictly semistable objects. For any $E_v\in M_\sigma(v)$, we always have $\Ext^2(E_v,E_v)=0$. Hence by deformation theory, each irreducible component of $M_\sigma(v)$ is of dimension at least $1-\chi(v,v)$. On the other hand, assume that $v=mv_0$ where $v_0$ is a primitive character, the locus of strictly semistable objects has dimension less than or equal to
\[\max \Big\{ \sum_{i=1,...,r}(1-\chi(m_iv_0,m_iv_0))\ \Big|\ (m_1,...,m_r)\text{ is a partition of }m,\ r>1 \Big\}<1-\chi(v,v).\] So the statement follows.
\end{proof}

\subsection{Stratification of moduli space in the $\Ku(Y_3)$ case} 
Given the irreducibility of all moduli spaces, we may prove the following two stratification statements Theorem \ref{thm:stratcubic3} and Proposition \ref{prop:stratofmbeta} for moduli spaces of semistable objects in $\Ku(Y_3)$. These results will provide an inductive approach to understanding the moduli spaces. In particular, in the next section, we will use them to prove the connectivity of the fibers of the \AJ map.

\begin{Lem}\label{lem:znamb}
    For every $m,n>0$, the space $\BN(n\alpha,m\beta)\neq \ms(n\alpha)\times M^s_\sigma(m\beta)$ and the space $\BN(E_\alpha,{m\beta})\neq \ms(m\beta)$.
\end{Lem}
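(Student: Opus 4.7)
The plan is to first establish the second assertion that $\BN(E_\alpha,m\beta)\neq\ms(m\beta)$ for every $E_\alpha\in\ms(\alpha)$, and then deduce the first by a semicontinuity argument. Given such $E_\alpha$ together with a stable $E_{m\beta}\in\ms(m\beta)$ satisfying $\hom(E_\alpha,E_{m\beta})=0$, the polystable object $E_\alpha^{\oplus n}\in M_\sigma(n\alpha)$ trivially satisfies $\hom(E_\alpha^{\oplus n},E_{m\beta})=0$. Since $M_\sigma(n\alpha)$ is irreducible with $\ms(n\alpha)$ open dense by Corollary~\ref{cor:msvirreducible}, upper semicontinuity of $\hom(-,E_{m\beta})$ will produce a genuinely stable $E_{n\alpha}\in\ms(n\alpha)$ still satisfying the vanishing, giving the first statement.

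For the second statement, I will use Serre duality to replace $\BN(E_\alpha,m\beta)$ by a Brill--Noether locus attached to an object in $\ms(\gamma)$. Setting $G:=\mathsf S(E_\alpha)[-1]\in\ms(\gamma)$, phase considerations together with $\gd(\sigma)\leq 5/3$ give $\hom(E,G[i])=0$ for $i\neq 0,1$ and every $E\in\ms(m\beta)$; combined with $\chi(m\beta,\gamma)=m\chi(\beta,\gamma)=0$, this yields $\hom(E,G)=\ext^1(E,G)$. Applying Serre duality once more, $\hom(E_\alpha,E_{m\beta})=\ext^1(E_{m\beta},G)^*=\hom(E_{m\beta},G)^*$, so that
\[
\BN(E_\alpha,m\beta)=\{E\in\ms(m\beta)\;:\;\hom(E,G)\neq 0\}.
\]

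I then invoke the identification $\ms(\gamma)\cong F(Y_3)$ from Example~\ref{eg511} to write $G=\cF_{\ell'}$ for a unique line $\ell'\subset Y_3$. For any line $\ell$ disjoint from $\ell'$---which exists since the incidence locus has codimension one in $F(Y_3)\times F(Y_3)$---a cohomological computation on $Y_3$ will establish $\hom(\cI_\ell,\cF_{\ell'})=0$. Concretely, applying $\Hom(\cI_\ell,-)$ to the triangle $\cO_{\ell'}(-H)[-1]\to\cO(-H)[1]\to\cF_{\ell'}\to\cO_{\ell'}(-H)$ reduces the claim to the vanishings $\Ext^i(\cI_\ell,\cO(-H))=0$ for $i=1,2$ and $\Hom(\cI_\ell,\cO_{\ell'}(-H))=0$. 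The first follows from $H^*(Y_3,\cO(-H))=0$ (Kodaira vanishing and Serre duality) combined with the adjunction computation $\Ext^i(\cO_\ell,\cO(-H))=H^{i-2}(\ell,\cO_{\P^1}(-1))=0$, using $\det N_{\ell/Y_3}\cong\cO_{\P^1}$. The second holds because $\cI_\ell|_{\ell'}\cong\cO_{\ell'}$ for disjoint lines, reducing to $\Hom_{\P^1}(\cO_{\P^1},\cO_{\P^1}(-1))=0$.

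Consequently, the polystable object $E:=\cI_\ell^{\oplus m}\in M_\sigma(m\beta)$ satisfies $\hom(E,G)=m\hom(\cI_\ell,\cF_{\ell'})=0$. By Theorem~\ref{thm:irred_ulrich}, $M_\sigma(m\beta)$ is irreducible with $\ms(m\beta)$ open and dense in it, so upper semicontinuity of $\hom(-,G)$ guarantees that the non-empty open locus $\{\hom(-,G)=0\}\subset M_\sigma(m\beta)$ meets $\ms(m\beta)$ non-trivially, furnishing the desired $E_{m\beta}$. The delicate step will be verifying the Serre-duality/phase identifications that reduce $\BN(E_\alpha,m\beta)$ to $\BN(m\beta,G)$; once this is in place, both the explicit cohomological vanishings on $Y_3$ and the semicontinuity arguments are straightforward and entirely avoid dimension estimates on extension loci.
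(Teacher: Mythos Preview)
Your overall strategy is the same as the paper's: exhibit a polystable direct-sum object on which the relevant $\Hom$ vanishes, then use upper semicontinuity together with irreducibility of the ambient moduli space to find a stable object with the same vanishing, first on the $m\beta$ side and then on the $n\alpha$ side.

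There are two points worth flagging.

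\textbf{A logical circularity.} You invoke Corollary~\ref{cor:msvirreducible} to obtain irreducibility of $M_\sigma(n\alpha)$. But in the paper's architecture, Corollary~\ref{cor:msvirreducible} relies on Proposition~\ref{prop:stabbir1}, whose proof (Step~3, case $m=2$) uses Lemma~\ref{lem:homb}(3), which in turn cites Lemma~\ref{lem:znamb}. So citing Corollary~\ref{cor:msvirreducible} here is circular. The fix is immediate and is exactly what the paper does: irreducibility of $M_\sigma(n\alpha)$ follows from Theorem~\ref{thm:irred_ulrich} (irreducibility of $M_\sigma(m\beta)$, due to \cite{FP}) together with the Serre functor identification in Remark~\ref{rem:Sswapabc}/Proposition~\ref{prop:serreinv}. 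You already use Theorem~\ref{thm:irred_ulrich} on the $m\beta$ side, so just do the same on the $n\alpha$ side.

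\textbf{An unnecessary detour.} Your Serre-duality reduction $\hom(E_\alpha,E_{m\beta})=\hom(E_{m\beta},\mathsf S(E_\alpha)[-1])$ and the subsequent cohomological computation of $\hom(\cI_\ell,\cF_{\ell'})$ are correct, but the paper bypasses all of this by quoting \eqref{eq4111} from Example~\ref{eg511}, which already records $\hom(\cE_\ell,\cI_{\ell'})=0$ for disjoint lines. Since every $E_\alpha\in\ms(\alpha)$ is of the form $\cE_\ell$, this gives the needed vanishing in one line. Your route recovers an equivalent vanishing via the $\gamma$-side description of the same moduli space, at the cost of recomputing cohomology that Example~\ref{eg511} has effectively already packaged.
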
 
\begin{proof}
    By \eqref{eq4111}, there exists a semistable object $E\in M_\sigma(m\beta)$ such that $\Hom(E_\alpha,E)=0$. By the upper semi-continuity of $\Hom(E_\alpha,-)$ on $M_\sigma(m\beta)$ and Theorem \ref{thm:irred_ulrich}, the Brill--Noether locus $\BN(E_\alpha,m\beta)$ is not the whole stable locus.

    It follows that there exists $E_{m\beta}\in \ms(m\beta)$ such that $\Hom(E_\alpha^{\oplus n},E_{m\beta})=0$. Again by the upper semi-continuity of $\Hom(-,E_{m\beta})$ on $M_\sigma(n\alpha)$, the Brill--Noether locus $\BN(n\alpha,E_{m\beta})$ is not the whole stable locus. 
\end{proof}

\begin{Lem}\label{lem:openextobjKuY}
Let $w\in \Kn(\Ku(Y_3))$ satisfying $\phi_\sigma(w)-\phi_\sigma(\alpha)>0$.   Let $E_w\in\ms(w)$, $E\in M_\sigma(m\alpha)$ for some $m\geq 1$ such that $\hom(E_w,E[1])=-\chi(w,m\alpha)$. Assume that there exists $f\in \Hom(E_w,E[1])$ such that $E_f$ is $\sigma$-stable. Then there exists $E'\in\ms(m\alpha)$ and $f'\in \Hom(E_w,E'[1])$ such that $E_{f'}=\Cone(f')[-1]$ is $\sigma$-stable.
\end{Lem}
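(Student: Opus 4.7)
The strategy is a standard deformation argument: deform $E$ within its moduli stack to a stable object $E' \in \ms(m\alpha)$, and use openness of the stability condition on extensions to produce a stable $E_{f'}$ from a nearby extension class.

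For the setup, I would choose an irreducible smooth scheme $T$ together with a smooth morphism $T \to \mathscr M_\sigma(m\alpha)$ onto an open neighborhood of $[E]$ in the (smooth Artin) moduli stack, and a lift $t_0 \in T$ of $[E]$; let $\mathscr F$ denote the pullback of the universal family to $T \times Y_3$. Since $\hom(E_w, \mathscr F_{t_0}[1]) = -\chi(w, m\alpha)$ by assumption and each $\ext^i(E_w, \mathscr F_{t_0}[1])$ is nonnegative, the Euler characteristic relation forces $\ext^i(E_w, \mathscr F_{t_0}[1]) = 0$ for all $i \neq 1$. Upper semi-continuity of $\ext^i$ in $t$ therefore defines an open neighborhood
\[
T^\circ := \{\, t \in T \mid \ext^i(E_w, \mathscr F_t[1]) = 0 \text{ for all } i \neq 1\,\} \ni t_0,
\]
on which the relative $\Ext^1$ sheaf is locally free of rank $-\chi(w, m\alpha)$. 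Let $\pi : \mathbb P \to T^\circ$ denote its projectivization, carrying a tautological family of extensions $E_{(t, [f])} := \Cone(E_w \to \mathscr F_t[1])[-1]$.

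By Lemma \ref{lem:openextobjstable} the subset $V \subset \mathbb P$ on which $E_{(t, [f])}$ is $\sigma$-stable is open; by hypothesis $(t_0, [f]) \in V$, so $V \neq \emptyset$. On the other hand, Corollary \ref{cor:msvirreducible} asserts that $M_\sigma(m\alpha)$ is irreducible, and the dimension comparison in its proof shows that the strictly semistable locus has dimension at most $1 + (m-1)^2 + 2$, which is strictly less than $1 - \chi(m\alpha, m\alpha) = 1 + m^2$ for $m \geq 2$. Since $M_\sigma(m\alpha) \ni [E_\alpha^{\oplus m}]$ is non-empty of dimension at least $1 + m^2$ (from the deformation estimate using $\Ext^2(E_v, E_v) = 0$, which holds by Serre invariance), it follows that $\ms(m\alpha)$ is dense open in $M_\sigma(m\alpha)$ for every $m \geq 1$. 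Consequently $\pi^{-1}(\ms(m\alpha))$ is dense open in the irreducible scheme $\mathbb P$, and therefore $V \cap \pi^{-1}(\ms(m\alpha)) \neq \emptyset$. Any point $(t', [f'])$ in this intersection yields the required pair $E' := \mathscr F_{t'} \in \ms(m\alpha)$ and $f' \in \Hom(E_w, E'[1])$ with $E_{f'}$ $\sigma$-stable.

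The only genuinely non-formal step, and the main obstacle to verify, is the density of $\ms(m\alpha)$ inside $M_\sigma(m\alpha)$; all other ingredients reduce to upper semi-continuity and the openness of stability along extensions (Lemma \ref{lem:openextobjstable}). This density is supplied by the irreducibility of $M_\sigma(m\alpha)$ (Corollary \ref{cor:msvirreducible}) combined with the elementary dimension count for the strictly semistable locus outlined above.
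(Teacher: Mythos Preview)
Your approach is essentially the paper's: both deform $E$ inside the irreducible space $M_\sigma(m\alpha)$ to reach the stable locus, build the relative $\Ext^1$ over the non-jumping locus, and invoke openness of stability for extensions. The paper works over an irreducible curve through $E$ and cites \cite[Corollary~6.5]{FP} (equivalently Theorem~\ref{thm:irred_ulrich}) directly for the irreducibility of $M_\sigma(m\alpha)$, whereas you pass through a smooth atlas and Corollary~\ref{cor:msvirreducible}; there is no circularity since that corollary does not use the present lemma.

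One small gap to fix: the sentence ``each $\ext^i$ is nonnegative, so the Euler relation forces vanishing for $i\neq 1$'' is not valid as stated, because the signs alternate. What actually gives the vanishing at $t_0$ is that $\Hom(E_w,E[i])=0$ for $i\le 0$ (from $\phi_\sigma(w)>\phi_\sigma(\alpha)$) and for $i\ge 3$ (from $\gd(\sigma)=\tfrac{5}{3}$, using $\phi_\sigma(w)-\phi_\sigma(\alpha)<\tfrac{4}{3}$, which holds in every use of the lemma); then $\chi(w,m\alpha)=-\hom^1+\hom^2$ together with the hypothesis $\hom^1=-\chi(w,m\alpha)$ forces $\hom^2=0$. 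The paper makes exactly this reduction, phrased as ``removing the jumping locus $\BN(E_w,m\alpha[2])$'' along the curve.
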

\begin{proof}
    As $M_\sigma(m\alpha)$ is irreducible by \cite[Corollary 6.5]{FP}, there exists an irreducible curve $C\to M_\sigma(m\alpha)$ such that at $p_0\in C$, the object is $E$; for a general point $p$ of $C$, the object $F_p\in \Ku(Y_3\times \{p\})$ is $\sigma$-stable. By removing the jumping locus, we may assume that   $F_p\not\in\BN(E_w,m\alpha[2])$ for all points on $C$. The relative $\Hom(E_w,m\alpha[1])$ can be defined over $C$ and is irreducible. For all $f'$ in this relative $\Ext^1$ space, by Lemma \ref{lem:hnfactorextobj}, the object $E_{f'}$ being $\sigma$-stable is an open condition. Therefore, for general $\sigma$-stable object $F_p$ and morphism $f'\in \Hom(E_w,F_p[1])$, the object $E_{f'}$ is $\sigma$-stable.
\end{proof}

\begin{Thm}\label{thm:stratcubic3}
    Let $Y_3$ be a smooth cubic threefold and $\Ku(Y_3)$ be its Kuznetsov component. Then for every $n\alpha+m\beta\in\Kn(\Ku(Y_3))$ with positive $m,n$, we have
    \begin{align}
        M^{s}_\sigma(n\alpha+m\beta)=\bigcup_{0\leq i\leq n,0\leq j\leq m,\tfrac{j}{i}<\tfrac{m}n}\ET(i\alpha+j\beta,(n-i)\alpha+(m-j)\beta). \label{eq:msnamb}
    \end{align}
    The map $e_{n\alpha,m\beta}$ is dominant and generically finite. The space $\overline{\ET(n\alpha,m\beta)}=M^s_\sigma(n\alpha+m\beta)$, and the other $\ET(v,w)$'s in \eqref{eq:msnamb} are of strictly smaller dimensions.
\end{Thm}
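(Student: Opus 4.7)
The plan is to establish the three assertions in order: the density $\overline{\ET(n\alpha,m\beta)}=M^s_\sigma(n\alpha+m\beta)$, the strict dimension inequality for the other strata, and the set-theoretic equality \eqref{eq:msnamb}.

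\textbf{Density.} First I would construct a stable object in $\ET(n\alpha, m\beta)$ by iteration: starting from a stable $E_{m\beta}\in M^s_\sigma(m\beta)$, apply Lemma \ref{lem:extbyonealpha} $n$ consecutive times to produce a stable object $E$ of class $n\alpha+m\beta$ fitting in a triangle $F\to E\to E_{m\beta}\xrightarrow{+}$ with $F$ a (possibly strictly semistable) object of class $n\alpha$, then invoke Lemma \ref{lem:openextobjKuY} to replace $F$ by a stable $E_{n\alpha}\in M^s_\sigma(n\alpha)$, so $\ET(n\alpha,m\beta)\neq\emptyset$. Since $\phi_\sigma(m\beta)-\phi_\sigma(n\alpha)=\tfrac{1}{3}$ and $\BN(n\alpha,m\beta)$ does not exhaust $M^s_\sigma(n\alpha)\times M^s_\sigma(m\beta)$ (Lemma \ref{lem:znamb}, combined with irreducibility of both factors via Corollary \ref{cor:msvirreducible} and Serre-functor symmetry), Lemma \ref{lem:extspaceof13gap} yields that $e_{n\alpha,m\beta}$ is generically finite and dominant onto an irreducible component of $M^s_\sigma(n\alpha+m\beta)$; irreducibility of the target forces $\overline{\ET(n\alpha,m\beta)}=M^s_\sigma(n\alpha+m\beta)$.

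\textbf{Strict dimension.} For any pair $(i,j)\neq (n,0)$ in the admissible range, the two characters $v_1=i\alpha+j\beta$ and $v_2=(n-i)\alpha+(m-j)\beta$ both lie in the $(\alpha,\beta)$-sextant with at least one of them strictly in the interior, and a direct case check (on $j=0$ with $i<n$, $i=n$ with $j>0$, or both $j>0$ and $i<n$) shows $\phi_\sigma(v_2)-\phi_\sigma(v_1)\in(0,\tfrac{1}{3})$ strictly. Lemma \ref{lem:extspaceofsmallgapvw} then gives $\dim \ET(v_1,v_2)<\dim M^s_\sigma(n\alpha+m\beta)$.

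\textbf{Pointwise covering.} To promote the density statement to the set-theoretic equality \eqref{eq:msnamb}, I will apply the final assertion of Proposition \ref{prop:imofprimpliesext} with $v=n\alpha$ and $w=n\alpha+m\beta$. By the density established above, $\ET(n\alpha,m\beta)$ contains an open dense subset of $M^s_\sigma(w)$, so for a general $E_w$ the defining triangle $E_{n\alpha}\to E_w\to E_{m\beta}\xrightarrow{+}$ provides a nonzero morphism $E_{n\alpha}\to E_w$; hence $\overline{\BNP_w(\BN(n\alpha,w))}=M^s_\sigma(w)$, and Proposition \ref{prop:imofprimpliesext} yields
\[
M^s_\sigma(w)=\bigcup_{v'\in\triangle(n\alpha,w)\setminus\{\R w\}}\ET(v',w-v').
\]
Parametrizing $v'=a(n\alpha)+b(n\alpha+m\beta)=i\alpha+j\beta$ with $a,b\geq 0$, $a+b\leq 1$, so $j=bm$ and $i=(a+b)n$, excluding the ray $\R w$ (which corresponds to $a=0$, equivalently $jn=im$) shows that precisely the pairs with $1\leq i\leq n$, $0\leq j\leq m-1$, and $jn<im$ occur, which is exactly the admissible range of the theorem.

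\textbf{Main obstacle.} The most delicate foundational issue is ensuring that $M^s_\sigma(n\alpha)$ is non-empty and irreducible for all $n\geq 1$, since this underlies both the existence of the initial stable extension and the applicability of Lemma \ref{lem:extspaceof13gap}; Serre-invariance (Proposition \ref{prop:serreinv}) identifies it with $M^s_\sigma(n\beta)$, whose irreducibility is Theorem \ref{thm:irred_ulrich} and whose non-emptiness is the existence of stable Ulrich-type objects. Secondarily, one must verify that $\BN(n\alpha,m\beta)$ contains no irreducible component of $M^s_\sigma(n\alpha)\times M^s_\sigma(m\beta)$; given the irreducibility of the product, this is immediate from Lemma \ref{lem:znamb}.
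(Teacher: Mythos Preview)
Your three-step architecture matches the paper's, and the \textbf{strict dimension} and \textbf{pointwise covering} steps are correct as written.

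The \textbf{density} step has a genuine gap. Lemma~\ref{lem:extbyonealpha} applies only to a \emph{general} $E_w\in M^s_\sigma(w)$ (concretely, $E_w$ must avoid finitely many closed Brill--Noether images). After one application the output is a particular object of class $\alpha+m\beta$ with no a~priori claim to genericity in $M^s_\sigma(\alpha+m\beta)$, so you cannot simply ``apply the lemma $n$ consecutive times''. You also do not verify the non-jumping hypothesis $\hom(E_{m\beta},F[1])=mn$ required by Lemma~\ref{lem:openextobjKuY}.

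The paper closes both issues by running an explicit induction on $n$. Once $\overline{\ET(n\alpha,m\beta)}=M^s_\sigma(n\alpha+m\beta)$ is known, one chooses a general $E_{n\alpha+m\beta}$ that lies in $\ET(n\alpha,m\beta)$ \emph{and} in the open locus of Lemma~\ref{lem:extbyonealpha}, with its defining triangle $E_{n\alpha}\to E_{n\alpha+m\beta}\to E_{m\beta}\xrightarrow{+}$; one further arranges $\hom(E_{m\beta},E_{n\alpha}[1])=mn$. Picking $E'_\alpha\in M^s_\sigma(\alpha)$ with $\Hom(E_{n\alpha},E'_\alpha)=0$ and $\hom(E_{m\beta},E'_\alpha[1])=m$, one extends $E_{n\alpha+m\beta}$ by $E'_\alpha$ and applies the octahedral axiom: the resulting stable object of class $(n+1)\alpha+m\beta$ sits in a triangle over $E_{n\alpha}\oplus E'_\alpha$ and $E_{m\beta}$, with the non-jumping condition holding by construction. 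Lemmas~\ref{lem:openextobjKuY} and~\ref{lem:openextobjstable} then show $e_{(n+1)\alpha,m\beta}$ is well-defined at some point, and Lemma~\ref{lem:extspaceof13gap} with Corollary~\ref{cor:msvirreducible} give density, closing the induction. Your ``iterate then deform $F$ at the end'' collapses to exactly this once you address genericity at each step; as a stand-alone shortcut it does not go through.
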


\begin{proof}
  We prove the theorem by induction on $n$. 

When $n=1$, by Lemma \ref{lem:extbyonealpha},  the rational map $e_{\alpha,m\beta}$ is well-defined on a non-empty subset of $\Pe{\alpha}{m\beta}$. Note that $\phi_\sigma(m\beta)-\phi_\sigma(\alpha)=\tfrac{1}{3}$. By Lemma \ref{lem:znamb}, the Brill--Noether locus $\BN(\alpha,m\beta)\neq \ms(\alpha)\times \ms(m\beta)$. By Corollary \ref{cor:msvirreducible} and  Lemma \ref{lem:extspaceof13gap}, the map $e_{\alpha,m\beta}$ is generically finite and dominant.

It follows that every general object $E_{\alpha+m\beta}\in M^s_\sigma(\alpha+m\beta)$ is in $\ET(\alpha,m\beta)$. Hence every general object $E_{\alpha+m\beta}\in\BNP_{\alpha+m\beta}\BN(\alpha,\alpha+m\beta)$. It follows that $\overline{\BNP_{\alpha+m\beta}\BN(\alpha,\alpha+m\beta)}=M^s_\sigma(\alpha+m\beta)$. By Proposition \ref{prop:imofprimpliesext}, the formula \eqref{eq:msnamb} holds for $\alpha+m\beta$.\\

Assuming that the statement holds for $n\alpha+m\beta$, we will first show that the locus $\ET((n+1)\alpha,m\beta)\neq \emptyset$. To see this, by induction, we may choose a general object $E_{n\alpha+m\beta}\in \ET(n\alpha,m\beta)\subset M^s_\sigma(n\alpha+m\beta)$ satisfying the assumption as that in Lemma \ref{lem:extbyonealpha}. In particular, the object $E_{n\alpha+m\beta}$ fits in a distinguished triangle
\begin{align}\label{eq47}
E_{n\alpha}\xrightarrow{f_1}E_{n\alpha+m\beta}\xrightarrow{f_2}E_{m\beta}\xrightarrow{+}E_{n\alpha}[1]
\end{align}
for some $E_{n\alpha}\in M^s_\sigma(n\alpha)$ and $E_{m\beta}\in M^s_\sigma(m\beta)$.

As $E_{n\alpha+m\beta}$ is general, we may further assume that $\mathsf S(E_{m\beta})\not\cong E_{n\alpha}[2]$. In particular, we have \begin{align*}
    &\hom(E_{m\beta},E_{n\alpha}[2])=\hom(E_{n\alpha}[2],\mathsf S(E_{m\beta}))=0;\\  &\hom(E_{m\beta},E_{n\alpha}[1])=-\chi(m\beta,n\alpha)=mn.
\end{align*}

Let $E'_\alpha\in M^s_\sigma(\alpha)$ be an object satisfying 
\begin{align}
    \Hom(E_{n\alpha},E'_\alpha)=0\text{ and }\hom(E_{m\beta},E'_{\alpha}[1])=-\chi(m\beta,\alpha)=m.
\end{align}
Apply $\Hom(-,E'_\alpha[1])$ to \eqref{eq47}, we get the long exact sequence
\begin{align*}
    0= & \Hom(E_{n\alpha}[1],E'_\alpha[1])\to \Hom(E_{m\beta},E'_\alpha[1])\xrightarrow{-\circ f_2}\Hom(E_{n\alpha+m\beta},E'_\alpha[1]) \\
    \xrightarrow{-\circ f_1} & \Hom(E_{n\alpha},E'_\alpha[1])\to \Hom(E_{m\beta}[-1],E'_\alpha[1])=0.
\end{align*}
As $\Hom(E_{m\beta},E'_\alpha[1])\neq 0$,   there exists $0\neq g\in \Hom(E_{n\alpha+m\beta},E'_\alpha[1])$ such that $g\circ f_1=0$.

Apply the octahedral axiom for $E_{n\alpha}\xrightarrow{f_1}E_{n\alpha+m\beta}\xrightarrow{g}E'_\alpha[1]$, we get the following commutative diagram of distinguished triangles:
    \begin{center}
	\begin{tikzcd}
     E'_\alpha \arrow[r,equal]\arrow{d}
		&   E'_\alpha\arrow{d}\\
		E_{n\alpha}\oplus E'_\alpha \arrow{r} \arrow{d}& E_g
		\arrow{d} \arrow{r}{\ev_2} & E_{m\beta} \arrow[d,equal] \arrow{r}{h} & (E_{n\alpha}\oplus E'_\alpha)[1] \\
		E_{n\alpha} \arrow{r}{f_1} \arrow{d}{0}  & E_{n\alpha+m\beta}\arrow{r}{f_2}\arrow{d}{g} & E_{m\beta} \arrow{r}{} & E_{n\alpha}[1]\\
  E'_\alpha[1] \arrow[r,equal]
		&   E'_\alpha[1].
	\end{tikzcd}
\end{center}
Applying Lemma \ref{lem:extbyonealpha} to the second vertical column, we see that $E_g$ is in $M^s_\sigma((n+1)\alpha+m\beta)$. By the choice of $E_{n\alpha}$ and $E'_\alpha$, we have $\hom(E_{m\beta},(E_{n\alpha}\oplus E'_\alpha)[1])=m(n+1)$. The pair of objects $(E_{m\beta},E_{n\alpha}\oplus E'_\alpha)$ is not on the $\Ext^1$-jumping locus of $M^s_\sigma(m\beta)\times M_\sigma((n+1)\alpha)$. By Lemma \ref{lem:openextobjKuY} and \ref{lem:openextobjstable}, for a general object $E_{(n+1)\alpha}\in M^s_\sigma((n+1)\alpha)$ and general $f\in \Hom(E_{m\beta},E_{(n+1)\alpha}[1])$, the object $\Cone(f)[-1]$ is $\sigma$-stable.

It follows that the map $e_{(n+1)\alpha,m\beta}$ is well-defined on a non-empty subset of $\Pe{(n+1)\alpha}{m\beta}$.  Note that $\phi_\sigma(m\beta)-\phi_\sigma(n\alpha)=\tfrac{1}{3}$. By Lemma \ref{lem:znamb}, the Brill--Noether locus $\BN(n\alpha,m\beta)\neq \ms(n\alpha)\times \ms(m\beta)$. By Corollary \ref{cor:msvirreducible} and  Lemma \ref{lem:extspaceof13gap}, the map $e_{n\alpha,m\beta}$ is generically finite and dominant.

It follows that every general object $E_{(n+1)\alpha+m\beta}\in M^s_\sigma((n+1)\alpha+m\beta)$ is in $\ET((n+1)\alpha,m\beta)$. Therefore, the space $\overline{\BNP_{(n+1)\alpha+m\beta}\ET((n+1)\alpha,(n+1)\alpha+m\beta)}=M^s_\sigma((n+1)\alpha+m\beta)$. By Proposition \ref{prop:imofprimpliesext},  formula \eqref{eq:msnamb} holds for $(n+1)\alpha+m\beta$.

For the last part of the statement, since for every $v\in \triangle((n+1)\alpha,(n+1)\alpha+m\beta)\setminus(\{(n+1)\alpha\}\cup \R((n+1)\alpha+m\beta))$, we have $\phi_\sigma((n+1)\alpha+m\beta-v)-\phi_\sigma(v)\in(0,\tfrac{1}{3})$. By Lemma \ref{lem:extspaceofsmallgapvw}, the space $\ET(v,(n+1)\alpha+m\beta-v)$ is with smaller dimension than that of $\ms((n+1)\alpha+m\beta)$. By formula \eqref{eq:msnamb}, we have $M^s_\sigma((n+1)\alpha+m\beta)=\overline{\ET((n+1)\alpha,m\beta)}$.
\end{proof}

By Remark \ref{rem:Sswapabc}, for every character $v\in \Kn(\Ku(Y_3))$ not proportional to $\alpha$, $\beta$ and $\gamma$, the moduli space $M^s_\sigma(v)$ admits a stratification by characters in its sextant. The following stratification theorem covers the only remaining case, and will be essential and enough for our purpose to prove the irreducibility of fibers of the \AJ map in the next section.

\begin{Prop}\label{prop:stratofmbeta}
For every $m\geq 2$, the moduli space
\begin{align}
M^s_\sigma(m\beta)=\ET(\alpha,(m-1)\beta+\gamma). \label{eq:msmb}
\end{align}
The map $e_{\alpha,(m-1)\beta+\gamma}$ is dominant.
\end{Prop}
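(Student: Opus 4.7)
The plan is to apply Proposition \ref{prop:imofprimpliesext} with $(v,w)=(\alpha,m\beta)$. Note that $\phi_\sigma(m\beta)-\phi_\sigma(\alpha)=\tfrac{1}{3}\in(0,1)$. Writing a lattice point in $\triangle(\alpha,m\beta)$ as $n\alpha+k\beta$ with $n,k\ge 0$, the defining inequality $mn+k\le m$ together with $\gcd(1,m)=1$ forces $(n,k)\in\{(1,0)\}\cup\{(0,j):0\le j\le m\}$. Hence $\triangle(\alpha,m\beta)\setminus\R m\beta=\{\alpha\}$, and the conclusion of Proposition \ref{prop:imofprimpliesext} becomes precisely $\ms(m\beta)=\ET(\alpha,(m-1)\beta+\gamma)$, provided we verify the hypothesis $\overline{\BNP_{m\beta}(\BN(\alpha,m\beta))}=\ms(m\beta)$.

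I plan to establish this hypothesis by proving the stronger pointwise statement: for every $E\in\ms(m\beta)$ there exists $E_\alpha\in\ms(\alpha)$ with $\Hom(E_\alpha,E)\neq 0$. Arguing by contradiction, suppose $\Hom(E_\alpha,E)=0$ for every $E_\alpha$. Since the Serre functor $\mathsf{S}$ shifts phase by $\tfrac{5}{3}$ and $\phi_\sigma(m\beta)-\phi_\sigma(\alpha)=\tfrac{1}{3}$, phase considerations combined with Serre duality yield $\Hom(E_\alpha,E[i])=0$ for all $i\notin\{0,1\}$; then $\chi(\alpha,m\beta)=0$ forces $\Hom(E_\alpha,E[1])=0$ as well, so $\RHom(E_\alpha,E)=0$ for every $E_\alpha\in\ms(\alpha)$. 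Taking $E_\alpha=\cE_\ell$ and applying the resolution $0\to\cE_\ell\to\cO_{Y_3}^{\oplus 2}\to\cI_\ell(H)\to 0$, together with the defining vanishings $\RHom(\cO_{Y_3},E)=\RHom(\cO_{Y_3}(H),E)=0$ for $E\in\Ku(Y_3)$, we obtain $\RHom(\cI_\ell(H),E)=0$. The short exact sequence $0\to\cI_\ell(H)\to\cO_{Y_3}(H)\to\cO_\ell(H)\to 0$ then gives $\RHom(\cO_\ell(H),E)=0$, and Serre duality on $Y_3$ yields $\RHom_{Y_3}(E,\cO_\ell(-H))=0$ for every line $\ell\subset Y_3$.

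The main obstacle is to deduce $E=0$ from this vanishing against the two-dimensional family $\{\cO_\ell(-H)\}_{\ell\in F(Y_3)}$. The plan is to use the adjunction $\RHom_{Y_3}(E,\cO_\ell(-H))\cong\RHom_{\mathbb P^1}(\mathbf{L} i_\ell^{*}E,\cO_{\mathbb P^1}(-1))$, which forces $\mathbf{L} i_\ell^{*}E$ to be a direct sum of shifts of $\cO_\ell$ for every line $\ell$. Combined with the fact that lines sweep out $Y_3$ and with the standard generation result that the family $\{\cE_\ell\}_{\ell\in F(Y_3)}$ (together with $\cO_{Y_3},\cO_{Y_3}(H)$) spans $\Db(Y_3)$, this forces $E=0$, giving the desired contradiction.

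Once the hypothesis is verified, Proposition \ref{prop:imofprimpliesext} yields the set equality $\ms(m\beta)=\ET(\alpha,(m-1)\beta+\gamma)$. For the dominance of $e_{\alpha,(m-1)\beta+\gamma}$, since its image is by construction exactly $\ET(\alpha,(m-1)\beta+\gamma)=\ms(m\beta)$, it suffices to check that the map is well-defined on a nonempty open subset of $\Pe{\alpha}{(m-1)\beta+\gamma}$. Well-definedness amounts to the $\sigma$-stability of a generic extension $E_\alpha\to E\to E_w\xrightarrow{+}$: by Lemma \ref{lem:hnfactorextobj}, any potentially destabilizing class must lie in $\triangle(m\beta,(m-1)\beta+\gamma)\setminus\{0,m\beta,(m-1)\beta+\gamma\}=\{\beta,2\beta,\dots,(m-1)\beta\}$, and these possibilities are excluded for generic $E_w\in\ms((m-1)\beta+\gamma)$ by the codimension estimate of Lemma \ref{lem:extspaceofsmallgapvw}, arguing exactly as in Step 2 of the proof of Proposition \ref{prop:stabbir1}.
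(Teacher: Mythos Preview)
Your overall plan via Proposition \ref{prop:imofprimpliesext} is sound, and your identification $\triangle(\alpha,m\beta)\setminus\R(m\beta)=\{\alpha\}$ is correct. There are, however, two genuine gaps.

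\textbf{Gap 1: the spanning claim.} Your pointwise argument hinges on the assertion that if $E\in\Ku(Y_3)$ satisfies $\RHom(\cE_\ell,E)=0$ for every line $\ell$ then $E=0$; equivalently, that the family $\{\cE_\ell\}_{\ell\in F(Y_3)}$ spans $\Ku(Y_3)$. This is not a standard fact and you have not proved it. Your reduction to ``$\mathbf{L}i_\ell^{*}E$ is a direct sum of shifts of $\cO_\ell$ for every line'' is correct, but this alone does not force $E=0$ (for instance $\cO_{Y_3}$ restricts trivially to every line), and combining it with $\RHom(\cO_{Y_3},E)=0$ still requires a nontrivial argument you do not supply. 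The paper avoids this entirely: it shows that the dominant, generically finite map $e_{\alpha,m\beta}\colon\Pe{\alpha}{m\beta}\dashrightarrow\ms(\alpha+m\beta)$ cannot be birational, by a first Betti number count (Proposition \ref{prop:betti}). Having two distinct preimages $(E_\alpha,E_{m\beta},f)$ and $(E'_\alpha,E'_{m\beta},f')$ of a general point with $E'_\alpha\not\cong E_\alpha$ then yields $\Hom(E'_\alpha,E_{m\beta})\neq 0$, which is exactly the Brill--Noether hypothesis for general $E_{m\beta}$.

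\textbf{Gap 2: dominance.} You write that the image of $e_{\alpha,(m-1)\beta+\gamma}$ is ``by construction exactly $\ET(\alpha,(m-1)\beta+\gamma)$''. This is incorrect: by Definition \ref{def:proj_nj} the domain $\Pe{\alpha}{(m-1)\beta+\gamma}$ lives over the non-jumping locus $\ms(\alpha,(m-1)\beta+\gamma)^\dag$, so the image is only $\ET^0(\alpha,(m-1)\beta+\gamma)$. Since $\phi_\sigma((m-1)\beta+\gamma)-\phi_\sigma(\alpha)>\tfrac{1}{3}$, the jumping locus $\BN((m-1)\beta+\gamma,\alpha[2])$ is typically nonempty (via Serre duality it identifies with $\BN(\beta,(m-1)\beta+\gamma)$), so $\ET\neq\ET^0$. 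Establishing dominance therefore requires bounding $\dim\ET^{\geq 1}(\alpha,(m-1)\beta+\gamma)$ below $\dim\ms(m\beta)$; the paper does this explicitly using Lemma \ref{lem:homb}(2) (and a separate argument for $m=2$). Your well-definedness step, showing a generic extension is $\sigma$-stable by ruling out destabilizers of class $j\beta$ via Lemma \ref{lem:extspaceofsmallgapvw}, is fine, but it only places the image inside $\ET^0$; it does not show $\ET^0$ is dense in $\ms(m\beta)$.
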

\begin{proof}
  By Theorem \ref{thm:stratcubic3}, the rational map $e_{\alpha,m\beta}$ is dominant and generically finite.
  
  Note that $\ms(\alpha)\cong F(Y_3)$, so $b_1(\ms(\alpha))=10$. The \AJ map $\IJ_{m\beta}$ is not constant, so $b_1(M_\sigma(m\beta))>1$. By the first paragraph of the proof for Proposition \ref{prop:betti}, which does not rely on the statement in this section, the first Betti number $b_1(\ms(m\beta+\alpha))\leq 10$. It follows that $b_1(\ms(\alpha))+b_1(M_\sigma(m\beta))> 10 \geq b_1(\ms(m\beta+\alpha))$.  So the map $e_{\alpha,m\beta}$ cannot be birational.
  
  For a general object $(E_{\alpha},E_{m\beta},f)\in \Pe{\alpha}{m\beta}$ with $(E_\alpha,E_{m\beta})\not\in \BN(\alpha,m\beta)$, its image under $e_{\alpha,m\beta}$, which is by definition $E_f=\Cone(f)[-1]$, has at least another different preimage $(E_{\alpha}',E_{m\beta}',f')$ in $\Pe{\alpha}{m\beta}$. 

    Suppose $E_\alpha\cong E_\alpha'$, then we may apply $\Hom(-,E_{m\beta})$ to the distinguished triangle $E_\alpha'\to E_{f'}\to E_{m\beta}'\xrightarrow{+}$. As $\Hom(E_\alpha',E_{m\beta})=\Hom(E_\alpha,E_{m\beta})=0$, we have $\Hom(E_{m\beta}',E_{m\beta})\twoheadrightarrow \Hom(E_f',E_{m\beta})\neq0$. It follows that $E_{m\beta}\cong E_{m\beta}'$. Similar to the second part of the proof of Proposition \ref{prop:extmapfinite}, we can deduce that the morphisms $f= f'$ up to a scalar. So the two points $(E_{\alpha},E_{m\beta},f)$ and $(E_{\alpha}',E_{m\beta}',f')$ are the same, which is a contradiction.

    Hence $E_\alpha'\not\cong E_\alpha$. Apply $\Hom(E_\alpha',-)$ to the distinguished triangle $E_\alpha\to E_{f}\to E_{m\beta}\xrightarrow{+}$, we get $0\neq \Hom(E_\alpha',E_f)\hookrightarrow\Hom(E_\alpha',E_{m\beta})$. It follows that $(E'_\alpha,E_{m\beta})\in \BN(\alpha,m\beta)$. As this holds for all general $E_{m\beta}\in \ms(m\beta)$, we have $\overline{\BNP_{m\beta}(\BN(\alpha,m\beta))}=\ms(m\beta)$. By Proposition \ref{prop:imofprimpliesext}, formula \eqref{eq:msmb} holds.\\

    To show that $e_{\alpha,(m-1)\beta+\gamma}$ is dominant, we need to show that $\ET^{\geq 1}(\alpha,(m-1)\beta+\gamma)$ is of strictly smaller dimension. Continuing the above argument, by the octahedral axiom, we have the following commutative diagram of distinguished triangles:

    \begin{center}
	\begin{tikzcd}
     & E'_\alpha \arrow[r,equal]\arrow{d}
		&   E'_\alpha\arrow{d}\\
		E_\alpha \arrow{r} \arrow[d,equal]& E_f
		\arrow{d} \arrow{r} & E_{m\beta} \arrow[d] \arrow{r}{f} & E_\alpha[1]\arrow[d,equal] \\
		E_{\alpha} \arrow{r}  & E'_{m\beta}\arrow{r}{}\arrow{d}{g} & E_{(m-1)\beta+\gamma} \arrow{r}{} \arrow{d}{}& E_{\alpha}[1]\\
 & E'_\alpha[1] \arrow[r,equal]
		&   E'_\alpha[1].
	\end{tikzcd}
\end{center}
We claim that $E_{(m-1)\beta+\gamma}$ is $\sigma$-stable. Suppose that this is not the case, then by Lemma \ref{lem:hnfactorextobj}, the HN factor $\HN^-_\sigma(E_{(m-1)\beta+\alpha})$ is of character $m'\beta$ for some $m'<m$. This leads to a contradiction as $\Hom(E_{m\beta},\HN^-_\sigma(E_{(m-1)\beta+\alpha}))=0=\Hom(E'_\alpha[1],\HN^-_\sigma(E_{(m-1)\beta+\alpha}))$.

Applying $\Hom(-,E'_\alpha)$ to the distinguished triangle $E_\alpha\to E'_{m\beta}\to E_{(m-1)\beta+\gamma}\xrightarrow{+}$, we get $\hom(E_{(m-1)\beta+\gamma},E'_\alpha[2])=0$ or $1$; respectively, $\hom(E_{(m-1)\beta+\gamma},E'_\alpha[1])=m-1$ or $m$. So each general $E_{m\beta}$ is in $\ET^i(\alpha,(m-1)\beta+\gamma)$ for $i=0$ or $1$. We only need to show that $\ET^{= 1}(\alpha,(m-1)\beta+\gamma)$ is of strictly smaller dimension.

By Serre duality, $\hom(E_{(m-1)\beta+\gamma}, E'_\alpha[2])=\hom(\mathsf {S}^{-1} (E'_\alpha)[2],E_{(m-1)\beta+\gamma})$. So the Brill--Noether loci $\BN((m-1)\beta+\gamma,\alpha[2])\cong\BN(\beta,(m-1)\beta+\gamma)$. 
 
 When $m\geq 3$, by Lemma \ref{lem:homb}.(2),  the  locus $\BN(\beta,(m-1)\beta+\gamma)$ is of codimension at least $3$. So $\ET^1(\alpha,(m-1)\beta+\gamma)$ is of dimension at most
 \begin{align*}
   & \dim\BN(\beta,(m-1)\beta+\gamma) -\chi((m-1)\beta+\gamma,\alpha) -1+ 1 \\
   \leq & \dim \ms(\beta)+\dim\ms((m-1)\beta+\gamma)-3+m-1\\
   = &\ 2+(m^2-m+2)-3+m-1=m^2<\dim \ms(m\beta).
 \end{align*}
For the remaining case when $m=2$, we may consider the dimension of the \AJ map. Note that $\dim\BN(\beta,\beta+\gamma)=4$, so $\dim(\IJ_{2\beta}(\ET^1(\alpha,\beta+\gamma)))\leq 4<5=\dim(\IJ_{2\beta}(\ms(2\beta)))$. So $\ET^1(\alpha,\beta+\gamma)$ is not dense in $\ms(2\beta)$.

In all cases, we have $\ms(m\beta)=\overline{\ET^0(\alpha,(m-1)\beta+\gamma)}$. In other words, the map $e_{\alpha,(m-1)\beta+\gamma}$ is dominant.
\end{proof}
    
\subsection{Aside: derived categories of hearts and enhancements}
As an application of the results proved so far, we may show that the bounded derived category of the heart of $\sigma$ on $\Ku(Y_3)$ is equivalent to $\Ku(Y_3)$. This is one of the missing cases in our previous paper \cite{heart}.

\begin{Prop}\label{prop:dbofheart}
    Let $Y_3$ be a smooth cubic threefold, $\Ku(Y_3)$ be its Kuznetsov component, and $\sigma=(\cA,Z)$ be the stability condition constructed in \cite{BLMS:kuzcomponent}. Then $\Db(\cA)$ is equivalent to $\Ku(Y_3)$.
\end{Prop}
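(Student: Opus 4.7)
The plan is to invoke the criterion from \cite{heart} alluded to in Remark 4.10 therein. This criterion reduces the equivalence $\Db(\cA)\simeq \Ku(Y_3)$ to verifying that the natural realization functor is fully faithful; equivalently, that for every pair $E, F\in\cA$ and every class $\epsilon\in\Ext^n_{\Ku(Y_3)}(E,F)$ with $n\geq 2$, there exist an object $G\in\cA$ and classes $\epsilon_1\in\Ext^k_{\Ku(Y_3)}(E,G)$, $\epsilon_2\in\Ext^{n-k}_{\Ku(Y_3)}(G,F)$ with $0<k<n$ such that $\epsilon=\epsilon_2\circ\epsilon_1$ as a Yoneda product. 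Using Jordan--H\"older filtrations in $\cA$ and bilinearity of Yoneda products, one reduces the verification to the case where $E$ and $F$ are $\sigma$-stable.

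Since $\mathsf S$ acts on stability and permutes the sextants of $\Kn(\Ku(Y_3))$ (Proposition \ref{prop:serreinv}, Remark \ref{rem:Sswapabc}), one may further assume that the characters of $E$ and $F$ both lie in the $(\alpha,\beta)$-sextant. Because $\gd(\sigma)=5/3$, a class in $\Ext^n(E,F)$ with $n\geq 2$ can be non-zero only when the phase difference $\phi_\sigma(F)-\phi_\sigma(E)$ is sufficiently negative; in fact only $n=2$ can ever arise, and then $\phi_\sigma(F)-\phi_\sigma(E)\leq -1/3$. This drastically restricts the list of character pairs that must be examined.

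I would then argue by induction on the complexity of the characters, measured by $n+m$ for $v=n\alpha+m\beta$. For the base cases ($v\in\{\alpha,\beta,\gamma,\alpha+\beta,\beta+\gamma,\ldots\}$) the explicit descriptions of the stable objects in Examples \ref{eg511} and \ref{eg:ab}, together with the Koszul-type resolutions they fit in, let one check the Yoneda factorization directly. For the inductive step, use Theorem \ref{thm:stratcubic3} (or Proposition \ref{prop:stratofmbeta} when $v=m\beta$) to write a general $\sigma$-stable $E_v$ as a non-trivial extension $E_1\to E_v\to E_2\xrightarrow{+}$ with $E_1,E_2$ $\sigma$-stable of strictly smaller complexity. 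Applying $\Hom(-,F)$ produces a long exact sequence whose connecting homomorphism $\Ext^1(E_1,F)\to\Ext^2(E_2,F)$ is precisely Yoneda multiplication with the extension class $[E_v]\in\Ext^1(E_2,E_1)$; combining this with the inductive hypothesis applied to the pairs $(E_1,F)$ and $(E_2,F)$ expresses any prescribed class in $\Ext^2(E_v,F)$ as a Yoneda product of $\Ext^1$-classes between objects of $\cA$, which is what is needed.

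The main obstacle is that the stratification triangle is not canonical: a given $\epsilon\in\Ext^2(E_v,F)$ must be matched with a suitable choice of decomposition $E_1\to E_v\to E_2\xrightarrow{+}$. To get around this, I would use the Brill--Noether dimension estimates (Lemma \ref{lem:homb}) together with the density statement $\overline{\ET(n\alpha,m\beta)}=\ms(n\alpha+m\beta)$ of Theorem \ref{thm:stratcubic3} to arrange that the connecting map in the long exact sequence is surjective onto the relevant $\Ext^2$; the non-generic pairs $(E_v,F)$ lying on jumping loci are then handled by a semi-continuity argument degenerating from the generic ones. A secondary technical point is ensuring that the intermediate object $G$ produced by the Yoneda factorization indeed lies in $\cA$ (and not in a shift of it), which follows from the fact that the stratification triangles have all three terms in $\cA$ by construction.
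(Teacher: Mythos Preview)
The criterion you describe is not the one the paper uses. The paper applies \cite[Theorem~3.8]{heart}, whose hypothesis is \cite[Assumption~3.4]{heart}; conditions (a) and (c) there are automatic, and the substantive condition (b) reads: for every nonzero $E\in\cA$ and every $s>0$ there exists a $\sigma$-stable $F\in\cA$ with $\phi_\sigma(F)<s$ and $\Hom(F,E)\neq 0$. This is far easier to check than Yoneda-factoring every $\Ext^2$-class. After reducing to stable $E$, the paper verifies it via a short preparatory lemma (Lemma~\ref{lem:profbnN}): for $v=n\alpha+m\beta$ with $m,n>0$ and every $N\geq n+m-1$ one has $\BNP_v(\BN(N\alpha+\beta,v))=\ms(v)$; since $\phi_\sigma(N\alpha+\beta)\to 0^+$ as $N\to\infty$, this supplies the required small-phase stable objects. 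That lemma is a brief induction on $m$ using only that $\chi(v,\gamma)\geq 0$ forces $\BN(E_v,\gamma)\neq\emptyset$, together with Proposition~\ref{prop:imofprimpliesext}. The cases $[E]=m\beta$ and $[E]$ outside the $(\alpha,\beta)$-sextant are handled separately and even more directly.

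Your own argument has two concrete gaps. First, the sextant reduction is invalid: $\mathsf S$ shifts phases by $5/3$ and so does not preserve the heart $\cA=\cP_\sigma((0,1])$, hence you cannot use it to move a \emph{pair} $(E,F)$ of objects of $\cA$ into a preferred sextant while keeping both in $\cA$; Remark~\ref{rem:Sswapabc} is a statement about a single character. Second, and more seriously, the inductive step does not close. Given a specific $\epsilon\in\Ext^2(E_v,F)$ and a triangle $E_1\to E_v\to E_2\xrightarrow{+}$, the long exact sequence lifts $\epsilon$ to $\Ext^2(E_2,F)$ only when its image in $\Ext^2(E_1,F)$ vanishes. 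Theorem~\ref{thm:stratcubic3} tells you a \emph{generic} $E_v$ lies in $\ET(n\alpha,m\beta)$, but gives no mechanism to choose, for a \emph{specific} triple $(E_v,F,\epsilon)$, a decomposition achieving $\Ext^2(E_1,F)=0$. The proposed semi-continuity patch is the heart of the matter and is left as a handwave: Yoneda-factorability of a fixed extension class is not an open or closed condition on the pair in any evident sense.
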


We first need the following lemma.
\begin{Lem}\label{lem:profbnN}
  Let $n,m$ be positive integers, and $v=n\alpha+m\beta$. Then for every $N\geq n+m-1$, we have $\pi_{v}(\BN(N\alpha+\beta,v))=\ms(v)$
\end{Lem}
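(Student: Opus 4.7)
The plan is to show that $\pi_v(\BN(N\alpha+\beta, v))$ is both closed and dense in $\ms(v)$, hence equal to $\ms(v)$. Closedness is immediate: $\BN(N\alpha+\beta,v)$ is the locus $\{\hom \geq 1\}$ inside $\ms(N\alpha+\beta) \times \ms(v)$, which is closed by upper semi-continuity of $\hom$, and the projection $\pi_v$ is proper since it comes from a product of proper algebraic spaces. Thus only density needs to be established, and I would proceed by double induction: outer induction on $m$, with an inner induction on $N$ in the base case.

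For the base case $m=1$, one has $v = n\alpha + \beta$ and $N \geq n$. When $N = n$ take $E_w = E_v$. For $N > n$, assume (inductive hypothesis) that for generic $E_v \in \ms(n\alpha+\beta)$ there exists $E_{w_0} \in \ms((N-1)\alpha+\beta)$ with a non-zero morphism $E_{w_0} \to E_v$. By Lemma \ref{lem:extbyonealpha}, for general $E_{w_0}$ and general $f \in \Hom(E_{w_0}, E_\alpha[1])$ the extension $E_w := \Cone(f)[-1]$ lies in $\ms(N\alpha+\beta)$ and fits in a distinguished triangle $E_\alpha \to E_w \to E_{w_0} \xrightarrow{+}$. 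Applying $\Hom(-, E_v)$ gives the exact sequence
\[
\Hom(E_\alpha[1], E_v) \to \Hom(E_{w_0}, E_v) \to \Hom(E_w, E_v) \to \Hom(E_\alpha, E_v),
\]
in which the first term equals $\Hom(E_\alpha, E_v[-1]) = 0$ since $\phi_\sigma(E_\alpha) = 0 > \phi_\sigma(E_v) - 1$. Hence the map $\Hom(E_{w_0}, E_v) \hookrightarrow \Hom(E_w, E_v)$ is injective, yielding the desired non-zero morphism $E_w \to E_v$. Genericity of $E_{w_0}$ is preserved along the iteration via the dominance of the extension map from Theorem \ref{thm:stratcubic3}, and closedness of $\pi_v(\BN(\cdot,\cdot))$ then promotes the conclusion from generic to all $E_v$.

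For the inductive step $m \geq 2$, a generic $E_v \in \ms(n\alpha+m\beta)$ lies in the dense stratum $\ET(n\alpha, m\beta)$ by Theorem \ref{thm:stratcubic3}, so it fits in a triangle $E_{n\alpha} \to E_v \to E_{m\beta} \xrightarrow{c} E_{n\alpha}[1]$. Applying $\Hom(E_w, -)$ with $w = N\alpha+\beta$ and using $\Hom(E_w, E_{n\alpha}) = 0$ (phase: $\phi_\sigma(w) > 0 = \phi_\sigma(n\alpha)$), one gets
\[
0 \to \Hom(E_w, E_v) \to \Hom(E_w, E_{m\beta}) \xrightarrow{c \circ -} \Hom(E_w, E_{n\alpha}[1]).
\]
So it suffices to exhibit an $E_w$ together with a morphism $g: E_w \to E_{m\beta}$ such that $c \circ g = 0$. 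Using Proposition \ref{prop:stratofmbeta} applied to $E_{m\beta}$, and combining with the dimension formula $\dim\Hom(E_w, E_{n\alpha}[1]) = n(N+1)$ (from $\chi$ and the vanishing of $\Ext^0, \Ext^2$ by Serre duality and phase), we can choose $E_w$ to achieve $\dim\Hom(E_w, E_{m\beta})$ large enough, through a Brill--Noether / dimension count on $\ms(N\alpha+\beta)$, so that the map $c\circ(-)$ cannot be injective. Alternatively, one reduces via the inductive hypothesis applied to $E_{m\beta}$ (treating the pure-$\beta$ case via Proposition \ref{prop:stratofmbeta}) to produce $E_w$ and $g$, and verifies compatibility with $c$.

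The main obstacle is the last step of the inductive argument, namely, the dimension estimate needed to guarantee that the kernel of composition with $c$ is non-zero for at least one choice of $E_w$. This requires controlling $\Hom(E_w, E_{m\beta})$ uniformly across the relevant moduli, which should follow from the bound $N \geq n+m-1$ and the Euler pairing computations, but must be executed carefully along the induction.
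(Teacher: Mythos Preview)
Your base case ($m=1$) has the right structure but cites the wrong lemma. Lemma~\ref{lem:extbyonealpha} only guarantees stability of the extension when $E_{w_0}$ is \emph{general} in its moduli space, but your $E_{w_0}$ is chosen to map to a specific $E_v$ and need not be general; the appeal to Theorem~\ref{thm:stratcubic3} does not repair this. The correct tool is Lemma~\ref{lem:extendstabobj}: since $\alpha \times ((N-1)\alpha+\beta) = 1$, every nonzero extension of any $E_{w_0} \in \ms((N-1)\alpha+\beta)$ by any $E_\alpha \in \ms(\alpha)$ is automatically $\sigma$-stable. With this, the iterative argument works for \emph{every} $F \in \ms(n\alpha+\beta)$, and the closedness step becomes unnecessary.

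The inductive step contains the real gap. Decomposing a generic $E_v$ via $\ET(n\alpha, m\beta)$ does not reduce $m$, so neither your alternative (b) (the hypothesis does not cover $E_{m\beta}$, which has $n$-coordinate $0$ and the same $m$) nor the dimension count in (a) succeeds: generically $\hom(E_w, E_{m\beta})$ is small while $\hom(E_w, E_{n\alpha}[1]) = n(N+1)$ is large, so there is no reason for the kernel of $c\circ(-)$ to be nonzero. The paper takes a different route via the $\gamma$-direction. Since $\chi(v,\gamma) = n > 0$ and $\Hom(F, E_\gamma[k]) = 0$ for $k \neq 0,1$, one has $\Hom(F, E_\gamma) \neq 0$ for every $E_\gamma \in \ms(\gamma)$; Proposition~\ref{prop:imofprimpliesext} then produces $v_0 \in \triangle(v-\gamma, v) \setminus \R v$ with $F \in \ET(v_0, v-v_0)$, i.e.\ $F$ has a stable subobject $F_0$ of character $v_0 = n_0\alpha + m_0\beta$ with $m_0 < m$ and $n_0+m_0 \leq n+m$. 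Induction on $m$ applied to $F_0$ yields, for every $N \geq n+m-1 \geq n_0+m_0-1$, some $F_N \in \ms(N\alpha+\beta)$ with $\Hom(F_N, F_0) \neq 0$; composing with the inclusion $F_0 \hookrightarrow F$ in the heart gives $\Hom(F_N, F) \neq 0$. This works for every $F$, not just generic ones.
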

\begin{proof}
    We make induction on $m$. When $m=1$, by Lemma \ref{lem:hnfactorextobj}, the locus $\ET(s\alpha,F)\neq \emptyset$ for every $s\geq 1$ and $F\in\ms(v)$. It follows that $\BN(v+s\alpha,F)\neq \emptyset$. The statement holds.

    Assume that the statement holds for all $v'=n\alpha+m'\beta$ with $m'<m$. As $\chi(v,\gamma)\geq 0$, we have $\BN(F,\gamma)\neq \emptyset$ for every $F\in\ms(v)$. By Proposition \ref{prop:imofprimpliesext}, there exists $v_0\in\triangle(v-\gamma,v)\setminus\{\R v\}$ such that we have $F\in\ET(v_0,v-v_0)$. If follows that there exists $F_{0}\in\ms(v_0)$ which is a subobject of $F$ in $\cA$.

   Note that $v_0=n_0\alpha+m_0\beta$ for some $m_0<m$ and $n_0+m_0\leq n+m$. By induction, for every $N\geq n_0+m_0-1$, there exists $F_N\in \ms(N\alpha+\beta)$ such that $\Hom(F_N,F_0)\neq 0$. As $F_0$ is a subobject of $F$ in $\cA$, we have  $\Hom(F_N,F)\neq 0$. 
   
   The statement holds for all $v$ by induction.
\end{proof}
\begin{proof}[Proof of Proposition \ref{prop:dbofheart}]
By \cite[Theorem 3.8]{heart}, we only need to check that the stability condition $\sigma$ on $\Ku(Y_3)$ satisfies \cite[Assumption 3.4]{heart}. It is clear that Assumption \emph{(a)} and \emph{(c)} hold. We only need to check Assumption \emph{(b)}. That is, 

\textbf{Claim:} For every non-zero object $E$ in $\mathcal A$ and every real number $s>0$, there exists a $\sigma$-stable object $F$ in $\mathcal A$ satisfying $\phi_\sigma(F)<s$ and  $\Hom(F,E)\neq 0$.\\

We may assume that the character $[E]$ is in the $(\alpha,\beta)$-sextant, since otherwise $\chi(N\alpha+\beta,E)>0$ when $N\gg 1$, the statement holds automatically. 

If $[E]=m\beta$, then by Theorem \ref{thm:stratcubic3}, for every $N>0$ with $\gcd(N,m)=1$, we have that $\BNP_{m\beta}(\BN(N\alpha+m\beta,m\beta))=\ms(m\beta)$. The claim follows. If $[E]=n\alpha+m\beta$ for some $n,m\geq 1$, the claim follows by Lemma \ref{lem:profbnN}. 
\end{proof}

As a direct consequence of Proposition \ref{prop:dbofheart} we deduce that $\Ku(Y_3)$ has a strongly unique dg enhancement (see \cite{CS:tour} for a survey on this topic). This implies that every equivalence $\Phi \colon \Ku(Y_3) \to \Ku(Y_3')$ among the Kuznetsov components of two cubic threefolds $Y_3$ and $Y_3'$ is of Fourier--Mukai type, in other words, the composition $i' \circ \Phi \circ i^*$ is a Fourier-Mukai functor, where $i^*$ denotes the left adjoint of the natural inclusion $\Ku(Y_3) \to \Db(Y_3)$ and $i' \colon \Ku(Y_3') \to \Db(Y_3')$ is the natural inclusion.

\begin{Cor}\label{cor:unique_dg}
The Kuznetsov component $\Ku(Y_3)$ of a smooth cubic threefold $Y_3$ has a strongly unique dg enhancement. In particular, every equivalence $\Ku(Y_3) \to \Ku(Y_3')$ for a smooth cubic threefold $Y_3'$  is of Fourier--Mukai type.   
\end{Cor}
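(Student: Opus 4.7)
The plan is to derive both statements from Proposition~\ref{prop:dbofheart}. By that result, there is an equivalence $\Ku(Y_3) \simeq \Db(\cA)$, where $\cA$ is the heart of $\sigma$, and $\cA$ is essentially small because $\Ku(Y_3)$ sits as an admissible subcategory of $\Db(Y_3)$ for the smooth projective variety $Y_3$. To show strong uniqueness of a dg enhancement, I would transport the question across this equivalence: it suffices to prove that $\Db(\cA)$ has a strongly unique dg enhancement. This is then a direct application of the general uniqueness theorems of Lunts--Orlov and Canonaco--Stellari, whose statements and various refinements are surveyed in \cite{CS:tour}; the hypotheses there are satisfied by any essentially small abelian category (no injectives, projectives, or Grothendieck structure are required for the strong uniqueness statement for $\Db(\cA)$).

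For the Fourier--Mukai statement, I would argue as follows. Let $\Phi \colon \Ku(Y_3) \to \Ku(Y_3')$ be an equivalence. The category $\Db(Y_3)$ carries a canonical dg enhancement (for example, by h-injective complexes), and by strong uniqueness this enhancement is unique up to quasi-equivalence; the same holds for $\Db(Y_3')$ and, via Proposition~\ref{prop:dbofheart} together with the previous step, for $\Ku(Y_3)$ and $\Ku(Y_3')$. The semiorthogonal inclusion $i \colon \Ku(Y_3) \hookrightarrow \Db(Y_3)$ and its left adjoint $i^*$ both lift to dg quasi-functors by uniqueness, and the same on the primed side. Consequently the composition $i' \circ \Phi \circ i^* \colon \Db(Y_3) \to \Db(Y_3')$ admits a dg lift, and Toën's representability theorem for dg functors between derived categories of smooth projective varieties then produces a kernel in $\Db(Y_3 \times Y_3')$ representing it, which shows $\Phi$ is of Fourier--Mukai type in the sense specified in the statement.

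The main delicate point is the first paragraph: one must confirm that the Canonaco--Stellari--Lunts--Orlov uniqueness result applies to $\Db(\cA)$ for our particular $\cA$. Since $\cA$ is the heart of a Bridgeland stability condition with discrete central charge and the support property, its slices $\cP_\sigma(\theta)$ are of finite length, and $\cA$ has Harder--Narasimhan filtrations. This is more than enough for $\cA$ to fall in the scope of the strong uniqueness theorems, whose general form only requires $\cA$ to be an (essentially small) abelian category; the finite-length structure provides in addition a natural bounded filtration that makes the comparison of any two enhancements straightforward. Once the first paragraph is settled, the second is a formal consequence of Toën's theorem applied to the dg lift of $i' \circ \Phi \circ i^*$.
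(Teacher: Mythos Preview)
Your approach is essentially the same as the paper's: invoke Proposition~\ref{prop:dbofheart} to get $\Ku(Y_3)\simeq\Db(\cA)$, then appeal to general enhancement and Fourier--Mukai results. The paper's proof is two lines, citing \cite[Theorem~3.12]{heart} and \cite[Corollary~3.15]{heart} directly rather than unpacking the argument.

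One caution: your assertion that the Lunts--Orlov/Canonaco--Stellari results in \cite{CS:tour} give \emph{strong} uniqueness of the dg enhancement of $\Db(\cA)$ for an arbitrary essentially small abelian category is stronger than what those references establish. Uniqueness (not strong) for $\Db(\cA)$ with $\cA$ arbitrary abelian is recent (Canonaco--Neeman--Stellari), but strong uniqueness --- that every triangulated equivalence lifts to the dg level --- generally requires more. The paper avoids this issue by citing \cite[Theorem~3.12]{heart}, which is formulated precisely for admissible subcategories $\cT\subset\Db(X)$ satisfying $\Db(\cA)\simeq\cT$ and uses that ambient geometric structure. Your second paragraph, lifting $\Phi$ to a dg quasi-functor, depends on exactly this strong uniqueness, so the gap propagates. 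The argument is salvageable, but you should either cite \cite[Theorem~3.12, Corollary~3.15]{heart} as the paper does, or verify carefully that the specific hypotheses for strong uniqueness in the literature are met by your $\cA$ rather than asserting it holds in full generality.
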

\begin{proof}
The first statement follows from Proposition \ref{prop:dbofheart} and \cite[Theorem 3.12]{heart}. The second statement follows from the first part and \cite[Corollary 3.15]{heart}.    
\end{proof}

\subsection{Stable birationality between moduli spaces}
In this section, our main goal is to show that when the dimension of $\ms(v)$ with primitive $v$ is sufficiently large, these moduli spaces are all stably birational to each other. We achieve this by proving more birational equivalences between different $\Pe vw$'s. 

We first show that for primitive $v_i=n_i\alpha+m_i\beta$, $i=1,2$ with $n_1+m_1=n_2+m_2$, the moduli spaces $\ms(v_1)$ and $\ms(v_2)$ are stably birational to each other. Here by ``stably birational'', we mean that $\ms(v_i)\times \P^{N_i}$ are birational to each other for some $N_i\in \Z_{\geq 0}$. Note that Proposition \ref{prop:stabbir1} is not enough for this purpose, as there are non-primitive characters in the middle. For a non-primitive character $mv_0$, the spaces $\Pe{E_\gamma[1]}{mv_0}$ and $\Pe{mv_0}{E_\gamma}$ are Severi--Brauer varieties over $\ms(mv_0)$, and it is a well-known difficult question to see whether they are birational to projective bundles over the base. So we cannot use them to relate primitive characters in the simplest way. 

To deal with this issue, we prove the following general statement to relate primitive characters directly. 
\begin{Prop}\label{prop:stabbir2}
Let $Y_3$ be a smooth cubic threefold and $\Ku(Y_3)$ be its Kuznetsov component. Then for every $n>0$, $m>s>0$, and general $E_{s\gamma}\in\ms(s\gamma)$, we have a birational equivalence 
\begin{align}
e^R_{E_{s\gamma}[-1],n\alpha+m\beta}\colon\Pe{E_{s\gamma}[-1]}{n\alpha+m\beta}\dashrightarrow \Pe{(n+s)\alpha+(m-s)\beta}{E_{s\gamma}}.
    \end{align}
\end{Prop}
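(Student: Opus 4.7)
The plan is to prove Proposition~\ref{prop:stabbir2} by induction on $s$, taking Proposition~\ref{prop:stabbir1} as the base case $s=1$. For the inductive step I would follow the four-step template used to establish Proposition~\ref{prop:stabbir1}, adapted to accommodate the non-primitive character $s\gamma$ and the need for a generic $E_{s\gamma}$.

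First, Assumption~\ref{asspdag} is straightforward for both pairs $(s\gamma[-1], n\alpha+m\beta)$ and $((n+s)\alpha+(m-s)\beta, s\gamma)$: using $\phi_\sigma(\alpha)=0$, $\phi_\sigma(\beta)=\tfrac{1}{3}$, $\phi_\sigma(\gamma)=\tfrac{2}{3}$, both phase gaps lie in $(\tfrac{1}{3}, \tfrac{2}{3}) \subset (0,1)$ when $n>0$ and $m>s>0$; the Euler pairings evaluate to $\chi(n\alpha+m\beta, s\gamma[-1]) = -ns<0$ and $\chi(s\gamma, (n+s)\alpha+(m-s)\beta) = -s(m-s)<0$. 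The non-jumping conditions follow, via Serre duality, from the fact that $\mathsf S^{\pm 1}(E_{s\gamma})[\bullet]$ lies generically in $\ms(s\alpha)$ or $\ms(s\beta)$, combined with Corollary~\ref{cor:msvirreducible}, upper semi-continuity, and the inductive hypothesis.

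For Step 2, verifying that $e_{s\gamma[-1], n\alpha+m\beta}$ is well-defined on a non-empty open subset of $\Pe{E_{s\gamma}[-1]}{n\alpha+m\beta}$, the lattice points $v^*$ in the open triangle $\triangle((n+s)\alpha+(m-s)\beta, n\alpha+m\beta)$ excluding the two vertices split into two families. The first consists of points with $\phi_\sigma(n\alpha+m\beta-v^*) - \phi_\sigma(v^*) \in (0, \tfrac{1}{3})$, handled directly by Lemma~\ref{lem:extspaceofsmallgapvw} and Proposition~\ref{prop:imofprimpliesext} as in Proposition~\ref{prop:stabbir1}. The new second family consists of the $s-1$ edge lattice points $v^* = (n+s-k)\alpha+(m-s+k)\beta$ for $1 \leq k \leq s-1$, with complement $(s-k)\gamma$; here I plan to invoke the inductive hypothesis of Proposition~\ref{prop:stabbir2} at level $s' = s - k < s$, which yields a birational equivalence $\Pe{E_{(s-k)\gamma}[-1]}{n\alpha+m\beta} \dashrightarrow \Pe{v^*}{E_{(s-k)\gamma}}$. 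This allows me to control the image $\ET(v^*, (s-k)\gamma) \subset \ms(n\alpha+m\beta)$ and show it does not dominate. Combined with Lemma~\ref{lem:hnfactorextobj}, this forces $E_f$ to be $\sigma$-stable for every nonzero $f \in \Hom(E_{n\alpha+m\beta}, E_{s\gamma})$ with $E_{n\alpha+m\beta}$ outside a proper closed subset.

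The main obstacle is Step 3: showing that for general $(E_{n\alpha+m\beta}, f)$ the extension $E_f$ avoids the Brill--Noether locus $\BN(E_{s\gamma}[-2], (n+s)\alpha+(m-s)\beta)$. Applying $\Hom(-, \mathsf S(E_{s\gamma})[-2])$ to the distinguished triangle $E_{s\gamma}[-1] \to E_f \to E_{n\alpha+m\beta} \to E_{s\gamma}$, together with Serre duality and the non-jumping of a general $E_{n\alpha+m\beta}$, yields $\hom(E_{s\gamma}[-2], E_f) \leq 1+s^2$. It then remains to show that for each $1 \leq i \leq 1+s^2$,
\[
\dim \Pe{\BN^{=i}(E_{s\gamma}[-2], (n+s)\alpha+(m-s)\beta)}{E_{s\gamma}} < \dim \Pe{E_{s\gamma}[-1]}{n\alpha+m\beta}.
\]
By Serre duality this reduces to a generalization of Lemma~\ref{lem:homb}.(1) bounding $\dim \BN((n+s)\alpha+(m-s)\beta, G)$ for general $G \in \ms(s\beta)$, which I plan to prove by induction on $s$ using the stratification $\ms(s\beta) = \ET(\alpha, (s-1)\beta+\gamma)$ from Proposition~\ref{prop:stratofmbeta}: a general $G$ fits in a triangle $E_\alpha \to G \to E_{(s-1)\beta+\gamma} \to E_\alpha[1]$, and any nonzero map from an $E$ of character $(n+s)\alpha+(m-s)\beta$ to $G$ projects to one of the two factors, reducing the codimension estimate to the base case handled by Lemma~\ref{lem:homb}. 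Once Step 3 is established, Proposition~\ref{prop:extbirpair} identifies $e^R_{E_{s\gamma}[-1], n\alpha+m\beta}$ as a birational equivalence with inverse $e^L_{(n+s)\alpha+(m-s)\beta, E_{s\gamma}}$, and irreducibility on both sides follows from Corollary~\ref{cor:msvirreducible}.
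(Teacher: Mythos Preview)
Your Step~2 contains a concrete error. You claim that the inductive hypothesis lets you show $\ET(v^*,(s-k)\gamma)$ does not dominate $\ms(n\alpha+m\beta)$, so that no $E_{v^*}$ maps into a general $E_{n\alpha+m\beta}$. But already for $s-k=1$ this is false: by Step~2 of the proof of Proposition~\ref{prop:stabbir1}, a general $E_{n\alpha+m\beta}$ admits a nonzero morphism to some $E_\gamma$ with $\sigma$-stable cone, hence lies in $\ET((n+1)\alpha+(m-1)\beta,\gamma)$. So there always exist $E_{v^*}\in\ms((n+1)\alpha+(m-1)\beta)$ with $\Hom(E_{v^*},E_{n\alpha+m\beta})\neq 0$, and you cannot conclude ``$E_f$ is stable for every nonzero $f$'' by this route. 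Your Step~3 induction is also not closed: decomposing a general $G\in\ms(s\beta)$ via Proposition~\ref{prop:stratofmbeta} produces a quotient of character $(s-1)\beta+\gamma$, not $s'\beta$, so the bound on $\BN((n+s)\alpha+(m-s)\beta,G)$ does not reduce to an instance of your inductive hypothesis nor to Lemma~\ref{lem:homb}.

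The paper sidesteps both obstacles by abandoning the dimension-count template and instead \emph{constructing one good point}, then invoking semicontinuity. Fix a general $E_{(s-1)\gamma}$ (to which the inductive hypothesis applies) and any $E_\gamma$. For general $E\in\ms((n+s-1)\alpha+(m-s+1)\beta)$ and general $g\in\Hom(E_{(s-1)\gamma}[-1],E)$, $h\in\Hom(E,E_\gamma)$, induction and Proposition~\ref{prop:stabbir1} guarantee that $E_g:=\Cone(g)\in\ms(n\alpha+m\beta)$ and $E_h:=\Cone(h)[-1]\in\ms((n+s)\alpha+(m-s)\beta)$ are stable and avoid the jumping loci for $E_\gamma$ and $E_{(s-1)\gamma}$ separately. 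The octahedral axiom then assembles these into a triangle $E_h\to E_g\to F_{s\gamma}\to E_h[1]$, where $F_{s\gamma}$ is a (merely semistable) extension of $E_\gamma$ by $E_{(s-1)\gamma}$. The required vanishings $\Hom(E_g,F_{s\gamma}[1])=0$ and $\Hom(F_{s\gamma},E_h[2])=0$ now follow trivially because they hold for each factor. Semicontinuity over the irreducible space $M_\sigma(s\gamma)$ (Lemma~\ref{lem:openextobjKuY}, Corollary~\ref{cor:msvirreducible}) transfers them to a general stable $E_{s\gamma}$, exhibiting a point where $e^R_{E_{s\gamma}[-1],n\alpha+m\beta}$ is well-defined; Proposition~\ref{prop:extbirpair} then gives the birational equivalence.
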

\begin{proof}
    We prove the statement by induction on $s$. When $s=1$, the statement follows from Proposition \ref{prop:stabbir1}. Assume that $s\geq 2$ and the statement holds for all smaller $s$.

   We first show that Assumption \ref{asspdag} holds for $(s\gamma[-1],n\alpha+m\beta)$ and $((n+1)\alpha+(m-1)\beta,s\gamma)$. It is clear that $\phi_\sigma(s\gamma)-\phi_\sigma(n\alpha+m\beta)\in(0,1)$, $\chi(n\alpha+m\beta,s\gamma)=sn>0$, and $\chi(s\gamma,(n+1)\alpha+(m-1)\beta)=s(1-m)<0$.

By Serre duality, we have that the jumping locus $\BN(n\alpha+m\beta,s\gamma[1])\cong\BN(s\alpha,n\alpha+m\beta)$. Note that $\mathsf S^{-1}(E_\gamma)[1]\in\ms(\alpha)$. By Theorem \ref{thm:stratcubic3}, a general object $F\in\ms(n\alpha+m\beta)$ is in $\ET(n\alpha,m\beta)$. Since $\BN(s\alpha,n\alpha)\neq \ms(s\alpha)\times\ms(n\alpha)$ and $\BN(s\alpha,m\beta)\neq \ms(s\alpha)\times\ms(m\beta)$ by Lemma \ref{lem:znamb}, we have $\BN(s\alpha,n\alpha+m\beta)\neq \ms(s\alpha)\times\ms(n\alpha+m\beta)$.  Similarly, by Serre duality, $\BN(s\gamma,((n+s)\alpha+(m-s)\beta)[2])\cong\BN(((n+s)\alpha+(m-s)\beta)[2],\beta[2])$. Again by Lemma \ref{lem:znamb} and Theorem \ref{thm:stratcubic3}, this is not the full space. \\

We then show that $e^R_{s\gamma[-1],n\alpha+m\beta}$ maps a general point $(E_{s\gamma}[-1],E_{n\alpha+m\beta},f)$ to $(E_f,E_{s\gamma})\in\ms((n+s)\alpha+(m-s)\beta,s\gamma)^\dag$.

By induction, we may fix a general $E_{(s-1)\gamma}\in\ms((s-1)\gamma)$ so that  $e^L_{{(n+s-1)\alpha+(m-s+1)\beta,E_{(s-1)\gamma}}}$ is a birational equivalence and in addition $\BN(E_{(s-1)\gamma}[-2],(n+s)\alpha+(n-s)\beta)\neq \ms((n+s)\alpha+(n-s)\beta)$. 

Let $E_\gamma\in\ms(\gamma)$, then by Proposition \ref{prop:stabbir1}, the space  $\BN(E_{\gamma}[-2],(n+s)\alpha+(n-s)\beta)\neq \ms((n+s)\alpha+(n-s)\beta)$, $\BN(n\alpha+m\beta,E_\gamma[1])\neq \ms(n\alpha+m\beta)$, and the map $e^R_{E_\gamma[-1],(n+s-1)\alpha+(m-s+1)\beta}$ is a birational equivalence.

So for every general $E\in\ms((n+s-1)\alpha+(n-s+1)\beta)$, $g\in\Hom(E_{(s-1)\gamma}[-1],E)$ and $h\in\Hom(E,E_\gamma)$, we have 
\begin{align}
 \label{eq:eg}   E_g:=\Cone(g) & \in\ms(n\alpha+m\beta)\setminus \BN(n\alpha+m\beta,E_{(s-1)\gamma}[1])\cup\BN(n\alpha+m\beta,E_{\gamma}[1])\\
 \label{eq:eh}   E_h:=\Cone(h)[-1] & \in \ms((n+s)\alpha+(n-s)\beta)\setminus \cup_{i=1,(s-1)}\BN(E_{i\gamma}[-2],(n+s)\alpha+(n-s)\beta).
\end{align}
   By the octahedral axiom, there is a commutative diagram of distinguished triangles:

    \begin{center}
	\begin{tikzcd}
     & E_h \arrow[r,equal]\arrow{d}
		&   E_h\arrow{d}\\
		E_{(s-1)\gamma}[-1] \arrow{r}{g} \arrow[d,equal]& E
		\arrow{d}{h} \arrow{r} & E_g \arrow[d] \arrow{r}{} & E_{(s-1)\gamma}\arrow[d,equal] \\
		E_{(s-1)\gamma}[-1] \arrow{r}  & E_\gamma\arrow{r}{}\arrow{d}{} & F_{s\gamma} \arrow{r}{} \arrow{d}{}& E_{(s-1)\gamma}\\
 & E_h[1] \arrow[r,equal]
		&   E_h[1].
	\end{tikzcd}
\end{center}
As $F_{s\gamma}$ is extended by $E_\gamma$ and $E_{(s-1)\gamma}$, it is $\sigma$-semistable.  In particular, we have $F_{s\gamma}\in M_\sigma(s\gamma)$. 

By \eqref{eq:eg}, we have  $\Hom(E_g,E_{(s-1)\gamma}[1])=\Hom(E_g,E_\gamma[1])=0$. Hence $\Hom(E_g,F_{s\gamma}[1])=0$. By the semicontinuity, 
 we have $\Hom(E_g,E_{s\gamma}[1])=0$ for general $E_{s\gamma}\in\ms(s\gamma)$. 

Note that $E_h\cong \Cone(E_g\to F_{s\gamma})$ is $\sigma$-stable. By Lemmas \ref{lem:openextobjKuY} and \ref{lem:openextobjstable}, for each general $f\in\Hom(E_g,E_{s\gamma})$, the object $\Cone(f)[-1]\in \ms((n+s)\alpha+(n-s)\beta)$. 

By \eqref{eq:eh}, we have $\Hom(E_\gamma,E_h[2])=\Hom(E_{(s-1)\gamma},E_h[2])=0$. Hence $\Hom(F_{s\gamma},E_h[2])=0$. By the upper semicontinuity of $\hom(-,E_h[2])$ on $M_\sigma(s\gamma)$ and the general assumption on $E_{s\gamma}$ and $f$, we have $\Hom(E_{s\gamma},E_h[2])=0$.

In summary, the map $e^R_{s\gamma[-1],n\alpha+m\beta}$ maps the above point $(E_{s\gamma}[-1],E_g,f)$ to a point in the locus $\ms((n+s)\alpha+(m-s)\beta,s\gamma)^\dag$. By Proposition \ref{prop:extbirpair} and Corollary \ref{cor:msvirreducible}, the map $e^R_{s\gamma[-1],n\alpha+m\beta}$ is a birational equivalence.
\end{proof}

Next, we will prove the stable birationality between characters $v=n\alpha+m\beta$ with different $n+m$. The strategy is to replace the character $\gamma$ in Proposition \ref{prop:stabbir1} by $\beta+\gamma$. This will allow one to relate the characters with $n+m$ differed by one. The strategy of the proof is the same as that for Proposition \ref{prop:stabbir1}, but requires a more delicate estimate on the dimension of several Brill--Noether loci. Similar to Lemma \ref{lem:homb}, we need the following lemma for preparation.

\begin{Lem}\label{lem:homb2}
    Let $m,n$ be positive integers. Let $E_{\alpha-\gamma}\in\ms(\alpha-\gamma)$ and $E_{\alpha+\beta}\in\ms(\alpha+\beta)$ be general elements. Then
   \begin{enumerate}[(1)]
       \item when $n>m$, we have $\dim \BN(n\alpha+m\beta,E_{\alpha+\beta}) \leq \dim\ms(n\alpha+m\beta)-4;$\\
         $\dim\BN^2(n\alpha+m\beta,E_{\alpha+\beta})\leq \dim\ms(n\alpha+m\beta)-5$.
         \item $\dim\BN^i(n\alpha+(n-s)\beta,E_{\alpha+\beta},)$ and $\dim\BN^i(E_{\alpha+\beta},(n-s)\alpha+n\beta)\leq \dim\ms((n-s)\alpha+n\beta)-i$ 
         for $s=1,2$, $n\geq s$, and $i\geq 1$.
        \item when $n>m$, we have $\dim \BN(E_{\alpha-\gamma}, n\alpha+m\beta) \leq \dim\ms(n\alpha+m\beta)-1.$
      \end{enumerate}
        
\end{Lem}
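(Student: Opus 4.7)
The approach for all three parts follows the pattern already established in Lemma~\ref{lem:homb}: for a general object in the relevant moduli space, use Theorem~\ref{thm:stratcubic3} to express it as an extension of two ``simpler'' stable objects, apply an appropriate $\Hom$-functor to the resulting distinguished triangle, and combine stability-based vanishing of certain terms with the dimension bounds already proved. In each part, the main additional ingredient compared to Lemma~\ref{lem:homb} is that $E_{\alpha+\beta}$ itself is not a single stable object of a ``primitive'' type but an extension of $E_\alpha$ and $E_\beta$, so one must control a Yoneda/connecting map in addition to the naive Brill--Noether strata.

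For part (1), by Theorem~\ref{thm:stratcubic3} a general $E_{\alpha+\beta}$ fits in a distinguished triangle
\[
E_\alpha \to E_{\alpha+\beta} \to E_\beta \xrightarrow{\,e\,} E_\alpha[1],
\]
with $E_\alpha \in \ms(\alpha)$, $E_\beta \in \ms(\beta)$ in general position. Applying $\Hom(E_v,-)$ with $v=n\alpha+m\beta$ and $n>m$, the inequality $\phi_\sigma(v) > \phi_\sigma(\alpha)$ together with stability forces $\Hom(E_v, E_\alpha)=0$. This yields an inclusion $\Hom(E_v, E_{\alpha+\beta}) \hookrightarrow \Hom(E_v, E_\beta)$ whose image is the kernel of the Yoneda map $\delta_v = e\cdot(-)\colon \Hom(E_v, E_\beta)\to \Ext^1(E_v, E_\alpha)$. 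Thus $\BN^r(v,E_{\alpha+\beta})$ equals the locus of $E_v\in\BN^{\geq r}(v,E_\beta)$ with $\dim\ker\delta_v\geq r$. For $m\geq 3$, the bound $\dim\BN(v,E_\beta)\leq \dim\ms(v)-m-1$ from Lemma~\ref{lem:homb}.(1) already yields both estimates in (1) directly, and the analogous bound on $\BN^{\geq 2}(v,E_\beta)$ handles the second inequality. For $m\in\{1,2\}$, the Lemma~\ref{lem:homb} bound alone is too weak; the extra codimension comes from the generic vanishing of $\delta_v$. Since $\dim\Ext^1(E_\beta,E_\alpha)=1$ and $e$ is a general nonzero class, an incidence argument on $(E_v,f,e)$ combined with the stratification of $\BN^{=r}(v,E_\beta)$ and the refined bound Lemma~\ref{lem:homb}.(3) supplies the missing codimension.

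For part (2), the characters $n\alpha+(n-s)\beta$ and $(n-s)\alpha+n\beta$ lie symmetrically about the locus of slope $m/n=1$, and the two inequalities are exchanged by Serre duality in view of Proposition~\ref{prop:serreinv}. Applying either $\Hom(E_{\alpha+\beta},-)$ or $\Hom(-,E_{\alpha+\beta})$ to the extension triangle above and using upper semicontinuity on the stratification by $\hom$, the bound $\dim\BN^i\leq \dim\ms-i$ follows because each additional dimension of $\Hom$ is cut out by a codimension-one condition corresponding to an extra coincidence of the Yoneda map $\delta_v$, and Lemma~\ref{lem:homb}.(1)--(2) guarantees the base stratum already has sufficient codimension.

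For part (3), a direct computation in $\Kn(\Ku(Y_3))$ shows $\mathsf{S}(\alpha+\beta)=2\alpha-\beta=\alpha-\gamma$, so by Proposition~\ref{prop:serreinv} we have $E_{\alpha-\gamma}\cong \mathsf{S}E'_{\alpha+\beta}[-2]$ for some $E'_{\alpha+\beta}\in\ms(\alpha+\beta)$. Serre duality then translates the statement into a bound on $\hom(E_{n\alpha+m\beta},\mathsf{S}E'_{\alpha+\beta})$, which can be controlled by applying $\Hom(-,\mathsf{S}E'_{\alpha+\beta})$ to the extension triangle $E_{n\alpha}\to E_{n\alpha+m\beta}\to E_{m\beta}$ for a general $E_{n\alpha+m\beta}$ (Theorem~\ref{thm:stratcubic3}). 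Since $\chi(\alpha-\gamma, n\alpha+m\beta) = m-n<0$, it suffices to exhibit one object where $\Hom$ vanishes, which is immediate from Lemma~\ref{lem:homb} applied to the pieces $E_{n\alpha}$ and $E_{m\beta}$. The main obstacle throughout is part (1) for $m\in\{1,2\}$, where the bookkeeping of the incidence argument intertwining the strata of $\BN(v,E_\beta)$ with the genericity of the extension class $e$ is the true technical heart of the proof.
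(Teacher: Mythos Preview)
Your strategy is genuinely different from the paper's, and while the overall shape is plausible, several key steps do not go through as stated.

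\textbf{Comparison of approaches.} You decompose the fixed object $E_{\alpha+\beta}$ via its extension triangle $E_\alpha\to E_{\alpha+\beta}\to E_\beta$ and then apply $\Hom(E_v,-)$, reducing to the Brill--Noether bounds of Lemma~\ref{lem:homb} against $E_\beta$ together with a Yoneda-map analysis. The paper instead applies Proposition~\ref{prop:imofprimpliesext} directly to $\BN(n\alpha+m\beta,E_{\alpha+\beta})$, obtaining a decomposition into extension loci $\ET((n-1)\alpha+(m-1)\beta,E_{\alpha+\beta})\cup\bigcup_{v'}\ET(n\alpha+m\beta-v',v')$, and then runs an induction on $n-m$ (for part (1)) and on $n-s$ (for part (2)). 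This is the same machinery as in Lemma~\ref{lem:homb}, but applied on the opposite side of the $\Hom$. The advantage of the paper's route is that the codimension bookkeeping via Lemma~\ref{lem:extspaceofsmallgapvw} is uniform and avoids analyzing the connecting map $\delta_v$ in families.

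\textbf{Gaps.} First, in part~(2) your claim that the two inequalities are ``exchanged by Serre duality'' is not correct: Serre duality gives $\hom(E_{\alpha+\beta},E_w)=\hom(E_w,\mathsf{S}E_{\alpha+\beta})=\hom(E_w,F[2])$ with $F\in\ms(\alpha-\gamma)$, which is not the first Brill--Noether locus in part~(2). The paper proves the two inequalities by parallel inductions, not by duality. Second, in part~(3) your final step fails: a direct computation gives $\chi(\alpha-\gamma,m\beta)=m>0$, and since the relevant $\Ext^2$ vanishes, one has $\Hom(E_{\alpha-\gamma},E_{m\beta})\neq 0$ for every $E_{m\beta}$. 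So the triangle $E_{n\alpha}\to E_v\to E_{m\beta}$ cannot yield vanishing of $\Hom(E_{\alpha-\gamma},E_v)$ ``immediately from Lemma~\ref{lem:homb}''. The paper handles part~(3) by applying Proposition~\ref{prop:imofprimpliesext} to $\BN(E_{\alpha-\gamma},n\alpha+m\beta)$ and bounding the resulting $\ET^i(E_{\alpha-\gamma},(n-2)\alpha+(m+1)\beta)$ terms using part~(2) via Serre duality on the jumping locus. Third, even for part~(1) your outline leaves the cases $m\in\{1,2,3\}$ to an unspecified incidence argument on the genericity of the extension class $e$; making this precise (gaining two extra codimensions from a one-dimensional extension space) is nontrivial, and the paper's induction on $n-m$ avoids it entirely.
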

\begin{proof}
\emph{(1)} By Proposition \ref{prop:imofprimpliesext}, the space $\BN(n\alpha+m\beta,E_{\alpha+\beta})$ is contained in
\begin{align}\label{eq440}
    \ET((n-1)\alpha+(m-1)\beta,E_{\alpha+\beta})\cup(\bigcup_{
        v\in\triangle'_{n,m}
    }\ET(n\alpha+m\beta-v,v)),
\end{align}
    where $\triangle'_{m,n}:=\triangle(n\alpha+m\beta,\alpha+\beta)\setminus  \{\R(n\alpha+m\beta),\alpha+\beta\}$.  By Lemma \ref{lem:extspaceofsmallgapvw}, the codimension of the first term is at least $4-\chi(\alpha,\alpha+\beta)=5$.

    When $n-m=1$, the set $\triangle'_{m,n}$ is empty, so the statement holds for this case. We induct on $n-m$ and assume that the statement holds for all smaller $n-m$.

      For $v\in \triangle'_{m,n}$ in \eqref{eq440}, each object $E$ in $\ET(n\alpha+m\beta-v,v))$ fits in a distinguished triangle $E_{n\alpha+m\beta-v}\to E\to E_{v}\xrightarrow{+} $. It is clear that $v=a\alpha+b\beta$ for some $a-b<n-m$. By induction, $\BN(v,E_{\alpha+\beta})$ (resp. $\BN^2$) has codimension at least $4$ (resp. $5$). When $n\alpha+m\beta-v\not\in \Z\alpha$, by induction, $\BN(n\alpha+m\beta-v,E_{\alpha+\beta})$ has codimension at least $4$ as well. By Lemma \ref{lem:extspaceofsmallgapvw}, the space $\ET(n\alpha+m\beta-v,v)\cap\BN(n\alpha+m\beta,E_{\alpha+\beta})$ has codimension at least $5$ for all these terms.

      The remaining terms are those with $v$'s satisfying $n\alpha+m\beta-v=t\alpha$ for some $t\geq 1$. This can only happen when $m=1$ and $v=(n-t)\alpha+\beta$ with $n-t\geq 2$. 
      
      By Theorem \ref{thm:stratcubic3}, the object $E_{\alpha+\beta}$ always fits in the distinguished triangle $E_\alpha\to E_{\alpha+\beta}\to E_\beta\xrightarrow{+}$ for some $E_\alpha\in \ms(\alpha)$ and $E_\beta\in\ms(\beta)$. Together with Lemma \ref{lem:znamb}, it follows that the Brill--Noether locus $\BN(t\alpha,E_{\alpha+\beta})$ has codimension at least $1$. 
      
      When $t\geq 2$, we have $-\chi(v,n\alpha+\beta-v)=-\chi((n-t)\alpha+\beta,t\alpha)=t(n-t+1)\geq 4$. By Lemma \ref{lem:extspaceofsmallgapvw},
      \begin{align*}
           & \codim(\ET((n-t)\alpha+\beta,t\alpha)\cap\BN(n\alpha+\beta,E_{\alpha+\beta})) \\
           \geq & \min\{\codim(\BN((n-t)\alpha+\beta,E_{\alpha+\beta})),\codim(\BN(t\alpha,E_{\alpha+\beta})\}-\chi((n-t)\alpha+\beta,t\alpha)\geq 5.
      \end{align*}
      The statement holds in this case.
      
      When $t=1$, as $E_{\alpha+\beta}$ is general, we may assume that the preimage $e_{\alpha,\beta}^{-1}(E_{\alpha+\beta})$ is a finite set and not in $\BN(\alpha,\beta)$. It follows that $\BN(\alpha,E_{\alpha+\beta})$ (resp. $\BN^2$) has codimension $2$ (resp. empty). As $-\chi((n-1)\alpha+\beta,\alpha)\geq 2$, by Lemma \ref{lem:extspaceofsmallgapvw} and the same computation as above, the locus  $\ET((n-1)\alpha+\beta,\alpha)\cap\BN$ (resp. $\BN^2$) has codimension at least $4$ (resp. $5$).\\

      \emph{(2)} We only  prove the statement for $\BN^i(n\alpha+(n-s)\alpha,E_{\alpha+\beta})$, and the other part can be proved in exactly the same way. We prove the statement by induction on $n-s$. 

      When $n=s=1$,  as $E_{\alpha+\beta}$ is general, we may assume that the preimage $e_{\alpha,\beta}^{-1}(E_{\alpha+\beta})$ is a finite set and not in $\BN(\alpha,\beta)$. It follows that $\BN^2(\alpha,E_{\alpha+\beta})=\emptyset$.
      
      When $n=s=2$, by Proposition \ref{prop:imofprimpliesext}, the Brill--Noether locus $\BN(2\alpha,E_{\alpha+\beta})=\ET(\gamma[-1],E_{\alpha+\beta})$. For $i\geq 2$, note that $\chi(\alpha+\beta,\gamma)=1$, we have 
      \begin{align*}
          \BN^i(E_{\alpha+\beta},\gamma)=\BN^{i-1}(E_{\alpha+\beta}[-1],\gamma)=\BN^{i-1}(\gamma,\mathsf S(E_{\alpha+\beta})[-1])\cong\BN^{i-1}(\alpha,E_{\alpha+\beta}).
      \end{align*} By the result for $n=s=1$ case, the Brill--Noether locus $\BN^2(E_{\alpha+\beta},\gamma)$ is of dimension $0$ and $\BN^3(E_{\alpha+\beta},\gamma)=\emptyset$. It follows that
      \begin{align*}
          \dim \ET^t(\gamma[-1],E_{\alpha+\beta})\leq \dim\BN^{t+1}(E_{\alpha+\beta},\gamma)+t\leq 2
      \end{align*}
      for every $t\geq 0$. So $\dim\BN(2\alpha,E_{\alpha+\beta})=\dim\ET(\gamma[-1],E_{\alpha+\beta})\leq 2$. The statement holds for $i\leq 3$. By Proposition \ref{prop:stratofmbeta}, it is clear that $\BN^4(2\alpha,E_{\alpha+\beta})=\emptyset$. So the statement holds in all cases when $n-s=0$.\\

     Now assume that statement holds for smaller $n-s$. When $s=1$, the formula \eqref{eq440} only has the first term. As the Brill--Noether locus $\BN(E_{\alpha+\beta},((n-1)\alpha+(n-s-1)\beta)[2])=\emptyset$, by Lemma \ref{lem:extspaceofsmallgapvw} and induction, we have
      \begin{align*}
          & \codim \BN^i(n\alpha+(n-s)\beta,E_{\alpha+\beta})\\
          \geq & \codim \ET(\BN^{i-1}((n-1)\alpha+(n-s-1)\beta,E_{\alpha+\beta}),E_{\alpha+\beta}) \\
          \geq & \codim(\BN^{i-1}((n-1)\alpha+(n-s-1)\beta,E_{\alpha+\beta}))-\chi((n-1)\alpha+(n-s-1)\beta,\alpha+\beta)\\ \geq &\ i-1+(n-1)+2(n-s-1)\geq i,
      \end{align*}
for every $i\geq 1$.

When $s=2$, the formula \eqref{eq440} has at most two terms. The first term follows the same argument as that in the $s=1$ case. The second term is $\ET(l\alpha+(l-1)\beta,(l+1)\alpha+l\beta)$ for some $1\leq l<n-s$. By Lemma \ref{lem:extspaceofsmallgapvw} and induction, we have
\begin{align*}
    & \codim (\BN^i(n\alpha+(n-s)\beta,E_{\alpha+\beta})\cap \ET(l\alpha+(l-1)\beta,(l+1)\alpha+l\beta)) \\
    \geq &\ i-\chi(l\alpha+(l-1)\beta,(l+1)\alpha+l\beta)\geq i-1+3l^2\geq i+2
\end{align*}
for every $i\geq 1$.

By induction, the statement holds in all cases.\\

      \emph{(3)} By Proposition \ref{prop:imofprimpliesext}, the space $\BN(E_{\alpha-\gamma},n\alpha+m\beta)$ is contained in
\begin{align}\label{eq441}
    \ET(E_{\alpha-\gamma},(n-2)\alpha+(m+1)\beta)\cup(\bigcup_{
        v\in\triangle^*_{n,m}
    }\ET(v,n\alpha+m\beta-v)),
\end{align}
    where $\triangle^*_{m,n}:=\triangle(\alpha-\gamma,n\alpha+m\beta)\setminus  \{\R(n\alpha+m\beta),\alpha-\gamma\}$. Every $v\in\triangle^*_{m,n}$ is in the $(\alpha,\beta)$-sextant. So $\phi_\sigma(n\alpha+m\beta-v)-\phi_\sigma(v)<\tfrac{1}{3}.$  By Lemma \ref{lem:extspaceofsmallgapvw}, the codimension of the rest terms is at least one.
    
    For the first term, when $n-2>m+1$, $\phi_\sigma((n-2)\alpha+(m+1)\beta)-\phi_\sigma(\alpha-\gamma)<\tfrac{1}{3}.$ When $n-2=m+1$, the Brill--Noether locus $\BN((n-2)\alpha+(m+1)\beta,E_{\alpha-\gamma}[2])$ is either empty or only consists of the point $\mathsf S^{-1}(E_{\alpha-\gamma})[2]$. By Lemma \ref{lem:extspaceofsmallgapvw}, the codimension of the first term is at least $4$.

    The only remaining case is when $n-2<m+1$,  the Brill--Noether locus $\BN^i((n-2)\alpha+(m+1)\beta,E_{\alpha-\gamma}[2])=\BN^i(\mathsf S^{-1}(E_{\alpha-\gamma})[2],(n-2)\alpha+(m+1)\beta)$. By \emph{(2)},
    \begin{align*}
        &\dim\ET^i(E_{\alpha-\gamma},(n-2)\alpha+(m+1)\beta)\\
        \leq & \dim((n-2)\alpha+(m+1)\beta)-i-\chi((n-2)\alpha+(m+1)\beta,\alpha-\gamma)+i\\
        = &\ n^2+mn+m^2+1-n+m=\dim\ms(n\alpha+m\beta)-n+m.
    \end{align*}
  The statement follows by the assumption that $n>m$.
\end{proof}

\begin{Prop}\label{prop:stabbir3}
    Let $Y_3$ be a smooth cubic threefold and $\Ku(Y_3)$ be its Kuznetsov component. Then for every $n\alpha+m\beta\in\Kn(\Ku(Y_3))$ with $n>m\geq 3$ and  every general $E_{\beta+\gamma}\in\ms(\beta+\gamma)$, we have a birational equivalence \begin{align}
e^R_{E_{\beta+\gamma}[-1],n\alpha+m\beta}:\Pe{E_{\beta+\gamma}[-1]}{n\alpha+m\beta}\dashrightarrow \Pe{(n+1)\alpha+(m-2)\beta}{E_{\beta+\gamma}}.
    \end{align}
\end{Prop}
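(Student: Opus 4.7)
The plan is to mirror the four-step strategy used in the proof of Proposition \ref{prop:stabbir1}, now with Lemma \ref{lem:homb2} playing the role that Lemma \ref{lem:homb} played there. Following Remark \ref{rem:projectivization} and Proposition \ref{prop:extbirpair}, it is enough to check three things: Assumption \ref{asspdag} holds at both the source and the target; a generic extension $E_f=\Cone(E_{n\alpha+m\beta}\to E_{\beta+\gamma})[-1]$ is $\sigma$-stable; and the image $(E_f,E_{\beta+\gamma}[1])$ lies generically in the non-jumping locus $\ms((n+1)\alpha+(m-2)\beta,\beta+\gamma)^\dag$.

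For the first step, the phase inequalities are immediate from the hexagonal coordinates, and a direct calculation using the Euler pairings in Notation \ref{not:abc} gives $\chi(n\alpha+m\beta,\beta+\gamma)=n-m>0$ and $\chi(\beta+\gamma,(n+1)\alpha+(m-2)\beta)=3-n-2m<0$ under the hypothesis $n>m\geq 3$. The non-triviality of the two relevant Brill--Noether loci is established by translating them via Serre duality into Brill--Noether loci at the characters $\alpha-\gamma$ and $\alpha+\beta$ respectively: explicitly, for general $E_{\beta+\gamma}$ we may take $E_{\alpha-\gamma}:=\mathsf S^{-1}(E_{\beta+\gamma})[1]$ and $E_{\alpha+\beta}:=\mathsf S(E_{\beta+\gamma})[-2]$, both general, and then apply parts (3) and (1) of Lemma \ref{lem:homb2}. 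For the second step, the argument of Step 2 in the proof of Proposition \ref{prop:stabbir1} carries over with minor modifications: any non-vertex lattice point $v$ of the triangle $\triangle((n+1)\alpha+(m-2)\beta,n\alpha+m\beta)$ is of the form $a\alpha+b\beta$ with $a\leq n+1$, $b\leq m$, so $n\alpha+m\beta-v$ lies in the $(\alpha,\beta)$-sextant and the phase difference is in $(0,\tfrac13)$. Combining Lemma \ref{lem:extspaceofsmallgapvw} with Proposition \ref{prop:imofprimpliesext}, a general $E_{n\alpha+m\beta}$ then receives no morphism from any $E_v\in\ms(v)$ with such a character, and applying $\Hom(F,-)$ to the defining triangle of $E_f$ rules out destabilizing Jordan--H\"older factors.

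The main obstacle is the third step. Applying $\Hom(-,\mathsf S E_{\beta+\gamma})=\Hom(-,E_{\alpha+\beta}[2])$ to the defining triangle of $E_f$ and using Serre duality gives the a priori bound
\[
\hom(E_{\beta+\gamma}[-2],E_f)\leq \hom(E_{\beta+\gamma},E_{\alpha+\beta}[1])=4,
\]
valid provided we exclude $E_{n\alpha+m\beta}$ from the locus $\BN(n\alpha+m\beta,E_{\alpha+\beta})$, which is allowed since this locus is already small by Step~1. The remaining task is to bound the dimension of each stratum $\Pe{\BN^{=i}(E_{\beta+\gamma}[-2],(n+1)\alpha+(m-2)\beta)}{E_{\beta+\gamma}}$ strictly below $\dim \Pe{E_{\beta+\gamma}[-1]}{n\alpha+m\beta}=n^2+mn+m^2+n-m$ for every $1\leq i\leq 4$, or equivalently to show $\codim \BN^{=i}\geq i+1$ inside $\ms((n+1)\alpha+(m-2)\beta)$. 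Using Serre duality once more this becomes a codimension statement for $\BN^{=i}((n+1)\alpha+(m-2)\beta,E_{\alpha+\beta})$: the cases $i=1,2$ are handled directly by Lemma \ref{lem:homb2}.(1), while the cases $i=3,4$ require refining the inductive estimates of Lemma \ref{lem:homb2}.(2) beyond its $s=1,2$ hypothesis by further stratifying $\BN^{=i}$ according to the $\ET$-decomposition of $E_{(n+1)\alpha+(m-2)\beta}$ coming from Theorem \ref{thm:stratcubic3}. The delicate regime is $m=3$, where all these codimension bounds are simultaneously tightest. Once this is carried out, the symmetric argument establishes that $e^L_{(n+1)\alpha+(m-2)\beta,E_{\beta+\gamma}}$ is likewise generically defined, and Proposition \ref{prop:extbirpair} yields the desired birational equivalence.
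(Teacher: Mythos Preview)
Your overall strategy matches the paper's proof exactly: verify Assumption \ref{asspdag} at source and target via Serre duality and Lemma \ref{lem:homb2}, show generic stability of $E_f$ using the triangle decomposition and Lemma \ref{lem:extspaceofsmallgapvw}, and then control the jumping locus at the target. However, you over-complicate the third step, and in doing so misidentify what remains to be proved.

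You correctly derive that the needed bound is $\codim \BN^{=i}((n+1)\alpha+(m-2)\beta,E_{\alpha+\beta})\geq i+1$ for $1\leq i\leq 4$. But then you claim that Lemma \ref{lem:homb2}.(1) only handles $i=1,2$ and that $i=3,4$ ``require refining the inductive estimates of Lemma \ref{lem:homb2}.(2) beyond its $s=1,2$ hypothesis''. This is not so. Lemma \ref{lem:homb2}.(1) gives $\codim\BN\geq 4$ and $\codim\BN^2\geq 5$; since $\BN^{=i}\subseteq\BN^2$ for every $i\geq 2$, the single bound $\codim\BN^2\geq 5$ already yields $\codim\BN^{=i}\geq 5\geq i+1$ for all $i=2,3,4$. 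No further stratification, no appeal to part (2), and no separate treatment of the ``delicate regime $m=3$'' is required. The paper's proof dispatches all four cases in one line by exactly this observation.

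A minor imprecision in your Step 2: the lattice points of $\triangle'_{n,m}$ actually satisfy $a\leq n$ and $b<m$ (not merely $a\leq n+1$, $b\leq m$); the sharper bound is what guarantees $n\alpha+m\beta-v$ stays in the $(\alpha,\beta)$-sextant. Finally, your closing ``symmetric argument'' for $e^L$ is unnecessary: once $e^R$ is well-defined at one point, Proposition \ref{prop:extbirpair} together with the irreducibility of the moduli spaces (Corollary \ref{cor:msvirreducible}) already gives the birational equivalence, which is how the paper concludes.
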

\begin{proof}
The proof follows the same strategy as that for Proposition \ref{prop:stabbir1}. 

First, we show that Assumption \ref{asspdag} holds for $((\beta+\gamma)[-1],n\alpha+m\beta)$ and $((n+1)\alpha+(m-2)\beta,\beta+\gamma)$. It is clear that $\phi_\sigma(n\alpha+m\beta)-\phi_\sigma(\beta+\gamma)\in(0,1)$, $\chi(n\alpha+m\beta,\beta+\gamma)=n-m>0$, and $\chi(\beta+\gamma,(n+1)\alpha+(m-2)\beta)=-n-2m+3<0$.

By Serre duality, the Brill--Noether locus $\BN(n\alpha+m\beta,E_{\beta+\gamma}[1])=\BN(\mathsf S^{-1}(E_{\beta+\gamma})[1],n\alpha+m\beta)$. Note that $\mathsf S^{-1}(E_{\beta+\gamma})[1]\in \ms(\alpha-\gamma)$, by Lemma \ref{lem:homb2}.(3), the space $\BN(n\alpha+m\beta,E_{\beta+\gamma}[1])\neq \ms(n\alpha+m\beta)$ for general $E_{\beta+\gamma}$.

We have $\BN(E_{\beta+\gamma},((n+1)\alpha+(m-2)\beta)[2])=\BN(((n+1)\alpha+(m-2)\beta)[2],\mathsf S(E_{\beta+\gamma}))$. Note that $\mathsf S(E_{\beta+\gamma})[-2]\in \ms(\alpha+\beta)$, by Lemma \ref{lem:homb2}.(1), the space $\BN(E_{\beta+\gamma},((n+1)\alpha+(m-2)\beta)[2])\neq \ms((n+1)\alpha+(m-2)\beta)[2]$.\\

    Secondly, we show that the map $e_{E_{\beta+\gamma}[-1],n\alpha+m\beta}$ is well-defined on any general points in $\Pe{E_{\beta+\gamma}[-1]}{n\alpha+m\beta}$. 
    
    Note that every lattice point \[v\in \triangle'_{n,m}:=\triangle((n+1)\alpha+(m-2)\beta,n\alpha+m\beta)\setminus\{(n+1)\alpha+(m-2)\beta,n\alpha+m\beta\},\]
    can be written of the form $a\alpha+b\beta$ for some $a\leq n$ and $b<m$. 
    
    By Proposition \ref{prop:imofprimpliesext} and Lemma \ref{lem:extspaceofsmallgapvw}, we have\
    \[\dim (\bigcup_{v\in\triangle'_{n,m}}\BNP_{n\alpha+m\beta}(v,n\alpha+m\beta))<\dim \ms(n\alpha+m\beta).\]
    So for every general point $E_{n\alpha+m\beta}\in\ms(n\alpha+m\beta)$, the vanishing $\Hom(E_v,E_{n\alpha+m\beta})=0$ holds for every $E_v\in\ms(v)$ with $v\in \triangle'_{n,m}$.

    For every $0\neq f\in\Hom(E_{n\alpha+m\beta},E_\gamma)$, by Lemma \ref{lem:hnfactorextobj}, the object $E_f:=\Cone(f)[-1]$ is $\sigma$-stable. \\
    
    Finally, we show that for every general $f$, the object $E_f$ is not in the Brill--Noether locus $\BN(E_{\beta+\gamma}[-2],(n+1)\alpha+(m-2)\beta)$.  For this purpose, we further assume that $E_{n\alpha+m\beta}\not\in \BN(n\alpha+m\beta,\mathsf{S}(E_{\beta+\gamma})[-2])$, which has a strictly smaller dimension than that of $\ms(n\alpha+m\beta)$. 

    Apply $\Hom(-,\mathsf{S}(E_{\beta+\gamma}))$ to the distinguished triangle $E_{\beta+\gamma}[-1]\to E_f\to E_{n\alpha+m\beta}\xrightarrow{+}$, we get the long exact sequence:
    \begin{align*}
         0&\to \Hom(E_f,\mathsf{S}(E_{\beta+\gamma})[-2])\to \Hom(E_{\beta+\gamma},\mathsf{S}(E_{\beta+\gamma})[-1])\\ &\to\Hom(E_{n\alpha+m\beta},\mathsf{S}(E_{\beta+\gamma})[-1])\to...
    \end{align*}
It follows that $\hom(E_{\beta+\gamma},E_f[2])=\hom(E_f,\mathsf S(E_{\beta+\gamma})[-2])\leq  \hom(E_{\beta+\gamma},\mathsf{S}(E_{\beta+\gamma})[-1])=4$. So  $E_f\not\in \BN^5(E_{\beta+\gamma},((n+1)\alpha+(m-2)\beta)[2])$.

By Lemma \ref{lem:homb2}.(1), for all $i=1,2,3,4$, the dimension of $\Pe{\BN^{=i}(E_{\beta+\gamma}[-2],(n+1)\alpha+(m-2)\beta)}{E_{\beta+\gamma}}$ is strictly less than that of $\P{E_{\beta+\gamma}[-1]}{n\alpha+m\beta}$. So the map $e^R_{E_{\beta+\gamma}[-1],n\alpha+m\beta}$ is well-defined generically.

As every $\ms(v)$ is irreducible by Corollary \ref{cor:msvirreducible}, by Proposition \ref{prop:extbirpair}, the map $e^R_{E_{\beta+\gamma}[-1],n\alpha+m\beta}$ induces a birational equivalence between $\Pe{E_{\beta+\gamma}[-1]}{n\alpha+m\beta}$ and $\Pe{(n+1)\alpha+(m-2)\beta}{E_{\beta+\gamma}}$.
\end{proof}
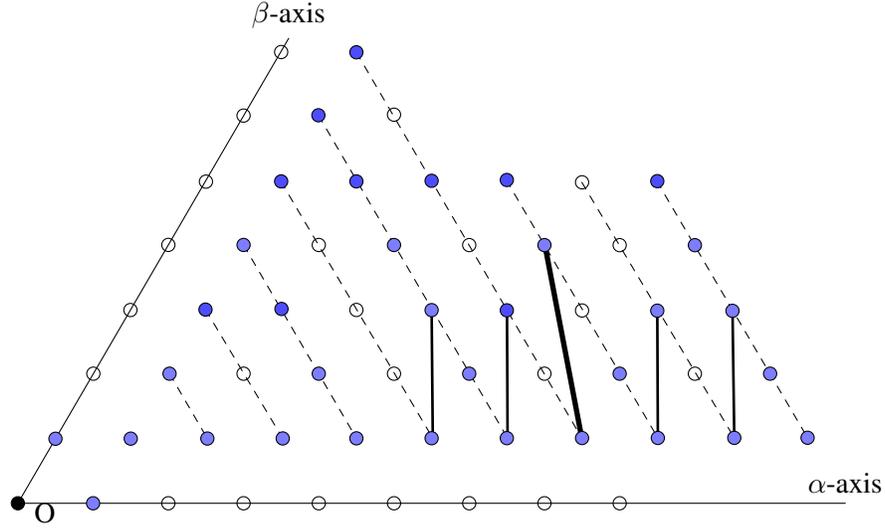
\begin{figure}
    \centering
\begin{tikzpicture}[line cap=round,line join=round,>=triangle 45,x=1cm,y=1cm]
\clip(-2,-0.4) rectangle (16,7);
\draw (0.08455094931971,0.14229617794195135) node[anchor=north west] {O};
\draw [domain=0:3.6] plot(\x,{(-0--1.7240222753698782*\x)/1.0032964027047553}) node[above]{$\beta$-axis};

\draw (0,0)--(11,0) node[above]{$\alpha$-axis};

\draw [dashed] (4.5,6)-- (7.496348019888999,0.8676972535331512);
\draw [dashed] (3.994923746417918,5.162108366315537)-- (6.509893925177644,0.8663962326463687);
\draw [dashed] (3.4956010788016636,4.28234938051547)-- (5.51557967100966,0.8650848451022006);
\draw [dashed] (2.999964267983041,3.4370165371417243)-- (4.500562937784039,0.8637461533400193);
\draw [dashed] (2.4939200776525094,2.5769902671455376)-- (3.5207945856832126,0.8624539501882197);
\draw [dashed] (2.015893980295695,1.7240222753698784)-- (2.5187550474035376,0.8611323738719238);
\draw [line width=1pt] (5.500169217346692,2.5696907391265946)-- (5.51557967100966,0.8650848451022006);
\draw [line width=1pt] (6.502007381668468,2.5672581577232925)-- (6.509893925177644,0.8663962326463687);
\draw [dashed] (6.4920467031418445,4.301255110780406)-- (8.507688759723262,0.8690310970776531);
\draw [dashed] (7.496927890294832,4.270335689637236)-- (9.5237275065176,0.870371136759628);
\draw [line width=1pt] (8.507688759723262,0.8690310970776531)-- (8.500151530178112,2.562406427703532);
\draw [line width=1pt] (9.5237275065176,0.870371136759628)-- (9.50014563445525,2.5599783238958445);
\draw [dashed] (8.494079222162027,4.285795400208821)-- (10.497988300094532,0.8716560760674096);
\draw [line width=2pt] (7.00623139622215,3.434743686484229)-- (7.496348019888999,0.8676972535331512);
\begin{scriptsize}
\draw [fill=] (0,0) circle (2.5pt);
\draw [] (2,0) circle (2.5pt);
\draw [] (3,0) circle (2.5pt);
\draw [] (4,0) circle (2.5pt);
\draw [] (5,0) circle (2.5pt);
\draw [] (1.0032964027047553,1.7240222753698782) circle (2.5pt);
\draw [fill=xdxdff] (0.4995870375608891,0.8584693206504792) circle (2.5pt);
\draw [fill=xdxdff] (1.5001639746556859,0.8597889679619681) circle (2.5pt);
\draw [fill=xdxdff] (2.5187550474035376,0.8611323738719238) circle (2.5pt);
\draw [fill=xdxdff] (3.5207945856832126,0.8624539501882197) circle (2.5pt);
\draw [] (1.4962055971819346,2.5710166717639726) circle (2.5pt);
\draw [fill=ududff] (2.4939200776525094,2.5769902671455376) circle (2.5pt);
\draw [fill=ududff] (3.5042316795615425,2.5852715097841363) circle (2.5pt);
\draw [] (2,3.4375835541527273) circle (2.5pt);
\draw [] (3,1.7240222753698788) circle (2.5pt);
\draw [fill=xdxdff] (4,1.724022275369879) circle (2.5pt);
\draw [fill=xdxdff] (4.500,0.8637461533400193) circle (2.5pt);
\draw [fill=xdxdff] (2.015893980295695,1.7240222753698784) circle (2.5pt);
\draw [fill=xdxdff] (1,0) circle (2.5pt);
\draw [fill=xdxdff] (5.5,0.8650848451022006) circle (2.5pt);
\draw [] (6,0) circle (2.5pt);
\draw [] (7,0) circle (2.5pt);
\draw [] (8,0) circle (2.5pt);
\draw [fill=xdxdff] (7.5,0.8676972535331512) circle (2.5pt);
\draw [fill=xdxdff] (6.5,0.8663962326463687) circle (2.5pt);
\draw [fill=xdxdff] (6.,1.7240222753698797) circle (2.5pt);
\draw [] (5,1.7240222753698793) circle (2.5pt);
\draw  (7,1.72402227536988) circle (2.5pt);
\draw [fill=ududff] (6.5,2.5672581577232925) circle (2.5pt);
\draw [fill=xdxdff] (5.5,2.5696907391265946) circle (2.5pt);
\draw  (4.5,2.5720867625756156) circle (2.5pt);
\draw  (6,3.435311794320988) circle (2.5pt);
\draw [fill=xdxdff] (5,3.4358803676891667) circle (2.5pt);
\draw  (4,3.436439992185827) circle (2.5pt);
\draw [fill=xdxdff] (3,3.4370165371417243) circle (2.5pt);
\draw  (2.5,4.279248389327641) circle (2.5pt);
\draw  (3.,5.157928200132907) circle (2.5pt);
\draw  (3.5,6.003605730619815) circle (2.5pt);
\draw [fill=ududff] (4.5,6) circle (2.5pt);
\draw [fill=ududff] (3.994923746417918,5.162108366315537) circle (2.5pt);
\draw (5,5.170034122944366) circle (2.5pt);
\draw [fill=ududff] (3.5,4.28234938051547) circle (2.5pt);
\draw [fill=ududff] (4.5,4.28234938051547) circle (2.5pt);
\draw [fill=ududff] (5.5,4.290275137144299) circle (2.5pt);
\draw [fill=xdxdff] (7,3.434743686484229) circle (2.5pt);
\draw  (8,3.4341762363987316) circle (2.5pt);
\draw [fill=xdxdff] (9,3.433608770480418) circle (2.5pt);
\draw [fill=ududff] (6.5,4.301255110780406) circle (2.5pt);
\draw  (7.5,4.270335689637236) circle (2.5pt);
\draw [fill=ududff] (8.5,4.285795400208821) circle (2.5pt);
\draw  (7.5,2.5648236333514425) circle (2.5pt);
\draw [fill=xdxdff] (8.5,2.562406427703532) circle (2.5pt);
\draw [fill=xdxdff] (9.5,2.5599783238958445) circle (2.5pt);
\draw [fill=xdxdff] (8,1.7240222753698802) circle (2.5pt);
\draw  (9,1.7240222753698806) circle (2.5pt);
\draw [fill=xdxdff] (10,1.7240222753698808) circle (2.5pt);
\draw [fill=xdxdff] (8.507688759723262,0.8690310970776531) circle (2.5pt);

\draw [fill=xdxdff] (9.5237275065176,0.870371136759628) circle (2.5pt);

\draw [fill=xdxdff] (10.497988300094532,0.8716560760674096) circle (2.5pt);
\end{scriptsize}
\end{tikzpicture}
    \caption{Stable birationality between $\ms(v)$'s: Blue dots stand for primitive characters. Proposition \ref{prop:stabbir2} proves the stable birationality between primitive characters on the dashed line segments. Proposition \ref{prop:stabbir3} is for the usual line segments. Proposition \ref{prop:stabbir4} is for the thickened line segment.}
    \label{fig:bir}
\end{figure}
The following proposition is to connect two more primitive characters that are not covered by Proposition \ref{prop:stabbir2} and \ref{prop:stabbir3}. It will be useful in the proof of Theorem \ref{thm:connectedfiber}.

\begin{Prop}\label{prop:stabbir4}
    Let $Y_3$ be a smooth cubic threefold and $\Ku(Y_3)$ be its Kuznetsov component. Then for every general $E\in\ms(\beta+2\gamma)$, we have a birational equivalence \begin{align}
e^R_{E[-1],5\alpha+4\beta}\colon\Pe{E[-1]}{5\alpha+4\beta}\dashrightarrow \Pe{7\alpha+\beta}{E}.
    \end{align}
\end{Prop}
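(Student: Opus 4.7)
The plan is to run the four-step schema of Propositions \ref{prop:stabbir1}, \ref{prop:stabbir2}, and \ref{prop:stabbir3} for the specific characters $v := (\beta+2\gamma)[-1]$ and $w := 5\alpha+4\beta$, for which $v+w = 7\alpha+\beta$ and $v[1] = \beta+2\gamma$. Once Assumption \ref{asspdag} is verified for both $(v,w)$ and $(v+w,v[1])$, the extension $E_f$ is shown to be $\sigma$-stable for a generic $(E_w,f) \in \Pe{E[-1]}{w}$, and the image $(E_f,E)$ is shown to lie in the non-jumping locus $\ms(7\alpha+\beta,\beta+2\gamma)^\dag$, Proposition \ref{prop:extbirpair} together with the irreducibility of the moduli spaces (Corollary \ref{cor:msvirreducible}) yields the claimed birational equivalence.

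For \emph{Step 1} the phase gaps are clearly in $(0,1)$, and the Euler-form table gives $\chi(w,v)=-6<0$ and $\chi(v[1],v+w)=-10<0$. Using Proposition \ref{prop:serreinv} together with the identities $\mathsf S\beta=\alpha$ and $\mathsf S\gamma=\beta$ in $\kn$, so that $\mathsf S E\in\ms(\alpha+2\beta)$ after an appropriate shift, the two Brill--Noether conditions of Assumption \ref{asspdag} translate to loci of the form $\BN(5\alpha+4\beta,E_{\alpha+2\beta})$ and $\BN(7\alpha+\beta,E_{\alpha+2\beta})$, both proper by Lemma \ref{lem:homb2}.(1) (the coefficient of $\alpha$ strictly exceeds the coefficient of $\beta$ in each case). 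For \emph{Step 2}, any potential HN factor $F$ of $E_f$ has character in $\triangle(v+w,w)\cup\triangle(v,v+w)$; the lattice points of $\triangle(v,v+w)$ are only $\{0,v,v+w\}$, and the character-$v$ case is ruled out by applying $\Hom(-,E'[-1])$ to the defining triangle, which forces $E'\cong E$ and then forces the quotient $E_f\to E[-1]$ to factor through the vanishing $\Hom(E_w,E[-1])=0$. Characters in $\triangle(v+w,w)\setminus\{v+w,w\}$ all lie in the $(\alpha,\beta)$-sextant with phase gap to $w$ less than $\tfrac13$, so Lemma \ref{lem:extspaceofsmallgapvw} shows that for generic $E_w$ no such $F$ admits a nonzero map into $E_w$. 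Primitivity of $v+w=7\alpha+\beta$ then upgrades semistability to stability.

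For \emph{Step 3}, apply $\Hom(-,\mathsf S E[-2])$ to the triangle $E[-1]\to E_f\to E_w\xrightarrow{+}$ and use Serre duality, giving $\hom(E,E_f[2])=\hom(E_f,\mathsf S E[-2])$, which fits in a long exact sequence with terms $\hom(E_w,\mathsf S E[-2])$, a fixed contribution $\hom(E,\mathsf S E[-1])=\hom(E,E[1])^*=8$, and $\hom(E_w,\mathsf S E[-1])=\hom(E,E_w[1])^*=17$. For generic $E_w$, Lemma \ref{lem:homb2}.(1) applied to the character $5\alpha+4\beta$ (with $n=5>m=4$) gives $\hom(E_w,\mathsf S E[-2])=0$, together with codimension estimates $\codim\BN^{\geq 1}\geq 4$ and $\codim\BN^{\geq 2}\geq 5$. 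A parallel codimension bound on $\BN^{=i}(E[-2],7\alpha+\beta)\cong\BN^{=i}(7\alpha+\beta,\mathsf S E[-2])$, obtained by iterating Proposition \ref{prop:imofprimpliesext} and Lemma \ref{lem:extspaceofsmallgapvw} for the higher strata, shows that $\dim\Pe{\BN^{=i}(E[-2],7\alpha+\beta)}{E}<\dim\Pe{E[-1]}{w}=67$ for every $i\geq 1$. Hence the image of a generic point of $\Pe{E[-1]}{w}$ avoids the Brill--Noether locus, so $e^R_{E[-1],w}$ is well-defined on a dense open subset, and Proposition \ref{prop:extbirpair} completes the argument.

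The technical heart is \emph{Step 3}. Because $\beta+2\gamma$ lives in the $(\beta,\gamma)$-sextant rather than the $(\alpha,\beta)$-sextant, each dimension estimate has to be routed through Serre duality before Lemmas \ref{lem:homb} and \ref{lem:homb2} become applicable; moreover the tight value $m=1$ on the target character $7\alpha+\beta$ makes the codimension bounds nearly saturate and prevents a direct appeal to Lemma \ref{lem:homb2}.(2), so the treatment of $\BN^{=i}$ for $i\geq 3$ requires a bespoke inductive estimate using the inclusion $\BN^{=i}(E[-2],7\alpha+\beta)\subset\bigcup_{v'}\ET(v',7\alpha+\beta-v')\cap\BN^{=i-1}$ coming from Proposition \ref{prop:imofprimpliesext}.
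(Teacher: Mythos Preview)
Your outline follows the dimension-estimate schema of Propositions \ref{prop:stabbir1} and \ref{prop:stabbir3}, but this runs into a concrete obstruction that the paper's own proof is designed to avoid. The issue is that Lemma \ref{lem:homb2} only provides bounds for Brill--Noether loci of the form $\BN(n\alpha+m\beta,E_{\alpha+\beta})$ and $\BN(E_{\alpha-\gamma},n\alpha+m\beta)$; there is no statement there (or anywhere in the paper) for $E_{\alpha+2\beta}$. So your invocation of Lemma \ref{lem:homb2}.(1) in Step~1 for $\BN(7\alpha+\beta,E_{\alpha+2\beta})$ is void, and likewise the crucial Step~3 estimate $\dim\BN^{=i}(7\alpha+\beta,\mathsf S E[-2])$ has no supporting lemma. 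Your own text acknowledges that the $m=1$ case ``prevents a direct appeal to Lemma \ref{lem:homb2}.(2)'' and that a ``bespoke inductive estimate'' is needed, but you do not supply one, and the bound $\hom(E,E_f[2])\leq 8$ coming from the long exact sequence would force you to control $\BN^{=i}$ for $i$ up to $8$, far beyond what the existing lemmas reach. There is also a slip in Step~1: the first Brill--Noether condition $\BN(5\alpha+4\beta,(\beta+2\gamma)[1])$ does not translate via Serre duality to $\BN(5\alpha+4\beta,E_{\alpha+2\beta})$ but rather to $\BN(2\alpha-\gamma,5\alpha+4\beta)$ (one must apply $\mathsf S^{-1}$ to $E$, not $\mathsf S$), and $2\alpha-\gamma$ is outside the $(\alpha,\beta)$-sextant, so Lemma \ref{lem:homb2}.(1) does not apply there either.

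The paper sidesteps all of this by following the constructive strategy of Proposition \ref{prop:stabbir2} rather than the dimension-estimate strategy. Instead of bounding $\BN^{=i}$, it leverages the already-established birational equivalences $e^R_{E_\gamma[-1],6\alpha+3\beta}$ (Proposition \ref{prop:stabbir1}) and $e^R_{E_{\beta+\gamma}[-1],6\alpha+3\beta}$ (Proposition \ref{prop:stabbir3}) to build, via the octahedral axiom, an explicit $\sigma$-stable $F_{\beta+2\gamma}\in\ET(\beta+\gamma,\gamma)$ together with an explicit point of $\Pe{F_{\beta+2\gamma}[-1]}{5\alpha+4\beta}$ whose image lies in the non-jumping locus. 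This yields a single point where $e^R$ is well-defined, after which Proposition \ref{prop:extbirpair} and irreducibility (Corollary \ref{cor:msvirreducible}) finish the argument. The trade-off is clear: your route would be more uniform if the Brill--Noether bounds were available, but the paper's route avoids proving a new ad hoc analogue of Lemma \ref{lem:homb2} for the character $\alpha+2\beta$.
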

\begin{proof}
The proof follows the same strategy as that for Proposition \ref{prop:stabbir2}.
   We first show that Assumption \ref{asspdag} holds with respect to $((\beta+2\gamma)[-1],5\alpha+4\beta)$ and $(7\alpha+\beta,\beta+2\gamma)$. It is clear that $\phi_\sigma(\beta+2\gamma)-\phi_\sigma(5\alpha+4\beta)\in(0,1)$; $\chi(5\alpha+4\beta,\beta+2\gamma)=6>0$; and $\chi(\beta+2\gamma,7\alpha+\beta)=-10<0$.

By Serre duality, the Brill--Noether locus $\BN(5\alpha+4\beta,(\beta+2\gamma)[1])\cong\BN(2\alpha-\gamma,5\alpha+4\beta)$. Let $F$ be a general object in $\ET(\alpha-\gamma,\alpha)$, then by Lemma \ref{lem:homb}.(2) and Lemma \ref{lem:homb2}.(3),  the space  $\BN(F,5\alpha+4\beta)\neq \ms(5\alpha+4\beta)$.  Similarly, by Serre duality, $\BN(\beta+2\gamma,(7\alpha+\beta)[2])\cong\BN((7\alpha+\beta)[2],(\alpha+2\beta)[2])$. Note that $\phi_\sigma(\alpha+2\beta)-\phi_\sigma(6\alpha-\beta)<\tfrac{1}{3}$, by Proposition \ref{lem:extspaceofsmallgapvw}, this is not the full space. \\

Let $E_\gamma\in\ms(\gamma)$, then by Proposition \ref{prop:stabbir1}, the  map
    $e^L_{E_{\gamma}[-1],{5\alpha+4\beta}}$ is a birational equivalence and in addition $\BN(E_{\gamma}[-2],7\alpha+\beta)\neq \ms(7\alpha+\beta)$.

     By Proposition \ref{prop:stabbir3}, we may fix a general $E_{\beta+\gamma}\in\ms(\beta+\gamma)$ such that the spaces
     \[\BN(E_{\beta+\gamma}[-2], 7\alpha+\beta)\neq \ms(7\alpha+\beta) \;\text{and}\;\BN(5\alpha+4\beta,E_\gamma[1])\neq \ms(5\alpha+4\beta),\]
     and the map $e^R_{E_{\beta+\gamma}[-1],6\alpha+3\beta}$ is a birational equivalence. So for general $E_{6,3}\in\ms(6\alpha+3\beta)$ and $h\in\Hom(E_{6,3},E_{\beta+\gamma})$, we have
     \[E_{7,1}:=\Cone(h)[-1]\in\ms(7\alpha+\beta)\setminus (\BN(E_\gamma[-2],7\alpha+\beta)\cup \BN(E_{\beta+\gamma}[-2],7\alpha+\beta)).\]

As $\Hom(E_\gamma[-2],E_{7,1})=0$, we have $\hom(E_\gamma[-1],E_{7,1})=\chi(-\gamma,7\alpha+\beta)=1$.

As $\hom(E_{\gamma}[-1],E_{6,3})=3$, for a general $g\in\Hom(E_{\gamma}[-1],E_{6,3})$  we have 
\begin{align*}    E_{5,4}:=\Cone(g)\in\ms(5\alpha+4\beta)\setminus \BN(5\alpha+4\beta,E_{\gamma}[1])\cup\BN(5\alpha+4\beta,E_{\beta+\gamma}[1])
\end{align*}
and $h\circ g\neq 0$.

   By the octahedral axiom,  there is a commutative diagram of distinguished triangles:

    \begin{center}
	\begin{tikzcd}
     & E_{7,1} \arrow[r,equal]\arrow{d}
		&   E_{7,1}\arrow{d}\\
		E_{\gamma}[-1] \arrow{r}{g} \arrow[d,equal]& E_{6,3}
		\arrow{d}{h} \arrow{r} & E_{5,4} \arrow[d] \arrow{r}{} & E_{\gamma}\arrow[d,equal] \\
		E_{\gamma}[-1] \arrow{r}  & E_{\beta+\gamma}\arrow{r}{}\arrow{d}{} & F_{\beta+2\gamma} \arrow{r}{} \arrow{d}{}& E_{\gamma}\\
 & E_{7,1}[1] \arrow[r,equal]
		&   E_{7,1}[1].
	\end{tikzcd}
\end{center}

As $h\circ g\neq 0$, by Lemma \ref{lem:extendstabobj}, the object $F_{\beta+2\gamma}\cong\Cone(h\circ g)$ is $\sigma$-stable.

As $E_{5,4}\not\in \BN(5\alpha+4\beta,E_{\gamma}[1])\cup\BN(5\alpha+4\beta,E_{\beta+\gamma}[1]) $, we have $E_{5,4}\not\in\BN(5\alpha+4\beta,F_{\beta+2\gamma}[1])$. 

As $E_{7,1}\not\in\BN(E_{\gamma}[-2],7\alpha+\beta)\cup\BN(E_{\beta+\gamma}[-2],7\alpha+\beta)$, we have $E_{7,1}\not\in\BN(F_{\beta+2\gamma}[-2],7\alpha+\beta)$.

In other words, the point $(F_{\beta+2\gamma}[-1],E_{5,4})\in\ms((\beta+2\gamma)[-1],5\alpha+4\beta)^\dag$; and the point $(E_{7,1},F_{\beta+2\gamma})\in\ms(7\alpha+\beta,\beta+2\gamma)^\dag$. By Proposition \ref{prop:extbirpair}, the map $e^R_{F_{\beta+2\gamma}[-1],5\alpha+4\beta}$ is a birational equivalence.
\end{proof}

The following lemma is a precise statement for the equivalences of primitive lattice points in Figure \ref{fig:bir}.
\begin{Lem}[See Figure \ref{fig:bir}]\label{lem:tree}
    Let $S=\{(a,b)\in\Z_{\geq 1}\times \Z_{\geq 1}\;|\;a+b\geq 6,a>b,\gcd(a,b)=1\}$. We say two points $(a,b)$ and $(c,d)$ in $S$ are equivalent to each other if $a+b=c+d$, or $(c,d)=(a+1,b-2)$, or $(a,b,c,d)=(5,4,7,1)$. Let this span an equivalence relation,  then all points in $S$ are equivalent.
\end{Lem}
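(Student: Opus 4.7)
The plan is to reduce the lemma to a level-by-level connectivity statement. Write $[n]=\{(a,b)\in S:a+b=n\}$ for the ``level $n$'' fibre. Rule 1 immediately collapses each $[n]$ to a single class, so it suffices to prove that the levels $\{6,7,8,\dots\}$ all lie in one equivalence class, where the only available level-moving relation is rule 2 (which shifts by one level) and the special identification rule 3.

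First, I would settle the small levels $[6],[7],[8],[9]$ by hand. The pair $(4,3)\sim(5,1)$ via rule 2 connects $[7]$ with $[6]$, and $(5,3)\sim(6,1)$ connects $[8]$ with $[7]$. Rule 2 cannot directly link $[9]$ to $[8]$: for each primitive $(a,b)\in[9]$ with $b\geq 3$, namely $(5,4)$, the shifted pair $(6,2)$ fails to be primitive. This is precisely where rule 3 is used: $(5,4)\sim(7,1)$ bridges $[9]$ and $[8]$.

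For the main step, I would show $[n+1]\sim[n]$ for every $n\geq 9$ by producing a rule-2 edge through the following elementary number-theoretic construction. Enumerate the odd primes $3=p_1<p_2=5<p_3=7<\cdots$, and let $p_k$ be the smallest odd prime that does \emph{not} divide $n+1$ (finite prime factorisation of $n+1$ guarantees existence). Consider
\[
(n+1-p_k,\,p_k)\in[n+1]\quad\text{and}\quad(n+2-p_k,\,p_k-2)\in[n],
\]
related by rule 2. Primitivity of the first pair is $\gcd(n+1-p_k,p_k)=\gcd(n+1,p_k)=1$ by choice of $k$. Primitivity of the second pair is $\gcd(n+2-p_k,p_k-2)=\gcd(n,p_k-2)=1$: every odd prime factor of the odd integer $p_k-2$ is strictly less than $p_k$, hence lies among $p_1,\dots,p_{k-1}$, hence divides $n+1$ by minimality of $k$, hence is coprime to $n$.

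The main obstacle is the size constraint $p_k<(n+1)/2$ needed to ensure $a>b$ in both pairs. For $k=1$ this only asks $n\geq 6$, and for $k=2$ the hypothesis $3\mid n+1$ together with $n\geq 9$ forces $n+1\geq 12>10=2p_2$. For $k\geq 3$ one has $n+1\geq p_1p_2\cdots p_{k-1}$, and the comparison $p_1\cdots p_{k-1}>2p_k$ is straightforward: already $p_1p_2=15>14=2p_3$, and for $k\geq 4$ one has $p_1\cdots p_{k-1}\geq 15\,p_{k-1}>4p_{k-1}>2p_k$ by Bertrand's postulate. This exhausts all cases, so $[n+1]\sim[n]$ for every $n\geq 9$, completing the proof.
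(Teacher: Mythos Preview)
Your proof is correct and follows essentially the same approach as the paper's: both reduce to connecting consecutive levels $[n]$ and $[n+1]$ via the pair $(n+1-p,\,p)$ and $(n+2-p,\,p-2)$, where $p$ is the smallest odd prime not dividing $n+1$. Your treatment of the size constraint $p<(n+1)/2$ is more detailed (invoking Bertrand's postulate for $k\geq 4$), whereas the paper simply asserts this bound after excluding $m=8$; otherwise the arguments coincide.
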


\begin{proof}
    It is enough to show that for every $m\geq 6$ and $m\neq 8$, there exists a point $(a,b)\in S$ with $a+b=m$ and $(a-1,b+2)\in S$. 
    
    Let $p$ be the smallest odd prime number that does not divide $m+1$. By assumption, $m+1\geq 7$ and $\neq 9$, so we have $3\leq p<\tfrac{m+1}{2}$. It follows that $\gcd(m+1-p,p)=1$. By definition, $(m+1-p,p)\in S$. As $\gcd(m+1,m)=1$, $\gcd(m,q)=1$ for every odd prime $q<p$. It follows that $\gcd(m-p+2,p-2)=1$. Therefore, $(m-p+2,p-2)\in S$. The statement holds.
\end{proof}

\begin{Cor}\label{cor:stabbir}
    Let $Y_3$ be a smooth cubic threefold and $\Ku(Y_3)$ be its Kuznetsov component. Let  $v,w\in\Kn(\Ku(Y_3))$ be two primitive characters. If both $\chi(v,v),\chi(w,w)< -22$, or both $\chi(v,v),\chi(w,w)\in[-21,-16]$, or $\chi(v,v)=\chi(w,w)=-13$ (or resp. $\chi(v,v)=\chi(w,w)=-7$), then the spaces $\ms(v)$ and $\ms(w)$ are stably birational (resp. birational) to each other.
\end{Cor}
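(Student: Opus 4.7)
The plan is to combine the birational equivalences among the $\Pe vw$ spaces established in Propositions~\ref{prop:stabbir1}, \ref{prop:stabbir2}, \ref{prop:stabbir3}, and \ref{prop:stabbir4} with the combinatorial connectivity of Lemma~\ref{lem:tree}. The crucial observation is that when $v$ is primitive, $\ms(v)$ is a fine moduli space by Corollary~\ref{cor:cub3smoothmoduli}, so for any fixed stable object $F$ the space $\Pe Fv$ (resp.\ $\Pe vF$) is an honest projective bundle over $\ms(v)$ and is therefore stably birational to it. Consequently, any birational equivalence between two $\Pe$ spaces whose underlying moduli bases are both primitive translates into a stable birational equivalence of those moduli.

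First I would reduce to the $(\alpha,\beta)$-sextant. By Remark~\ref{rem:Sswapabc} and Proposition~\ref{prop:serreinv}, the Serre functor permutes the six sextants and induces isomorphisms between the corresponding moduli spaces, so one may assume $v = n\alpha + m\beta$ and $w = n'\alpha + m'\beta$ with $n,m,n',m' \geq 0$ and $\gcd(n,m) = \gcd(n',m') = 1$. Since $\chi(n\alpha+m\beta, n\alpha+m\beta) = -(n^2+nm+m^2)$, the four numerical conditions in the statement correspond respectively to $n+m \geq 6$, $n+m = 5$, $n+m = 4$, and $n+m = 3$.

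Next I would dispatch the small-$(n+m)$ cases directly. For $n+m = 3$, Proposition~\ref{prop:stabbir2} with $s = 1$ gives a birational equivalence $\Pe{E_\gamma[-1]}{\alpha+2\beta} \dashrightarrow \Pe{2\alpha+\beta}{E_\gamma}$, and a direct Euler-pairing computation shows both $\Ext^1$ groups have rank one, so both $\Pe$ spaces coincide with the underlying moduli, giving the honest birationality $\ms(\alpha+2\beta) \dashrightarrow \ms(2\alpha+\beta)$. For $n+m = 4$, Proposition~\ref{prop:stabbir2} with $s = 2$ connects $(1,3)$ and $(3,1)$ via a birational equivalence of $\P^1$-bundles over the primitive moduli $\ms(\alpha+3\beta)$ and $\ms(3\alpha+\beta)$, yielding stable birationality. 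For $n+m = 5$, iterated use of Proposition~\ref{prop:stabbir2} with $s = 1$ links the four primitive characters $(1,4), (2,3), (3,2), (4,1)$ via a chain of primitive intermediate points.

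For the main case $n + m \geq 6$, the primitive characters in the sextant lie in $S \cup \{(b,a) : (a,b) \in S\}$ with $S$ as in Lemma~\ref{lem:tree}; within a single line $a+b = \mathrm{const}$, all primitive points are connected via iterated Proposition~\ref{prop:stabbir2} with $s = 1$ (which are birational equivalences, hence bidirectional), so $(a,b)$ and $(b,a)$ are already linked. The three equivalence moves defining Lemma~\ref{lem:tree} are realised respectively by Proposition~\ref{prop:stabbir2} (constant-sum moves), Proposition~\ref{prop:stabbir3} (the diagonal move $(a,b) \to (a+1, b-2)$, whose hypothesis $a > b \geq 3$ is exactly what the proposition requires), and Proposition~\ref{prop:stabbir4} (the exceptional $(5,4) \leftrightarrow (7,1)$ case). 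Lemma~\ref{lem:tree} then produces a finite chain of stable birational equivalences connecting any two such primitive characters. The main subtlety I anticipate is ensuring that every step of the chain stays between primitive characters, so that each intermediate $\Pe$ space is an honest projective bundle rather than a possibly nontrivial Severi--Brauer bundle; this is precisely what the choice of relations in Lemma~\ref{lem:tree}, combined with the careful primitivity hypotheses of Propositions~\ref{prop:stabbir2}--\ref{prop:stabbir4}, is designed to secure, and is also why the statement is restricted to the specific numerical ranges listed.
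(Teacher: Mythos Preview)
Your approach is essentially the same as the paper's, and the reduction to the $(\alpha,\beta)$-sextant, the treatment of the small-sum cases, and the appeal to Lemma~\ref{lem:tree} together with Propositions~\ref{prop:stabbir2}--\ref{prop:stabbir4} are all correct. The one slip is the claim that for $n+m\geq 6$ one may link $(a,b)$ and $(b,a)$ by ``iterated Proposition~\ref{prop:stabbir2} with $s=1$.'' Each application of Proposition~\ref{prop:stabbir1} gives a birational equivalence $\Pe{E_\gamma[-1]}{n\alpha+m\beta}\dashrightarrow\Pe{(n+1)\alpha+(m-1)\beta}{E_\gamma}$, but to chain two consecutive steps you must pass from $\Pe{(n+1)\alpha+(m-1)\beta}{E_\gamma}$ to $\Pe{E_\gamma[-1]}{(n+1)\alpha+(m-1)\beta}$; when $(n+1,m-1)$ is not primitive these are Severi--Brauer varieties over $\ms((n+1)\alpha+(m-1)\beta)$ with a priori inverse Brauer classes, and you have not argued that they are stably birational. (On the line $n+m=6$, for instance, you would pass through the non-primitive points $(2,4),(3,3),(4,2)$.) The fix is exactly what you write two sentences later and what the paper does: invoke Proposition~\ref{prop:stabbir2} once with a single value of $s$ large enough to jump directly between primitive points on the line, so that both endpoints are primitive and both $\Pe$ spaces are honest projective bundles.
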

\begin{proof}
By Remark \ref{rem:Sswapabc}, we may assume that both $v$ and $w$ are  in the $(\alpha,\beta)$-sextant, that is, of the form $n\alpha+m\beta$ with $m,n>0$ and $\gcd(m,n)=1$. Note that $\chi(n\alpha+m\beta,n\alpha+m\beta)=-n^2-mn-m^2<-22$ (resp. $\in[-21,-16]$, $=-13$, $=-8$) if and only if $m+n\geq 6$ (resp. $=5$, $=4$, $=3$). 
    
    By Lemma \ref{lem:tree},  there exists a finite sequence of primitive characters $v_1,\dots,v_t$, with $v_i=n_i\alpha+m_i\beta$ in the $(\alpha,\beta)$-sextant satisfying $v_1=v$, $v_t=w$, and for every $i$, either $n_i+m_i=n_{i+1}+m_{i+1}$, or $(n_{i+1},m_{i+1})=(n_i+1,m_i-2)$, or $(n_i,m_i,n_{i+1},m_{i+1})=(5,4,7,1)$. In either case, by Proposition \ref{prop:stabbir2}, \ref{prop:stabbir3}, and \ref{prop:stabbir4} respectively, there exists a $\sigma$-stable object $E$ such that we have the birational map
\[e^R_{E[-1],v_{i+1}}\colon\Pe{E[-1]}{v_{i+1}}\dashrightarrow \Pe{v_{i}}E\]
    or $e^R_{E[-1],v_{i}}$ when $n_i>n_{i+1}$. For each primitive $v_i$, the space $\ms(v_i)\times Y_3$ admits a universal object. It follows that $\Pe{v_i}E$ (or $\Pe{E[-1]}{v_i}$) is the projectivization of a locally free sheaf over $\ms(v_i)$, in particular, birational to $\ms(v_i)\times \mathbf P^r$ for some $r\geq 0$.  So  $\ms(v_1=v)$ is stably birational to $\ms(v_t=w)$.\\
    
    In the case that $\chi(v,v)=\chi(w,w)=-8$, we may assume that $v=\alpha+2\beta$ and $w=2\alpha+\beta$. Let $E\in\ms(\gamma)$, by Proposition \ref{prop:stabbir1}, we have the birational map $e^R_{E[-1],v}:\Pe{E[-1]}{\alpha+2\beta}\dashrightarrow\Pe{2\alpha+\beta}E$. Note that $\Pe{E[-1]}{\alpha+2\beta}$ is birational to $\ms(\alpha+2\beta)$ and $\Pe{2\alpha+\beta}E$ is birational to $\ms(2\alpha+\beta)$, the statement follows.
\end{proof}

\section{\AJ map and properties of the fibers}\label{sec7}
This section studies the properties of the fibers of the \AJ map from the moduli space to the intermediate Jacobian. In Section \ref{sec:irr_fiber}, we show that the \AJ map has irreducible general fiber. Then we show that these fibers are Fano varieties with primitive canonical class. Our approach is based on two steps: in Section \ref{sec:rel_NS}, we show that the morphism $\IJ_v$ has relative Picard rank one. In particular, the canonical bundle of a fiber is proportional to the restriction of an ample line bundle. Then in Section \ref{sec:deg_K}, we directly compute the intersection number of the canonical divisor with a curve consisting of objects that are extended by two fixed objects. This will show that the general fiber of $\IJ_v$ is Fano. In particular, $\IJ_v$ gives the maximally rationally connected (MRC) quotient of the moduli space.

\subsection{Irreducibility of the fibers}\label{sec:irr_fiber}

We start this section with the following observation.

\begin{Rem}\label{rem:compatibleextandajmap}
We have a commutative diagram as in \eqref{eq524} whenever $e_{v,w}$ is generically well-defined. Moreover, the birational maps $e^R_{E_v,w}$ and $e^L_{v,E_w}$ are compatible with the $\AJ$ map in the sense that they map a fiber to another whenever generically well-defined on the fiber:
 \begin{equation}\label{eq524}
	\begin{tikzcd}
 \ms(v+w)\arrow{rr}{\IJ_{v+w}} && J_{v+w}(Y_3)\\
 \Pe vw \arrow[u,dashed]{}{e_{v,w}} \arrow{r}{\pi_{v,w}} \arrow[d,shift left, dashed]{}{e^R_{v,w}} & \ms(v)\times \ms(w) \arrow{r}{\Phi_v\times \Phi_w} & J_v(Y_3)\times J_w(Y_3) \arrow{u}{+}\arrow{d}{\mu}\\
 \Pe{v+w}{v[1]} \arrow[u,shift left, dashed]{}{e^L_{v+w,v[1]}} \arrow{r}{\pi_{v+w,v[1]}} & \ms(v+w)\times \ms(v[1])\arrow{r}{\IJ_{v+w}\times \IJ_{v[1]}}& J_{v+w}(Y_3)\times J_{v[1]}(Y_3).
	\end{tikzcd}
\end{equation}
For every $c_v\in J_v(Y_3)$ and $c_w\in J_w(Y_3)$,  the morphism $\mu$ maps $(c_v,c_w)$ to $(c_v+c_w,-c_v)$. For every $E_v\in \ms(v,c_v)$, if $e^R_{E_v,w}$ is well-defined on a general point in $\ms(w,c_w)$, then by Proposition \ref{prop:extbirpair}, the birational map $e^R_{E_v,w}$ restricts to a birational map between the fibers: 
\begin{align*}
    e^R_{E_v,c_w}\colon\Pe{E_v}{c_w}\dashrightarrow \Pe{c_w+c_v}{E_v[1]}.
\end{align*}
In particular, the fiberwise version of Propositions \ref{prop:stabbir2}, \ref{prop:stabbir3}, and \ref{prop:stabbir4} holds.
\end{Rem}
\begin{Thm}\label{thm:connectedfiber}
     Let $Y_3$ be a smooth cubic threefold and $\Ku(Y_3)$ be its Kuznetsov component. Then for every $v\in\Kn(\Ku(Y_3))$ with $\chi(v,v)\leq -4$,  the map $\IJ_v\colon M_\sigma(v)\to J_v(Y_3)$ is surjective with connected fibers.
     
     For primitive $v$, $w$ with $\chi(v,v),\chi(w,w)<-22$, general $c_v\in J_v(Y_3)$ and $c_w\in J_w(Y_3)$, the general fibers $\ms(v,c_v)$ and $\ms(w,c_w)$ are stably birational to each other.
\end{Thm}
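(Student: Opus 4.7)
For part (1), the plan is to apply induction on $\dim \ms(v) = 1 - \chi(v, v)$, using the stratification theorems (Theorem \ref{thm:stratcubic3} and Proposition \ref{prop:stratofmbeta}) together with the compatibility of $\IJ$ with extension maps (Remark \ref{rem:compatibleextandajmap}). By Remark \ref{rem:Sswapabc}, I reduce to $v = n\alpha + m\beta$ with $n, m \geq 0$. The base case is $v = 2\beta$ (dimension $5 = \dim J(Y_3)$): I exploit $\ms(2\beta) = \ET(\alpha, \beta+\gamma)$ via the dominant generically finite map $e_{\alpha, \beta+\gamma}$ from $\Pe{\alpha}{\beta+\gamma}$. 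Combined with Example \ref{eg:ab}, which identifies $\ms(\beta+\gamma) \cong \mathrm{Bl}_p \Theta$ with \AJ map equal to the blow-down onto $\Theta \subset J(Y_3)$, the compatibility diagram reduces the analysis to the addition map $F(Y_3) \times \Theta \to J(Y_3)$, $(x, y) \mapsto \IJ(x) + y$. The fiber over general $c \in J(Y_3)$ is $F(Y_3) \cap (c - \Theta)$, which is non-empty (by dimension count) and connected by the Lefschetz connectedness theorem applied to $F(Y_3)$ and the ample restriction $(c - \Theta)|_{F(Y_3)}$; for generic $c$ this intersection is a smooth curve by Bertini. Connectedness transfers to $\Pe{\alpha}{\beta+\gamma}$ (a projective bundle over $F(Y_3) \times \mathrm{Bl}_p \Theta$ away from the jumping locus) and then to $\ms(2\beta)$ via the dominant map $e_{\alpha, \beta+\gamma}$; a Stein factorization argument upgrades connectedness of the generic fiber to connectedness of all fibers.

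For the inductive step, Case A is $v = n\alpha + m\beta$ with $n, m \geq 1$ and $\chi(v,v) \leq -4$. By Theorem \ref{thm:stratcubic3}, $\ms(v) = \overline{\ET(n\alpha, m\beta)}$, and by Remark \ref{rem:compatibleextandajmap}, $\IJ_v$ pulls back along $e_{n\alpha, m\beta}$ to the composition $\Pe{n\alpha}{m\beta} \to \ms(n\alpha) \times \ms(m\beta) \xrightarrow{\IJ_{n\alpha} + \IJ_{m\beta}} J(Y_3)$. When $n, m \geq 2$, induction gives that both $\IJ_{n\alpha}$ and $\IJ_{m\beta}$ are surjective with connected fibers, so the sum map is surjective with connected fibers (each fiber of the sum is fibered over $\ms(n\alpha)$ with connected fibers). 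When $n = 1$ (the case $m = 1$ is symmetric), $\ms(\alpha) = F(Y_3)$ has 2-dimensional image; however the sum $F(Y_3) \times \ms(m\beta) \to J(Y_3)$ remains surjective with connected fibers since for each fixed $x \in F(Y_3)$ the map $\IJ_{m\beta}(\,\cdot\,) + \IJ(x)$ is surjective with connected fibers by induction. In all cases, the (generically) Brauer-Severi bundle $\Pe{n\alpha}{m\beta}$ inherits connected fibers over $J(Y_3)$, and these transfer to $\ms(v)$ because the image of a connected set under the dominant generically finite map $e_{n\alpha, m\beta}$ is connected; Stein factorization of the proper map $\IJ_v$ then yields the full statement. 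Case B is $v = m\beta$ with $m \geq 3$: by Proposition \ref{prop:stratofmbeta}, $\ms(m\beta) = \ET(\alpha, (m-1)\beta + \gamma)$, and Serre invariance (Proposition \ref{prop:serreinv}) gives an isomorphism of varieties $\ms((m-1)\beta + \gamma) \cong \ms((m-1)\alpha + \beta)$; since $\chi((m-1)\alpha + \beta, (m-1)\alpha + \beta) = -(m^2 - m + 1) \leq -7$, this has strictly smaller dimension than $\ms(m\beta)$ and falls under Case A of the induction. Combining with $\ms(\alpha) = F(Y_3)$ as in the $n = 1$ subcase above completes the step.

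For part (2), the plan is to combine part (1) with the chain of birational equivalences from Propositions \ref{prop:stabbir2}, \ref{prop:stabbir3}, \ref{prop:stabbir4}, organized via the tree Lemma \ref{lem:tree} as in the proof of Corollary \ref{cor:stabbir}. The key observation is Remark \ref{rem:compatibleextandajmap}: fixing a stable object $E$ with $\IJ(E) = c_E \in J(Y_3)$, the birational map $e^R_{E[-1], v_{i+1}} \colon \Pe{E[-1]}{v_{i+1}} \dashrightarrow \Pe{v_i}{E}$ is compatible with $\IJ$, so it sends the preimage of $c \in J(Y_3)$ birationally onto the preimage of $c + c_E$. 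For each primitive $v_i$, the space $\Pe{v_i}{E}$ (resp. $\Pe{E[-1]}{v_i}$) is the projectivization of a locally free sheaf on $\ms(v_i)$, hence birationally $\ms(v_i, c - c_E) \times \mathbb{P}^r$ on fibers over $c \in J$. Chaining these fiberwise birational equivalences along the primitive characters connected by Lemma \ref{lem:tree} yields stable birational equivalences between general fibers $\ms(v, c_v)$ and $\ms(w, c_w)$ after translating by an appropriate sum of $c_E$'s. Part (1) ensures these fibers are non-empty and irreducible.

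The main obstacle is the base case $\ms(2\beta)$ in part (1), where $\dim \ms(2\beta) = \dim J(Y_3) = 5$ and connectedness of the general (zero-dimensional) fiber amounts to showing $\IJ_{2\beta}$ is birational. The Lefschetz connectedness argument for $F(Y_3) \cap (c - \Theta)$ leverages the classical geometry of $F(Y_3)$ as the minimal class in $H^4(J(Y_3), \Z)$ and the ampleness of $\Theta$, but one must also verify that the generic finiteness of $e_{\alpha, \beta+\gamma}$ interacts well with the Stein factorization of $\IJ_{2\beta}$ so that connectedness descends along the quotient map; this is the technical heart of the argument.
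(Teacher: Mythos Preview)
Your outline for Part (1) is essentially the paper's argument: the same base case $\ms(2\beta)$ analyzed via $e_{\alpha,\beta+\gamma}$ and the intersection $F(Y_3)\cap(c-\Theta)$, the same stratification input (Theorem~\ref{thm:stratcubic3}, Proposition~\ref{prop:stratofmbeta}), the same compatibility diagram (Remark~\ref{rem:compatibleextandajmap}), and the same Stein-factorization upgrade from general to all fibers. The paper organizes the induction along the chain $n\beta\to n\beta+\alpha,\,n\beta+\gamma\to (n+1)\beta$ rather than on $\dim\ms(v)$, and uses Bertini instead of Lefschetz for the base case, but these are cosmetic. One small caution: the phrase ``image of a connected set under the dominant rational map $e$'' is imprecise since $e$ is only defined on an open; what you actually need (and what the paper uses) is that the general fiber of the composition is \emph{irreducible} because $\Pe{n\alpha}{m\beta}$ is irreducible, and then the image of an irreducible set under $e$ is irreducible.

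For Part (2) there is a genuine missing step. Chaining the fiberwise maps $e^R_{E[-1],\,\cdot\,}$ along Lemma~\ref{lem:tree} only gives you
\[
\ms(v,c_v)\ \sim_{\mathrm{st}}\ \ms\bigl(w,\,c_v+\textstyle\sum_i \pm c_2(E_i)\bigr)
\]
for some fixed sequence of auxiliary objects $E_i$ lying in moduli spaces $\ms(u_i)$ determined by the chain. You assert one can choose the $E_i$'s so that the translation equals $c_w-c_v$, but this requires the combined map $\prod_i\ms(u_i)\to J(Y_3)$, $(E_i)\mapsto\sum\pm\IJ(E_i)$, to be dominant, \emph{and} that a general point in the required fiber still satisfies all the genericity hypotheses needed for each $e^R_{E_i[-1],\,\cdot\,}$ to be defined. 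Many links in the tree use $E\in\ms(\gamma)$ or $E\in\ms(\beta+\gamma)$, whose Abel--Jacobi images are only $2$- or $4$-dimensional; it is not automatic that the cumulative translation is arbitrary. The paper sidesteps this by isolating one specific link: using Proposition~\ref{prop:stabbir2} with $E\in\ms(4\gamma)$ to connect $\ms(\alpha+5\beta,c)$ to $\ms(5\alpha+\beta,c-c_2(E))$, and then invoking Part~(1) to note that $\IJ_{4\gamma}$ is \emph{surjective}, so as $E$ varies the target fiber parameter $c-c_2(E)$ sweeps out all of $J_{5\alpha+\beta}(Y_3)$. This establishes once and for all that the general fibers of $\ms(5\alpha+\beta)$ are mutually stably birational; the tree then links every other $\ms(v,c_v)$ to one of these. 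Your sketch needs this anchor step.
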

\begin{proof}
    We first prove the first statement for all $v$ of the form $n\beta$, $n\beta+\alpha$, and $n\beta+\gamma$  by induction on $n$. By Remark \ref{rem:Sswapabc}, at each step of the induction, the statement will also hold for $n\alpha$, $n\gamma$,  $n\alpha+\beta$, $n\alpha-\gamma$, $n\gamma+\beta$, and $n\gamma-\alpha$.

    When $n=1$, the space $\ms(\beta)$ is the Fano variety of lines in $Y_3$ and $\Phi_{\beta}$ is a closed embedding. The space $\ms(\beta+\alpha)\cong\ms(\beta+\gamma)$, and as that in Example \ref{eg:ab}, the map $\Phi_{\beta+\gamma}$ is the resolution of the Theta divisor in $J_{\beta+\gamma}(Y_3)$. In particular, its image is connected.

    When $n=2$, as $\chi(\beta+\gamma,\alpha)=-1$, the space $\Pe{\alpha}{\beta+\gamma}\cong \ms(\alpha,\beta+\gamma)^\dag$, which is an open subset of $\ms(\alpha)\times \ms(\beta+\gamma)$. By Proposition \ref{prop:stratofmbeta}, there is a dominant rational map $e_{\alpha,\beta+\gamma}\colon\ms(\alpha)\times \ms(\beta+\gamma)\dashrightarrow \ms(2\beta)$. By \eqref{eq524}, for a general point $p\in J_{2\beta}(Y_3)$, the fiber \[(\Phi_{2\beta}\circ e_{\alpha,\beta+\gamma})^{-1}(p)\sim(+\circ (\Phi_\alpha\times\Phi_{\beta+\gamma}))^{-1}(p)\sim F(Y_3)\cap(\tau_p(\Theta)) \text{ in } J(Y_3).\]
    Here we write $\sim$ for birational equivalence, and $\tau_p$ for the map $J(Y_3)\to J(Y_3)\colon x\mapsto x+p$. By Bertini Theorem, the variety $F(Y_3)\cap(\tau_p(\Theta))$ is non-empty and irreducible for general $p$. Note that $\dim(\ms(2\beta))=5=\dim(J_{2\beta}(Y_3))$ and the general fiber of $\Phi_{2\beta}\circ e_{\alpha,\beta+\gamma}$ is irreducible, so the map $\Phi_{2\beta}$ must be dominant with degree one, in other words, a birational equivalence. In particular, the first statement holds for $2\beta$. (In fact, one can explicitly show that $M_\sigma(2\beta)\cong \mathrm{Bl}_{F(Y_3)}J(Y_3)$.)

    Assuming the first statement holds for $n\beta$, $n\geq 2$, we claim that it holds for $n\beta+\alpha$ and $n\beta+\gamma$. By Theorem \ref{thm:stratcubic3}, the rational map $e_{\alpha,n\beta}$ is dominant. By induction, the map $\Phi_{n\beta}$ is surjective with connected fiber. Note that  $\Phi_\alpha$ is a closed immersion of $\ms(\alpha)\cong F(Y_3)$. By Lemma \ref{lem:connectdfiber}, in the diagram \eqref{eq524}, every fiber of $+\circ(\Phi_\alpha\times \Phi_{n\beta})\circ \pi_{\alpha,n\beta}$    is connected. As $\Pe{\alpha}{n\beta}$ is irreducible, a general fiber is irreducible. It follows that $\Phi_{n\beta+\alpha}$ is surjective, and  a general fiber of $\Phi_{n\beta+\alpha}$ is irreducible. By the same argument, the first statement holds for both $n\beta+\alpha$ and $n\beta+\gamma$.

    Assuming the first statement holds for $n\beta+\gamma$ for some $n\geq 2$, then by Proposition \ref{prop:stratofmbeta}, the rational map $e_{\alpha,n\beta+\gamma}$ is dominant. By  the same argument as above, the first statement holds for $(n+1)\beta$.

    Now by induction, the first statement holds for $n\alpha$, $n\beta$, and $n\gamma$ for all $n \geq 2$. For every $n\alpha+m\beta$ with $n,m\geq 2$, by Theorem \ref{thm:stratcubic3}, the rational map $e_{n\alpha,m\beta}$ is dominant. By the same argument as above, the map $\Phi_{n\alpha+m\beta}$ is surjective with connected fiber. By Remark \ref{rem:Sswapabc}, the first statement holds for all $v$ with $\chi(v,v)\leq -4$.\\

    The second statement follows the same argument as that for Corollary \ref{cor:stabbir}, we expand a few more details for solidarity.
    
    By Proposition \ref{prop:stabbir2} and Remark \ref{rem:compatibleextandajmap}, there exists $c\in J_{\alpha+5\beta}(Y_3)$ such that for a general object $E\in \ms(4\gamma)$, the birational map 
    \begin{align}\label{eq534}
        e^R_{E[-1],c}\colon\Pe{E[-1]}c\dashrightarrow\Pe{c-c_2(E)}E
    \end{align} is well-defined. As $5\alpha+\beta$ is primitive, the space $\Pe{5\alpha+\beta}E$ is the projectivization of a locally free sheaf over $\ms(5\alpha+\beta)$.  For general $c'\in J_{5\alpha+\beta}(Y_3)$, the space $\Pe{c'}E$ is the projectivization of a locally free sheaf over $\ms(5\alpha+\beta,c')$. It is therefore birational to $\ms(5\alpha+\beta,c')\times \mathbf P^3$.

    By \eqref{eq534}, for all general $E\in \ms(4\gamma)$, the spaces $\ms(5\alpha+\beta,c-c_2(E))\times \mathbf P^3$ are birational to each other.     As the map $\IJ_{4\gamma}$ is surjective, the spaces $\ms(5\alpha+\beta,c')$ are stably birational to each other for general $c'\in J_{5\alpha+\beta}(Y_3)$.

    As in Corollary \ref{cor:stabbir}, we may assume $v=n\alpha+m\beta$ form some $m,n>0$, $\gcd(m,n)=1$, and  $m+n\geq 6$.  By Lemma \ref{lem:tree},  there exists a finite sequence of primitive characters $v_1,\dots,v_n$, with $v_i=n_i\alpha+m_i\beta$ in the $(\alpha,\beta)$-sextant satisfying $v_1=v$, $v_n=5\alpha+\beta$. We have birational maps
\[e^R_{E[-1],v_{i+1}}\colon\Pe{E[-1]}{v_{i+1}}\dashrightarrow \Pe{v_{i}}E\]
    or $e^R_{E[-1],v_{i}}$ when $n_i>n_{i+1}$. For a general $c_i\in J_{v_i}(Y_3)$, by Remark \ref{rem:compatibleextandajmap}, the maps $e^R$'s restrict to $\Pe{E[-1]}{c_i}$ and $\Pe{c_i}E$. As all $v_i$'s are primitive,  the spaces $\Pe{E[-1]}{c_i}$ and $\Pe{c_i}E$ are both birational to $\ms(v_i,c_i)\times \mathbf P^r$ for some $r\geq 0$. So  the space $\ms(v_1=v,c_1)$ is stably birational to $\ms(v_n=5\alpha+\beta,c_n)$ for a general $c_n\in J_{5\alpha+\beta}(Y_3)$. Therefore for all general $c\in J_v(Y_3)$, the spaces $\ms(v,c)$ are stably birational to each other.
\end{proof}

\begin{Lem}\label{lem:connectdfiber}
    Let $A$ be an abelian variety, $\Phi_i\colon M_i\to A$ be two proper morphisms such that $\Phi_1$ is surjective, the image of $\Phi_2$ is connected in $A$, the non-empty fibers of $\Phi_i$'s are connected. Denote by $\pi$ the composition $M_1\times M_2\xrightarrow{(\Phi_1,\Phi_2)} A\times A\xrightarrow{+}A$, then the fibers of $\pi$ are connected.
\end{Lem}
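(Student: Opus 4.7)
The plan is to prove that for any $a \in A$ the fiber $\pi^{-1}(a)$ is connected by projecting it onto $M_2$ and exhibiting it as a proper family with connected fibers over a connected base.

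First I would check that $M_2$ itself is connected. The image $\Phi_2(M_2) \subset A$ is connected by hypothesis and $\Phi_2$ is proper with connected (nonempty) fibers. If $M_2$ decomposed as a disjoint union $C_1 \sqcup C_2$ of nonempty closed subsets, properness would make each $\Phi_2(C_i)$ closed in $\Phi_2(M_2)$, and the latter being connected would force $\Phi_2(C_1) \cap \Phi_2(C_2) \neq \emptyset$; any point $b$ in this intersection would have $\Phi_2^{-1}(b)$ meeting both $C_i$, contradicting the connectedness of that fiber.

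Next, fix $a \in A$ and consider the second projection $p_2\colon \pi^{-1}(a) \to M_2$. The fiber over $m_2$ is naturally identified with $\Phi_1^{-1}(a - \Phi_2(m_2)) \times \{m_2\}$, which is nonempty (because $\Phi_1$ is surjective) and connected (by the hypothesis on fibers of $\Phi_1$). Moreover $p_2$ is proper, as it is the base change of the proper map $\Phi_1$ along the morphism $M_2 \to A$ sending $m_2 \mapsto a - \Phi_2(m_2)$. Applying the standard fact that a proper surjection with connected fibers onto a connected base has connected total space --- verified by the same disjoint-decomposition argument as in the preceding paragraph --- one concludes that $\pi^{-1}(a)$ is connected.

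The argument is purely topological and does not draw on any of the cubic threefold geometry developed earlier, so there is no real obstacle; the only item that needs a moment's care is the Cartesian square identifying $p_2$ as a pullback of $\Phi_1$, but this is formal. The surjectivity of $\Phi_1$ is essential both to guarantee the nonemptiness of every fiber of $p_2$ and to make $p_2$ itself surjective, and the connectedness of $\Phi_2(M_2)$ is used only through the reduction to the connectedness of $M_2$.
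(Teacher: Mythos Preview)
Your proof is correct and uses essentially the same idea as the paper: a proper surjection with connected fibers onto a connected base has connected source. The paper's version is the one-line factorization $M_1\times M_2 \to A\times\im\Phi_2 \xrightarrow{+} A$ as a composition of two such maps (the second having fibers isomorphic to $\im\Phi_2$), while you instead project each fiber $\pi^{-1}(a)$ onto $M_2$ after first checking $M_2$ is connected; the two arguments are equivalent reorganizations of the same principle.
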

\begin{proof}
    Note that the map $M_1\times M_2\to A\times \im\Phi_2\xrightarrow{+}A$ is a composition of proper and surjective maps with connected fibers. So the fibers of $\pi$ are connected.
\end{proof}

\begin{Rem}
    By \cite[Theorem 3.2]{Shinder_2022}, the stably birational equivalence holds between any smooth fibers of the \AJ map of the expected dimensions.
\end{Rem}

\subsection{The relative Neron--Severi group}\label{sec:rel_NS}

We start with an analogue of \cite{beauville:cohomology}. Let $v$ be a primitive character in $\Kn(\Ku(Y_3))$, recall that $\ms(v)$ stands for the moduli space of $\sigma$-stable objects in $\Ku(Y_3)$ with character $v$. Note that we have the following properties:
\begin{enumerate}
    \item There is a universal family $\cU\to \ms(v)\times Y_3$; 
    \item For any $E,F\in \ms(v)$, $\Ext^i(E,F)=0$ except for $i=0,1$. 
\end{enumerate}
\begin{Lem}\label{lem:beauville}
    The cohomology algebra $H^*(\ms(v);\mathbb{Q})$ is generated by the K\"unneth components over $\ms(v)$ of Chern classes $c_i(\cU)$. 
\end{Lem}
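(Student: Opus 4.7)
The plan is to adapt Beauville's diagonal trick, as in \cite{beauville:cohomology}, to the Kuznetsov component setting. The essential inputs are the existence of a universal family $\cU$ on $\ms(v)\times Y_3$ (Corollary \ref{cor:cub3smoothmoduli}, since $v$ is primitive), the Serre invariance of $\sigma$ (Proposition \ref{prop:serreinv}), and the smoothness of $\ms(v)$.

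First I would verify the cohomological vanishing: for any $E,F\in\ms(v)$,
\[
\Ext^i(E,F)=0 \quad \text{unless } i\in\{0,1\}.
\]
Negative $i$ is immediate from $\phi_\sigma(E)=\phi_\sigma(F)$. For $i\geq 2$, Serre duality gives $\Ext^i(E,F)\cong \Hom(F,\mathsf S(E)[-i])^*$, and since $\phi_\sigma(\mathsf S(E)[-i])=\phi_\sigma(E)+\tfrac53-i<\phi_\sigma(F)$, this vanishes. Hence, writing $q_{ij}$ for the projections from $\ms(v)\times\ms(v)\times Y_3$ and $\pi$ for the projection to $\ms(v)\times\ms(v)$, the complex
\[
\cF^\bullet:=R\pi_*\,R\cHom(q_{13}^*\cU,\,q_{23}^*\cU)\;\in\;\Db(\ms(v)\times\ms(v))
\]
is concentrated in degrees $0$ and $1$. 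A pointwise check shows $\mathcal H^0(\cF^\bullet)\cong\cO_\Delta$, while $\mathcal H^1(\cF^\bullet)$ is a coherent sheaf of generic rank $\dim\ms(v)-1$ that jumps by $1$ along $\Delta$.

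Next I would resolve $\cF^\bullet$ by a two-term complex of locally free sheaves $[V^0\xrightarrow{d}V^1]$ on $\ms(v)\times\ms(v)$ (possible since the base is smooth projective and $\cF^\bullet\in\Db^{[0,1]}$). By construction $\ker(d)=\cO_\Delta$ and $d$ is generically injective, so $\Delta$ is exactly the expected-codimension degeneracy locus where the rank of $d$ drops by one. The Thom--Porteous formula then yields
\[
[\Delta]=c_{\dim\ms(v)}\bigl([V^1]-[V^0]\bigr)=c_{\dim\ms(v)}\bigl(-[\cF^\bullet]\bigr)\quad\text{in } H^*(\ms(v)\times\ms(v);\Q).
\]
Grothendieck--Riemann--Roch computes $\mathrm{ch}(\cF^\bullet)=\pi_*\bigl(q_{13}^*\mathrm{ch}(\cU^\vee)\cdot q_{23}^*\mathrm{ch}(\cU)\cdot q_3^*\mathrm{td}(T_{Y_3})\bigr)$, and unwinding this via the K\"unneth decomposition of $H^*(\ms(v)\times\ms(v)\times Y_3)$ expresses every Chern class of $\cF^\bullet$, and hence $[\Delta]$, as a polynomial in the K\"unneth components of $c_i(\cU)$.

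Finally, for any $\alpha\in H^*(\ms(v);\Q)$ the projection formula gives $\alpha=(p_1)_*\bigl(p_2^*\alpha\cdot[\Delta]\bigr)$; substituting the expression for $[\Delta]$ writes $\alpha$ as a polynomial in the K\"unneth components of $c_i(\cU)$, which is the desired statement. The main technical obstacle is the identification of $[\Delta]$ via Thom--Porteous: one must justify that $\Delta$ has the expected codimension $\dim\ms(v)$ (which follows from irreducibility and smoothness together with the generic rank computation) and that the global two-term resolution exists; if one prefers, an alternative is to extract $[\cO_\Delta]$ directly from the K-theoretic identity $[\cO_\Delta]=[\cF^\bullet]+[\mathcal H^1(\cF^\bullet)]$ by analyzing the rank jump, but Porteous handles both issues uniformly.
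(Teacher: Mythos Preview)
Your proposal is correct and follows essentially the same approach as the paper: represent the relative $\Ext$ complex by a two-term complex of locally free sheaves, identify the diagonal as the rank-drop degeneracy locus, apply Porteous, and then GRR to express $[\Delta]$ in terms of K\"unneth components. The only cosmetic differences are that the paper shifts the complex by $[1]$, cites \cite{Bridgeland-Maciocia:K3Fibrations} for the two-term resolution, and is slightly more cautious in writing that $[\Delta]$ is \emph{a multiple of} $c_m(\mathcal{H})$ rather than asserting exact equality (your assertion $\mathcal{H}^0(\cF^\bullet)\cong\cO_\Delta$ as sheaves is a little stronger than a pointwise check gives, but only the rank jump is needed for Porteous, so this does not affect the argument).
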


\begin{proof} For simplicity we write $M=\ms(v)$ and $m= \dim M$ in the proof. Let $\delta$ denote the class of the diagonal in $M\times M$. 
Note that if $\delta$ can be written in the form 
\begin{align}\label{eq:diagonal}\delta=\sum_i \operatorname{pr}_1^*\eta_i\cdot \operatorname{pr}_2^*\xi_i,\end{align}
where $\operatorname{pr}_1$ and $\operatorname{pr}_2$ denote the projections, then the cohomology algebra is generated by $\{\,\eta_i\,\}_i$. 
    
Let $\pi_{ij}$ denote the projection from $M\times M\times X$ onto the $i$-th and $j$-th factors. Let 
    $\mathcal{H}=\RlHom_{\pi_{12}}(\pi_{13}^*\cU,\pi_{23}^*\cU)[1]$.
By \cite[Proposition 5.4]{Bridgeland-Maciocia:K3Fibrations}, the object $\mathcal{H}$ can be represented by a complex 
    $K^\bullet=\{\,K^{-1}\xrightarrow{u}K^0\,\}$
of locally free sheaves in degrees $-1$ and $0$.
For a point $x=(E,F)\in M\times M$, we have an exact sequence
\begin{align*}
    0\to \Hom(E,F)\to K^{-1}(x)\xrightarrow{u(x)} K^0(x)\to \Ext^1(E,F)\to 0
\end{align*}
by Cohomology and Base Change. Note that $\Hom(E,F)\not=0$ if and only if $x\in \Delta\subset M\times M$, and when it is nonzero, it has dimension $1$. Thus by Porteous' formula, the cohomology class of the diagonal is a multiple of $c_m(K^0-K^{-1})=c_m(\mathcal{H})$. 
By Grothendieck--Riemann--Roch, the diagonal indeed can be written in the form \eqref{eq:diagonal}, with $\{\,\eta_i\,\}_i$ being the K\"unneth components of the Chern classes. 
\end{proof}
The statement still holds if we replace ``Chern classes'' by ``Chern characters'', which is more convenient for our next application.

Now we are ready to prove the following result on the first two Betti numbers of $\ms(v)$.

\begin{Prop}\label{prop:betti}
    For every primitive $v\in\Kn(\Ku(Y_3))$ with $\chi(v,v)<-4$, we have Betti numbers of the moduli space $b_1(\ms(v))=10$ and $b_2(\ms(v))=46$.
\end{Prop}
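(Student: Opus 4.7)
The computation splits into separately establishing $b_1(\ms(v))=10$ and $b_2(\ms(v))=46$.

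\emph{Computing $b_1$.} The lower bound comes directly from Theorem~\ref{thm:connectedfiber}: since $\chi(v,v)<-4$, the Abel--Jacobi map $\IJ_v\colon \ms(v)\to J(Y_3)$ is surjective, and a surjective morphism to an abelian variety induces an injective pullback on $H^1$ with $\Q$-coefficients, so $b_1(\ms(v))\ge b_1(J(Y_3))=10$. The upper bound follows from Lemma~\ref{lem:beauville}: $H^{*}(\ms(v);\Q)$ is generated by K\"unneth components of $\ch_i(\cU)\in H^{2i}(\ms(v)\times Y_3)$, and the degree-one K\"unneth components lie in $H^1(\ms(v))\otimes H^{2i-1}(Y_3)$. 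Since the only nontrivial odd cohomology of $Y_3$ is $H^3(Y_3)\cong\Q^{10}$, we obtain $b_1\le 10$.

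\emph{Lower bound for $b_2$.} The injectivity of $\IJ_v^{*}$ on $H^1$ induces an injection $\wedge^{2}\IJ_v^{*}\colon H^2(J(Y_3))=\wedge^{2}H^1(J(Y_3))\hookrightarrow H^2(\ms(v))$, yielding a subspace of dimension $\binom{10}{2}=45$. Since $\chi(v,v)<-4$ gives $\dim\ms(v)>5=\dim J(Y_3)$, the fibers of $\IJ_v$ are positive-dimensional. A Bayer--Macr\`i divisor class on $\ms(v)$ is ample and therefore restricts non-trivially to a general fiber, so it is linearly independent from $\IJ_v^{*}H^2(J(Y_3))$. Together these contribute $b_2\ge 46$.

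\emph{Upper bound for $b_2$.} By Lemma~\ref{lem:beauville} again, $H^2(\ms(v))$ is generated by the 45 wedge products in $\wedge^{2}H^1(\ms(v))$ together with four \emph{primitive} K\"unneth classes $\mu_0,\mu_2,\mu_4,\mu_6$, where $\mu_{2i-2}$ is extracted from the K\"unneth component of $\ch_i(\cU)$ in $H^2(\ms(v))\otimes H^{2i-2}(Y_3)$ for $i=1,2,3,4$ (each $H^{2i-2}(Y_3)$ is one-dimensional). Two relations among the $\mu_j$ modulo $\wedge^{2}H^1$ come from the Kuznetsov orthogonalities $\mathbf{R}p_{1*}\cU=0$ and $\mathbf{R}p_{1*}(\cU\otimes p_2^{*}\cO(H))=0$: applying Grothendieck--Riemann--Roch to each vanishing and extracting the degree-two component in $H^{*}(\ms(v))$ produces a linear equation $\sum_j c_j\mu_{2j-2}=0$, with coefficients determined by pairing Todd (respectively Todd times $e^H$) with the basis of $H^{2j-2}(Y_3)$; the two resulting equations are independent since $\td(Y_3)$ and $\td(Y_3)\cdot e^H$ are linearly independent in $H^{\text{even}}(Y_3;\Q)$. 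The principal obstacle is to produce a \emph{third} independent relation so as to reduce the primitive contribution to one class. I expect this to follow by exploiting the $\Pic(\ms(v))$-torsor of twists on the universal family: replacing $\cU$ by $\cU\otimes p_1^{*}L$ shifts $\mu_0$ by $\mathrm{rk}(v)\cdot c_1(L)$ while leaving the wedge-square subspace invariant, so when $\mathrm{rk}(v)\ne 0$ we may gauge $\mu_0=0$, and the two GRR relations then cut the remaining three primitives $\mu_2,\mu_4,\mu_6$ down to a single class; the case $v=\alpha+2\beta$ (the only primitive in an $(\alpha,\beta)$-type sextant with rank zero, which by Remark~\ref{rem:Sswapabc} suffices after applying the Serre functor) is handled by a parallel normalization using a different tautological twist. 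Combined with the 45-dimensional $\wedge^{2}H^1$ subspace, this yields $b_2\le 46$.
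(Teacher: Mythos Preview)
Your approach is essentially the paper's: tautological generation (Lemma~\ref{lem:beauville}) for upper bounds, GRR applied to the Kuznetsov orthogonalities for relations, and the Abel--Jacobi map for lower bounds. The paper simply performs your ``third relation'' upfront by normalizing the universal family so that the K\"unneth component $a_1\in H^2(\ms(v))\otimes H^0(Y_3)$ of $\ch_1(\cU)$ vanishes, leaving three primitive classes $e_2,g_3,h_4$ and two GRR relations.

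Two small corrections. First, the second orthogonality should be $\mathbf{R}p_{1*}(\cU\otimes p_2^*\cO_{Y_3}(-H))=0$, not $\cO_{Y_3}(H)$: objects of $\Ku(Y_3)$ satisfy $\RHom(\cO_{Y_3}(H),E)=0$, which translates to vanishing of $H^\bullet(Y_3,E(-H))$. This only changes the coefficients in your linear relations, not their independence. Second, your rank-zero worry is unnecessary and your example is wrong: since $\rk(\alpha)=2$ and $\rk(\beta)=1$, every nonzero $n\alpha+m\beta$ in the $(\alpha,\beta)$-sextant has $\rk=2n+m>0$ (in particular $\rk(\alpha+2\beta)=4$). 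Thus after reducing via Remark~\ref{rem:Sswapabc} you can always gauge $\mu_0=0$ by a rational twist $\ch(\cU)\mapsto \ch(\cU)\cdot p_1^*e^{-\mu_0/\rk(v)}$, which preserves both the generated algebra and the GRR vanishings by the projection formula; no separate case is needed.
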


\begin{proof}

Define the K\"unneth components of $\ch_i(\cU)\in H^{2i}(\ms(v)\times Y_3; \Q)$ by the following equality:
\begin{align*}
    \ch_i(\cU)=a_i\otimes 1+e_i\otimes H + \sum_{j=1}^{10} f_{i,j}\otimes \rho_j + g_i\otimes H^2 + h_i\otimes H^3,
\end{align*}
where $H$ is the hyperplane class of $Y_3$, and $\{\rho_j\}_{j=1,...,10}$ is a basis of $H^3(Y_3;\mathbb Q)$.  We can choose a universal family $\cU$ such that $a_1=0$. 
By Lemma \ref{lem:beauville}, the space $H^1(\ms(v);\mathbb{Q})$ is generated by $\{\,f_{2,j}\,\}_j$ and $H^2(\ms(v);\mathbb{Q})$ is generated by 
    $\{\,e_2,g_3,h_4, f_{2,j_1}\cup f_{2,j_2}\,\}_{j_1,j_2}$.
In particular, $b_1(\ms(v))\leq 10$.

Let $p\colon \ms(v)\times Y_3\to \ms(v)$ denote the projection. As the objects are in $\Ku(Y_3)$, we have 
\begin{align*}
    Rp_*\cH om(\cO, \cU)=0=Rp_*\cH om(\cO(H), \cU).
\end{align*}
Applying the Grothendieck--Riemann--Roch Theorem, we obtain 
\begin{align*}
    p_*(\ch(\cU)\td Y_3)=p_*(\ch(\cU)\ch(\cO(-H))\td Y_3)=0. 
\end{align*}
Recall that the Todd class of a cubic threefold is $\td Y_3=1+H+\frac{2}{3}H^2+\frac{1}{3}H^3$. Considering the degree 2 part of the equalities, we obtain two relations among $e_2, g_3,$ and $h_4$:
\begin{align*}
    \frac{1}{3}h_4+ g_3+\frac{2}{3} e_2&=0,\\
    \frac{1}{3}h_4+\frac{1}{6}e_2&=0.    
\end{align*}
Hence $H^2(\ms(v);\mathbb{Q})$ is generated by 
    $\{\,e_2, f_{2,j_1}\cup f_{2,j_2}\,\}_{j_1,j_2}$.
So $b_2(\ms(v))\leq 46$.

On the other hand, by Theorem \ref{thm:connectedfiber}, when $\chi(v,v)< -4$, the Abel--Jacobi map $\IJ_v\colon \ms(v)\to J_v(Y_3)$ is surjective. Hence the pullback of cohomology $\IJ_v^*\colon H^i(J_v(Y_3)) \to H^i(\ms(v))$ is injective (cf. \cite[Page 177]{Voisin:hogedbook}). Moreover, since the morphism $\IJ_v$ now has positive dimensional fibers, the relative Neron--Severi group has to be of positive rank, we must have $b_2(\ms(v))>b_2(J_v(Y_3))$. Note that $b_1(J_v(Y_3))=10$ and $b_2(J_v(Y_3))=45$, now the statement follows.
\end{proof}

In particular, the proof gives the following corollary, which we state separately for later reference.

\begin{Cor}\label{cor:relpic1}
     For every primitive $v\in\Kn(\Ku(Y_3))$ with $\chi(v,v)< -4$, the relative Neron--Severi group of the morphism $\IJ_v\colon \ms(v)\to J_v(Y_3)$ has rank $1$.
\end{Cor}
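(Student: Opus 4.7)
The plan is to extract Corollary \ref{cor:relpic1} as a direct consequence of the Betti number computation already carried out in the proof of Proposition \ref{prop:betti}, together with the standard observation that an ample class on the total space gives a nontrivial class on each fiber.

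First I would recall the upper bound. Lemma \ref{lem:beauville} combined with the two Grothendieck--Riemann--Roch relations from the proof of Proposition \ref{prop:betti} shows that $H^2(\ms(v);\Q)$ is generated by $e_2$ together with the products $f_{2,j_1}\cup f_{2,j_2}$, giving $b_2(\ms(v)) \leq 46$. On the other hand, $b_2(J_v(Y_3)) = 45$. Since $\chi(v,v) < -4$, Theorem \ref{thm:connectedfiber} applies and $\IJ_v$ is surjective, so $\IJ_v^*\colon H^2(J_v(Y_3);\Q) \to H^2(\ms(v);\Q)$ is injective. Hence the quotient $H^2(\ms(v);\Q)/\IJ_v^*H^2(J_v(Y_3);\Q)$ has dimension at most $1$. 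Since $\mathrm{NS}(\ms(v))_{\Q}$ embeds into $H^2(\ms(v);\Q)$ and contains $\IJ_v^*\mathrm{NS}(J_v(Y_3))_{\Q}$, the relative Neron--Severi rank is bounded above by $1$.

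For the lower bound I would argue as follows. Under the hypothesis $\chi(v,v) < -4$, the dimension $\dim \ms(v) = 1 - \chi(v,v) \geq 8$ is strictly greater than $\dim J_v(Y_3) = 5$, so the surjective morphism $\IJ_v$ has positive dimensional fibers. The moduli space $\ms(v)$ is smooth projective by Corollary \ref{cor:cub3smoothmoduli}, so admits an ample divisor class $A$. Its restriction to any general fiber of $\IJ_v$ is an ample class on that fiber, hence nonzero, and so $A$ does not lie in $\IJ_v^*\mathrm{NS}(J_v(Y_3))_{\Q}$. Thus the relative Neron--Severi group has rank at least $1$.

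Combining the two inequalities forces the relative Neron--Severi rank to equal exactly $1$, proving the corollary. The only potential subtlety is verifying that $\dim \ms(v) > 5$ for all primitive $v$ with $\chi(v,v) < -4$; this is immediate since $-\chi(v,v) = n^2 + nm + m^2$ for $v = n\alpha + m\beta$ never takes the value $4$, so $\chi(v,v) < -4$ already forces $-\chi(v,v) \geq 7$. No step beyond what is contained in the proof of Proposition \ref{prop:betti} is required.
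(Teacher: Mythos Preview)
Your proof is correct and follows essentially the same approach as the paper. The paper states this corollary immediately after Proposition \ref{prop:betti} with the remark that ``the proof gives the following corollary,'' and your argument simply makes explicit the two halves already present there: the upper bound from the tautological generation plus GRR relations and the injectivity of $\IJ_v^*$ on $H^2$, and the lower bound from the ample class on the projective total space restricting nontrivially to a positive-dimensional fiber.
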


\subsection{Degree of the tangent bundle}\label{sec:deg_K}
\subsubsection{Quiver model for the formal neighborhood of extension locus}
Let $Q$ be the quiver consisting of two vertices $v_1$ and $v_2$ with $a_{ij}$ arrows/loops from $v_i$ to $v_j$. For $i=1,2$, we denote by $S_i$ the simple representation with $\C$ at $v_i$, and $0$ at the other vertex and all arrows/loops. 

The category $\Rep(Q)$ is of homological dimension one.  The $\Ext^\bullet(S_i,S_j)$-algebra is the same as the path algebra associated to $Q$. In other words, we have $\Hom_{\Db(\Rep(Q))}(S_i,S_j[k])=0$ when $k\neq 0$ or $1$; $\Hom(S_i,S_j)=\C^{\delta_{ij}}$; $\Hom(S_i,S_j[1])=\C^{a_{ij}}$; and the composition of any two elements in $\Hom(S_\bullet,S_\bullet[1])$ is $0$.

Denote by $\{\be_1,\dots,\be_r\}$, where $r=a_{21}$, the basis for $\Hom_{\Rep(Q)}(S_2,S_1[1])$ corresponding to the arrows from $v_2$ to $v_1$. Let $\mathbf 1=(1,1)$ be a dimension vector on the vertices of $Q$ and $\theta=(-1,1)$ be the weight in the sense of King \cite{King:QuiverStability}.  For every $f=a_1\be_1+\dots a_r\be_r\in \Hom(S_2,S_1[1])$, the object  $\Cone(f)[-1]$ is the representation of $Q$ of dimension vector $\mathbf 1$, timing $a_i$ on the $i$-th arrow from $v_2$ to $v_1$, $0$ on all arrows from $v_1$ to $v_2$ and all loops. The object $\Cone(f)[-1]$ is $\theta$-stable if and only if $f\neq 0$.

 Denote by $\cR^s_\theta(Q,\mathbf 1)$ the variety parametrizing isomorphic classes of $\theta$-stable representations of dimension vector $\mathbf 1$. Let $Z$ be the subvariety in $\cR^s_\theta(Q,\mathbf 1)$ parametrizing objects of the form $\Cone(f)[-1]$ for all $0\neq f\in\Hom(S_2,S_1[1])$.  It is clear that $Z\cong \mathbf P^{r-1}$.

\begin{Lem}\label{lem:degofcanquivermodel}
    Adopting the notation as above, assume that $a_{21}\geq 2$, then $\omega_{\cR^s_\theta(Q,\mathbf 1)}|_Z\cong \cO_Z(a_{12}-a_{21})$.
\end{Lem}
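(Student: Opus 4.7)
My plan is to realize $\cR^s_\theta(Q,\mathbf{1})$ as an explicit GIT quotient and then apply adjunction to reduce the statement to an identification of the normal bundle $N_{Z/Y}$.

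First I would present $Y := \cR^s_\theta(Q,\mathbf{1})$ as the geometric quotient $R^s/\mathbb{G}_m$, where $R^s$ is the $\theta$-stable open subset of the affine representation space $V := \mathbb{A}^{a_{11}+a_{22}+a_{12}+a_{21}}$ and the gauge group $\mathbb{G}_m = (GL_1\times GL_1)/\Delta$ acts with weight $0$ on loop coordinates, weight $+1$ on coordinates for arrows $v_1\to v_2$, and weight $-1$ on coordinates for arrows $v_2\to v_1$. In this presentation, $Z$ is the image under $\pi\colon R^s\to Y$ of the locus $Z_{pre}\subset R^s$ on which all loops and all arrows $v_1\to v_2$ vanish; the restricted quotient $\pi'\colon Z_{pre}\to Z$ realizes $Z\cong\mathbb{P}^{a_{21}-1}$ as the standard projective space, except that $\mathbb{G}_m$ now acts with weight $-1$ on each coordinate. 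A short calculation with invariant global sections then shows that $\pi'^{*}\mathcal{O}_Z(n)$ corresponds to the equivariantly trivial line bundle of character $-n$.

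Next I would compute the normal bundle directly from the quiver data. Pulling $N_{Z/Y}$ back through the principal $\mathbb{G}_m$-bundle $\pi'$ yields $N_{Z_{pre}/R^s}$, which, since $Z_{pre}$ is cut out of $R^s$ by a coordinate subspace, is just the trivial bundle $Z_{pre}\times \mathbb{C}^{a_{11}+a_{22}+a_{12}}$. The $\mathbb{G}_m$-action on this normal bundle carries weight $0$ on the $a_{11}+a_{22}$ loop summands and weight $+1$ on the $a_{12}$ summands corresponding to arrows $v_1\to v_2$. Applying the descent dictionary from the previous paragraph gives
\[ N_{Z/Y}\;\cong\; \mathcal{O}_Z^{\oplus(a_{11}+a_{22})}\oplus \mathcal{O}_Z(-1)^{\oplus a_{12}}, \qquad \det N_{Z/Y}\;\cong\; \mathcal{O}_Z(-a_{12}). \]

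Finally, combining the adjunction formula $\omega_Y|_Z\cong \omega_Z\otimes(\det N_{Z/Y})^{-1}$ with $\omega_Z\cong \mathcal{O}_Z(-a_{21})$ yields $\omega_Y|_Z\cong \mathcal{O}_Z(a_{12}-a_{21})$, which is the desired conclusion. The step requiring the most care will be the sign in the descent correspondence $\pi'^{*}\mathcal{O}_Z(n)\leftrightarrow$ character $-n$; beyond that, the proof is pure bookkeeping of the $\mathbb{G}_m$-weights across the four blocks (loops at $v_1$, loops at $v_2$, arrows $v_1\to v_2$, arrows $v_2\to v_1$) of the quiver representation space.
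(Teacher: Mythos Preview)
Your proof is correct and takes a genuinely different route from the paper's. The paper reduces to the loopless case $X=\bigl((\C^{r}\setminus\{0\})\times\C^{t}\bigr)/\C^\times$ with $r=a_{21}$, $t=a_{12}$, and then performs an explicit coordinate computation: it writes down a nowhere-vanishing top form $s_1$ on the chart $U_1=\{x_1\neq 0\}$, reexpresses it in $U_2$-coordinates, and reads off the order of zero (or pole) along $\{x_1=0\}$ to identify $\omega_X|_Z$. Your argument instead factors through adjunction, computing $N_{Z/Y}$ by equivariant descent and then invoking $\omega_Y|_Z\cong\omega_Z\otimes(\det N_{Z/Y})^{-1}$. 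Your route is cleaner and makes the role of each block of arrows transparent; the paper's hands-on calculation is more elementary in that it avoids any appeal to descent or adjunction machinery. Either way, the only delicate point is the sign convention linking $\mathbb{G}_m$-characters to $\cO_Z(n)$, which you flag and handle correctly.
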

\begin{proof}
By the theory of stability of quiver representations introduced by King \cite{King:QuiverStability}, we have $\cR^s_\theta(Q,\mathbf 1):=\Rep^s_{\theta}(Q,\mathbf 1)/(G_{\mathbf 1}/\C^\times)$. Here the space of $\theta$-stable representations is identified as  \[\Rep^s_{\theta}(Q,\mathbf 1)=\C^{\oplus a_{11}}\times \C^{\oplus a_{22}}\times (\C^{\oplus a_{21}}\setminus\{0\})\times \C^{\oplus a_{12}}.\]
The group $G_{\mathbf 1}/\C^\times\cong \C^\times $ acts  on each factor $\C^{\oplus a_{ij}}$ with weight $i-j$. The objects $\Cone(f)[-1]$'s form the subvariety $\{0\}\times\{0\}\times (\C^{\oplus a_{21}}\setminus\{0\})\times \{0\}$.

As the group action on the factors $\C^{\oplus a_{ii}}$ is trivial, this part does not affect the degree of $\omega_{\cR^s_\theta(Q,\mathbf 1)}$ on $Z$. We may reduce the computation to $X:=\left((\C^{\oplus r}\setminus\{0\})\times \C^{\oplus t}\right)/\C^\times$ with $Z$ being locus of $(\C^{\oplus r}\setminus\{0\})\times\{0\}$. We write $\C[x_1,\dots,x_r,y_1,\dots,y_t]$ for the weighted coordinate ring of $X$. The $\C^\times$ action is given as $k\cdot(x_1,\dots,x_r,y_1\dots,y_t):=(kx_1,\dots,kx_r,k^{-1}y_1,\dots,k^{-1}y_t)$. We denote  $U_i:=X\setminus \{x_i=0\}\cong \mathrm{Spec} (\C[x_1/x_i,\dots,x_r/x_i,x_iy_1,\dots,x_iy_t])$ as the affine open subset on $X$. Then on $U_1\cap U_2$, we have the following section  $s_1\in H^0(U_1\cap U_2,\omega_X|_{U_1\cap U_2})$: 
    \begin{align*}
     s_1:= &  d\left(\frac{x_2}{x_1}\right)\wedge\dots \wedge  d\left(\frac{x_r}{x_1}\right)\wedge d(x_1y_1)\wedge\dots\wedge d(x_1y_t) \\
        =& \left(\frac{1}{x_1}dx_2-\frac{x_2}{x^2_1}dx_1\right)\wedge\dots\wedge(x_1dy_1+y_1dx_1)\wedge\dots\wedge(x_1dy_t+y_tdx_1)\\
        = & x^{t+1-r}_1 dx_2\wedge\dots \wedge dx_r\dots \wedge dy_t - x_2 x_1^{t-r} dx_1\wedge dx_3\wedge \dots dx_r\wedge\dots dy_t+\dots\\
        = & -\left(\frac{x_1}{x_2}\right)^{t-r} d\left(\frac{x_1}{x_2}\right)\wedge\dots \wedge  d\left(\frac{x_r}{x_2}\right)\wedge d(x_2y_1)\wedge\dots\wedge d(x_2y_t).
    \end{align*}
    The section  $s_1$  extends holomorphically on $U_1$ without any zeros. It has a pole along $x_2=0$ of degree $t-r$ (or a zero of degree $r-t$). In particular, $\omega_X$ is with degree $r-t$ while restricting to $Z$, which is given by the equations $y_1=\dots=y_t=0$. The statement follows.
\end{proof}

\begin{Prop}\label{prop:localneighbourofcurve}
     Let $v_i\in\Kn(\Ku(Y_3))$, $i=1,2$, and  $E_i\in \ms(v_i)$, satisfying 
     \begin{itemize}
         \item $\Hom(E_i,E_j[k])= 0$  when $k\neq 0,1$, or $k=0$ and $i\neq j$;
         \item $\hom(E_2,E_1[1])=r\geq 2$ and for every $0\neq f\in \Hom(E_2[-1],E_1)$, the object $\Cone(f)$ is $\sigma$-stable.
     \end{itemize}
    Denote by $\omega$ the canonical divisor on $\ms(v_1+v_2)$. Then $\ET(E_1,E_2)\cong \mathbf P^{r-1}$ and $\omega|_{\ET(E_1,E_2)}\cong \cO_{\mathbf P^{r-1}}(d)$, where $d=\hom(E_1,E_2[1])-r$.
\end{Prop}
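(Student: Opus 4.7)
The plan is to reduce the canonical-divisor computation to the quiver calculation of Lemma~\ref{lem:degofcanquivermodel} by a local comparison of deformation theories at the polystable object $E_1\oplus E_2$.

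\emph{Step 1: the projective structure.} I first establish $\ET(E_1,E_2)\cong\mathbf P(\Hom(E_2,E_1[1]))\cong\mathbf P^{r-1}$. By hypothesis (ii), the map $[f]\mapsto E_f:=\Cone(f)[-1]$ is defined on all of $\mathbf P(\Hom(E_2,E_1[1]))$ and lands in $\ET(E_1,E_2)$. Injectivity on points is exactly the argument of Proposition~\ref{prop:extmapfinite}: any isomorphism $E_f\xrightarrow{\sim}E_{f'}$, via the long exact sequences obtained by applying $\Hom(E_1,-)$ and $\Hom(-,E_2)$ to the defining triangles and the vanishings $\Hom(E_i,E_j)=\C\cdot\delta_{ij}$ from (i), is induced by scalar automorphisms of $E_1$ and $E_2$ intertwining the two triangles, forcing $f=\lambda f'$. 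Injectivity of the differential $V/\C f\to\Ext^1(E_f,E_f)$ (with $V=\Hom(E_2,E_1[1])$) follows from the long exact sequence obtained by applying $\Hom(E_2,-)$ to $E_1\to E_f\to E_2\xrightarrow{f}E_1[1]$. Together with properness of $\mathbf P^{r-1}$, this exhibits $\ET(E_1,E_2)$ as a closed subvariety isomorphic to $\mathbf P^{r-1}$.

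\emph{Step 2: formal comparison with the quiver moduli.} Let $Q$ be the two-vertex quiver with $a_{ij}:=\hom(E_i,E_j[1])$ arrows from $v_i$ to $v_j$. Under hypothesis (i), the graded $\Ext$-algebras of $E:=E_1\oplus E_2$ and of $S:=S_1\oplus S_2\in\Rep(Q)$ coincide: both are concentrated in degrees $0,1$ with matching dimensions, and the cup product $\Ext^1\otimes\Ext^1\to\Ext^2$ vanishes on each side because $\Ext^2=0$. Moreover, the minimal $A_\infty$-model of $\RHom(E,E)$ has all higher operations $m_n=0$ for $n\geq 3$: since $m_n$ has cohomological degree $2-n$, inputs from $\Ext^1$ land in $\Ext^2=0$, while the standard $A_\infty$-axioms force $m_n$ with an identity input to vanish for $n\geq 3$. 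Hence the two formal deformation problems are controlled by isomorphic $A_\infty$-algebras, and (e.g.\ via Alper's local structure theorem for good moduli spaces) the formal neighborhood of the polystable point $[E]$ in $\mathscr M_\sigma(v_1+v_2)$ is isomorphic to that of $[S]$ in $[\Rep(Q,\mathbf 1)/G_{\mathbf 1}]$. Under the stability conditions in (ii), which corresponds to $\theta$-stability on the quiver side, this restricts to an isomorphism between a formal neighborhood of $\ET(E_1,E_2)$ in $\ms(v_1+v_2)$ and one of $Z\subset\cR^s_\theta(Q,\mathbf 1)$.

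\emph{Step 3: conclusion.} Since the restriction of $\omega_{\ms(v_1+v_2)}$ to the proper smooth subvariety $\ET(E_1,E_2)$ is determined by the formal neighborhood, Step~2 matches it with $\omega_{\cR^s_\theta(Q,\mathbf 1)}|_Z$. Applying Lemma~\ref{lem:degofcanquivermodel} (whose hypothesis $a_{21}\geq 2$ is exactly $r\geq 2$), one obtains $\omega|_{\ET(E_1,E_2)}\cong\cO(a_{12}-a_{21})=\cO(d)$.

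\textbf{Main obstacle.} The delicate point is Step~2, namely making rigorous the formal comparison of the two stacks at the polystable boundary point $[E]$; the use of Alper's local structure theorem is clean but requires some care regarding the GIT-type setup. A concrete alternative that avoids this machinery entirely is to compute the normal bundle $N_{\ET/\ms}$ on $\mathbf P^{r-1}$ directly from the universal extension on $\mathbf P^{r-1}\times Y_3$: the vanishings in (i) identify
\[
N_{\ET(E_1,E_2)/\ms(v_1+v_2)}\cong\cO^{\oplus a_{11}}\oplus\cO^{\oplus a_{22}}\oplus\cO(-1)^{\oplus a_{12}},
\]
so $\det N=\cO(-a_{12})$, and adjunction with $\omega_{\mathbf P^{r-1}}=\cO(-r)$ yields $\omega_{\ms}|_{\ET}=\cO(a_{12}-r)=\cO(d)$, bypassing Step~2 and recovering the statement.
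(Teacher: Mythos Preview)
Your Step~1 is essentially the paper's argument. The real problem is Step~2: the object $E_1\oplus E_2$ is in general \emph{not} a point of the moduli stack $\mathscr M_\sigma(v_1+v_2)$ at all. In the intended application (and whenever $v_1,v_2$ are not proportional, which is forced by $v_-\times v_+=1$ in Theorem~\ref{thm:fanofiber}) one has $\phi_\sigma(E_1)\neq\phi_\sigma(E_2)$, so $E_1\oplus E_2$ is not $\sigma$-semistable and there is no ``polystable point'' at which to invoke Alper's local structure theorem. Your $A_\infty$-formality observation is correct, but it controls the versal deformation of $E_1\oplus E_2$ as an object of $\Ku(Y_3)$, not a point of the (semi)stable moduli; extracting from it an identification of the formal neighbourhoods of the two copies of $\mathbf P^{r-1}$ inside the respective \emph{stable} loci is precisely the work you have not done.

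Your alternative is essentially what the paper actually does, and is the correct fix. The paper constructs, for each $f$, an explicit three-step filtration of $\Hom(E_f,E_f[1])$ with graded pieces $\Hom(E_2,E_1[1])/\C f$, $\Ext^1(E_1,E_1)\oplus\Ext^1(E_2,E_2)$, and $\Ext^1(E_1,E_2)$, using the maps $f^L,f^R$ from the defining triangle. It then verifies that this filtration lives over $\Hom(E_2,E_1[1])\setminus\{0\}$ compatibly with the $\Aut(E_1)\times\Aut(E_2)/\C^\times$-action and hence descends to a filtration of $T\ms(v_1+v_2)|_{\ET(E_1,E_2)}$. Since only the determinant is needed, the paper does not claim the direct-sum decomposition you assert for $N$; the splitting happens to hold because the relevant $\Ext^1$-groups between $\cO$, $\cO(-1)$ and $T\mathbf P^{r-1}$ vanish on $\mathbf P^{r-1}$, but you would need to say this. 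The paper then observes that the graded pieces depend only on the $\Ext^\bullet(E_i,E_j)$-data and hence agree with the quiver model, and concludes via Lemma~\ref{lem:degofcanquivermodel} rather than adjunction; the two endings are equivalent.
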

\begin{proof}
We first show that $\ET(E_1,E_2)\cong \mathbf P^{r-1}$. Consider the map \[e\colon\P(\Hom(E_2,E_1[1]))\to \ms(v_1+v_2)\colon[f]\mapsto \Cone(f)[-1]=:E_f.\] 
By the second assumption, the map is well-defined.  Assume that $\Cone(f)\cong \Cone(f')$, then as $\Hom(E_1,E_2)=0$, by the second part of the argument for Proposition \ref{prop:extmapfinite}, the morphism $f=f'$ up to a scalar. Therefore the map $e$ is injective.

We then show that $e$ separates tangent vectors, and hence is a closed embedding. For a point $[f]\in \P(\Hom(E_2,E_1[1]))$, its tangent space $T_f(\P(\Hom(E_2,E_1[1])))$ is canonically identified as the quotient subspace $\Hom(E_2,E_1[1])/\C f$. The tangent space at $E_f$ on $\ms(v_1+v_2)$ is identified as $\Hom(E_f,E_f[1])$.  For an element $g\in\Hom(E_2,E_1[1])/\C f$, the induced pushforward of $e$ maps $g$ to the composition \[de_f(g)\colon E_f\xrightarrow{f^L} E_2\xrightarrow{g}E_1[1]\xrightarrow{f^R} E_f[1]\] up to a scalar. (It follows from the first assumption that $\hom(E_f,E_2)=\hom(E_1,E_f)=1$. So the morphisms $f^L$ and $f^R$ are uniquely determined up to scalar.)
   \begin{equation}\label{eqdiagram6}
	\begin{tikzcd}
		 E_f \arrow{r}{f^L} \arrow[d,dashed]{}{h} & E_2
		\arrow{d}{g} \arrow{r}{f} & E_1[1] \arrow{r}{+}& \;\\
		 E_2 \arrow{r}{f} & E_1[1]
		 \arrow{r}{f^R} & E_f[1] \arrow{r}{+}& .
	\end{tikzcd}
\end{equation}
Assume that $de_f(g)=0$, then there exists $h\in \Hom(E_f,E_2)$ that commutes the diagram. As $h\sim f^L$, we have $g\circ f^L= f\circ h=0$. By the first assumption $g\circ f^L=0$ if and only if $g\sim f$, in other words, the element $g= 0$ in $\Hom(E_2,E_1[1])/\C f$. So the pushforward map $de_f$ is injective. In particular, the space $\ET(E_1,E_2)=e(\P(\Hom(E_2,E_1[1]))\cong \P(\Hom(E_2,E_1[1]))\cong \mathbf P^{r-1}$.\\

To compute $\omega|_{\ET(E_1,E_2)}$, we claim that the tangent space at $E_f$, which is $T_{E_f}(\ms(v_1+v_2))\cong\Hom(E_f,E_f[1])$, admits the following filtration up to the choice of $f^R$ and $f^L$:
\begin{align}\label{eq:filofefef1}
    0\subset V_1\subset V_2\subset V_3=\Hom(E_f,E_f[1]).
\end{align} Here $V_1\cong \Hom(E_2,E_1[1])/\C f$, $V_2/V_1\cong \Hom(E_1,E_1[1])\oplus \Hom(E_2,E_2[1])$, and $V_3/V_2\cong \Hom(E_1,E_2[1])$. 

To get \eqref{eq:filofefef1}, we consider the map $\pi_{f}\colon\Hom(E_f,E_f[1])\to \Hom(E_1,E_2[1])\colon k\mapsto f^L[1]\circ k\circ f^R[-1]$. As $\Hom(E_i,E_j[2])=0$ for all $i,j\in\{1,2\}$, the map $\pi_f$ is surjective. 

For every $k\in \ker \pi_f$, there exists $k_1\in \Hom(E_1,E_1[1])$ that commutes the diagram \eqref{eqdiagram62} below. As $\Hom(E_1,E_2)=0$, such a morphism $k_1$ is unique. Similarly, there exists unique $k_2\in\Hom(E_2,E_1[1])$ that commutes the diagram. This gives the map \[\pi_{f,1,2}\colon\ker \pi_f\to \Hom(E_1,E_1[1])\oplus \Hom(E_2,E_2[1])\colon k\mapsto (k_1,k_2).\]
\begin{equation}\label{eqdiagram62}
	\begin{tikzcd}
		 E_1 \arrow{r}{f^R[-1]} \arrow[d,dashed]{}{k_1} & E_f
		\arrow{d}{k} \arrow{r}{f^L} \arrow[dl,dashed]{}{k'} & E_2 \arrow{r}{+}\arrow[d,dashed]{}{k_2}& \;\\
		 E_1[1] \arrow{r}{f^R} & E_f[1]
		 \arrow{r}{f^L[1]} & E_2[1] \arrow{r}{+}& .
	\end{tikzcd}
\end{equation}
The map $\pi_{f,1,2}$ is surjective:  for every  $k_1\in \Hom(E_1,E_1[1])$, as $\Hom(E_2,E_1[2])=0$, it lifts to $k'\in \Hom(E_f,E_1[1])$ such that $k_1=k'\circ f^R[-1]$. Let $k=f^R\circ k'$, then $k'\in \ker \pi_f$ and $\pi_{f,1,2}(k)=(k_1,0)$. Similarly, $\pi_{f,1,2}$ is surjective onto the second factor. So $\pi_{f,1,2}$ is surjective.

The space $\ker\pi_{f,1,2}$ has dimension equal to $\hom(E_2,E_1[1])-1$ and it consists of morphisms satisfying $k\circ f^R[-1]=f^L[1]\circ k=0$. By the first part of the proof, this is just the sub tangent space $T_{E_f}(\ET(E_1,E_2))=de_f(\Hom(E_2,E_1[1])/\C f)$. To sum up, we get the filtration \eqref{eq:filofefef1} for $\Hom(E_f,E_f[1])$. 

Fix a family of $f^R$ and $f^L$ on $\Hom(E_2,E_1[1])\setminus\{0\}$ that is compatible with the action of $\End(E_1,E_1)\times \End(E_2,E_2)/\C^\times$, in other words, for every $c_i\in \End(E_i,E_i)$, $((c_1,c_2)f)^R=c_1^{-1}c_2f^R$ and $((c_1,c_2)f)^L=c_2^{-1}c_1f^L$. We get a family version of \eqref{eq:filofefef1} over $\Hom(E_2,E_1[1])\setminus\{0\}$ as $0\subset \tilde \cV_1\subset\tilde \cV_2\subset\tilde\cV_3\cong \cHom(E_f,E_f[1]) |_{\Hom(E_2,E_1[1])\setminus\{0\}}$. As $f^R$ and $f^L$ are compatible with  the $\End(E_1,E_1)\times \End(E_2,E_2)/\C^\times$-action, this filtration descends to the filtration   $0\subset \cV_1\subset\cV_2\subset\cV_3\cong T\ms(v_1+v_2)|_{\ET(E_1,E_2)}$ after taking the quotient.  Taking the top exterior product on the filtration, we get \begin{align}
    \omega^{-1}|_{\ET(E_1,E_2)} & \cong \bigwedge \!^{\text{top}}\cV_1\otimes \bigwedge \!^{\text{top}}(\cV_2/\cV_1)\otimes \bigwedge \!^{\text{top}}(\cV_3/\cV_2).
\end{align}
Note that the divisor $\omega^{-1}|_{\ET(E_1,E_2)}$ does not depend on the extensions between these factors. The bundles $\cV_{i+1}/\cV_i$ only depend on the quiver $\Ext^\bullet(E_i,E_j)$-algebra as they are descended from the space $\Hom(E_2,E_1[1])\setminus\{0\}$. So the degree of $\omega^{-1}|_{\ET(E_1,E_2)}$ is the same as that of the quiver model in Lemma \ref{lem:degofcanquivermodel}. The statement follows.
\end{proof}

Now we are ready to prove the main theorem of this section: the general fibers of the Abel--Jacobi map are Fano varieties with prime canonical class. Classically, for moduli of vector bundles on curves, this can be deduced from the fact that every fiber is unirational of Picard rank one. In the $\Ku(Y_3)$ case, it is still unknown to us if the general fiber of $\IJ_v$ is unirational or of Picard rank one. Also note that moduli of vector bundles with a fixed determinant on curves is of index $2$, while for cubic threefolds, we will show that the canonical class is indeed primitive.

\begin{Thm}\label{thm:fanofiber}
    Let $Y_3$ be smooth cubic threefold and $\Ku(Y_3)$ be its Kuznetsov component. Then for every primitive $v\in\Kn(\Ku(Y_3))$ with $\chi(v,v)< -4$ and general $c_v\in J_{v}(Y_3)$, the space $\ms(v,c_v)$ is a smooth Fano variety with primitive canonical class.
\end{Thm}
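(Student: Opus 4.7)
The plan is to show $F := \ms(v, c_v)$ is a smooth Fano variety with primitive canonical class by (a) reducing to a curve intersection number via the rank-one restricted N\'eron--Severi, and (b) evaluating that number on an explicit $\mathbf P^1$ of extensions provided by a Farey-neighbor decomposition of $v$.

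First I would establish smoothness and the canonical formula. Since $\ms(v)$ is smooth by Corollary \ref{cor:cub3smoothmoduli} and $J_v(Y_3)$ is a smooth abelian variety, generic smoothness gives that $F$ is smooth. The normal bundle $N_{F/\ms(v)}$ is the restriction of $\IJ_v^* T_{J_v(Y_3)}$, which is trivial since $T_{J_v(Y_3)}$ is trivial, so adjunction yields $K_F = K_{\ms(v)}|_F$. Corollary \ref{cor:relpic1} says the image of $\mathrm{NS}(\ms(v)) \to \mathrm{NS}(F)$ has rank one; let $L$ denote its primitive generator, chosen so that the restriction of an ample divisor from $\ms(v)$ to $F$ is a positive integer multiple of $L$. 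Then $L$ is itself ample, and $K_F = cL$ for some $c \in \Z$.

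Next I would compute $K_F \cdot C = -1$ on an explicit curve $C$. After applying a power of the Serre functor (Remark \ref{rem:Sswapabc}) I may assume $v = n\alpha + m\beta$ with $n, m \geq 1$ and $\gcd(n, m) = 1$. Proposition \ref{pd:pick} produces a Farey-neighbor decomposition $v = v_1 + v_2$ with $v_i = a_i\alpha + b_i\beta$ primitive, $\phi_\sigma(v_1) < \phi_\sigma(v_2)$, and $v_1 \times v_2 = a_1 m - b_1 n = 1$. Lemma \ref{lem:extendstabobj} ensures every non-trivial extension of $E_2 \in \ms(v_2)$ by $E_1 \in \ms(v_1)$ is $\sigma$-stable. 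The phase bounds combined with $\gd(\sigma) < 2$ restrict $\hom(E_1, E_2[k])$ to $k \in \{0, 1\}$ and $\hom(E_2, E_1[k])$ to $k \in \{1, 2\}$; semicontinuity together with $\chi(v_1, v_2) < 0$ and $\chi(v_2, v_1) < 0$ (valid whenever $(a_1,b_1)$ is not a lattice corner of $[0,n]\times[0,m]$) then yield $\hom(E_1, E_2) = \hom(E_2, E_1[2]) = 0$ for generic $(E_1, E_2)$. Proposition \ref{prop:localneighbourofcurve} therefore applies: $\ET(E_1, E_2) \cong \mathbf P^{r-1}$ with $r = -\chi(v_2, v_1) \geq 2$, and $K_{\ms(v)}$ restricts to $\cO(\chi(v_2, v_1) - \chi(v_1, v_2))$ there. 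A direct computation from the Euler form ($\chi(\alpha, \alpha) = \chi(\beta, \beta) = \chi(\beta, \alpha) = -1$ and $\chi(\alpha, \beta) = 0$) gives $\chi(v_2, v_1) - \chi(v_1, v_2) = b_1 n - a_1 m = -1$. Since $c_2$ is additive on extensions, $\ET(E_1, E_2)$ lies in a single fiber of $\IJ_v$; for generic $(E_1, E_2)$ this fiber is a general point of $J_v(Y_3)$, because the addition map on Abel--Jacobi images is surjective by Theorem \ref{thm:connectedfiber} applied to the smaller strata. Taking $C \cong \mathbf P^1$ to be a line in $\ET(E_1, E_2)$ yields $K_F \cdot C = -1$.

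To conclude, combining $K_F = cL$ with $K_F \cdot C = -1$ and $L \cdot C \in \Z_{>0}$ forces $c = -1$ and $L \cdot C = 1$, so $-K_F = L$ is ample and $F$ is Fano. The identity $K_F \cdot C = -1$ moreover excludes any factorization $K_F = kD$ with $k \geq 2$ and $D \in \mathrm{NS}(F)$, so $K_F$ is primitive. The hard part will be the generic Hom-vanishings and the identification of $\ET(E_1, E_2)$ with a general fiber of $\IJ_v$; both hinge on careful tracking of phases under the Serre functor and on the Abel--Jacobi surjectivity already established in Theorem \ref{thm:connectedfiber}.
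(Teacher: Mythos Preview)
Your overall strategy matches the paper's proof almost step for step: reduce to a single intersection number via Corollary~\ref{cor:relpic1}, exhibit a $\mathbf P^{r-1}$ of extensions using the Farey decomposition $v=v_-+v_+$, and invoke Proposition~\ref{prop:localneighbourofcurve} to compute $\omega\cdot C=-1$.

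There is, however, a genuine gap in how you verify the hypotheses of Proposition~\ref{prop:localneighbourofcurve}. The claim that ``semicontinuity together with $\chi(v_1,v_2)<0$ and $\chi(v_2,v_1)<0$'' forces $\hom(E_1,E_2)=\hom(E_2,E_1[2])=0$ generically is not valid: a negative Euler characteristic only says $\hom^1>\hom^0$ (resp.\ $\hom^1>\hom^2$), not that the smaller term vanishes on an open set. The paper closes both vanishings differently. For $\Hom(E_2,E_1[2])$ one uses that both $v_\pm$ lie in the closed $(\alpha,\beta)$-sextant, so the phase gap satisfies $\phi_\sigma(v_+)-\phi_\sigma(v_-)<\tfrac13$; combined with the Serre-invariance $\phi_\sigma(\mathsf S E_2)=\phi_\sigma(E_2)+\tfrac53$ this gives $\phi_\sigma(E_1[2])>\phi_\sigma(\mathsf S E_2)$, whence $\Hom(E_2,E_1[2])=0$ for \emph{all} pairs, not just generic ones (cf.\ Lemma~\ref{lem:extspaceofsmallgapvw}). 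For the generic vanishing of $\Hom(E_1,E_2)$ there is no numerical shortcut: the paper appeals to Theorem~\ref{thm:stratcubic3} (a general $E_{v_\pm}$ is an extension of an object in $\ms(m_\pm\beta)$ by one in $\ms(n_\pm\alpha)$) together with Lemma~\ref{lem:znamb} to conclude $\BN(v_-,v_+)\subsetneq\ms(v_-)\times\ms(v_+)$. This last step is exactly where the stratification machinery developed earlier in the paper enters, and your sketch does not supply an alternative.

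A smaller point: your appeal to Theorem~\ref{thm:connectedfiber} to place $\ET(E_1,E_2)$ in a \emph{general} fiber does not apply directly when one of $v_\pm$ has $\chi\geq -3$ (e.g.\ $v=2\alpha+\beta$ gives $v_-=\alpha$, $v_+=\alpha+\beta$). One can instead argue globally: any line $C\subset\ET(E_1,E_2)$ is $\IJ_v$-vertical, and since the relative N\'eron--Severi rank is one, the relation $\omega_{\ms(v)}\cdot C=-1$ already pins down the sign of $\omega_{\ms(v)}$ relative to a relatively ample class, independently of which fiber $C$ sits in.
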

\begin{proof}
Let $c_v\in J_v(Y_3)$ be general such that $\ms(v,c_v)$ is a smooth fiber of $\IJ_v$ of expected dimension. Then the line bundle $\omega_{\ms(v,c_v)}$ is isomorphic to the restriction of $\omega_{\ms(v)}$. Denote by $\iota\colon \ms(v,c_v)\hookrightarrow\ms(v)$ the inclusion map, by Corollary \ref{cor:relpic1}, the image of $\iota^*(\Pic(\ms(v)))$ is rank $1$. In particular, as $\ms(v)$ is projective, the image of the restriction is generated by an ample divisor on $\ms(v,c_v)$. To show that $\ms(v,c_v)$ is Fano, we only need to check the intersection number of $\omega_{\ms(v,c_v)}$ and a curve is negative.\\

   By Remark \ref{rem:Sswapabc}, we may assume that $v$ is in the $(\alpha,\beta)$-sextant. Let $v_+$ and $v_-$ be the characters as that introduced in Section \ref{sec22}. Write $v_+$ as $n_+\alpha+m_+\beta$ and $v_-$ as $n_-\alpha+m_-\beta$, then $n_-m_+-n_+m_-=1$. It follows that
  \begin{align*}
      &\chi(v_+,v_-)-\chi(v_-,v_+) \\
      = & (-n_+n_--n_-m_+-m_-m_+)-(-n_+n_--n_+m_--m_-m_+)=-1.
  \end{align*}
As $\chi(v,v)< -4$, $v\neq \alpha+\beta$ or $\alpha$ or $\beta$. It follows that $-n_+n_--n_+m_--m_-m_+\leq -1$. So $\chi(v_+,v_-)\leq -2$.
  
  As $v\neq \alpha$ or $\beta$, both $v_+$ and $v_-$ are in the $(\alpha,\beta)$-sextant, $\phi_\sigma(v_+)-\phi_\sigma(v_-)\in(0,\tfrac{1}{3})$. For all pairs of $(E_1,E_2)\in \ms(v_-)\times \ms(v_+)$, we have $\Hom(E_i,E_j[k])=0$ for all $i,j\in\{1,2\}$ whenever $k\geq 0$ or $1$. It is also clear that $\Hom(E_2,E_1)=0$. So $\hom(E_2,E_1[1])=-\chi(v_+,v_-)\geq 2$. By Theorem \ref{thm:stratcubic3} and Lemma \ref{lem:znamb}, $\BN(v_-,v_+)\neq \ms(v_-)\times \ms(v_+)$. So when $(E_1,E_2)$ is general, $\Hom(E_1,E_2)=0$. 

  By Lemma \ref{lem:extendstabobj}, for every $0\neq f\in \Hom(E_2,E_1[1])$, the object $\Cone(f)[-1]\in \ms(v)$. So every  pair of $(E_1,E_2)\in \ms(v_-)\times \ms(v_+)\setminus \BN(v_-,v_+)$ satisfies all assumptions as that in Proposition \ref{prop:localneighbourofcurve}.  By Proposition \ref{prop:localneighbourofcurve}, we have $\omega|_{\ET(E_1,E_2)}\cong\cO_{\mathbf P^{r_v-1}}(-1)$ where $r_v=-\chi(v_+,v_-)$.
  
  Let  $(E_1,E_2)\in \ms(v_-)\times \ms(v_+)\setminus \BN(v_-,v_+)$  be with $c_2(E_1)+c_2(E_2)=c_v$. In particular, $\ET(E_1,E_2)\subset \ms(v,c_v)$ and $\omega_{\ms(v,c_v)}|_{\ET(E_1,E_2)}\cong \cO_{\mathbf P^{r_v-1}}(-1)$. Choose a line $\ell$ in $\ET(E_1,E_2)$, then $\omega_{\ms(v,c_v)}\cdot \ell=-1$. As $\ms(v,c_v)$ is smooth, the divisor $\omega_{\ms(v,c_v)}$ is primitive. The statement holds.
\end{proof}

\begin{Cor}\label{Cor:mrcquotient}
    Let $Y_3$ be smooth cubic threefold and $\Ku(Y_3)$ be its Kuznetsov component. Then for every primitive $v\in\Kn(\Ku(Y_3))$ with $\chi(v,v)\leq -4$, the maximal rationally connected quotient of $\ms(v)$ is the intermediate Jacobian $J(Y_3)$.
\end{Cor}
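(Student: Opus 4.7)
The plan is to deduce the corollary as a formal consequence of Theorems~\ref{thm:connectedfiber} and \ref{thm:fanofiber} combined with standard facts about maximal rationally connected (MRC) quotients. First I would reconcile the inequalities in the hypothesis: for primitive $v$ one has $\chi(v,v)=-(n^2+nm+m^2)$ for some coprime $(n,m)$ (up to the action of the Serre functor, by Remark~\ref{rem:Sswapabc}), so $\chi(v,v)$ takes values in $\{-1,-3,-7,-13,\ldots\}$. In particular the hypothesis $\chi(v,v)\leq -4$ is equivalent to $\chi(v,v)<-4$, which places us exactly in the range of Theorems~\ref{thm:connectedfiber} and \ref{thm:fanofiber}.

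Next I would package the geometric input. Theorem~\ref{thm:connectedfiber} provides a surjective proper morphism $\IJ_v\colon \ms(v)\to J_v(Y_3)$ with connected fibers, and Theorem~\ref{thm:fanofiber} shows that for general $c\in J_v(Y_3)$ the fiber $\ms(v,c)$ is a smooth Fano variety. By the Kollár--Miyaoka--Mori / Campana theorem, smooth Fano varieties are rationally connected, so $\IJ_v$ is a proper morphism with rationally connected general fibers. The target $J_v(Y_3)$ is a torsor over the intermediate Jacobian $J(Y_3)$, hence an abelian fivefold; in particular it is not uniruled — indeed it contains no rational curves at all.

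Finally I would invoke the universal property of the MRC quotient (see Kollár's \emph{Rational Curves on Algebraic Varieties} and Graber--Harris--Starr). Since $J_v(Y_3)$ is not uniruled, any rational map $\ms(v)\dashrightarrow J_v(Y_3)$ factors through the MRC quotient $\pi\colon \ms(v)\dashrightarrow Z$, giving a rational map $g\colon Z\dashrightarrow J_v(Y_3)$ with $\IJ_v=g\circ \pi$. Conversely, because $\IJ_v$ has rationally connected general fibers, the general fiber of $\IJ_v$ maps to a point in $Z$ (any morphism from a rationally connected variety to a non-uniruled variety is constant, and $Z$ is non-uniruled by construction). Thus the fibers of $\IJ_v$ are contained in fibers of $\pi$ generically, so $\dim Z\leq \dim J_v(Y_3)$; combined with the opposite inequality coming from the factorization $\IJ_v=g\circ \pi$, we get $\dim Z=\dim J_v(Y_3)$, so $g$ is birational and $Z\sim_{\mathrm{bir}} J_v(Y_3)\cong J(Y_3)$.

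No step in this argument is a real obstacle — the substantive work is done by Theorems~\ref{thm:connectedfiber} and \ref{thm:fanofiber}, and the corollary is essentially a formal consequence of rational connectedness of Fano varieties together with the absence of rational curves on abelian varieties.
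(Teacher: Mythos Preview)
Your proposal is correct and follows essentially the same approach as the paper. The paper's proof is extremely terse (two sentences citing Theorem~\ref{thm:fanofiber}, rational connectedness of Fano varieties, and the fact that rational connectivity is open and closed in smooth proper families), while you have spelled out the MRC argument more explicitly and helpfully reconciled the hypothesis $\chi(v,v)\leq -4$ with the strict inequality $\chi(v,v)<-4$ used in Theorem~\ref{thm:fanofiber}.
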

\begin{proof}
    This is a direct corollary of Theorem \ref{thm:fanofiber} together with the following facts. Every smooth Fano variety over $\C$ is rationally connected. Rational connectivity in a smooth proper family is both open and closed condition. 
\end{proof}

\section{Applications and further directions}

\subsection{Lagrangian subvarieties}\label{sec:lagrangian}
As another application of Proposition \ref{prop:existencegeneralcase} and the technique in Section \ref{sec2}, we prove \cite[Conjecture A.1]{FGLZ:EPW} in the very general cubic fourfold case. To state the result, we first recap the notion of the Kuznetsov component of a cubic fourfold. More details are referred to the paper \cite{FGLZ:EPW}.

Let $Y_4$ be a cubic fourfold and $j\colon Y_3\to Y_4$ be a smooth hyperplane section. The bounded derived category of coherent sheaves $\Db(Y_4)$ admits a semiorthogonal decomposition:
\begin{align*}
    \Db(Y_4)=\langle \cO_{Y_4}(-H), \Ku(Y_4),\cO_{Y_4},\cO_{Y_4}(H)\rangle,
\end{align*}
where $H$ is the hyperplane class. Denote by $\mathsf{pr}_{\Ku(Y_4)}$ the projection functor from $\Db(Y_4)$ to $\Ku(Y_4)$ given by $\mathbb R_{\cO_{Y_4}(-H)}\circ\mathbb L_{\cO_{Y_4}}\circ\mathbb L_{\cO_{Y_4}(H)}$. Here we write $\mathbb R_E$ (resp. $\mathbb L_E$) for the right (resp. left) mutation functor of the exceptional object $E$. More explicitly, for every object $G\in\Db(Y_4)$,
\begin{align*}
    \mathbb R_E(G)&:=\Cone(G\xrightarrow{\ev}E\otimes \mathrm{RHom}(G,E)^*)[-1]. \\ 
    \mathbb L_E(G)&:=\Cone(E\otimes \mathrm{RHom}(E,G)\xrightarrow{\mathsf{ev}}G).
\end{align*}

For every $E\in \Ku(Y_3)$, by adjunction $\RHom_{\cO_{Y_4}}(\cO_{Y_4},j_*E)=\RHom_{\cO_{Y_3}}(j^*\cO_{Y_4},E)=0$ and $\RHom_{\cO_{Y_4}}(\cO_{Y_4}(H),j_*E)=0$. It follows that \[\mathsf{pr}_{\Ku(Y_4)}(j_*E)=\mathbb R_{\cO_{Y_4}(-H)}(j_*E).\] 

Let $\sigma_3$ be the stability condition on $\Ku(Y_3)$ as in Section \ref{sec:kuz}. By \cite[Theorem 1.2]{BLMS:kuzcomponent}, there is a stability condition $\sigma_4$ on $\Ku(Y_4)$.

\def\pry{\ensuremath{\mathsf{pr}_{\Ku(Y_4)}}}
The following result \cite[Lemma A.2]{FGLZ:EPW} will be useful. 
\begin{Lem}[{\cite[Lemma A.2]{FGLZ:EPW}}]
    Let $E\in \Ku(Y_3)$, then the object $j^*\pry j_* E$ fits into the distinguished triangle:
    \begin{align}\label{eq:C5}
        \mathsf S_{\Ku(Y_3)}^{-1}E[2]\to j^*\pry j_* E \to E\xrightarrow{+}.
    \end{align}
\end{Lem}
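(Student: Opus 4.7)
The plan is to produce the triangle by applying the octahedral axiom to two natural triangles — the mutation triangle defining $\pry(j_*E)$ on $Y_4$ and the Koszul-type adjunction triangle for $j^*j_*$ on $Y_3$ — and then identify the fiber with $\mathsf S^{-1}_{\Ku(Y_3)}E[2]$ via Serre/Grothendieck duality and the explicit formula for the inverse Serre functor of $\Ku(Y_3)$.

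First I would show that $j_*E \in \langle \cO_{Y_4}(-H), \Ku(Y_4)\rangle$. Indeed, by adjunction $\RHom_{Y_4}(\cO_{Y_4}(kH), j_*E) = \RHom_{Y_3}(\cO_{Y_3}(kH), E) = 0$ for $k=0,1$ since $E \in \Ku(Y_3)$. Consequently the Kuznetsov projection coincides with the right mutation $\pry(j_*E) = \mathbb R_{\cO_{Y_4}(-H)}(j_*E)$, and fits into
\[
\pry(j_*E) \to j_*E \to \cO_{Y_4}(-H)\otimes V \xrightarrow{+},\qquad V := \RHom_{Y_4}(j_*E, \cO_{Y_4}(-H))^\vee.
\]
Using the divisor form of Grothendieck duality $j^!F = j^*F\otimes \cO_{Y_3}(H)[-1]$, this simplifies to $V \simeq \RHom_{Y_3}(E,\cO_{Y_3})^\vee[1]$.

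Next I would pull back by $j^*$ and combine with the standard identity $j^*j_*F \simeq F\oplus F(-H)[1]$, which follows from the Koszul resolution $[\cO_{Y_4}(-H)\xrightarrow{s}\cO_{Y_4}] \simeq j_*\cO_{Y_3}$ together with the vanishing of $s|_{Y_3}$. This yields a triangle
\[
j^*\pry(j_*E) \to E \oplus E(-H)[1] \to \cO_{Y_3}(-H)\otimes V \xrightarrow{+}.
\]
Composing the middle map with the projection to the first summand $E\oplus E(-H)[1] \to E$, the octahedral axiom produces a triangle $K \to j^*\pry(j_*E) \to E \xrightarrow{+}$, in which $K$ is built as an extension of $\cO_{Y_3}(-H)\otimes V[-1]$ by $E(-H)[1]$.

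The crux is then to identify $K \simeq \mathsf S^{-1}_{\Ku(Y_3)}E[2]$. By Serre duality on $Y_3$ I would rewrite $V = \RHom(\cO_{Y_3}(2H), E)[4]$, so the $\cO_{Y_3}(-H)$-factor in the middle term is exactly the one produced by the left mutation $\mathbb L_{\cO(H)}$ applied to $E(2H)$ (the relevant $\RHom$ group being $\RHom(\cO(H), E(2H)) = \RHom(\cO(-H), E)$ paired against $\cO(H)\otimes \cO_{Y_3}(-2H)[?]$); the remaining $E(-H)$-contribution then matches the outer mutation $\mathbb L_\cO$ after the overall shift $[-3]$ prescribed by $\mathsf S^{-1}_{\Ku(Y_3)}(-)=\mathbb L_\cO\mathbb L_{\cO(H)}((-)(2H))[-3]$. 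Putting everything together gives the desired triangle.

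The main obstacle I expect is precisely this last identification: it is essentially a bookkeeping exercise but requires carefully matching the triangles coming from $\mathbb L_\cO\mathbb L_{\cO(H)}$ with those arising from the octahedron, tracking the shifts, Serre dualities, and the twist by $\cO_{Y_3}(2H)$. A cleaner alternative would be to characterize $\mathsf S^{-1}_{\Ku(Y_3)}$ via the universal property of the Serre functor and verify the isomorphism $K\simeq \mathsf S^{-1}_{\Ku(Y_3)}E[2]$ by checking $\Hom$-pairings against a generating set of $\Ku(Y_3)$; this bypasses the direct mutation computation at the cost of invoking the Serre-functor formalism.
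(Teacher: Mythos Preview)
The paper does not give its own proof of this lemma; it simply quotes \cite[Lemma~A.2]{FGLZ:EPW}. So there is nothing in the paper to compare against, and your proposal has to be judged on its own.

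Your Steps~1--3 are correct. The identity $\pry(j_*E)=\mathbb R_{\cO_{Y_4}(-H)}(j_*E)$ is exactly what the paper records, your computation of $V$ via $j^!(-)=j^*(-)\otimes\cO_{Y_3}(H)[-1]$ is right, and the splitting $j^*j_*E\simeq E\oplus E(-H)[1]$ does hold (the map $j_*E(-H)\xrightarrow{\cdot s}j_*E$ is zero since $j_*E$ is supported on $\{s=0\}$, and $j_*$ is fully faithful). The octahedral axiom then yields $K\to j^*\pry j_*E\to E$ with $K$ sitting in a triangle
\[
\cO_{Y_3}(-H)\otimes V[-1]\;\longrightarrow\;K\;\longrightarrow\;E(-H)[1].
\]

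The gap is your Step~4. Your proposed identification --- matching the $\cO(-H)\otimes V[-1]$ piece with $\mathbb L_{\cO(H)}(E(2H))$ and the $E(-H)[1]$ piece with the outer $\mathbb L_\cO$ --- does not work as written: after Serre duality one has $V[-1]=\RHom(\cO(2H),E)[3]$, whereas the mutation $\mathbb L_{\cO(H)}(E(2H))$ involves $\RHom(\cO(-H),E)$; these are genuinely different complexes, not related by a shift. So the ``bookkeeping'' you anticipate is not merely bookkeeping.

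There is, however, a one-line fix once you add the missing ingredient that $K\in\Ku(Y_3)$ (this follows from $j^*\Ku(Y_4)\subset\Ku(Y_3)$, which is a short computation using the Koszul triangle on $Y_4$ and the orthogonality conditions defining $\Ku(Y_4)$). Twist your $K$-triangle by $\cO_{Y_3}(H)$ and apply $\mathbb L_{\cO}$: the term $\cO\otimes V[-1]$ dies because $\mathbb L_\cO(\cO)=0$, and $E[1]$ is fixed because $E\in\langle\cO\rangle^\perp$. Hence $\mathsf L(K)=\mathbb L_\cO(K(H))\cong E[1]$. Since $K\in\Ku(Y_3)$ and $\mathsf L$ is an autoequivalence there with $\mathsf S=\mathsf L[1]$, this gives $K\cong\mathsf L^{-1}E[1]=\mathsf S^{-1}E[2]$, which is exactly what you want. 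Your alternative via the universal property of the Serre functor would also work, but is less direct than this.
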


We are ready to prove \cite[Conjecture A.1]{FGLZ:EPW} for very general cubic fourfolds.
\begin{Thm}\label{thm:C2}
 Let $Y_4$ be a smooth cubic fourfold with $\rk(\Kn(\Ku(Y_4)))=2$, $j\colon Y_3\to Y_4$ be a smooth hyperplane section. Then for every primitive character $v\in\Kn(\Ku(Y_3))$, there exists a non-empty open subset $U_v\subset M^s_{\sigma_3}(\Ku(Y_3),v)$ such that for every $E\in U_v$, the object $\pry j_*E$ is $\sigma_4$-stable. 
\end{Thm}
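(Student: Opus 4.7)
The plan is to mirror the inductive strategy of Proposition~\ref{prop:existencegeneralcase}, but now working in the ambient category $\Ku(Y_4)$ via the exact functor $\Psi := \pry \circ j_*$. First I would compute the induced $\mathbb{Z}$-linear map
\[
\Psi_*\colon \Kn(\Ku(Y_3)) \to \Kn(\Ku(Y_4))
\]
explicitly on the basis $\{\alpha,\beta,\gamma\}$, using Grothendieck--Riemann--Roch together with the triangle \eqref{eq:C5}. The hypothesis $\rk(\Kn(\Ku(Y_4)))=2$ guarantees that the target is a rank two lattice, and a direct check shows that $\Psi_*$ is compatible with central charges in the sense that, whenever $\phi_{\sigma_3}(v_-) < \phi_{\sigma_3}(v_+)$ with phase gap less than $1$, the same phase inequality (with gap still in $(0,1)$) holds for $\phi_{\sigma_4}(\Psi_*(v_\pm))$. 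This is a straightforward verification using the explicit form of $\sigma_4$ from \cite{BLMS:kuzcomponent}.

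Next I would handle the base cases. For $v = \beta$, the object $\pry j_* \cI_\ell$ corresponds, via the embedding of $F(Y_3)$ into the hyperk\"ahler variety of lines $F(Y_4)$, to a known $\sigma_4$-stable object; this is classical for lines contained in a hyperplane section. The cases $v = \alpha$ and $v = \gamma$ then follow from the Serre invariance of $\sigma_3$ (Proposition~\ref{prop:serreinv}) together with the fact that $\pry j_*$ intertwines the two Serre functors up to a shift, which can be read off from \eqref{eq:C5}. A handful of small $|v|$ remain to be checked directly using the explicit descriptions from Section~\ref{sec:mssmall}.

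For the inductive step, let $v$ be primitive with $|v|$ large enough, and write $v = v_+ + v_-$ as in Pick's Theorem~\ref{pd:pick}. By Theorem~\ref{thm:stratcubic3} and its $\mathsf S$-translates (Remark~\ref{rem:Sswapabc}), a generic $E \in M^s_{\sigma_3}(v)$ fits in a distinguished triangle $E_- \to E \to E_+ \xrightarrow{+}$ with $E_\pm \in M^s_{\sigma_3}(v_\pm)$ generic. By the inductive hypothesis, $\Psi(E_\pm)$ are $\sigma_4$-stable, and applying the exact functor $\Psi$ yields a triangle
\[
\Psi(E_-) \to \Psi(E) \to \Psi(E_+) \xrightarrow{+}
\]
in $\Ku(Y_4)$. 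To conclude $\sigma_4$-stability of $\Psi(E)$ via Lemma~\ref{lem:extendstabobj}, it remains to verify the cross product condition $\Psi_*(v_-) \times \Psi_*(v_+) = 1$ in $\Kn(\Ku(Y_4))$, as the phase hypothesis is already controlled by the first step.

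The main obstacle is precisely this primitivity condition. The determinant of $\Psi_*$, viewed as a map of rank two lattices, need not equal $\pm 1$; if it does, the argument closes immediately, but in general the image extension class may fail to be primitive. To bypass this, I would fall back on Lemma~\ref{lem:cokerstable} in place of Lemma~\ref{lem:extendstabobj}: it is enough to show that for a generic pair $(E_-, E_+)$ no $\sigma_4$-stable object with numerical class strictly interior to the triangle $\triangle(\Psi_*(v_-), \Psi_*(v_+))$ maps nontrivially to $\Psi(E_+)$ or from $\Psi(E_-)$. This can be arranged by an upper semicontinuity argument on $M^s_{\sigma_3}(v_-) \times M^s_{\sigma_3}(v_+)$, noting that the locus where such morphisms occur is a proper closed subset by dimension counting (mirroring the estimates of Lemma~\ref{lem:extspaceofsmallgapvw}), and invoking the inductive non-emptiness already established. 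Openness of the resulting locus $U_v \subset M^s_{\sigma_3}(v)$ then follows from the openness of $\sigma_4$-stability in flat families.
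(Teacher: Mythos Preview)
Your inductive skeleton matches the paper's, but the execution has two genuine gaps and misses the key technical step.

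First, Theorem~\ref{thm:stratcubic3} does not give the Pick decomposition. For $v = n\alpha + m\beta$ it says a generic object lies in $\ET(n\alpha, m\beta)$, not in $\ET(v_-, v_+)$; indeed by Corollary~\ref{cor:codimofETlocus} the locus $\ET(v_-, v_+)$ has codimension $-\chi(v_-, v_+) \geq 1$ in $\ms(v)$, so it is never dense. The paper runs the induction the other way: pick $E_\pm \in U_{v_\pm}$ first (which exist by the inductive hypothesis), form any nonzero extension $E_f$, note it is $\sigma_3$-stable by Lemma~\ref{lem:extendstabobj}, prove $\Psi(E_f)$ is $\sigma_4$-stable, and only then invoke openness to obtain $U_v$. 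The stratification theorem is not needed at all.

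Second, your worry about $\det \Psi_*$ is misplaced, and your fallback does not work. Under the hypothesis $\rk(\Kn(\Ku(Y_4))) = 2$ the map $\Psi_*$ is a lattice isomorphism preserving orientation, so thin triangles map to thin triangles and the setup of Lemma~\ref{lem:extendstabobj} is available on the $Y_4$ side with no loss of primitivity. Your fallback via Lemma~\ref{lem:cokerstable} would require controlling Brill--Noether loci of $\sigma_4$-stable objects in $\Ku(Y_4)$; the estimates of Lemma~\ref{lem:extspaceofsmallgapvw} are about $\Ku(Y_3)$ and do not transfer.

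What you are missing is the actual content of the proof. To run Lemma~\ref{lem:extendstabobj} in $\Ku(Y_4)$ one must exclude the only possible destabilizing scenario, which amounts to showing $\Hom(\Psi(E_+), \Psi(E_f)) = 0$. The paper verifies this by adjunction: since $\Psi(E_+) \in \Ku(Y_4)$ is right-orthogonal to $\cO_{Y_4}(-H)$, one has
\[
\Hom(\Psi(E_+), \Psi(E_f)) \;=\; \Hom(\Psi(E_+), j_* E_f) \;=\; \Hom(j^* \Psi(E_+), E_f),
\]
and the triangle~\eqref{eq:C5} expresses $j^* \Psi(E_+)$ as an extension of $E_+$ by $\mathsf{S}^{-1}_{\Ku(Y_3)} E_+[2]$, both of which have $\sigma_3$-phase strictly larger than $\phi_{\sigma_3}(E_f)$ (the second because $\phi_{\sigma_3}(\mathsf{S}^{-1}E_+[2]) = \phi_{\sigma_3}(E_+) + \tfrac{1}{3}$). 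This reduces a $\Hom$-vanishing in $\Ku(Y_4)$ to two elementary phase inequalities in $\Ku(Y_3)$, and is the heart of the argument. Your proposal gestures at~\eqref{eq:C5} in the preamble but never actually uses it.
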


\begin{proof}
We prove the statement by induction on $-\chi(v,v)$. By Example \ref{eg511} and a direct computation (see \cite[Theorem B.1]{FGLZ:EPW}), the statement holds for $\pm\alpha$, $\pm\beta$ and $\pm\gamma$.  Assume that the statement holds for all $v'$ with $-\chi(v',v')\leq N-1$ for some $N\geq 2$.
 
 Let $v$ be a primitive character with $-\chi(v,v)=N\geq 2$. By Proposition and Definition \ref{pd:pick} and Lemma \ref{lem:22matrix}, $v=v_++v_-$ for some primitive $v_\pm$  with $-\chi(v_-,v_-),-\chi(v_+,v_+)\leq N-1$ and $\chi(v_+,v_-)<0$. 

 By induction, there exist $E_\pm\in U_{v_\pm}$. As $\chi(v_+,v_-)<0$ and $\gd(\sigma_3)<2$, $\exists \;0\neq f\in \Hom(E_+,E_-[1])$. By Lemma \ref{lem:extendstabobj}, the object $E_f=\Cone(f)[-1]$ is $\sigma_3$-stable with character $v$.

Applying the functors $\mathbb R_{\cO_{Y_4}(-H)}\circ j_*$ to the distinguished triangle
\begin{align*}
    E_-\to E_f\to E_+\xrightarrow{f}E_-[1],
\end{align*}
we get the distinguished triangles:
   \begin{align*}
    \pry j_*E_-\to \pry j_*E_f\to \pry j_*E_+\xrightarrow{\pry j_*f}\pry j_*E_-[1].
\end{align*}

As $Y_4$ is assumed with $\rk(\Kn(\Ku(Y_4)))=2$, the induced map
    \begin{align*}
        \varphi:=\mathsf{pr}_{\Ku(Y_4)}\circ j_*:\Kn(\Ku(Y_3))\to \Kn(\Ku(Y_4))
    \end{align*} is a group isomorphism preserving the orientation with respect to $\sigma_3$ and $\sigma_4$. In particular, for every non-zero $v,w\in \Kn(\Ku(Y_3))$ with no inner lattice point in $\triangle(v,w)$, there is no inner lattice point in $\triangle(\varphi(v),\varphi(w))$.

By induction, the objects $\pry j_*E_\pm$ are both $\sigma_4$-stable. By Lemma \ref{lem:extendstabobj}, to show that $\pry j_*E_f$ is $\sigma_4$-stable, we only need to show
\begin{align}\label{eq:Cvan}
    \Hom(\pry j_*E_+,\pry j_*E_f)=0
\end{align}

As $\pry j_*E_+\in\Ku(Y_4)$, we have $\RHom(\pry j_*E_+,\cO(-H))=0$. Applying the functor $\Hom(\pry j_*E_+,-)$ to the distinguished triangle
\begin{align*}
    \pry j_* E_f\to j_*E_f\xrightarrow{\ev}\cO_{Y_4}(-H)\otimes \RHom(\cO_{Y_4}(-H),j_*E_f)^*\xrightarrow{+},
\end{align*}
it follows that 
\begin{align}\label{eq:C11}
    \Hom(\pry j_*E_+,\pry j_*E_f)\cong\Hom(\pry j_*E_+,j_*E_f).
\end{align}

By adjunction, we have 
\begin{align}\label{eq:C12}
    \Hom(\pry j_*E_+,j_*E_f)\cong \Hom(j^*\pry j_*E_+,E_f).
\end{align}

By Proposition \ref{prop:serreinv}, $\phi_{\sigma_3}( \mathsf{S}_{\Ku(Y_3)}^{-1}E_+[2])>\phi_{\sigma_3}(E_+)>\phi_{\sigma_3}(E_f)$, we have \[\Hom(\mathsf{S}_{\Ku(Y_3)}^{-1}E_+[2],E_f)=\Hom(E_+,E_f)=0.\] Applying \eqref{eq:C5} to $E_+$ and  combining the two vanishings above, we get 
\begin{align}\label{eq:C13}
    \Hom(j^*\pry j_*E_+,E_f)=0.
\end{align}

Combining \eqref{eq:C11}, \eqref{eq:C12}, and \eqref{eq:C13}, we have the vanishing as that in \eqref{eq:Cvan}. The object $\pry j_*E_f$ is $\sigma_4$-stable. 

By the openness property of the stability conditions as in \cite{liurendapaper}, there is a non-empty open subset $U_v$ satisfying for every $E\in U_v$, $\pry j_*E$ is $\sigma_4$-stable. The statement holds by induction.
\end{proof}

Recall that the moduli space of stable objects on $\Ku(Y_4)$ is a smooth projective hyperk\"ahler variety \cite{liurendapaper}. By \cite[Theorem A.4]{FGLZ:EPW}, Theorem~\ref{thm:C2} has the following consequence on the existence of Lagrangian subvarieties of these moduli spaces.

\begin{Thm}[{\cite[Theorem A.4]{FGLZ:EPW}}]\label{thm:lagrangian_existence}
    Continuing with the setup of Theorem \ref{thm:C2}, the functor $\pry j_*$ induces a rational map $r\colon M_{\sigma_3}(\Ku(Y_3),v) \dashrightarrow M_{\sigma_4}(\Ku(Y_4),\pry j_*(v))$, such that the image is Lagrangian in $M_{\sigma_4}(\Ku(Y_4),\pry j_*(v))$. Furthermore, $r|_{U_v}$ is an immersion and $r$ is a birational map.
\end{Thm}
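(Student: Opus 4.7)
The plan is to proceed in four stages: construct $r$ as a morphism on $U_v$, compute its differential via adjunction, verify that the pullback of the symplectic form vanishes, and finish with a dimension count.

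First, by Theorem \ref{thm:C2}, for every $E\in U_v$ the object $\pry(j_*E)\in \Ku(Y_4)$ is $\sigma_4$-stable of class $\pry j_*(v)$. Since $j_*$ and the projection $\pry$ are exact Fourier--Mukai functors, this construction works in families and yields a morphism $r\colon U_v\to M_{\sigma_4}(\Ku(Y_4),\pry j_*(v))$, which extends to a rational map defined on all of $M_{\sigma_3}(\Ku(Y_3),v)$.

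Second, at a point $E\in U_v$ the tangent spaces are $\Ext^1_{Y_3}(E,E)$ and $\Ext^1_{Y_4}(\pry j_*E,\pry j_*E)$, and $dr_E$ is the map induced by the functor. Adjunction gives
\[\Ext^\bullet_{Y_4}(\pry j_*E,\pry j_*E)\cong \Ext^\bullet_{Y_3}(j^*\pry j_*E,E),\]
and the triangle \eqref{eq:C5} produces a long exact sequence whose relevant piece reads
\[\Ext^1_{Y_3}(E,E)\xrightarrow{dr_E} \Ext^1_{Y_3}(j^*\pry j_*E,E)\to \Ext^1_{Y_3}(\mathsf S^{-1}E[2],E).\]
Simplicity of $E$ together with the vanishing $\Hom_{Y_3}(E,E[-1])=0$ forces $dr_E$ to be injective, so $r|_{U_v}$ is an immersion.

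Third, check isotropy. The holomorphic symplectic form on $M_{\sigma_4}$ at $F=\pry j_*E$ is the Yoneda composition $\Ext^1_{Y_4}(F,F)^{\otimes 2}\to \Ext^2_{Y_4}(F,F)\xrightarrow{\mathrm{tr}}\C$, using the 2-Calabi--Yau structure of $\Ku(Y_4)$. Transporting this pairing through adjunction and splitting via \eqref{eq:C5}, the pullback along $dr_E$ decomposes into two contributions on $\Ext^1_{Y_3}(E,E)$: one Yoneda pairing on $\Ku(Y_3)$, and its twist by the Serre functor $\mathsf S$. Because $\Ku(Y_3)$ is fractional Calabi--Yau of dimension $5/3$ rather than $2$, these two contributions cancel, so $dr_E(\Ext^1_{Y_3}(E,E))$ is isotropic.

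For the fourth step, applying $\chi$ to \eqref{eq:C5} and using adjunction,
\[\chi_{Y_4}(\pry j_*v,\pry j_*v)=\chi_{Y_3}(v,v)+\chi_{Y_3}(\mathsf S^{-1}v[2],v)=2\chi_{Y_3}(v,v),\]
where the last equality is Serre duality on $\Ku(Y_3)$. Hence $\dim M_{\sigma_4}=2-\chi_{Y_4}(\pry j_*v,\pry j_*v)=2(1-\chi_{Y_3}(v,v))=2\dim M_{\sigma_3}$. Combined with the immersion and isotropy, the image of $r|_{U_v}$ is Lagrangian of half-dimension, and since $r|_{U_v}$ is generically injective onto a subvariety of its own dimension, $r$ is birational onto its image.

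The main obstacle is the cancellation in the third step. Running this precisely requires bookkeeping of several adjunction isomorphisms and the interplay between the Serre functors on $Y_3$, on $Y_4$, and on the two Kuznetsov components; this is where the fractional Calabi--Yau structure of $\Ku(Y_3)$ enters decisively. The detailed execution is carried out in \cite{FGLZ:EPW}, and our contribution here is providing the input—namely the non-empty open subset $U_v$ given by Theorem \ref{thm:C2}—on which all of the above arguments become literally valid.
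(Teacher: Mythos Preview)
The paper gives no proof of this statement at all: it is stated purely as a citation of \cite[Theorem A.4]{FGLZ:EPW}, with the remark that Theorem~\ref{thm:C2} supplies the hypothesis needed there. Your proposal correctly identifies this, and your sketch of the four steps (construction, immersion via the triangle \eqref{eq:C5}, isotropy, dimension count) is a reasonable outline of the argument in the cited reference---indeed more than the present paper attempts. Your closing sentence accurately describes the division of labour, so there is nothing to correct.
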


\begin{Rem}
    Note that in the case of Gushel--Mukai varieties, a much stronger result is proved in \cite{FGLZ:EPW}. In particular, they showed that for a very general GM fourfold and a smooth hyperplane section, an object in the Kuznetsov component of the GM threefold is semistable \textit{if and only if} its image in the Kuznetsov component of the GM fourfold is semistable. It is not clear to us whether the analogue of this stronger statement can hold in our case of cubic hypersurfaces.
\end{Rem}

\subsection{Hilbert schemes of curves on cubic threefolds}\label{sec:hilb_curves}

Curves with small degree and genus on the cubic threefold have been studied in \cite{HarrisRothStarr:curvesoncubic3fold}.  Denote by $\Hilb^{d,g}(Y_3)$ the open subscheme parametrizing irreducible smooth curves with degree $d$ and genus $g$ on $Y_3$ in the Hilbert scheme $\Hilb_{dt+1-g}(Y_3)$. It is shown explicitly case by case in \cite{HarrisRothStarr:curvesoncubic3fold} that $\Hilb^{d,g}(Y_3)$ is irreducible when $d\leq 5$, and smooth when $(d,g)\in\{(1,0),(2,0),(3,0),(3,1),(4,1)\}$. As an application of our results, we can describe the birational model of the ``main'' irreducible component of $\Hilb^{d,g}(Y_3)$ for some values of $d$ and $g$.

We start with a birational equivalence result, which strengthens Theorem~\ref{thm:connectedfiber} in special cases.

\begin{Prop}\label{prop:birfiber}
     Let $Y_3$ be a smooth cubic threefold and $\Ku(Y_3)$ be its Kuznetsov component. Let $v\in\Kn(\Ku(Y_3))$ be in either one of the following forms 
     \[2n\beta+\alpha; 2n\alpha+\beta; 2n\alpha+(2n-1)\beta\]
   for some $n\geq 1$. Then for general $c_v,c_v'\in J_{v}(Y_3)$, the spaces $\ms(v,c_v)$ and $\ms(v,c'_v)$ are  birational. 
   For every general  $c_1\in J_{2\beta+\alpha}(Y_3)$ and $c_2\in J_{2\alpha+\beta}(Y_3)$, the spaces $\ms(2\beta+\alpha,c_1)$ and $\ms(2\alpha+\beta,c_2)$ are birational. 
\end{Prop}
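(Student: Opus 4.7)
The strategy is to realize each $\ms(v,c)$ birationally as a projective bundle over a base that is independent of $c$ up to birational equivalence, using the extension stratification of Theorem~\ref{thm:stratcubic3}, and to transport between different $c$'s via Hecke-type transforms coming from $1$-dimensional generic $\Hom$ spaces.

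First I would handle $n=1$, which covers $v=2\beta+\alpha$, $v=2\alpha+\beta$, and the crossover statement. Theorem~\ref{thm:stratcubic3} combined with Proposition~\ref{prop:extmapfinite}, together with the generic vanishing $\hom(E_\alpha,E_{2\beta})=0$ (which follows from $\chi(\alpha,2\beta)=0$, Proposition~\ref{prop:serreinv}, and phase bounds), promotes the dominant extension map $e_{\alpha,2\beta}\colon\Pe{\alpha}{2\beta}\dashrightarrow\ms(2\beta+\alpha)$ to a birational equivalence (both sides have dimension $8$). Since the AJ map factors as $\Pe{\alpha}{2\beta}\to\ms(\alpha)\times\ms(2\beta)\xrightarrow{\IJ_\alpha+\IJ_{2\beta}}J(Y_3)$, the fiber $\ms(2\beta+\alpha,c)$ is birational to a $\mathbb P^1$-bundle over $Z_c:=\{(E_\alpha,E_{2\beta})\mid\IJ_\alpha(E_\alpha)+\IJ_{2\beta}(E_{2\beta})=c\}$. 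Theorem~\ref{thm:connectedfiber} applied to $2\beta$ (with $\chi(2\beta,2\beta)=-4$ and $\dim\ms(2\beta)=5=\dim J(Y_3)$, surjective with connected fibers) shows $\IJ_{2\beta}$ is in fact birational, so for generic $c$ the set $\ms(2\beta,c-\IJ_\alpha(E_\alpha))$ reduces to a single point; hence $Z_c\sim_{\mathrm{bir}}F(Y_3)$ independently of $c$. The parallel stratification $\ms(2\alpha+\beta)=\overline{\ET(2\alpha,\beta)}$ (using $\ms(2\alpha)\cong\ms(2\beta)$ via Serre, Notation~\ref{not:serrey3}) identifies $\ms(2\alpha+\beta,c')$ with a $\mathbb P^1$-bundle over $F(Y_3)$ as well; comparing the two relative $\Ext^1$ sheaves up to Brauer-equivalence on $F(Y_3)$ then gives the crossover statement and the $n=1$ cases.

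For $n\geq 2$, the key input is a Hecke-type transform. Direct computation with Notation~\ref{not:abc} gives $\chi(2n\beta+\alpha,\gamma)=2n\cdot 0+1=1$ and $\chi(2n\alpha+(2n-1)\beta,\beta+\gamma)=2n\cdot 1+(2n-1)\cdot(-1)=1$; combined with Serre invariance and phase bounds to rule out higher Ext generically, this forces $\hom(E_v,E_w)=1$ for generic $(E_v,E_w)$, where $w=\gamma$ or $w=\beta+\gamma$ respectively. The shifted cone $E':=\mathrm{Cone}(E_v\to E_w)[-1]$ is $\sigma$-stable by a Lemma~\ref{lem:cokerstable}-type argument, producing a rational correspondence from $\ms(v)$ to $\ms(v-w)$ that shifts the AJ image by $-\IJ(E_w)$. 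Since the image of $\ms(w)$ in $J(Y_3)$ generates $J(Y_3)$ as an abelian group (classically for $w=\gamma$ via $\ms(\gamma)\cong F(Y_3)$; for $w=\beta+\gamma$ via Example~\ref{eg:ab}), iterating forward and backward Hecke transforms with varying $E_w$ allows one to translate the AJ image by any element of $J(Y_3)$, yielding birational equivalences $\ms(v,c)\sim\ms(v,c')$ for arbitrary generic $c,c'\in J_v(Y_3)$; the case $v=2n\alpha+\beta$ is handled by Serre symmetry.

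The most delicate step will be converting the Hecke correspondence into an actual birational map between AJ-fibers when $n\geq 2$: since $\dim\ms(v)\ne\dim\ms(v-w)$ in general, the correspondence is not a birational equivalence of total moduli spaces, and when restricted to fibers of $\IJ_v$ it has fibers of dimension $2n-2$ over $\ms(v-w,c')$. I expect this to be addressed by combining the dimension estimates of Lemma~\ref{lem:homb} with the observation that the extra $2n-2$ dimensions are absorbed by the projective bundle structure coming from the stratification, so that the composition of a Hecke transform with its reverse (using a second choice of $E_w$) produces a genuine birational self-map of $\ms(v)$ translating $\IJ_v$ by a fixed element of $J(Y_3)$. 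The Brauer-class tracking for the $\mathbb P^{2n-1}$-bundles appearing in the stratification, obstructed by the non-primitivity of $2n\beta$, will be the main technical obstacle.
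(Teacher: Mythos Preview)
Your $n\geq 2$ sketch contains the right ingredients, but your $n=1$ argument has a genuine error, and fixing it reveals that the $n\geq 2$ mechanism already handles $n=1$ uniformly and without the Brauer obstruction you anticipate.

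The claim that $e_{\alpha,2\beta}\colon\Pe{\alpha}{2\beta}\dashrightarrow\ms(2\beta+\alpha)$ is birational is false. Proposition~\ref{prop:extmapfinite} gives only generic finiteness, not degree one; and in the proof of Proposition~\ref{prop:stratofmbeta} the paper shows that $e_{\alpha,m\beta}$ has degree strictly greater than one for every $m\geq 2$, by comparing first Betti numbers: $b_1(\ms(\alpha))+b_1(M_\sigma(m\beta))>10\geq b_1(\ms(m\beta+\alpha))$, which is impossible for a birational map. So your identification of $\ms(2\beta+\alpha,c)$ with a $\mathbf P^1$-bundle over $F(Y_3)$ via this stratification does not go through.

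The paper's argument is uniform in $n\geq 1$ and is precisely the mechanism you described for $n\geq 2$, but packaged through the already-established birational map $e^R_{E[-1],2n\beta+\alpha}\colon\Pe{E[-1]}{2n\beta+\alpha}\dashrightarrow\Pe{(2n-1)\beta+2\alpha}{E}$ of Proposition~\ref{prop:stabbir1} (with $E\in\ms(\gamma)$), restricted to Abel--Jacobi fibers via Remark~\ref{rem:compatibleextandajmap}. Your computation $\chi(2n\beta+\alpha,\gamma)=1$ is exactly what makes the source $\Pe{E[-1]}{c}$ birational to $\ms(2n\beta+\alpha,c)$ itself, since the projective-bundle fiber is a point. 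The target $\Pe{c+c_2(E)}{E}$ is a $\mathbf P^{2n-2}$-bundle over $\ms((2n-1)\beta+2\alpha,c+c_2(E))$, and because $(2n-1)\beta+2\alpha$ is \emph{primitive} there is no Brauer obstruction: it is birational to $\ms((2n-1)\beta+2\alpha,c+c_2(E))\times\mathbf P^{2n-2}$. Choosing a second general $E'\in\ms(\gamma)$ gives $\ms(2n\beta+\alpha,c)\sim\ms(2n\beta+\alpha,c+c_2(E)-c_2(E'))$; since $F(Y_3)$ generates $J(Y_3)$ this reaches all general $c'$. When $n=1$ the exponent $2n-2$ vanishes and the same chain yields the crossover $\ms(2\beta+\alpha,c_1)\sim\ms(2\alpha+\beta,c_2)$ directly. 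The cases $v=2n\alpha+\beta$ and $v=2n\alpha+(2n-1)\beta$ are handled identically, using $e^L_{2n\alpha+\beta,E}$ (still $E\in\ms(\gamma)$) and $e^R_{E[-1],2n\alpha+(2n-1)\beta}$ (the latter from Proposition~\ref{prop:stabbir3} with general $E\in\ms(\beta+\gamma)$), respectively.

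The Brauer difficulty you flagged is an artifact of routing through the non-primitive character $2n\beta$; in the paper's route every intermediate moduli space carries a primitive character, so the issue never arises, and there is no need for a separate treatment of $n=1$.
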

\begin{proof}
    By Proposition \ref{prop:stabbir1} and Remark \ref{rem:compatibleextandajmap}, for every $E\in \ms(\gamma)$ and general $c\in J_{2n\beta+\alpha}(Y_3)$, we have the birational map:
    \[e^R_{E[-1],2n\beta+\alpha}\colon\Pe{E[-1]}c\dashrightarrow \Pe{c+c_2(E)}E,\]
    where $c+c_2(E)\in J_{(2n-1)\beta+2\alpha}(Y_3)$. As $\chi(2n\beta+\alpha,\gamma)=1$, the space $\Pe{E[-1]}c$ is birational to $\ms(2n\beta+\alpha,c)$. As $(2n-1)\beta+2\alpha$ is primitive and $-\chi(\gamma,(2n-1)\beta+2\alpha)=2n-1$, the space $\Pe{c'}E$ is birational to $\ms((2n-1)\beta+2\alpha)\times \mathbf P^{2n-2}$. Therefore $\ms(2n\beta+\alpha,c)$ is birational to $\ms(2n\beta+\alpha,c+c_2(E)-c_2(E'))$ for all general $E,E'\in \ms(\gamma)$. Note that $J_0(Y_3)$ can be generated by $F(Y_3)$ as an abelian group, the statement holds for $v$ of the form $2n\beta+\alpha$. Note that when $n=1$, $2n-2=0$. The statement for the $2\alpha+\beta$ and $2\beta+\alpha$ part holds.

    For the $2n\alpha+\beta$ case, by Proposition \ref{prop:stabbir1}, for every $E\in \ms(\gamma)$, we have the birational map $e^L_{\beta+2n\alpha,E}$. As $\chi(\gamma,\beta+2n\alpha)=1$ and $2\beta+(2n-1)\alpha$ is primitive, by the same argument as that for $2n\beta+\alpha$, the statement holds.  For the $2n\alpha+(2n-1)\beta$ case, by Proposition \ref{prop:stabbir3}, for general $E\in \ms(\beta+\gamma)$, we have the birational map $e^{R}_{E[-1],2n\alpha+(2n-1)\beta}$. As $\chi(2n\alpha+(2n-1)\beta,\beta+\gamma)=2n-(2n-1)=1$ and $(2n+1)\alpha+(2n-3)\beta$ is primitive, by the same argument as that for $2n\beta+\alpha$, the statement holds.
\end{proof}

Now we have the following examples.

\begin{Ex}\label{eg:ajfiberhighdimension}
For $m\geq 0$, let $d=\tfrac{3}{2}m^2+\tfrac{3}{2}m+1$ and $g=m^3-m$. Denote by $C_{d,g}$  a smooth irreducible curve with degree $d$ and genus $g$ in $Y_3$. Note that $\cI_{C_{d,g}}(mH)\in\Ku(Y_3)$ if and only if $C_{d,g}$ is not on any hypersurface in $|mH|$ on $Y_3$. 

By a direct computation, the character of $\cI_{C_{d,g}}(mH)$ is $(m+1)\beta+m\gamma$. Note that the dimension of $\ms((m+1)\beta+m\gamma)$ is $3m^2+3m+2=2d$, which is the same as the expected dimension of $\Hilb^{d,g}(Y_3)$.

\begin{Ques}\label{ques:curve}
    Does there exist a smooth irreducible curve $C_{d,g}$ such that $\cI_{C_{d,g}}(mH)$ is $\sigma$-stable?
\end{Ques}
A positive answer to this question will show that a general point in the space $\ms((m+1)\beta+m\gamma)$ is the ideal sheaf of a curve with degree $d$ and genus $g$. In particular, $\ms((m+1)\beta+m\gamma)$ is birational to the irreducible component of $\Hilb^{d,g}(Y_3)$ that contains $C_{d,g}$. In this case, by Corollary \ref{Cor:mrcquotient}, when $m\geq 1$, the mrc quotient of $\Hilb^{\tfrac{3}{2}m^2+\tfrac{3}{2}m+1,m^3-m}(Y_3)$ is $J(Y_3)$.
\end{Ex}
\begin{Ex}[Rational quartic curves and $\ms(2\beta+\alpha)$] \label{ex:quartics}
Continuing from the last example, for every non-degenerate smooth rational quartic $C_{4,0}$ on $Y_3$, the sheaf $\cI_{C_{4,0}}(H)$ is slope stable by definition. By a standard wall-crossing argument, one can show that $\cI_{C_{4,0}}(H)$ is $\sigma_{\alpha,-1}$-stable for every $\alpha>0$ (see Appendix \ref{app:small_moduli} for more details on the notion). It follows that $\cI_{C_{4,0}}(H)$ is $\sigma$-stable. We get the birational map $\ms(2\beta+\gamma)\dashrightarrow \Hilb^{4,0}(Y_3)$.

On the other hand, by \cite{Kuz:V14} or \cite[Theorem 4.7]{Kuz:Fano3fold}, there is a family of smooth Fano threefolds $\{X_t\}$ parametrized by ${t\in M^s_\sigma(2\beta)}$, with index $1$ and genus $8$. For every $t$, we have $X_t$ is birational to $Y_3$, and the category $\Ku(Y_3)\cong \Ku(X_t)$. By \cite[Theorem 5.9]{JLZ} the object $\mathsf{pr}_{\Ku(X_t)}(\cI_p)=\Cone(\cE^{\oplus 4}_{2,X_t}\xrightarrow{\ev}\cI_p)$ is $\sigma$-stable for every $p\in X_t$. By direct computation, we have \[\dim \ms([\mathsf{pr}_{\Ku(X_t)}(\cI_p)])=1-\chi(\mathsf{pr}_{\Ku(X_t)}(\cI_p),\mathsf{pr}_{\Ku(X_t)}(\cI_p))=8.\] 

So up to taking the Serre functor, the character $[\mathsf{pr}_{\Ku(X_t)}(\cI_p)]$ is $2\beta+\gamma$ or $\beta+2\gamma$. In either case, we get a morphism $\mathsf{pr}_t\colon X_t\to \ms(v)\colon p\mapsto \mathsf{pr}_{\Ku(X_t)}(\cI_p)$. The morphism is clearly set-theoretically injective. As $c_2(\cI_p)$ is trivial for every $p\in X_t$, the map $\mathsf{pr}_t$ maps each $X_t$ to a fiber of the \AJ map. Now by Theorem \ref{thm:fanofiber} and Proposition \ref{prop:birfiber}, every general fiber $\ms(2\beta+\gamma,c_t)$ is isomorphic to an $X_t$.

In summary, the general fiber of the \AJ map from $\Hilb^{4,0}(Y_3)$ to $J(Y_3)$ is birational to $Y_3$. This provides a categorical interpretation for the result in \cite{IlievMarku:cubics} and    \cite[Theorem 8.2]{harris2002abeljacobi}.
\end{Ex}

\begin{Rem}\label{rem:curvetoku}
When $(d,g)$ is not of the form $(\tfrac{3}{2}m^2+\tfrac{3}{2}m+1,m^3-m)$ as that in the last example, in some cases, one can still choose $m$ and consider $\mathsf{pr}_{\Ku(Y_3)}(\cI_{C_{d,g}}(mH))$. Set $v(d,g,m)=[\mathsf{pr}_{\Ku(Y_3)}(\cI_{C_{d,g}}(mH))]$. In general, it is a tricky question to prove the stability of the object $\mathsf{pr}_{\Ku(Y_3)}(\cI_{C_{d,g}}(mH))$ for general curve $C_{d,g}$ on the principal irreducible component of $\Hilb^{d,g}(Y_3)$. For cases with small $d$, this has been proved, see \cite{BMMS:Cubics}. Whenever this holds for $(d,g,m)$, we get the following rational map which is compatible with the \AJ map:
\begin{align*}
    \mathsf{pr}_{\Ku(Y_3)}\colon\Hilb^{d,g}(Y_3) & \dashrightarrow \ms(v(d,g,m))\\
    \cI_{C_{d,g}}(mH)& \mapsto \mathsf{pr}_{\Ku(Y_3)}(\cI_{C_{d,g}}(mH)).
\end{align*}
In the case when $m$ satisfies $2d\geq \dim(\ms(v(d,g,m))$, the map $\mathsf{pr}_{\Ku(Y_3)}$ is dominant with rationally connected fiber. Here is the table for the choice of $m$ and the characters for small degrees:
\begin{center}
\begin{tabular}{||c c c c||} 
 \hline
 $d$ & $g$ & $m$ & $v(d,g,m)$ \\ [0.5ex] 
 \hline\hline
 1 & 0 & 0 & $\beta$ \\ 
 \hline
 2 & 0 & 1 & $\gamma$ \\
 \hline
 2 & 0 & 2 & $-\beta$ \\
 \hline
 3 & 0 & 1 & $\beta+\gamma$ \\
 \hline
 3 & 1 & 2 & 0 \\ 
 \hline
\end{tabular}
\begin{tabular}{||c c c c||} 
 \hline
 $d$ & $g$ & $m$ & $v(d,g,m)$ \\ [0.5ex] 
 \hline\hline
 4 & 0 & 1 & $2\beta+\gamma$ \\ 
 \hline
 4 & 0 & 2 & $-2\alpha-\beta$ \\
 \hline
 4 & 0 & 3 & $\alpha-2\gamma$ \\
 \hline
 4 & 1 & 2 & $-\alpha$ \\
 \hline
 5 & 0 & 3 & $-2\beta-\gamma$ \\ 
 \hline
\end{tabular}
\begin{tabular}{||c c c c||} 
 \hline
 $d$ & $g$ & $m$ & $v(d,g,m)$ \\ [0.5ex] 
 \hline\hline
 5 & 1 & 2 & $-2\alpha$ \\ 
 \hline
 5 & 1 & 3 & $-2\gamma$ \\
 \hline
 5 & 2 & 2 & $-\gamma$ \\
 \hline
 6 & 1 & 2 & $-3\alpha$ \\
 \hline
 6 & 1 & 3 & $-3\beta$ \\
 \hline
 7 & 2 & 3 & $-\alpha-3\beta$ \\ 
 \hline
\end{tabular}
\end{center}
\noindent However, it is not always the case that one can choose $m$ satisfying $2d\geq \dim(\ms(v(d,g,m))$. For example, for the missing cases in the table, when  $(d,g)=(6,0)$, $(7,0)$, or $(7,1)$, there seems no obvious way to get a dominant rational map from $\Hilb^{d,g}(Y_3)$ to $\ms(v)$.
\end{Rem}

\subsection{Further questions} \label{sec:questions}

For the cubic threefold case, while we have given several descriptions for the moduli spaces $\ms(v)$ and the general fibers $\ms(v,c)$ and their relations, some important information about them is still missing. In comparison to what we know for moduli of vector bundles on curves, we ask the following questions:

\begin{Ques}[Questions in the $Y_3$ case]\label{rem:qqy}
Let $\dim\ms(v)>5$ and $\ms(v,c)$ be a general fiber.
\begin{enumerate}[(1)]
    \item Is $\ms(v,c)$ with Picard rank one? Is there a Verlinde type formula for $\ms(v,c)$?
    \item What are the cohomology groups of $\ms(v,c)$?
    \item Is there a semiorthogonal decomposition for $\Db(\ms(v,c))$? 
    \item Is $\ms(v,c)$ always birational to $\ms(v,c')$ when both of them are of the expected dimension $-\chi(v,v)-4$?
    \item When $\dim\ms(v)$ is large, does the \AJ map admit some nice property, for example flatness?
    \item Is $\ms(v,c)$ unirational? Is it stably birational to $Y_3$?
\end{enumerate}      
\end{Ques}

The next interesting case is quartic double solids (index $2$ degree $2$) and Gushel--Mukai threefolds (index $1$ degree $10$), see \cite{APR}. In both cases, the Kuznetsov component is an Enriques category, i.e. the Serre functor is $\iota[2]$, with $\iota$ being an involution. The non-emptiness of the moduli spaces has been proved in \cite{PPZ_Enriques}, and is also covered by Theorem~\ref{thm:existinKurank2}. In this case, we expect our method to apply but to give a somewhat different result. Note that following the arguments in Section~\ref{sec:deg_K}, it suggests that the fiber of the \AJ map should have numerically trivial canonical class. We formulate this as the following question.

\begin{Ques}[Moduli spaces on $\Ku(Y_2)$]
For quartic double solids and Gushel--Mukai threefolds, what is the Kodaira dimension of a general fiber of the \AJ map?
\end{Ques}

From a broader point of view, we wonder if there are more examples of $\C$-linear triangulated categories, whose moduli spaces of stable objects behave like moduli of vector bundles on curves.
\begin{Ques}\label{rem:nccurve}
Search for more examples of triangulated categories $\cT$ satisfying the following conditions:
\begin{enumerate}
    \item $\cT$ is a full admissible subcategory of $\Db(X)$ for a smooth projective variety $X$;
    \item The Hodge structure on the topological K-theory $\text{K}_0^{\text{top}}
(\cT)$ (in the sense of \cite{Perry_2022}) is of pure type $(0,0)$;
    \item The Euler paring on $\Kn(\cT)$ is of signature $(r-2,2)$;
    \item $\cT$ admits a Serre invariant stability condition.
\end{enumerate}   
\end{Ques}
Note that here $\Kn(\cT)$ is not required to be of rank 2; as computed in Lemma \ref{lem:22matrix}, possible Euler forms of rank 2 are very restricted.

\appendix
\section{Euler form with numerical rank two}\label{app:bounds}

The following lemma computes all possible Euler forms for a non-commutative smooth projective variety with $\rk(\kn)=2$ and no non-zero numerical character fixed by the Serre functor. This was used in the proof of Theorem~\ref{thm:existinKurank2}.
\begin{Lem}\label{lem:22matrix}
    Let $Q(-,-)$ be a non-degenerate bilinear form on $\Lambda\cong\Z^{\oplus 2}$ satisfying \begin{enumerate}
        \item $Q(\mathbf x,\bx)\leq 0$;
        \item $\exists  \, D\in\mathrm{SL}(\Lambda)$ such that $\forall\, \bx,\mathbf y\in \Lambda$, $Q(\bx,\mathbf y)=Q(\mathbf y,D\bx)$ and $\forall \, 0\neq \bx\in\Lambda$, $D\bx\neq \pm \bx$.
    \end{enumerate}
    Then $D$ is of order $3$, $4$, or $6$. There exists a right-hand oriented $\Z$-linear basis of $\Lambda$ under which the matrix of $Q$ is in one of the following forms:
    \begin{align*}
       I_\pm= \begin{pmatrix}
            -n & \pm n\\0& -n
        \end{pmatrix};\;\;\; J_\pm=\begin{pmatrix}
            -n & \pm n\\ \mp n& -n
        \end{pmatrix};\;\;\; 
        K_\pm=\begin{pmatrix}
            -n & \pm n\\ \mp 2n & -n
        \end{pmatrix}
    \end{align*}
    for some $n\geq 1$.

    When $D$ is of order $3$ (resp. $4$ and $6$), the matrix of  $Q$ is of the form $K_\pm$ (resp. $J_\pm$ and $I_\pm$).
    
    When $Q$ is of the form $I_+$,    for every $\bv\in \Lambda$ with $Q(\bv,\bv)<-n$, $Q(\bv_+,\bv_-)<0$.

    When $Q$ is of the form $I_-$, for every $\bv\in \Lambda$ with $Q(\bv,\bv)<-3n$, $Q(\bv_+,\bv_-)<0$.
\end{Lem}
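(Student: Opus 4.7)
The plan is to first exploit the Serre-type identity $Q(\bx,\mathbf y)=Q(\mathbf y,D\bx)$ to extract a $D$-invariant negative-definite integral form on $\Lambda$, which will force $D$ to be a torsion element of $\mathrm{SL}(\Lambda)$ of one of a very short list of orders. Concretely, define the symmetrization $Q_s(\bx,\mathbf y):=Q(\bx,\mathbf y)+Q(\mathbf y,\bx)$ and observe that, by the Serre relation, $Q_s(\bx,\mathbf y)=Q(\bx,(1+D)\mathbf y)$. The hypothesis that $D\bx\neq-\bx$ for every $0\neq\bx$ says exactly that $1+D$ is injective on $\Lambda_\R$, hence invertible, so the non-degeneracy of $Q$ propagates to non-degeneracy of $Q_s$. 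Combined with $Q_s(\bx,\bx)=2Q(\bx,\bx)\leq 0$, this makes $Q_s$ negative definite. Applying the Serre relation twice shows $D$ preserves $Q$, hence also $Q_s$, so $D$ lies in the orthogonal group of the negative-definite rank-$2$ lattice $(\Lambda,Q_s)$; being in $\mathrm{SL}(\Lambda)$ and distinct from $\pm\Id$, $D$ must have order $3$, $4$, or $6$.

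For the classification of $M$, I would, for each of the three orders, pick a convenient representative of $D$ in $\mathrm{SL}_2(\Z)$ (for instance $\begin{pmatrix}0&-1\\1&-1\end{pmatrix}$, $\begin{pmatrix}0&-1\\1&0\end{pmatrix}$, $\begin{pmatrix}1&-1\\1&0\end{pmatrix}$ for orders $3,4,6$), and solve the linear equation $M^{T}=MD$ coming from the Serre relation. A direct calculation pins $M$ down uniquely up to a diagonal scalar $-n$, and the conditions $n\in\Z$ together with $Q(\bx,\bx)\leq 0$ force $n\geq 1$. Swapping $D$ with $D^{-1}$, equivalently flipping the orientation of the chosen basis, changes the sign of the off-diagonal entries; in a right-hand oriented basis this yields exactly the six normal forms $I_\pm$, $J_\pm$, $K_\pm$, which by inspection correspond to orders $6$, $4$, $3$ respectively.

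For the final two inequalities, the key is a clean formula for the antisymmetric part of $Q$. In the $I_\pm$ case, a $2\times 2$ check gives $Q(\bx,\mathbf y)-Q(\mathbf y,\bx)=\pm n(\bx\times\mathbf y)$. Pick's theorem (Proposition and Definition \ref{pd:pick}) yields $\bv_-\times\bv_+=1$, so $Q(\bv_+,\bv_-)-Q(\bv_-,\bv_+)=\mp n$. Expanding $Q(\bv,\bv)=Q(\bv_++\bv_-,\bv_++\bv_-)$ gives
\[Q(\bv_+,\bv_-)+Q(\bv_-,\bv_+)=Q(\bv,\bv)-Q(\bv_+,\bv_+)-Q(\bv_-,\bv_-),\]
and the explicit Eisenstein-type form $Q(\mathbf w,\mathbf w)=-n(w_1^2\mp w_1w_2+w_2^2)$ is bounded above by $-n$ on every non-zero primitive lattice vector; since $\bv_\pm$ are primitive by Pick, this shows $Q(\bv_+,\bv_-)+Q(\bv_-,\bv_+)\leq Q(\bv,\bv)+2n$. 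Adding this to the antisymmetric identity yields $2Q(\bv_+,\bv_-)\leq Q(\bv,\bv)+2n\mp n$, and the stated thresholds $Q(\bv,\bv)<-n$ (for $I_+$) and $Q(\bv,\bv)<-3n$ (for $I_-$) follow immediately.

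The main technical subtlety is the orientation bookkeeping in the classification step, which is what distinguishes the two sign variants within each of $I_\pm$, $J_\pm$, $K_\pm$; once that is settled, the rest of the argument reduces to transparent linear algebra and the two-term Pick decomposition.
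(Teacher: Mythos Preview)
Your approach to the classification (the first two paragraphs) is correct and more conceptual than the paper's. The paper picks a vector $\bx$ maximizing $Q(\bx,\bx)$, uses $\{\bx, D\bx\}$ as a basis, and determines the matrix of $Q$ directly from the constraint $Q(\bx\pm D\bx,\bx\pm D\bx)\leq Q(\bx,\bx)$. Your route via the $D$-invariant negative-definite symmetrization $Q_s$ is cleaner and explains \emph{why} $D$ must be torsion (it lies in a compact orthogonal group and has integer entries); the paper's approach is more hands-on but avoids invoking the conjugacy classification of torsion in $\mathrm{SL}_2(\Z)$.

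However, your argument for the final two inequalities contains a sign error that breaks it. From $Q(\bv_\pm,\bv_\pm)\leq -n$ one obtains
\[Q(\bv_+,\bv_-)+Q(\bv_-,\bv_+)=Q(\bv,\bv)-Q(\bv_+,\bv_+)-Q(\bv_-,\bv_-)\ \geq\ Q(\bv,\bv)+2n,\]
a \emph{lower} bound on the symmetric part, not the upper bound you claim. Combined with the antisymmetric identity this only bounds $Q(\bv_+,\bv_-)$ from below, so negativity does not follow. Concretely, for $I_+$ with $n=1$ and $\bv=(3,1)$ one finds $\bv_+=(2,1)$, $\bv_-=(1,0)$ and $Q(\bv_+,\bv_-)=-2$, whereas your inequality would force $Q(\bv_+,\bv_-)\leq\tfrac{1}{2}(-7+1)=-3$. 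The paper proves this step by writing $\bv_+=(a,b)$, $\bv_-=(c,d)$ and arguing case by case using both the cross-product constraint $bc-ad=1$ and the dot-product constraint $ac+bd\geq 0$, the latter being a consequence of $|\bv_\pm|<|\bv|$ in Pick's decomposition. Your argument never uses this size constraint on $\bv_\pm$, and it appears to be essential.
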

\begin{proof}
    Let $\bx\in \Lambda$ be with the maximum $Q(\bx,\bx)$ among all non-zero vectors in $\Lambda$. In particular, $\bx$ is primitive. By assumption ({\em a}), we may denote by $Q(\bx,\bx)=-n$ for some $n\geq 0$. As $S\bx\neq \pm \bx$, the vectors $\bx$ and $S\bx$ form a $\Z$-linear basis of $\Lambda$.
    
    It follows by assumption ({\em b}) that $Q(\bx,D\bx)=Q(\bx,\bx)=-n$. The matrix of $Q$ is of the form $\begin{pmatrix}
        -n & -n\\ t& -n
    \end{pmatrix}$ for some $t\in \Z$. As $Q$ is non-degenerate, the value $n\geq 1$.
    
   For a given non-zero vector $\bv=a\bx+bS\bx$, $Q(\bv,\bv)=-n(a^2+ab+b^2)+tab$.  As $Q(\bx\pm D\bx,\bx\pm D\bx)\leq -n$, the value $t\in[0,2n]$. 

    When $t=0$, the set  $\{\bx,D\bx\}$ (or $\{\bx,-D\bx\}$) forms a right-hand oriented basis of $\Lambda$ under which the matrix of $Q$ is of the form of $I_-$ (resp. $I_+$). By assumption ({\em b}), the transformation $D$ is of the form $\begin{pmatrix}
        0 & -1 \\ 1 & 1
    \end{pmatrix}$ (resp. $\begin{pmatrix}
        0 & 1 \\ -1 & 1
    \end{pmatrix}$), which is of order $6$.

      When $t=2n$, the set  $\{\bx,D\bx\}$ (or $\{\bx,-D\bx\}$) forms a right-hand oriented basis of $\Lambda$ under which the matrix of $Q$ is of the form of $K_-$ (resp. $K_+$). By assumption ({\em b}), the transformation $D$ is of the form $\begin{pmatrix}
        0 & -1 \\ 1 & -1
    \end{pmatrix}$ (resp. $\begin{pmatrix}
        0 & 1 \\ -1 & -1
    \end{pmatrix}$), which is of order $3$.

    When $t=2n$, $\{\bx,S\bx\}$ or $\{\bx,-S\bx\}$ is a right-hand oriented basis of $\Lambda$ under which the matrix of $Q$ is of the form of $K_\pm$.

     When $t\neq 0$ or $2n$, $Q(\bv,\bv)=-n$ when and only when $\bv=\pm \bx$ or $D\bx$. Note that we have $Q(D^2\bx,D^2\bx)=Q(D\bx,D^2\bx)=Q(D\bx,D\bx)=-n$. So $D^2\bx=-\bx$. It follows that $D$ is of order $4$, and $t=Q(D\bx,\bx)=Q(D\bx,-D^2\bx)=n$. So $\{\bx, D\bx\}$ (or $\{\bx,-D\bx\}$) is a right-hand oriented basis of $\Lambda$ under which the matrix of $Q$ is of the form of $J_-$ (resp. $J_+$).\\

    When $Q$ is of the form $I_\pm$, we compute all $\bv$ for which  $Q(\bv_+,\bv_-)\geq 0$. Note that we may always assume $n=1$. We write $\bv_+=(a,b)$ and $\bv_-=(c,d)$ as vectors under the basis for $I_\pm$. By assumption, $1=\bv_-\times \bv_+=bc-ad$ and $0\leq \bv_-\cdot \bv_+=ac+bd$. We may also assume that $b,d\geq 0$.

    When $Q$ is of the form of $I_-$, we have $Q(\bv_+,\bv_-)=-ac- ad-bd$. In the case of $d=0$, it follows that $b=c=1$ and $a\geq 0$. So $Q(\bv_+,\bv_-)\geq 0$ only when $a=0$. This is when $\bv=(1,1)$ with $Q(\bv,\bv)=-3$. In the case of $b=0$, it follows that $d=-a=1$ and $c\leq 0$. So $Q(\bv_+,\bv_-)\geq 0$ only when $c=0$ or $-1$. This is when $\bv=(-1,1)$ (or $(-2,1)$) with $Q(\bv,\bv)=-1$ (resp. $Q(\bv,\bv)=-3$). In the case of $b,d> 0$ and $a\geq 0$, it follows that $c>0$. So $Q(\bv_+,\bv_-)< 0$. In the case of $b,d> 0$ and $a< 0$, it follows that $c\leq 0$. If $|b|>|a|$ or $|c|\geq |d|$, then $Q(\bv_+,\bv_-)< 0$. Otherwise, as $bc-ad=1$, it follows that $b=d=1$, $c=0$, and $a=-1$. This is the case when $\bv=(-1,2)$ with $Q(\bv,\bv)=-3$. 

     When $Q$ is of the form of $I_+$, $Q(\bv_+,\bv_-)=-ac+ ad-bd$. In the case of $d=0$, it follows that $b=c=1$ and $a\geq 0$. So $Q(\bv_+,\bv_-)\geq 0$ only when $a=0$. This is when $\bv=(1,1)$ with $Q(\bv,\bv)=-1$. In the case of $b=0$, it follows that $d=-a=1$ and $c\leq 0$. So $Q(\bv_+,\bv_-)\leq ad=-1$. In the case of $b,d> 0$ and $a< 0$, it follows that $c\leq 0$. So $Q(\bv_+,\bv_-)\leq -bd< 0$. In the case of $b,d> 0$ and $a\geq 0$, it follows that $c>0$. As $bc-ad=1$, we have $|b|>|a|$ or $|c|\geq |d|$, in either case it follows that $Q(\bv_+,\bv_-)< 0$. 
\end{proof}

\section{The existence of stable objects with small characters}\label{app:small_moduli}

In this appendix, we prove the non-emptiness of several moduli spaces of small dimensions for various Fano threefolds. This was used in Section~\ref{sec:nonempty_fano3} in the proof of Theorem~\ref{thm:existinKurank2}.

We briefly recall the construction of the stability condition $\sigma$ on $\Ku(X)$. More details are referred to the original paper \cite{BLMS:kuzcomponent}. One of the key ingredients is the tilting construction as in \cite{Happel-al:tilting}.  
\begin{PropDef}[\cite{Happel-al:tilting}]\label{pd:tiltingheart}
 Let $Z$ be a weak stability function on $\cA$ satisfying the Harder--Narasimhan property. For any $t\in\R$, denote by
 \begin{align*}
     \cA^{>t}_{\mu_Z} & :=\{E \;|\; \text{All HN factors $A$ of $E$ have slope $\mu_Z(F)>t$}\}\subset \cA; \\
     \cA^{\leq t}_{\mu_Z} & :=\{E \;|\; \text{All HN factors $A$ of $E$ have slope $\mu_Z(F)\leq t$}\}\subset \cA.
 \end{align*}
 The pair $(\cA^{>t}_{\mu_Z},\cA^{\leq t}_{\mu_Z})$ forms a torsion pair in $\cA$. In particular, there is a heart of a bounded $t$-structure defined by
 \begin{align}
     \cA^t_{\mu_Z}=\langle \cA^{>t}_{\mu_Z},\cA^{\leq t}_{\mu_Z}[1]\rangle.
 \end{align}
\end{PropDef}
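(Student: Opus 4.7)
The statement has two parts: first, that $(\cA^{>t}_{\mu_Z},\cA^{\leq t}_{\mu_Z})$ is a torsion pair in $\cA$, and second, that the extension-closed subcategory $\cA^t_{\mu_Z}=\langle \cA^{>t}_{\mu_Z},\cA^{\leq t}_{\mu_Z}[1]\rangle$ is the heart of a bounded $t$-structure on $\Db(\cA)$ (or on any triangulated category of which $\cA$ is the heart). My plan is to verify the torsion pair axioms directly from the Harder--Narasimhan property, and then invoke the classical tilting theorem of Happel--Reiten--Smalø.

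For the torsion pair, first I would check the $\Hom$-vanishing: given $T\in\cA^{>t}_{\mu_Z}$ and $F\in\cA^{\leq t}_{\mu_Z}$, any nonzero morphism $f\colon T\to F$ would have nonzero image $I\subset F$, which is a subobject of $F$ and quotient of $T$; then a HN factor of $I$ realized via the quotient would have slope $>t$, while as a subobject of $F$ it would have slope $\leq t$ (by Schur's lemma applied to compare with HN factors of $F$), contradicting the see-saw property of the weak stability function. More cleanly, the slope of the first HN factor of $I$ is $\leq \mu_Z^+(F)\leq t$, while simultaneously $\geq \mu_Z^-(T)>t$, a contradiction. Second, I would produce the required short exact sequence $0\to T\to E\to F\to 0$ by taking the HN filtration $0=E_0\subset E_1\subset\cdots\subset E_m=E$ of $E$ and letting $T=E_k$ where $k$ is the largest index with $\mu_Z(A_k)>t$ (and $T=0$ if no such $k$ exists). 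The HN property ensures the quotient $F=E/T$ has all HN slopes $\leq t$.

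The second part follows from the Happel--Reiten--Smalø tilting theorem: given any torsion pair $(\cT,\cF)$ in a heart $\cA$ of a bounded $t$-structure on a triangulated category $\cD$, the full subcategory $\langle \cT,\cF[1]\rangle$ (meaning extensions of objects of $\cT$ by objects of $\cF[1]$) is again the heart of a bounded $t$-structure. Concretely, the corresponding truncation functors for an object $E\in\cD$ can be described: writing $H^i$ for the cohomology with respect to the original $t$-structure, one decomposes $H^0(E)$ via the torsion pair to build the new truncations. Boundedness of the new $t$-structure follows from boundedness of the old one because the tilted aisle $\cA^t_{\mu_Z}[\leq 0]$ sits between $\cA[\leq 0]$ and $\cA[\leq 1]$.

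The main obstacle, if one wanted to prove the Happel--Reiten--Smalø statement from scratch, would be verifying the octahedral/completion axioms for the candidate truncation functors, but since this is the standard tilting theorem I would simply cite it after verifying the torsion pair axioms. The only subtlety in the present setup is that $Z$ is a \emph{weak} stability function (so $Z$ may vanish on some nonzero objects), but this does not affect the argument: the HN property gives a well-defined slope filtration, and objects with $\Im Z=0$ and $\Re Z\leq 0$ have slope $+\infty$, so they lie in $\cA^{>t}_{\mu_Z}$ for every $t$, which is consistent with the torsion pair structure.
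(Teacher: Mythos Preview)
Your proof sketch is correct and follows the standard approach, but note that the paper does not actually prove this statement: it is stated as a Proposition/Definition with a citation to Happel--Reiten--Smal{\o} and is used as a black box in the construction of tilt stability. So there is no ``paper's own proof'' to compare against; you have supplied one where the authors simply quote the literature.

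Your argument is the expected one. The torsion pair axioms are verified exactly as you indicate: $\Hom$-vanishing via the inequality $t<\mu_Z^-(T)\leq\mu_Z^-(I)\leq\mu_Z^+(I)\leq\mu_Z^+(F)\leq t$ for the image $I$ of a nonzero map, and the decomposition by truncating the HN filtration at the last step with slope $>t$. The tilted heart then follows from the HRS theorem. Your remark about the weak case is also correct and worth making explicit: objects with $Z=0$ have slope $+\infty$ by convention and sit in the torsion part for every $t$, so nothing breaks.
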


 Let $X$ be a smooth Fano threefold with $\Pic(X)=\Z H$. One first considers the standard heart $\Coh(X)$ and the stability function given by $Z=i\rk-H\ch_1$. In particular, the slope of a non-zero coherent sheaf $E$ is $\mu_H(E)=H\ch_1(E)/\rk(E)$ when $\rk(E)\neq 0$ and $+\infty$ when $\rk(E)=0$.

 For every $\beta\in \R$, one may define the first tilting heart as \begin{align*}
     \Coh^\beta(X):=\langle \Coh^{> \beta}_{\mu_H}(X),\Coh^{\leq \beta}_{\mu_H}(X)[1]\rangle. 
 \end{align*}
 Then for every parameter $\alpha>0$, one may further define a function on $\Kn(\Coh^\beta(X))$  given as 
 \begin{align*}
     Z_{\alpha,\beta}:= iH^2\ch^\beta_1+\tfrac{1}{2}\alpha^2H^3\rk-H\ch^\beta_2.
 \end{align*}
 Here we use the twisted Chern characters to simplify the notion: $\ch^\beta_1(E):=\ch_1(E)-\beta H\rk(E)$ and $\ch^\beta_2(E):=\ch_2(E)-\beta H\ch_1(E)+\tfrac{1}{2}\beta^2 H^2\rk(E)$. 

 By general theorem as in \cite{BMT:3folds-BG}, the function $Z_{\alpha,\beta}$ is a weak stability function satisfying Harder--Narasimhan property. Denote by $\sigma_{\alpha,\beta}=(Z_{\alpha,\beta},\Coh^\beta(X))$ the weak stability condition and $\mu_{\alpha,\beta}$ the slope of $Z_{\alpha,\beta}$.

We may further consider the tilting at $0$ with respect to $Z_{\alpha,\beta}$ on $\Coh^\beta$. More precisely, the heart of the bounded $t$-structure is given as
\begin{align*}
    \Coh^0_{\alpha,\beta}(X):=\langle \cA_{\mu_{\alpha,\beta}}^{>0},\cA_{\mu_{\alpha,\beta}}^{\leq0}[1]\rangle
\end{align*}
  The weak stability function is $Z^0_{\alpha,\beta}:=\tfrac{1}{i}Z_{\alpha,\beta}$. Intuitively, the data $\sigma^0_{\alpha,\beta}:=(\Coh^0_{\alpha,\beta}(X),Z^0_{\alpha,\beta})$ is just to make a ``homological shift'' on the weak stability condition $(\Coh^\beta(X),Z_{\alpha,\beta})$ by $\tfrac{1}{2}$. In general, such a non-integer shift on a weak stability condition will not produce a weak stability condition -- it only makes sense for a stability condition. Fortunately, however, by \cite[Proposition 2.15]{BLMS:kuzcomponent}, $\sigma^0_{\alpha,\beta}$ is a weak stability condition. 

The final step is to restrict some particular $\sigma^0_{\alpha,\beta}$ to the Kuznetsov component. More precisely, when $X$ is of index two, let $\beta=-\tfrac{1}{2}$, $\alpha\in(0,\tfrac{1}{2})$, 
 \begin{align}\label{eq:stabonky}
     \cA_\alpha:= \Coh^0_{\alpha,-\frac{1}{2}}(X)\cap \Ku(X)\text{, and } Z_\alpha:= Z^0_{\alpha,-\frac{1}{2}}=H^2\ch^{-\frac{1}{2}}_1+i(H\ch_2^{-\frac{1}{2}}-\tfrac{1}{2}\alpha^2 H^3\rk).
 \end{align}
When $X$ is of index one, let $\beta=\epsilon-1$ for some $0<\epsilon\ll1$ and $\alpha\in(0,\delta)$, where $\mu_{\delta,\epsilon-1}(\cE_X)=\mu_{\delta,\epsilon-1}(\cO_X(-H)[1])$ and 
\begin{align}\label{eq:stabonky1}
     \cA_\alpha:= \Coh^0_{\alpha,\epsilon-1}(X)\cap \Ku(X)\text{, and } Z_\alpha:= Z^0_{\alpha,\epsilon-1}.
 \end{align}
Then \cite[Theorem 1.1]{BLMS:kuzcomponent} states that $\sigma_\alpha=(\cA_\alpha,Z_\alpha)  $ is a stability condition on $\Ku(X)$. 

\begin{Rem}\label{rem:stabonku}
Here are some facts about $\sigma_\alpha$ that are worth explaining in detail.
\begin{enumerate}[(i)]
    \item  The numerical Grothendieck group $\Kn(\Ku(X))$ is with rank $2$. The central charge factors via $\lambda\colon\Kn(\Ku(X))\to \Lambda: v\mapsto(\rk(v),H^2\ch_1(v))$ as that in Assumption \ref{asp2}.
    
 \item When $X$ is of index two (resp. one), for different $\alpha,\alpha'\in(0,\tfrac{1}{2})$ (resp. different $\epsilon\ll 1$ and different $\alpha,\alpha'\in(0,\delta)$), there exists $\tilde g\in\glt$ such that $\sigma_\alpha= \sigma_{\alpha'}\cdot\tilde g$,  see \cite[Proposition 3.6]{PY}. In particular, the parameter $\alpha$ does not affect the stability of objects in $\Ku(\Kn(X))$. We will simply denote $\sigma$ for one fixed stability condition among all $\sigma_\alpha$'s.
\end{enumerate}   
\end{Rem}

\begin{Lem}
\label{lemma_lastcaseGM3}
Let $X$ be a Fano threefold with Picard rank $1$, index $1$ and genus $6$. Let $v \in \kn(\Ku(X))$ be an element with Chern character $3-2H+\frac{3}{10}H^2+\frac{7}{6}P$, where $P$ stands for the class of a point. Then $M^s_\sigma(v) \neq \emptyset$.   
\end{Lem}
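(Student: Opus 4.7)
The character $v$ has $\chi(v,v) = -2$, so it sits at one of the extreme cases excluded from the induction of Corollary \ref{cor:existence}; the strategy is to construct a $\sigma$-stable object in $\Ku(X)$ of class $v$ directly as an extension of two simpler objects. Recall from the proof of Theorem \ref{thm:existinKurank2} that in a suitable basis the Euler pairing on $\Kn(\Ku(X))$ has the form $\begin{pmatrix}-1 & -1\\ -1 & -2\end{pmatrix}$, so there are characters $v_1, v_2$ with $\chi(v_i, v_j) = -\delta_{ij}$; I would first express $v$ in this basis and check that it can be written as $v = v_1 + v_2$ (up to sign and a shift in the lifted numerical group) with $\chi(v_1, v_1) = \chi(v_2, v_2) = -1$ and $\chi(v_1, v_2) + \chi(v_2, v_1) = 0$, so that (after possibly reordering) $\chi(v_2, v_1) \leq 0 \leq \chi(v_1, v_2)$.

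Next, the plan is to use the existence of $\sigma$-stable objects for characters of square $-1$. By \cite[Proposition 7.11, Theorem 8.9]{JLLZ}, already cited in the proof of Theorem \ref{thm:existinKurank2}, the moduli spaces $M^s_\sigma(v_i)$ are non-empty. Fix $E_i \in M^s_\sigma(v_i)$ in generic position so that $\hom(E_1, E_2) = 0$, which is possible since $M^s_\sigma(v_1), M^s_\sigma(v_2)$ are positive-dimensional and the Brill--Noether locus where $\Hom(E_1, E_2) \neq 0$ is proper. Because $\sigma$ is Serre invariant with $\mathsf{S} = \iota[2]$, global dimension $\gd(\sigma) = 2$, so $\Ext^{\geq 3}(E_2, E_1) = 0$ and Serre duality gives $\ext^2(E_2, E_1) = \hom(E_1, \iota E_2)$. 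Choosing $E_2$ outside the locus where $\iota E_2 \cong E_1$ (which is a proper closed subset since $\iota$ acts nontrivially on $\Kn$ and is not generically the identity on the moduli), one ensures $\ext^2(E_2, E_1) = 0$, hence $\ext^1(E_2, E_1) = -\chi(v_2, v_1) > 0$ after reordering. Pick any non-zero $f \in \Hom(E_2, E_1[1])$ and form the extension $E_f = \Cone(f)[-1]$, whose class is $v$.

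The key step is to verify that $E_f$ is $\sigma$-stable. Lemma \ref{lem:extendstabobj} does not directly apply because the lattice determinant condition $\lambda(v_1) \times \lambda(v_2) = 1$ may fail; instead I would argue as follows. Suppose for contradiction that $E_f$ admits a semistabilizing Jordan--H\"older subobject $F$ with $\phi_\sigma(F) \geq \phi_\sigma(E_f)$. By Lemma \ref{lem:hnfactorextobj}, $\lambda(F) \in \triangle(v_1+v_2, v_2)$, and applying $\Hom(F, -)$ to the defining triangle $E_1 \to E_f \to E_2 \xrightarrow{+}$ together with $\Hom(F, E_1) = 0$ (for phase reasons, since $\phi_\sigma(E_1)$ is strictly smaller than $\phi_\sigma(F)$ unless $F \cong E_1$) forces $F$ to map non-trivially to $E_2$. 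Since $v$ is primitive in $\Kn(\Ku(X))$ (which I would verify from the explicit coordinates of $v$), $F$ cannot have class proportional to $v$, and the possible intermediate classes in $\triangle(v_1+v_2, v_2)$ are limited to $v_2$ itself or characters $w$ with $\chi(w,w) \geq -1$ whose moduli consist of stable objects accounted for by the $v_i$'s. A finite case analysis of these intermediate classes, using that $f \neq 0$ and the simplicity of the $E_i$, rules out each possibility and shows $E_f$ is $\sigma$-stable, giving $E_f \in M^s_\sigma(v)$.

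The main obstacle I expect is the last step, where one must ensure no exotic destabilizing factor appears; unlike in Lemma \ref{lem:extendstabobj}, the triangle $\triangle(v_1, v_2)$ in $\Kn(\Ku(X))$ may contain non-trivial lattice points because of the specific Euler form, and I would either explicitly enumerate these and show their would-be moduli are empty (again using $\chi$-computations and existence results of \cite{JLLZ}), or, as a backup, realize $E_f$ as the Kuznetsov projection of an explicit coherent sheaf on $X$ with the correct Chern character --- a natural candidate being a twist of the restriction of the rank-$3$ tautological quotient bundle from $\Gr(2,5)$, whose projection to $\Ku(X)$ has rank $3$ and $c_1 = -2H$, matching $v$ up to lower-order terms that can be adjusted by a further modification.
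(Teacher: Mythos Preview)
Your extension strategy runs into a genuine obstruction that is exactly the reason this case is excluded from Corollary~\ref{cor:existence}. In the basis $\{v_1,v_2\}$ with $\chi(v_i,v_j)=-\delta_{ij}$ (which exists because the Euler form here is \emph{symmetric}), a class with $\chi(v,v)=-2$ is $v=\pm v_1\pm v_2$, and then $\chi(v_1,v_2)=\chi(v_2,v_1)=0$, not $<0$. So your claim ``$\ext^1(E_2,E_1)=-\chi(v_2,v_1)>0$ after reordering'' fails: the Euler characteristic gives no extension for free, and you would need an independent geometric argument to show $\Ext^1(E_2,E_1)\neq 0$ for some pair, which you do not supply. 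This is not a technicality --- it is precisely why Proposition~\ref{prop:existencegeneralcase} does not cover this boundary case and why the paper treats it separately.

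There are further errors in the setup. Since the Euler form is symmetric, Serre duality forces $\mathsf S$ (hence $\iota$) to act \emph{trivially} on $\Kn(\Ku(X))$, contradicting your assertion that ``$\iota$ acts nontrivially on $\Kn$''. Consequently $[\iota E_2]=v_2\neq v_1$, so $\iota E_2\not\cong E_1$ is automatic and tells you nothing; the vanishing $\ext^2(E_2,E_1)=\hom(E_1,\iota E_2)=0$ does not follow from your argument, because $\phi_\sigma(E_1)<\phi_\sigma(\iota E_2)$ is compatible with a nonzero morphism between stable objects. Your stability case analysis for $E_f$ is then moot, and the backup suggestion about a tautological bundle is not developed.

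The paper avoids all of this with a different idea: it passes to the alternative component $\cA_X$ via $\mathbb L_{\cO_X}(-\otimes\cO_X(H))$, takes a known stable object $F$ in the twisted-cubic class (Chern character $1-\tfrac{3}{10}H^2+\tfrac12 P$), applies the derived dual to get $F^\vee\in\cA_X^\vee$, and then mutates back. The point is that derived duality and these mutations are known to preserve Serre-invariant stability (via \cite{Zhang} and \cite{PR}), so one inherits stability rather than having to verify it from scratch.
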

\begin{proof}
We work with the alternative Kuznetsov component $\cA_X$ defined by
\[\Db(X)= \langle \cA_X, \cO_X, \cU_X^\vee \rangle.\]
Note that $\Ku(X)$ and $\cA_X$ are equivalent through $\mathbb{L}_{\cO_X}(- \otimes \cO_X(H)) \colon \Ku(X) \simeq \cA_X$. Moreover, this equivalence preserves the orbit of Serre invariant stability conditions by \cite{PR}, and numerically sends $v$ to $v' \in \kn(\cA_X)$ with $\ch(v')=-1+H-\frac{1}{5}H^2-\frac{5}{6}P$.

Let $\sigma$ be a Serre invariant stability condition on $\cA_X$. Consider $F \in \ms(w)$, where $\ch(w)=3-H-\frac{1}{5}H^2+\frac{5}{6}P$. This object exists since it corresponds to a stable object in $\Ku(X)$ with respect to a Serre invariant stability condition with Chern character $1 - \frac{3}{10}H^2+ \frac{1}{2}P$.

Its derived dual $F^\vee$ belongs to $\cA_X^\vee$, which sits in the semiorthogonal decomposition
\[\Db(X)= \langle \cA_X^\vee, \cU_X^\vee, \cO_X(H) \rangle.\]
Since $\Hom^\bullet(F^\vee,F^\vee)=\Hom^\bullet(F,F)$ and the fact that $F$ is stable, by \cite[Corollary 4.15]{Zhang} we have that $F^\vee$ is stable with respect to every Serre invariant stability condition on $\cA_X^\vee$.

Now $\mathbb{L}_{\cO_X}(F^\vee)$ belongs to $\cA_X$, has Chern character $v'$, and is stable with respect to Serre invariant stability conditions by \cite[Section 3.3]{PR}. This implies the non-emptiness in the statement.
\end{proof}

\begin{Lem}
\label{lemma_lastcasedVc}
Let $X$ be a Fano threefold with Picard rank $1$, index $2$ and degree $1$. Let $v \in \kn(\Ku(X))$ be an element with Chern character $1+H-\frac{3}{2}H^2-\frac{5}{6}P$. Then $M^s_\sigma(v) \neq \emptyset$.
\end{Lem}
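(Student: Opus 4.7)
The plan is to reduce via Serre invariance of $\sigma$ to producing one stable object of some character in the $\mathsf{S}$-orbit of $v$, and then to exhibit such an object through an explicit geometric construction.

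By \cite[Proposition 5.7]{PY}, the stability condition $\sigma$ on $\Ku(X)$ is Serre invariant, and as computed in the proof of Theorem~\ref{thm:existinKurank2}, the Euler form on $\Kn(\Ku(X))$ is of type $I_-$ with $n=1$, so $\mathsf{S}$ acts on the lattice $\Lambda$ as an element of order $6$. A direct enumeration then shows that the six primitive characters $v$ with $\chi(v,v)=-3$ form a single $\mathsf{S}$-orbit. Thus it suffices to produce a single $\sigma$-stable object whose class lies in this orbit, since applying suitable powers of $\mathsf{S}$ will then hit the specific character in the statement.

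Note that the general extension argument of Proposition~\ref{prop:existencegeneralcase} does not apply here: for the Pick decomposition $v = v_+ + v_-$ of any primitive $v$ with $\chi(v,v)=-3$ in the $I_-$ lattice, one directly computes $\chi(v_+,v_-)=0$. Combined with the phase constraints and the bound $\gd(\sigma) < \tfrac{5}{2}$, this forces $\Hom(E_+, E_-[i]) = 0$ for all $i\in\Z$ whenever $E_\pm \in M^s_\sigma(v_\pm)$, so no nontrivial extension of two $\sigma$-stable objects with classes $v_\pm$ can be formed. An ad hoc geometric construction is therefore required.

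I would construct the desired stable object as the projection $\mathrm{pr}_{\Ku(X)}(F)$ of a classical sheaf $F$ on $X$, chosen so that after mutating through $\cO_X$ and $\cO_X(H)$ the resulting object in $\Ku(X)$ has Chern character in the $\mathsf{S}$-orbit of $v$. Natural candidates are twists of ideal sheaves of low-degree curves $\cI_C(kH)$, or shifted structure sheaves $\cO_C(m)[j]$, where the degree and arithmetic genus of $C$ are fixed by matching Chern characters. Once $F$ has been identified, $\sigma$-stability of its projection is verified by tilt stability in the framework of \cite{BLMS:kuzcomponent, BMT:3folds-BG}: one shows that the object lies in the heart $\cA_\alpha$ and is $\sigma_{\alpha,-1/2}$-stable in $\Coh^{-1/2}(X)$ for some $\alpha\in(0,1/2)$, by ruling out destabilizing subobjects using the generalized Bogomolov--Gieseker inequality, and then concludes $\sigma$-stability from the construction of $\sigma$ as an iterated tilt recalled in Remark~\ref{rem:stabonku}.

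The main obstacle will be the stability verification. Because $H^3=1$ is small, the Bogomolov--Gieseker bounds are tight and the wall-and-chamber analysis in $\Coh^{-1/2}(X)$ becomes delicate; numerical ties that are easily avoided in higher-degree Fano threefolds must be ruled out by hand. A related subtlety is the explicit identification of $F$: matching the coefficient $-\tfrac{5}{6}P$ of $\ch_3$ forces unusual arithmetic invariants on any underlying curve (for example, for $F=\cI_C(H)$ one finds $C$ should have degree $2$ and arithmetic genus $2$), so the relevant geometric object on the double Veronese cone is likely to be non-reduced or reducible, and its geometric realization must be carried out carefully before the stability analysis can be applied.
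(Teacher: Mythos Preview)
Your overall strategy matches the paper's: take $F=\cI_C(H)$ for a suitable degree-$2$ curve $C$ on $X$, verify it lies in $\Ku(X)$, and prove $\sigma$-stability by a wall-crossing analysis with respect to the weak stability conditions $\sigma_{\alpha,-1/2}$ on $\Coh^{-1/2}(X)$. Your preliminary reductions (Serre invariance putting all $v$ with $\chi(v,v)=-3$ in one orbit, and the observation that $\chi(v_+,v_-)=0$ blocks the extension method) are correct and are also used, implicitly or explicitly, in the paper.

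However, your Chern-character computation is off. For a curve $C\subset X$ of degree $d$ and arithmetic genus $g$, one has $\ch(\cO_C)=(0,0,dH^2,(1-g-d)P)$ since $c_1(X)=2H$, and hence
\[
\ch(\cI_C(H)) = \bigl(1,\,H,\,\tfrac{1}{2}H^2 - dH^2,\,\tfrac{1}{6}P - dP + (g+d-1)P\bigr).
\]
Matching with $(1,H,-\tfrac{3}{2}H^2,-\tfrac{5}{6}P)$ forces $d=2$ and $g=0$, not $g=2$. So the relevant curve is a \emph{smooth} conic, and no non-reduced or reducible schemes are needed. The paper produces such a conic concretely: the linear system $|H|$ gives a rational map $X\dashrightarrow\P^2$ with a single base point $y$, and the preimage of a smooth plane conic avoiding (the image of) anything bad is a smooth conic $C\subset X$ not through $y$.

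A second simplification you should note: for this $C$ one has $\cI_C(H)\in\Ku(X)$ already, with no projection required. Indeed $h^0(\cO_X)=1$, $h^0(\cO_X(H))=3$, and the restriction $H^0(X,\cO_X(H))\to H^0(C,\cO_C(H))\cong H^0(\P^1,\cO(2))$ is an isomorphism because $C$ misses the base point $y$; together with the obvious vanishings this gives $\RHom(\cO_X,\cI_C(H))=\RHom(\cO_X(H),\cI_C(H))=0$.

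The stability analysis then proceeds exactly as you outline. The paper finds three numerical walls for $\alpha\ge\tfrac{1}{2}$, with destabilizing rank-$1$ classes $\ch_{\le 2}=(1,0,0)$, $(1,0,-\tfrac{1}{8}H^2)$, $(1,0,-\tfrac{1}{4}H^2)$; the first is excluded because it forces a map $\cO_X\to\cI_C(H)$ (impossible as $\cI_C(H)\in\Ku(X)$), and the other two are excluded because the corresponding short exact sequences would live in $\Coh(X)$ and contradict Gieseker stability of $\cI_C(H)$. Hence $\cI_C(H)$ is $\sigma_{\alpha,-1/2}$-stable for some $\alpha<\tfrac{1}{2}$, and after one more tilt (checking $\Hom(T,\cI_C(H)[1])=0$ for $T$ supported on points) one concludes $\sigma$-stability in $\Ku(X)$.
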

\begin{proof}
Recall that $X$ is a sextic hypersurface in the weighted projective space $\mathbb{P}(1,1,1,2,3)$. Denote by $x_0, x_1, x_2, x_3, x_4$ the coordinates on $\mathbb{P}(1,1,1,2,3)$. The space $H^0(X, \cO_X(H))$ has dimension $3$ generated by the sections $x_0, x_1, x_2$, and $\cO_X(H)$ has base locus consisting of a point $y$ \cite[Proposition 3.1]{Isk}. In particular, $\cO_X(H)$ induces a rational map from $X$ to $\P^2$ which is defined away from $y$.

Let $C' \subset \P^2$ be a smooth conic. Then it corresponds to a smooth conic $C$ on $X$ not containing $y$. Let $\cI_C$ be the ideal sheaf of $C$. Then $\cI_C(H)$ has Chern character $1+H-\frac{3}{2}H^2-\frac{5}{6}P$. Moreover, we have $I_C(H) \in \Ku(X)$, since $H^0(X, \cO_X(H)) \to H^0(X, \cO_C(H))$ is an isomorphism. In the rest of the proof, we show that $\cI_C(H)$ is $\sigma$-stable for the stability conditions constructed in \cite{BLMS:kuzcomponent}.

First, those stability conditions are in the same $\glt$-orbit by \cite[Proposition 3.6]{PY}. Thus it is enough to show the stability for one of them. We will work with the stability condition $\sigma_\alpha$ for $\alpha<\frac{1}{2}$ (see \cite{BLMS:kuzcomponent}, \cite[Theorem 3.3]{PY}).

Let $\sigma_{\alpha, -\frac{1}{2}}$ be the weak stability condition on $\Db(X)$ obtained by tilting slope stability at $-\frac{1}{2}$. We claim that $\cI_C(H)$ is $\sigma_{\alpha, -\frac{1}{2}}$-stable for $\alpha \gg 0$. Indeed, it is a slope stable torsion-free sheaf with slope $1$ and truncated $-\frac{1}{2}$-twisted Chern character 
\[\ch(\cI_C(H))_{\leq 2}=(1, \frac{3}{2}H, -\frac{7}{8}H^2).\]
The claim follows from \cite[Lemma 2.7]{BMS:stabCY3s}.

Let us now compute the numerical walls for $\cI_C(H)$ with respect to $\sigma_{\alpha,-\frac{1}{2}}$ varying $\alpha$. Note that we can restrict to consider potential walls at $\alpha \geq \frac{1}{2}$, because for our purpose it is enough to show the stability of $\cI_C(H)$ with respect to some weak stability condition $\sigma_{\alpha,-\frac{1}{2}}$ with $\alpha < \frac{1}{2}$. By standard arguments of wall crossing, we get the following numerical walls and destabilizing classes for $\alpha \geq \frac{1}{2}$:
\begin{enumerate}
\item $\alpha=\frac{\sqrt{5}}{2}$, $\ch(v_1)_{\leq 2}=(1,0,0)$;
\item $\alpha=\sqrt{\frac{7}{8}}$, $\ch(v_1)_{\leq 2}=(1,0,-\frac{1}{8}H^2)$;
\item $\alpha=\sqrt{\frac{1}{2}}$, $\ch(v_1)_{\leq 2}=(1,0,-\frac{1}{4}H^2)$.
\end{enumerate}
Note that a $\sigma_{\alpha, -\frac{1}{2}}$-semistable object $A$ with $\ch(A)_{\leq 2}=(1,0,0)$ satisfies $A \cong \cO_X$ by Sublemma \ref{sublemma_classifywall}. Since $\Hom(\cI_C(H), \cO_X)=0$, the first wall in (a) should be of the form
\[0 \to \cO_X \to \cI_X(H) \to B \to 0,\]
which is impossible since $\cI_C(H) \in \Ku(X)$.

We claim that the numerical walls in (b) and (c) are not actual walls. Indeed, let $A$ be a $\sigma_{\alpha, -\frac{1}{2}}$-semistable object with $\ch(A)_{\leq 2}=\ch(v_1)_{\leq 2}$ as in (b) or (c). Then the destabilizing sequence at $\alpha$ has to be of the form
\begin{equation} \label{eq_dest}
0 \to A \to \cI_C(H) \to B \to 0.    
\end{equation}
Indeed, the subobject has to be a sheaf and $\cI_C(H)$ is torsion-free. Now consider the associated cohomology sequence 
\[0 \to \cH^{-1}(B) \to A \to \cI_C(H) \to \cH^0(B) \to 0\]
in $\Coh(X)$. Assume that $\cH^{-1}(B) \neq 0$. Since $A$ has rank $1$, the rank $r$ of $\cH^{-1}(B)$ can be either $0$ or $1$. As $\cI_C(H)$ is torsion-free, we have $r=0$, in contradiction with the fact that its slope semistable factors have slope $\leq -\frac{1}{2}$. Thus $\cH^{-1}(B)=0$, so $B$ is a sheaf. Thus \eqref{eq_dest} is a short exact sequence of sheaves. A standard computation shows that this violates the Gieseker stability of $\cI_C(H)$. We conclude that $\cI_C(H)$ remains $\sigma_{\alpha, -\frac{1}{2}}$-stable for some $\alpha < \frac{1}{2}$. 

Since $\mu_{\alpha, -\frac{1}{2}}(\cI_C(H))<0$, we have that $\cI_C(H)[1] \in \Coh^0_{\alpha, -\frac{1}{2}}(X)$. Note that for every $T \in \Coh(X)$ supported on points, we have
$\Hom(T, \cO_C(H))=\Hom(T, \cO_X(H)[1])=0$, thus $\Hom(T,\cI_C(H)[1])=0$. Then $\cI_C(H)[1]$ is $\sigma^0_{\alpha, -\frac{1}{2}}$-stable \cite[Proposition 4.1]{FP}, which implies $\cI_C(H)[1]$ is $\sigma_\alpha$-stable for some $\alpha < \frac{1}{2}$.
\end{proof}

\begin{SubLem}
\label{sublemma_classifywall}        
Let $A \in \Coh^{-\frac{1}{2}}(X)$ be a $\sigma_{\alpha, -\frac{1}{2}}$-semistable object with  $\ch(A)_{\leq 2}=(1,0,0)$. Then $A \cong \cO_X$.
\end{SubLem}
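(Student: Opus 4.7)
The plan is to analyze $A$ through the cohomology sheaves of its standard $t$-structure and then exploit the stability of $\cO_X$ itself.

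First I would set up notation. With $H^3=1$, the twisted Chern character is $\ch^{-1/2}(A)_{\le 2}=(1,\tfrac{1}{2}H,\tfrac{1}{8}H^2)$, so $\mu_{\alpha,-1/2}(A)=\tfrac{1}{4}-\alpha^2$, and $\overline{\Delta}_H(A)=(H^2\ch_1)^2-2H^3\ch_0\cdot H\ch_2=0$. Note that $\cO_X$ has exactly the same truncated class, hence the same $\sigma_{\alpha,-1/2}$-phase, and is $\sigma_{\alpha,-1/2}$-stable throughout the range $\alpha\in(0,\sqrt{5}/2]$ by the standard walls analysis for line bundles on index-two Fano threefolds (large-volume slope-stability plus absence of numerical walls, in the spirit of \cite[Lemma 2.7]{BMS:stabCY3s}).

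The first main step is to show $\cH^{-1}(A)=0$. Consider the short exact sequence $0\to\cF[1]\to A\to G\to 0$ in $\Coh^{-1/2}(X)$, where $\cF=\cH^{-1}(A)$ is torsion-free with $\mu_H(\cF)\le -\tfrac{1}{2}$ and $G=\cH^0(A)\in\Coh^{>-1/2}_{\mu_H}(X)$. Set $r=\rk\cF$ and $s=-H^2\ch_1(\cF)\in\Z$; then $s\ge r/2$. Writing $G=T\oplus\text{(t.f.\ part)}$ with $T$ the torsion subsheaf and checking that the torsion-free part has slope $>-\tfrac12$, one sees that $T$ must have pure codimension $\ge 2$ (so $H^2\ch_1(T)=0$), and $s<(r+1)/2$. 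Combined, $r/2\le s<(r+1)/2$ with $s\in\Z$ forces $s=r/2$ exactly. But then $H^2\ch_1^{-1/2}(\cF[1])=s-r/2=0$, so $\cF[1]$ has infinite $\sigma_{\alpha,-1/2}$-slope and destabilizes $A$, contradiction. Hence $\cF=0$ and $A$ is a sheaf.

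Now $A$ is a rank-one sheaf in $\Coh^{>-1/2}_{\mu_H}(X)$ with $\ch_1(A)=\ch_2(A)=0$. Any codimension-one torsion would force $H^2\ch_1(A)<0$, ruled out; so the torsion subsheaf $T\subset A$ lives in codimension $\ge 2$ and has $H^2\ch_1^{-1/2}(T)=0$, i.e.\ infinite $\sigma_{\alpha,-1/2}$-slope. Since $T\hookrightarrow A$ in $\Coh^{-1/2}(X)$ and $A$ has finite slope, semistability forces $T=0$. Hence $A$ embeds into its reflexive hull $A^{**}\cong\cO_X$, giving $A=\cI_Z$ for some $Z\subset X$ of codimension $\ge 2$; and $\ch_2(A)=-[Z]_{\mathrm{codim}\,2}=0$ upgrades this to $\mathrm{codim}\,Z\ge 3$, so $Z$ is either empty or a nonempty zero-dimensional subscheme.

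The main obstacle, the nontrivial case $Z\ne\emptyset$, is ruled out by the stability of $\cO_X$. Indeed all three terms of $0\to\cI_Z\to\cO_X\to\cO_Z\to 0$ lie in $\Coh^{>-1/2}_{\mu_H}(X)\subset\Coh^{-1/2}(X)$, so this is a proper short exact sequence in the tilted heart with $\mu_{\alpha,-1/2}(\cI_Z)=\mu_{\alpha,-1/2}(\cO_X)$; this contradicts $\sigma_{\alpha,-1/2}$-stability of $\cO_X$, which requires every proper subobject to have strictly smaller slope. Therefore $Z=\emptyset$ and $A\cong\cO_X$.
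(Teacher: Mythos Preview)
Your reduction to $A\cong\cI_Z$ with $Z$ zero-dimensional (Steps 1--4) is correct and in fact more elementary than the paper's route: the paper invokes \cite[Corollary~3.11]{BMS:stabCY3s} (tilt-semistable with $\Delta=0$ forces a slope-semistable sheaf) to reach the same intermediate conclusion, while you extract it directly from the tilted torsion pair. There are minor slips of phrasing (the torsion subsheaf is not a direct summand; ``codimension-one torsion would force $H^2\ch_1(A)<0$'' should read $>0$), but the logic goes through.

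The final step, however, has a genuine gap. The sequence $0\to\cI_Z\to\cO_X\to\cO_Z\to 0$ does \emph{not} contradict $\sigma_{\alpha,-\frac12}$-stability of $\cO_X$, because $\sigma_{\alpha,-\frac12}$ is only a \emph{weak} stability condition: its central charge sees only $\ch_{\le 2}$, so $Z_{\alpha,-\frac12}(\cO_Z)=0$ and $\mu_{\alpha,-\frac12}(\cO_Z)=+\infty$. With the paper's definition of stability, namely $\mu(F)<\mu(E/F)$ for every proper nonzero subobject, the relevant inequality is $\mu_{\alpha,-\frac12}(\cI_Z)<+\infty$, which is trivially satisfied. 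The see-saw principle you implicitly use --- that a proper subobject of a stable object has strictly smaller slope than the object itself --- fails exactly when the quotient lies in $\ker Z$. In fact, the same observation shows that $\cI_Z$ is itself $\sigma_{\alpha,-\frac12}$-stable: every proper subobject $F\subsetneq\cI_Z$ in $\Coh^{-\frac12}(X)$ is also a proper subobject of $\cO_X$ with $Z_{\alpha,-\frac12}(\cO_X/F)=Z_{\alpha,-\frac12}(\cI_Z/F)$, so stability of $\cO_X$ transfers. Thus weak tilt-stability alone cannot distinguish $\cI_Z$ from $\cO_X$. The paper instead brings in $\ch_3$: it uses the generalized Bogomolov--Gieseker inequality \cite{Chunyi:Fano} to get $a:=\ch_3(A)\le 0$, computes $\chi(A,\cO_X)=1-a\ge 1$, and proves the $\Ext^2$-vanishing, which together constrain the situation beyond what $\ch_{\le 2}$ sees. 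Your argument never touches $\ch_3$, which is precisely the invariant separating $\cO_X$ from $\cI_Z$, so it cannot close as written.
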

\begin{proof}
This is a rather standard remark, we include the proof here for the sake of completeness.      

By \cite[Conjecture 4.1]{BMS:stabCY3s}, \cite[Theorem 0.1]{Chunyi:Fano} we have $a:=\ch_3(A) \leq 0$. We also remark that $A$ is $\sigma_{\alpha, -\frac{1}{2}}$-stable, because it has minimal $H^2\ch_1^{-\frac{1}{2}}$. Since $A$ has discriminant $\Delta(A)=0$, by \cite[Corollary 3.11]{BMS:stabCY3s} it is a slope semistable sheaf. It follows that $\Hom(A, \cO_X[k])=0$ for every $k \neq 0, 1, 2$ by Serre duality and stability. Since $\chi(A, \cO_X)=1-a \geq 1$, it follows that $\hom(A, \cO_X)+ \hom(A, \cO_X[2])>0$. Since
\[\mu_{\alpha, 1}(\cO_X(2H))> \mu_{\alpha, 1}(A[1])\]
for small values of $\alpha>0$, we deduce that $\Hom(A, \cO_X[2])=0$ by Serre duality. Thus there exists a morphism $A \to \cO_X$, which is an isomorphism by stability.
\end{proof}

\bibliography{all}                      % .bib-Datei

\newcommand{\etalchar}[1]{$^{#1}$}
\begin{thebibliography}{BMMS12}

\bibitem[Alp13]{Alper:goodmoduli}
Jarod Alper.
\newblock Good moduli spaces for {A}rtin stacks.
\newblock {\em Ann. Inst. Fourier (Grenoble)}, 63(6):2349--2402, 2013.

\bibitem[APR22]{APR}
Matteo Altavilla, Marin Petkovi\'{c}, and Franco Rota.
\newblock Moduli spaces on the {K}uznetsov component of {F}ano threefolds of
  index 2.
\newblock {\em \'{E}pijournal G\'{e}om. Alg\'{e}brique}, 6:Art. 13, 31, 2022.

\bibitem[BBF{\etalchar{+}}24]{Arend:cubic3}
Arend Bayer, Sjoerd~Viktor Beentjes, Soheyla Feyzbakhsh, Georg Hein, Diletta
  Martinelli, Fatemeh Rezaee, and Benjamin Schmidt.
\newblock The desingularization of the theta divisor of a cubic threefold as a
  moduli space.
\newblock {\em Geom. Topol.}, 28(1):127--160, 2024.

\bibitem[Bea95]{beauville:cohomology}
Arnaud Beauville.
\newblock Sur la cohomologie de certains espaces de modules de fibr\'{e}s
  vectoriels.
\newblock In {\em Geometry and analysis ({B}ombay, 1992)}, pages 37--40. Tata
  Inst. Fund. Res., Bombay, 1995.

\bibitem[BKM24]{BKM}
Arend Bayer, Alexander Kuznetsov, and Emanuele Macr{\`{\i}}.
\newblock Mukai bundles on fano threefolds, 2024, arXiv:2402.07154.

\bibitem[BLM{\etalchar{+}}21]{liurendapaper}
Arend Bayer, Mart\'{\i} Lahoz, Emanuele Macr\`{i}, Howard Nuer, Alexander
  Perry, and Paolo Stellari.
\newblock Stability conditions in families.
\newblock {\em Publ. Math. Inst. Hautes \'{E}tudes Sci.}, 133:157--325, 2021.

\bibitem[BLMS23]{BLMS:kuzcomponent}
Arend Bayer, Mart\'{\i} Lahoz, Emanuele Macr\`{i}, and Paolo Stellari.
\newblock Stability conditions on {K}uznetsov components.
\newblock {\em Ann. Sci. \'{E}c. Norm. Sup\'{e}r. (4)}, 56(2):517--570, 2023.
\newblock With an appendix by Bayer, Lahoz, Macr\`{i}, Stellari and X. Zhao.

\bibitem[BM02]{Bridgeland-Maciocia:K3Fibrations}
Tom Bridgeland and Antony Maciocia.
\newblock Fourier-{M}ukai transforms for {$K3$} and elliptic fibrations.
\newblock {\em J. Algebraic Geom.}, 11(4):629--657, 2002, arXiv:math/9908022.

\bibitem[BM14]{BM:projectivity}
Arend Bayer and Emanuele Macr{\`{\i}}.
\newblock Projectivity and birational geometry of {B}ridgeland moduli spaces.
\newblock {\em J.\ Amer.\ Math.\ Soc.}, 27(3):707--752, 2014, arXiv:1203.4613.

\bibitem[BMMS12]{BMMS:Cubics}
Marcello Bernardara, Emanuele Macr{\`{\i}}, Sukhendu Mehrotra, and Paolo
  Stellari.
\newblock A categorical invariant for cubic threefolds.
\newblock {\em Adv. Math.}, 229(2):770--803, 2012, arXiv:0903.4414.

\bibitem[BMS16]{BMS:stabCY3s}
Arend Bayer, Emanuele Macr{\`{\i}}, and Paolo Stellari.
\newblock The space of stability conditions on abelian threefolds, and on some
  {C}alabi-{Y}au threefolds.
\newblock {\em Inventiones Mathematicae}, 206:1--65, 2016.

\bibitem[BMT14]{BMT:3folds-BG}
Arend Bayer, Emanuele Macr{\`{\i}}, and Yukinobu Toda.
\newblock Bridgeland stability conditions on threefolds {I}:
  {B}ogomolov-{G}ieseker type inequalities.
\newblock {\em J. Algebraic Geom.}, 23(1):117--163, 2014, arXiv:1103.5010.

\bibitem[Bot22]{Bottini}
Alessio Bottini.
\newblock Towards a modular construction of og10, 2022, arXiv:2211.09033.

\bibitem[Bri07]{Bridgeland:Stab}
Tom Bridgeland.
\newblock Stability conditions on triangulated categories.
\newblock {\em Ann. of Math. (2)}, 166(2):317--345, 2007, arXiv:math/0212237.

\bibitem[CG72]{CG:RationalityCubics}
C.~Herbert Clemens and Phillip~A. Griffiths.
\newblock The intermediate {J}acobian of the cubic threefold.
\newblock {\em Ann. of Math. (2)}, 95:281--356, 1972.

\bibitem[CS17]{CS:tour}
Alberto Canonaco and Paolo Stellari.
\newblock A tour about existence and uniqueness of dg enhancements and lifts.
\newblock {\em J. Geom. Phys.}, 122:28--52, 2017.

\bibitem[DN89]{DrezetNarasimhan:modulioncurves}
J.-M. Drezet and M.~S. Narasimhan.
\newblock Groupe de {P}icard des vari\'{e}t\'{e}s de modules de fibr\'{e}s
  semi-stables sur les courbes alg\'{e}briques.
\newblock {\em Invent. Math.}, 97(1):53--94, 1989.

\bibitem[FGLZ24]{FGLZ:EPW}
Soheyla Feyzbakhsh, Hanfei Guo, Zhiyu Liu, and Shizhuo Zhang.
\newblock Lagrangian families of bridgeland moduli spaces from gushel-mukai
  fourfolds and double epw cubes, 2024, 2404.11598.

\bibitem[FLM23]{zhiyu:contractKu}
Changping Fan, Zhiyu Liu, and Songtao~Kenneth Ma.
\newblock Stability manifolds of kuznetsov components of prime fano threefolds,
  2023, 2310.16950.

\bibitem[FP23]{FP}
Soheyla Feyzbakhsh and Laura Pertusi.
\newblock Serre-invariant stability conditions and {U}lrich bundles on cubic
  threefolds.
\newblock {\em \'{E}pijournal G\'{e}om. Alg\'{e}brique}, 7:Art. 1, 32, 2023.

\bibitem[GL24]{GL:newpaper}
Hangfei Guo and Zhiyu Liu.
\newblock Atomic sheaves on hyperk\"{a}hler manifolds via bridgeland moduli
  spaces, 2024, in preparation.

\bibitem[HRS96]{Happel-al:tilting}
Dieter Happel, Idun Reiten, and Sverre~O. Smal{\o}.
\newblock Tilting in abelian categories and quasitilted algebras.
\newblock {\em Mem. Amer. Math. Soc.}, 120(575):viii+ 88, 1996.

\bibitem[HRS02]{harris2002abeljacobi}
Joe Harris, Mike Roth, and Jason Starr.
\newblock Abel--jacobi maps associated to smooth cubic threefolds, 2002,
  math/0202080.

\bibitem[HRS05]{HarrisRothStarr:curvesoncubic3fold}
Joe Harris, Mike Roth, and Jason Starr.
\newblock Curves of small degree on cubic threefolds.
\newblock {\em Rocky Mountain J. Math.}, 35(3):761--817, 2005.

\bibitem[Huy23]{Huybrechts:cubichypersurfacebook}
Daniel Huybrechts.
\newblock {\em The geometry of cubic hypersurfaces}, volume 206 of {\em
  Cambridge Studies in Advanced Mathematics}.
\newblock Cambridge University Press, Cambridge, 2023.

\bibitem[IM00]{IlievMarku:cubics}
Atanas Iliev and Dimitri Markushevich.
\newblock The {A}bel-{J}acobi map for a cubic threefold and periods of {F}ano
  threefolds of degree 14.
\newblock {\em Doc. Math.}, 5:23--47 (electronic), 2000, arXiv:math/9910058.

\bibitem[Isk77]{Isk}
V.~A. Iskovskih.
\newblock Fano threefolds. {I}.
\newblock {\em Izv. Akad. Nauk SSSR Ser. Mat.}, 41(3):516--562, 717, 1977.

\bibitem[Isk79]{Is}
V.~A. Iskovskih.
\newblock Anticanonical models of three-dimensional algebraic varieties.
\newblock In {\em Current problems in mathematics, {V}ol. 12 ({R}ussian)},
  Itogi Nauki i Tekhniki, pages 59--157, 239 (loose errata). Akad. Nauk SSSR,
  Vsesoyuz. Inst. Nauchn. i Tekhn. Inform., Moscow, 1979.

\bibitem[JLLZ21]{JLLZ}
Augustinas Jacovskis, Zhiyu Liu, Xun Lin, and Shizhuo Zhang.
\newblock Categorical torelli theorems for gushel-mukai threefolds, 2021,
  arXiv:2108.02946, to appear in Journal of the London Math. Society.

\bibitem[JLZ22]{JLZ}
Augustinas Jacovskis, Zhiyu Liu, and Shizhuo Zhang.
\newblock Brill--noether theory for kuznetsov components and refined
  categorical torelli theorems for index one fano threefolds, 2022,
  arXiv:2207.01021.

\bibitem[Kin94]{King:QuiverStability}
Alastair King.
\newblock Moduli of representations of finite-dimensional algebras.
\newblock {\em Quart. J. Math. Oxford Ser. (2)}, 45(180):515--530, 1994.

\bibitem[KP21]{KuzPerry_serre}
Alexander Kuznetsov and Alexander Perry.
\newblock Serre functors and dimensions of residual categories, 2021,
  arXiv:2109.02026.

\bibitem[KS99]{KingSchofield:rationality}
Alastair King and Aidan Schofield.
\newblock Rationality of moduli of vector bundles on curves.
\newblock {\em Indag. Math. (N.S.)}, 10(4):519--535, 1999.

\bibitem[Kuz04]{Kuz:V14}
Alexander Kuznetsov.
\newblock Derived category of a cubic threefold and the variety {$V_{14}$}.
\newblock {\em Tr. Mat. Inst. Steklova}, 246(Algebr. Geom. Metody, Svyazi i
  Prilozh.):183--207, 2004, arXiv:math/0303037.

\bibitem[Kuz06]{Kuznetsov:Hyperplane}
Alexander Kuznetsov.
\newblock Hyperplane sections and derived categories.
\newblock {\em Izv. Ross. Akad. Nauk Ser. Mat.}, 70(3):23--128, 2006,
  arXiv:math/0503700.

\bibitem[Kuz09]{Kuz:Fano3fold}
Alexander Kuznetsov.
\newblock Derived categories of {F}ano threefolds.
\newblock {\em Tr. Mat. Inst. Steklova}, 264:116--128, 2009.

\bibitem[Kuz19]{Kuznetsov:fracCY}
Alexander Kuznetsov.
\newblock Calabi-{Y}au and fractional {C}alabi-{Y}au categories.
\newblock {\em J. Reine Angew. Math.}, 753:239--267, 2019.

\bibitem[Li19]{Chunyi:Fano}
Chunyi Li.
\newblock Stability conditions on {F}ano threefolds of {P}icard number 1.
\newblock {\em J. Eur. Math. Soc. (JEMS)}, 21(3):709--726, 2019.

\bibitem[LMS15]{LMS}
Mart\'{\i} Lahoz, Emanuele Macr\`{i}, and Paolo Stellari.
\newblock Arithmetically {C}ohen-{M}acaulay bundles on cubic threefolds.
\newblock {\em Algebr. Geom.}, 2(2):231--269, 2015.

\bibitem[LPZ23]{heart}
Chunyi Li, Laura Pertusi, and Xiaolei Zhao.
\newblock Derived categories of hearts on {K}uznetsov components.
\newblock {\em J. Lond. Math. Soc. (2)}, 108(6):2146--2174, 2023.

\bibitem[LZ22]{LiuZhang_note}
Zhiyu Liu and Shizhuo Zhang.
\newblock A note on {B}ridgeland moduli spaces and moduli spaces of sheaves on
  {$X_{14}$} and {$Y_3$}.
\newblock {\em Math. Z.}, 302(2):803--837, 2022.

\bibitem[Mac07]{Macri:curves}
Emanuele Macr{\`{\i}}.
\newblock Stability conditions on curves.
\newblock {\em Math. Res. Lett.}, 14(4):657--672, 2007, arXiv:0705.3794.

\bibitem[MS19]{MS_survey}
Emanuele Macr\`{i} and Paolo Stellari.
\newblock Lectures on non-commutative {K}3 surfaces, {B}ridgeland stability,
  and moduli spaces.
\newblock In {\em Birational geometry of hypersurfaces}, volume~26 of {\em
  Lect. Notes Unione Mat. Ital.}, pages 199--265. Springer, Cham, [2019]
  \copyright 2019.

\bibitem[MU83]{MU}
Shigeru Mukai and Hiroshi Umemura.
\newblock Minimal rational threefolds.
\newblock In {\em Algebraic geometry ({T}okyo/{K}yoto, 1982)}, volume 1016 of
  {\em Lecture Notes in Math.}, pages 490--518. Springer, Berlin, 1983.

\bibitem[Per22]{Perry_2022}
Alexander Perry.
\newblock The integral hodge conjecture for two-dimensional calabi–yau
  categories.
\newblock {\em Compositio Mathematica}, 158(2):287–333, 2022.

\bibitem[PPZ23]{PPZ_Enriques}
Alexander Perry, Laura Pertusi, and Xiaolei Zhao.
\newblock Moduli spaces of stable objects in enriques categories, 2023,
  arXiv:2305.10702.

\bibitem[PR23]{PR}
Laura Pertusi and Ethan Robinett.
\newblock Stability conditions on {K}uznetsov components of {G}ushel-{M}ukai
  threefolds and {S}erre functor.
\newblock {\em Math. Nachr.}, 296(7):2975--3002, 2023.

\bibitem[PS23]{PS_survey}
Laura Pertusi and Paolo Stellari.
\newblock Categorical {T}orelli theorems: results and open problems.
\newblock {\em Rend. Circ. Mat. Palermo (2)}, 72(5):2949--3011, 2023.

\bibitem[PY22]{PY}
Laura Pertusi and Song Yang.
\newblock Some remarks on {F}ano three-folds of index two and stability
  conditions.
\newblock {\em Int. Math. Res. Not. IMRN}, 2022(17):12940--12983, 2022.

\bibitem[Ram73]{ramanan1973moduli}
Sundararaman Ramanan.
\newblock The moduli spaces of vector bundles over an algebraic curve.
\newblock {\em Mathematische Annalen}, 200:69--84, 1973.

\bibitem[Shi22]{Shinder_2022}
Evgeny Shinder.
\newblock {\em Variation of Stable Birational Types of Hypersurfaces}, page
  296–313.
\newblock London Mathematical Society Lecture Note Series. Cambridge University
  Press, 2022.

\bibitem[Voi02]{Voisin:hogedbook}
Claire Voisin.
\newblock {\em Hodge theory and complex algebraic geometry. {I}}, volume~76 of
  {\em Cambridge Studies in Advanced Mathematics}.
\newblock Cambridge University Press, Cambridge, 2002.
\newblock Translated from the French original by Leila Schneps.

\bibitem[VP21]{villalobos:proj_moishezon}
David Villalobos-Paz.
\newblock Moishezon spaces and projectivity criteria, 2021, arXiv:2105.14630.

\bibitem[Zha20]{Zhang}
Shizhuo Zhang.
\newblock Bridgeland moduli spaces for gushel-mukai threefolds and kuznetsov's
  fano threefold conjecture, 2020, arXiv:2012.12193.

\end{thebibliography}
\bibliographystyle{halpha}

\end{document}